\newcommand{\K}{\mathbb K}
\theoremstyle{plain} \theoremstyle{remark}
\newtheorem{theo}{\bf Theorem}
\newtheorem{defi}{\bf Definition}
\newtheorem{lem}{\bf Lemma}
\newtheorem{cor}{\bf Corollary}
\newtheorem{prop}{\bf Proposition}
\newtheorem{rem}{\bf Remark}
\newtheorem{ex}{\bf Example}
\newcommand{\Int}{\mathbf{Int}}
\newcommand{\Mob}{\mathrm{M\ddot{o}b}}
\newcommand{\Pom}{\mathcal{P}}
\newcommand{\Pop}{\mathscr{P}}
\newcommand{\Cop}{\mathscr{C}}
\newcommand{\gm}{\mathbf{g}}
\newcommand{\dg}{\mathbf{dg}}
\newcommand{\stree}{\mathscr{T}}
\newcommand{\Ope}{\mathscr{O}}
\newcommand{\BB}{\mathbb{B}}
\newcommand{\KK}{\mathbb{K}}
\newcommand{\LL}{\mathbb{L}}
\newcommand{\TT}{\mathbb{T}}
\newcommand{\Ho}{\mathcal{H}}
\newcommand{\Sy}{\mathbb{S}}
\newcommand{\ZZ}{\mathbb{Z}}
\newcommand{\mo}{\mathscr{M}}
\newcommand{\fs}{\mathcal{F}}
\newcommand{\mrm}{\mathrm}
\newcommand{\tbf}{\textbf}
\newcommand{\seg}{\underline{\circ}}
\newcommand{\seb}{\underline{\bullet}}
\newcommand{\vect}{\mathrm{Vec}_{\mathbb{K}}}
\newcommand{\dgr}{\mathrm{dgVec}_{\mathbb{K}}}
\newcommand{\mgr}{\mathrm{gVec}_{\mathbb{K}}}
\newcommand{\be}{\begin{equation}}
\newcommand{\eeq}{\end{equation}}
\newcommand{\sinverse}{\langle -1 \rangle}
\newcommand{\Arb}{\mathscr{A}}
\newcommand{\Rel}{\mathscr{R}}
\newcommand{\Ch}{\mathrm{Ch}}
\newcommand{\coop}{\textrm{!`}}
\newcommand{\xx}{\mathbf{x}}
\newcommand{\mir}{\raisebox{0.3ex}{\scalebox{.6}{$\rightharpoonup$}}}
\newcommand{\mil}{\raisebox{0.3ex}{\scalebox{.6}{$\leftharpoonup$}}}
\newcommand{\mirs}{\raisebox{0.1ex}{\scalebox{.6}{$\rightharpoonup$}}}
\newcommand{\mils}{\raisebox{0.1ex}{\scalebox{.6}{$\leftharpoonup$}}}
\newcommand{\rarr}{\scalebox{.6}{$\rightarrow$}}
\newcommand{\larr}{\scalebox{.6}{$\leftarrow$}}
\newcommand{\Al}{\Lambda^{\larr}}
\newcommand{\Ar}{\Lambda^{\rarr}}
\newcommand{\Sc}{\mathscr{F}}
\newcommand{\Cob}{\mathrm{Cob}}
\newcommand{\Ba}{\mathcal{B}\mathrm{ar}}
\newcommand{\Bao}{\mathscr{B}\mathrm{ar}}
\newcommand{\led}{\langle\langle}
\newcommand{\rid}{\rangle\rangle}
\newcommand{\grader}{\underline{g}}
\begin{document}
\title{Koszul duality for monoids and the operad of enriched rooted
trees.}
\author{Miguel A. M\'endez\\Departamento de Matem\'atica\\
Instituto Venezolano de Investigaciones
Cient\'\i ficas\\
 A.P. 21827, Caracas 1020--A,
Venezuela \\and \\Universidad Central de Venezuela\\ Facultad de
Ciencias\\ Escuela de Matem\'atica.}
\date{}
\maketitle

%

\begin{abstract}
We introduce here the notion of Koszul duality for monoids in the
monoidal category of species with respect to the ordinary product.
To each Koszul monoid we associate a class of Koszul algebras in
the sense of Priddy, by taking the corresponding analytic functor.
The operad $\mathscr{A}_M$ of rooted trees enriched with a monoid
$M$ was introduced by the author many years ago. One special case
of that is the operad of ordinary rooted trees, called in the
recent literature the permutative non associative operad. We prove
here that $\mathscr{A}_M$ is Koszul if and only if the
corresponding monoid $M$ is Koszul. In this way we obtain a wide
family of Koszul operads, extending a recent result of Chapoton
and Livernet, and providing an interesting link between Koszul
duality for associative algebras and Koszul duality for operads.
\end{abstract}

\thanks{Dedicated to the memory of Pierre Leroux.}

\section{Introduction}
The present paper is an overdue followup of the program initiated
in \cite{Miguel} and \cite{Julia-Miguel}, some years before the
introduction in \cite{G-K} of the notion of Koszul duality for
operads, and whose initial step in the above mentioned references
we proceed to describe.
 Recall that a Joyal (set) species is a functor from the category
 $\BB$ of finite sets and bijections, to the category
 $\mathcal{F}$ of finite sets and arbitrary functions. Let $M$
be a (set) species. Denoting by $|M[n]|$ the cardinal of the image
by $M$ of an arbitrary set of cardinal $n$, the
 exponential generating function of $M$ is defined by
\begin{equation}
M(x)=\sum_{n\geq 0}|M[n]|\frac{x^n}{n!}.
\end{equation}

  The program intended to overcome a gap in
the theory of species as originally introduced by Joyal in
\cite{Joyal2}. That was the lack of a combinatorial interpretation
for the formal power series with possible negative coefficients.
  Even though this task could be performed by means of the formal
differences between ordinary species (virtual species)
\cite{Joyal1}, the problem of finding {\em the inverses} by the
process of sieving known as M\"obius inversion on a partially
ordered set \cite{Rota} was open by then. The term {\em inverse}
means here inverse with respect to each one of the usual binary
operations between species: sum, product, and substitution. The
motivation for this approach was that many combinatorial
identities come in pairs. The two identities belonging to a pair
are usually said to be {\em inverses} to each other, because one
comes from the other by M\"obius inversion on a partially ordered
set (e.g., identities involving many families of polynomials of
binomial type and their umbral inverses \cite{Mullin, Rota1,
Reiner}).

A M\"obius species is defined as a functor from the category of
finite sets and bijections to the category of finite disjoint
union of finite partially ordered sets with unique maxima and
minima. All the main features of Joyal's set species are extended
to this context; the generating function of a M\"obius species is
computed by replacing the cardinality of a finite set by the
evaluation of the M\"obius function on partially ordered sets. In
this way, generating functions having negative coefficients are
allowed. To define M\"obius inverses for ordinary Joyal set
species, the concept we had to introduce was that of
left-cancellative monoid (c-monoid) in the context of certain
kinds of monoidal categories. We studied three instances of such
monoidal categories  on species, one for each of the
aforementioned operations. In order to construct the inverse of a
given species with respect to an operation one has to presuppose a
c-monoidal structure on it. For example, if a given set species
$M$ is a c-monoid in the monoidal category related to the
operation of product, we can define a M\"obius species $M^{-1}$
 whose M\"obius generating function
is the inverse with respect to the product of formal power series
of the generating function of $M$. A similar procedure can be
applied to the operations of sum and substitution in order to
define the M\"obius inverses with respect to each of these
operations. A c-monoid in the monoidal category of set species
with respect to the operation of substitution is a cancellative
set operad (c-operad from now on). In this way we introduced there
many c-operads and their associated families of partially ordered
sets, some of them rediscovered in the more recent literature. For
example: the c-operad of pointed sets $E^{\bullet}$ that induces
the poset of pointed partitions (\cite{Julia-Miguel} example
3.13.), studied in \cite{Chapoton1} (the permutative operad). We
also introduced the c-operad of $M$-enriched rooted trees
$\Arb_M$, $M$ being a c-monoid with respect to the product (see
\cite{Miguel} section 5. and \cite{Julia-Miguel} section 3). When
$M$ is the species of sets $E$, $\Arb_E=\Arb$ is the permutative
non-associative operad \cite{Livernet}. We proved by bijective
methods that the M\"obius species $\Arb_M^{\langle -1\rangle}$
(substitutional inverse of $\Arb_M$) and $XM^{-1}$ have the same
M\"obius valuation. In this way we related the substitutional
M\"obius inverse of $\Arb_M$ with the multiplicative M\"obius
inverse of $M$, prefiguring the main result of this paper (see
theorem \ref{main}). This result has an intimate connection with
the Lagrange inversion formula and the combinatorial proof given
by G. Labelle \cite{Gilbert}. Its corollary (corollary
\ref{genchapoton}) extends a recent result of Chapoton and
Livernet \cite{Chapoton}.

The following natural step in this program is to extend this
procedure of inversion from set to tensor species \cite{Joyal1}.
Koszul duality for operads can be seen as a way of doing that for
the operation of substitution. The M\"obius function is the Euler
characteristic of the complex associated to the chains in a
partially ordered set \cite{Rota, Bjorner}. As a matter of fact B.
Vallette
 rediscovered recently the construction of posets from c-operads
\cite{Vallette} and proved that a quadratic c-operad is Koszul if
and only if all the posets in the associated inverse M\"obius
species are Cohen-Macaulay. In this vein, we introduce here the
notion of Koszul duality for quadratic monoids with respect to the
product of species, and prove an analogous result for quadratic
c-monoids (see theorem \ref{C-M}). Given a Koszul monoid, we
obtain a family of Koszul algebras in the sense of Priddy
\cite{Priddy}, by taking the corresponding analytic functor. By
the Schur correspondence we can go back in this construction. In
this way we can translate many classical results about Koszul
duality for associative algebras to this context. However, the
present approach has the advantage of embodying Koszul duality for
associative algebras into the realm of representation theory of
the symmetric groups and symmetric functions.

In a forthcoming paper we shall explore the connections with Hopf
algebras induced by Hopf monoids as in \cite{Marcelo} and with the
antipode of incidence Hopf algebras induced by c-operads as in
\cite{Miguel}.

 As a conclusion we can say that from the
point of view of a combinatorialist, Koszul duality, for operads
or for product monoids, is a sophisticated method of defining the
inverses of some `good' generating functions. An inversion
procedure that goes deeper from the generating function, into the
structure of the algebraic and combinatorial objects that it
enumerates. For example, Andr\'e's generating function for
alternating permutations of even length ($\sec(x)$) \cite{Andre}
is obtained here as the (dimension) generating function of the
Koszul dual of a c-monoid introduced in \cite{Miguel}. The
generating function for alternating permutations of odd length
($\tan(x)$) is associated in the same way to the Koszul dual of a
module on this c-monoid. Moreover this construction gives, by
means of the orthogonal relations of Koszul duality, the ribbon
representations of the symmetric groups studied in \cite{Foulkes},
and in \cite{Foulkes0}, Thm. 6.1. (see also the recent paper of
Stanley \cite{Stanley-Andre}). These generating functions as well
as the character generating functions given in \cite{Carlitz0},
are instances of the inversion formulas associated to Koszul
duality (proposition \ref{genk}). The same procedure is applied
here to the combinatorial interpretation of the inverse of the
Bessel function $J_0(2x)$ given in \cite{Carlitz}. Many other
classical and new enumerative formulas can be obtained by the same
technique.

\section{Set, tensor, $\gm$-tensor, and $\dg$-tensor species}
Let $\KK$ be a field of characteristic zero. Let $\vect$ be the
category whose objects are vector spaces over $\mathbb{K}$ and
linear transformations as morphisms. Denote by $\mgr$ the category
whose objects are graded vector spaces
$V^{\centerdot}=\bigoplus_{i\in \ZZ}V^i$ and morphisms are linear
maps preserving the grading. $\mgr$ is a symmetric monoidal
category with respect to the tensor product of graded vector
spaces \be \label{tensor}\left(V^{\centerdot}\otimes
W^{\centerdot}\right)^n=\bigoplus_{i+j=n} V^i\otimes W^j. \eeq The
symmetry isomorphism is given by \be v\otimes w\mapsto
(-1)^{\mathrm{deg}(v)\mathrm{deg}(w)}w\otimes v.\eeq Denote by
$\dgr$ the category of differential graded vector spaces, or
complex over $\KK$. The objects of $\dgr$ are pairs
$(V^{\centerdot},d)$ where $V^{\centerdot}$ is a graded vector
space, and $d:V^{\centerdot}\rightarrow V^{\centerdot}$ is a
linear map of degree $1$ satisfying $d^2=0$. The tensor product
$(V^{\centerdot},d_1)\otimes (W^{\centerdot},d_2)$ is defined to
be $(V^{\centerdot}\otimes W^{\centerdot},d)$, where
$V^{\centerdot}\otimes W^{\centerdot}$ is as in equation
(\ref{tensor}) and $d$ is given by \be d(v\otimes w)=d_1(v)\otimes
w+(-1)^{\mathrm{deg}(v)}v\otimes d_2(w). \eeq

The direct sum of objects in the categories $\mgr$ and $\dgr$ is
defined in a trivial way. The cohomologies of a
$\mathrm{dg}$-vector space $(V^{\centerdot},d)$,
$H^i(V^{\centerdot},d)=\mathrm{Ker}d^i/\mathrm{Im}d^{i-1}$, $i\in
\ZZ$, defines a functor \be\begin{matrix}H:\dgr\rightarrow\mgr, &
H(V^{\centerdot},d):=\bigoplus_{i\in
\ZZ}H^{i}(V^{\centerdot},d).\end{matrix}\eeq It preserves tensor
products:
 \be \label{homologypreserve}H((V^{\centerdot},d_1)\otimes
 (W^{\centerdot},d_2))=H(V^{\centerdot},d_1)\otimes
 H(W^{\centerdot},d_2)\eeq

Denote by $\fs$ the category whose objects are finite sets and
whose morphisms arbitrary functions. Let $\BB$ be the groupoid
subjacent in $\fs,$ the objects of $\BB$ are finite sets and the
morphisms are bijective functions.

\begin{defi}When clear from the context, the common term
`species' will designate a covariant functor from $\BB$ to
$\mathcal{V}$, where $\mathcal{V}$ is either of the categories
$\fs$, $\vect$, $ \mgr$, or $\dgr$. Specifically, we shall call
$M:\BB\rightarrow \mathcal{V}$ a set, tensor, $\gm$-tensor, or a
$\dg$-tensor species if its codomain category $\mathcal{V}$ is
respectively $\fs$, $\vect$, $ \mgr$, or $\dgr$.
\end{defi}
For a species $F$, we denote by $F[U]$ the image under $F$  of the
finite set $U$ in the corresponding category. In this article we
only deal with finite dimensional (tensor, $\gm$-tensor, and
$\dg$-tensor) species, i.e., species where $F[U]$ is a finite
dimensional vector space for every $U$ in $\BB$. Similarly, for a
bijection between finite sets $\sigma:U\rightarrow V$, $F[\sigma]$
will denote the corresponding isomorphism
$F[\sigma]:F[U]\rightarrow F[V]$. A morphism $\alpha:F\rightarrow
G$ between two species of the same kind, is a natural
transformation between $F$ and $G$ as functors. Two isomorphic
species $F$ and $G$ will be considered as equal and we write
$F=G$. Unless otherwise stated, for $k\geq 0$, $F_k$ will denote
the subspecies of $F$ concentrated on sets of cardinality $k$. The
truncated species $\sum_{j\geq k} F_j$ will be denoted by
$F_{k^+}$. We also denote the species $F$ truncated in 1,
$F_{1^+}$, by $F_+$. By a standard construction, a tensor species
$T$ is equivalent to a sequence $\{T[n]\}_{n=0}^{\infty}$ of
representations of the symmetric groups (see \cite{Joyal1}). Let
$\mu$ and $\lambda$ be two partitions such that
$\mu\subseteq\lambda$. We denote by $\mathcal{S}_{\lambda/\mu}$
the Specht representation corresponding to the skew shape
$\lambda/\mu$, as well as its corresponding tensor species.

Given a set species $M$, we can construct a tensor species by
composing with the functor $l$ (linear span), that goes from $\fs$
to $\vect$, and sends each finite set $S$ to  $\KK\cdot S$, the
free $\KK$-vector space generated by $S$. The category $\vect$ can
be thought of as subcategory of $\mgr$ or of $\dgr$, by
considering a vector space as a graded vector space concentrated
in degree zero in the first case, or as a trivial complex
concentrated in degree zero in the second case. Similarly, the
category $\mgr$ is naturally imbedded into $\dgr$ by providing a
graded vector space with the zero differential. Then, a tensor
species can be thought of either as a $\gm$-tensor species or as a
$\dg$-tensor species, and a $\gm$-tensor species as a $\dg$-tensor
species. In other words, we have the following category imbeddings
\begin{equation}
\fs^{\BB}\subset\vect^{\BB}\subset\mgr^{\BB}\subset\dgr^{\BB}.
\end{equation}

In this article we frequently denote the classical set species and
their corresponding tensor, $\gm$-tensor, and $\dg$-tensor species
with the same symbol.
\subsection{Operations on species}
\begin{defi}\label{operations}Let $F$ and $G$ be two species of the same kind. We define
the operations of sum, product, Hadamard product, substitution and
derivative,
\begin{eqnarray}
(F+G)[U]&:=&F[U]\oplus G[U]\\
(F\cdot G)[U]&:=&\bigoplus_{U_1\uplus U_2=U}F[U_1]\otimes G[U_2]\\
(F\odot G)[U]&:=&F[U]\otimes G[U]\\
\label{subs}F(G)[U]=(F\circ G)[U]&:=&\bigoplus_{\pi\in
\Pi[U]}\left(\bigotimes_{B\in\pi}G[B]\right)\otimes F[\pi]\\
DF[U]=F'[U]&:=&F[U\uplus\{\star\}],\\
D^{k}F[U]&:=&F[U\uplus\{\star_1,\star_2,\dots,\star_k\}]\\
F^{\bullet}[U]&:=&\KK\cdot U\otimes F[U].
\end{eqnarray}
\noindent with $\{\star\}$ standing for any one-element set, and
$\{\star_1,\star_2,\dots,\star_k\}$ for any $k$-element set.\\
 A family of species $F_i$, $i\in I$, is said to be
sumable if for every finite set $U$, $F_i[U]=0$ for almost every
$i\in I$. We can then define the sum $\sum_{i\in I}F_i$ by
\begin{equation}
(\sum_{i\in I}F_i)[U]=\bigoplus_{i:F_i[U]\neq 0}F_i[U].
\end{equation}\end{defi}
 This definitions are valid for all the kinds of
species studied here, the tensor product has to be interpreted in
the corresponding category. In the case of set species, tensor
product has to be interpreted as cartesian product, and direct sum
as disjoint union. In the substitution (equation (\ref{subs})),
$\Pi[U]$ is the set of partitions of the set $U$, and we require
that $G[\emptyset]=0$. The tensor product over the blocks of a
partition in the right hand side of it has to be interpreted as an
unordered tensor product in the corresponding monoidal category.
Because all the monoidal categories in consideration are
symmetric, unordered tensor products have a precise meaning in
each context as coinvariants under the action of the symmetric
group: \be \bigotimes_{i\in I}V_i
:=\left(\bigoplus_{i_1,i_2,\dots,i_k}V_{i_1}\otimes V_{i_2}\otimes
\dots V_{i_k}\right)_{S_k},\eeq \noindent where the direct sum is
taken over all the total orderings of the set $I$.

A $\gm$-tensor species could be thought of as a summable family of
tensor species $\{F^{\underline{k}}| k\in \mathbb{Z}\}$. We
usually denote it with the symbol $F^{\grader}$,
$F^{\grader}=\sum_{k\in\mathbb{Z}}F^{\underline{k}}.$ In the same
vein, a $\dg$-tensor species could be thought of as a summable
family as above, plus a family of natural transformations
$d=\{d_k\}_{k\in\ZZ}$, $d_k:F^{\underline{k}}\rightarrow
F^{\underline{k+1}}$, $k\in \mathbb{Z}$, such that $d_{k+1}\circ
d_{k}=0$, for every $k\in \mathbb{Z}$.
\begin{defi}{\em Dual species.} Let $F$ be a tensor species. The
 dual $F^*$ of $F$ is defined by
\begin{eqnarray}
F^*[U]&=&(F[U])^*\mbox{, for every finite set $U$,}\\
F^*[f]h&=&h\circ F[f^{-1}]\mbox{, for every bijection
$f:U\rightarrow V$ and every $h\in F^*[U]$}.
\end{eqnarray}
For a $\gm$-tensor species $F^{\grader}$, the dual $\gm$-species
$(F^{\grader})^*$  is obtained by dualizing each component and
reversing the grading:
$(F^{\grader})^{*\;\underline{k}}:=(F^{\underline{-k}})^*,\; k\in
\ZZ.$ For a $\dg$-tensor species $(F^{\grader},d)$, its dual is
defined to be $((F^{\grader})^*,d^*)$, where $d^*_k$ is the
adjoint of $d_{-k-1}$, $k\in \ZZ$.
\end{defi}

\begin{ex}
Recall that the species $1$ (empty set indicator), and $X$
(singleton species), are defined as follows

\be \begin{matrix}1[U]=\begin{cases}\{U\}&\mbox{if\, $U$ is the empty set}\\
\emptyset&\mbox{otherwise}\end{cases}&
\; X[U]=\begin{cases}{U}&\mbox{if\, $|U|=1$}\\
\emptyset&\mbox{otherwise}.\end{cases}\end{matrix} \eeq \item
\noindent Observe that $1$ and $X$ are respectively the identities
for the operations of product and substitution. We have that
$1\cdot F=F\cdot 1=F$ for every species $F$, and $F(X)=X(F)=F$ for
every species such that $F[\emptyset]=0$
\end{ex}

For a tensor species $F$, the following two generating functions
are defined: the generating function of the dimensions and the
Frobenius character (see \cite{Joyal1}),
\begin{eqnarray}\label{gen}
F(x)&=&\sum_{n=0}^{\infty}\mathrm{dim}(F[n])\frac{x^n}{n!},\\
\label{Macd}\Ch F(\xx)&=&\sum_{n=0}^{\infty}\sum_{\alpha\vdash n
}\mathrm{tr}F[\alpha]\frac{p_{\alpha}(\xx)}{z_{\alpha}}.
\end{eqnarray}
In the right hand side  of equation (\ref{Macd}) we use
Macdonald's notation for the power sum symmetric function (see
\cite{Macdonald}), as in the rest of this article when dealing
with symmetric functions. When restricted to set species, we get
the exponential generating function of the cardinals $|F[n]|$ in
the first case, and the cycle index series $Z_F(p_1,p_2,\dots)$ in
the second case. These definitions are easily extended to $\gm$
and $\dg$-tensor species, by taking respectively the
Euler-Poincar\'e characteristic and the alternating sum of traces
\be\begin{matrix}\chi(F^{\grader}[n])=\sum_{i\in\ZZ}(-1)^i\mathrm{dim}F^{i}[n],&
\mathrm{tr}F^{\grader}[\alpha]:=\sum_{i\in\ZZ}(-1)^i\mathrm{tr}F^i[\alpha]\end{matrix}\eeq
\noindent instead of $\mathrm{dim}F[n]$, and of
$\mathrm{tr}F[\alpha]$. More explicitly, we have
\begin{eqnarray}
F^{\grader}(x)&=&\sum_{k\in \ZZ}(-1)^k F^{\underline{k}}(x)\\
\Ch F^{\grader}(\xx)&=&\sum_{k\in \ZZ}(-1)^k \Ch
F^{\underline{k}}(\xx).
\end{eqnarray}

The operations of above are preserved by taking generating
functions;
\begin{eqnarray}
(F+G)(x)&=&F(x)+G(x)\\\Ch (F+G)(\xx)&=&\Ch F(\xx)+\Ch G(\xx)\\
(F\cdot G)(x)&=&F(x)G(x)\\\Ch (F\cdot G)(\xx)&=&\Ch F(\xx)\cdot\Ch G(\xx)\\
(F\odot G)(x)&=&F(x)\odot G(x)\label{hadamard}\\\Ch(F\odot
G)(\xx)&=&\Ch
F(\xx)\odot \Ch G(\xx)\label{internal}\\
F(G)(x)&=&F(G(x)) \\\Ch F(G)(\xx)&=&\Ch F(\xx)\ast \Ch
G(\xx)\label{plethysm}\\ DF(x)&=&F'(x)\\
\Ch DF(\xx)&=&\frac{\partial \Ch F(\xx)}{\partial p_1}\\
F^{\bullet}(x)&=&xF'(x)\\
\Ch F^{\bullet}(\xx)&=&p_1(\xx)\frac{\partial \Ch F(\xx)}{\partial
p_1}\end{eqnarray} \noindent In the right hand side of equations
(\ref{hadamard}), and
 (\ref{internal}), $\odot$ means respectively
Hadamard product of formal power series and internal product of
symmetric functions. In the right hand side of equation
(\ref{plethysm}), $\ast$ means plethysm of symmetric functions.
\begin{ex}  The set species $1$ and $X$ have as generating
functions $1(x)=1$, $\Ch 1(\xx)=1$, $X(x)=x$, and $\Ch
X(\xx)=p_1(\xx)$. The set species $E$ (exponential, or uniform
species), and the species $\LL$ of totally ordered sets, are
defined as follows,
\begin{eqnarray}E[U]&=&\{U\}, \\ \LL[U]&=&\{l| l:[n]\rightarrow U, \mbox{ where $n=|U|$
and $l$ being a bijection}\}, \mbox{ for every $U\in
\BB$}.\end{eqnarray} It is clear that the family $X^k\mbox{,
 }k\geq 0,$ is summable and that \begin{equation}\LL=\sum_{k\geq
 0}X^k.
\end{equation}
\end{ex}

Their generating functions are: \begin{eqnarray} E(x)=e^x,&\,&\Ch
E(\xx)=\sum_{n=0}^{\infty}h_n(\xx)=\prod_{n=1}^{\infty}\frac{1}{1-x_n}\\
\LL(x)=\frac{1}{1-x},&\,& \Ch
\LL(\xx)=\frac{1}{1-p_1(\xx)}=\frac{1}{1-\sum_{n=1}^{\infty}x_n}.\end{eqnarray}
\begin{ex} The tensor species $\Lambda$, defined by
\be\begin{matrix}\Lambda[U]=\bigwedge^{|U|}(\KK \cdot U),&\mbox{ }
U\in \BB,\end{matrix}\eeq \noindent has as generating functions
\be \Lambda(x)=e^x,\,\mbox{ and }
\Ch\Lambda(\xx)=\sum_{k=0}^{\infty}e_k(\xx)=\prod_{n=1}^{\infty}(1+x_n).\eeq
\end{ex}

\begin{defi}{\em The functor $\Ho$.} Let $F$ be a $\dg$-tensor species. Let
$\Ho F=H\circ F$ be the $\gm$-tensor species defined as the
functorial composition of the cohomology functor $H$ with $F$.
$\Ho$ is a functor from the category ${\dgr}^{\BB}$ of
$\dg$-tensor species, to the category ${\mgr}^{\BB}$ of
$\gm$-tensor species.
\end{defi} It is not difficult to check, since $H$ preserves
tensor products (equation (\ref{homologypreserve})), that $\Ho$
preserves all the operations in definition \ref{operations}:
\begin{equation}\label{hompre}\begin{matrix}\Ho(F+G)=\Ho F+\Ho G, &  \Ho (F\cdot
G)=\Ho F\cdot\Ho G,\\\Ho (F\odot G)=\Ho F\odot\Ho G, &\Ho
(F(G))=\Ho F(\Ho G).\end{matrix}
\end{equation}
Because of the identities
\be\begin{matrix}\chi(F^{\grader}[n])=\chi(H(F^{\grader}[n])),&\mbox{and}&
\mathrm{tr}F^{\grader}[\alpha]=\mathrm{tr}HF^{\grader}[\alpha],\end{matrix}\eeq
the generating functions (\ref{gen}), and (\ref{Macd}) are also
preserved by $\Ho$, \be\label{preservation}\begin{matrix}\Ho
M(x)=M(x), &\Ch\Ho  M(\xx)=\Ch M(\xx)\end{matrix} .\eeq
\section{Additive, multiplicative and substitutional  inverses} Denote by $s^m$, $m\in \ZZ$, the
shift endofunctor of the category $\dgr$; $s^m:\dgr\rightarrow
\dgr$ defined by $(s^m(V^{\centerdot}))^i=V^{i+m}$,
$s^m(d_{V^{\centerdot}})=(-1)^{m}d_{V^{\centerdot}}.$ The shift
 can also be defined (by restriction) as an endofunctor of $\mgr$.
\begin{defi}{\em Additive Inverses.} Let $F$ be a species of any kind. $F$ can be always thought of as a
$\dg$-tensor species or as a $\gm$-tensor species. There are two
kinds of `additive inverses'
of $F$. 
 \begin{eqnarray}\mil F&:=&s^{-1}\circ F\\\mir F&:=&s^1\circ F
 \end{eqnarray}
\end{defi}
Clearly, $(\mil F)(x)=-F(x)=(\mir F)(x)$ and $\Ch(\mil
F)(\xx)=\Ch(\mir F)(\xx)=-\Ch F(\xx)$.   There are two other
interesting ways of shifting $F$. Define
$F^{\scriptscriptstyle{\leftarrow}}$ and $F^{\rarr}$ as follows,
\begin{eqnarray}F^{\larr}&:=&
\sum_{k=0}^{\infty}s^{-k}\circ F_k,\\
F^{\rarr}&:=&\sum_{k=0}^{\infty}s^{k}\circ F_k.
 \end{eqnarray}

\begin{prop} We have the identities,
\begin{eqnarray}\label{sml}F(\mil X)&=&\Lambda^{\larr}\odot F,\\
\label{smr}F(\mir X)&=&\Lambda^{\rarr}\odot F.
\end{eqnarray}
\end{prop}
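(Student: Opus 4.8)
The plan is to compute both sides of each identity on an arbitrary finite set $U$ and check that the resulting graded vector spaces coincide naturally in $U$. Since the two identities are entirely parallel (with $s^{-1}$ replaced by $s^1$ and $\Lambda^{\larr}$ by $\Lambda^{\rarr}$), I would prove (\ref{sml}) in detail and obtain (\ref{smr}) by the same argument, or simply by applying (\ref{sml}) to the mirror grading. First I would unwind the left-hand side. By the definition of substitution (equation (\ref{subs})),
\[
F(\mil X)[U]=\bigoplus_{\pi\in\Pi[U]}\left(\bigotimes_{B\in\pi}(\mil X)[B]\right)\otimes F[\pi].
\]
Now $(\mil X)[B]=s^{-1}\circ X[B]$, which is $\KK$ placed in degree $-1$ when $|B|=1$ and $0$ otherwise. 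Hence the only partition contributing is the partition of $U$ into singletons, so the sum collapses to a single term: $F(\mil X)[U]=\left(\bigotimes_{u\in U}(\mil X)[\{u\}]\right)\otimes F[\pi_{\mathrm{disc}}]$, where $\pi_{\mathrm{disc}}$ is the discrete partition, which is canonically identified with $U$ itself. So $F(\mil X)[U]\cong (\mil X)^{\otimes U}\otimes F[U]$.

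The key step is to identify the unordered tensor product $(\mil X)^{\otimes U}=\bigotimes_{u\in U}(s^{-1}\KK)$ with $\Lambda^{\larr}[U]$. Here I would invoke the Koszul sign rule built into the symmetry isomorphism $v\otimes w\mapsto(-1)^{\deg v\deg w}w\otimes v$ of $\mgr$: an unordered tensor product of $n$ copies of a one-dimensional space concentrated in \emph{odd} degree is, by the definition of unordered tensor product as $S_n$-coinvariants over all total orderings, precisely the sign representation of $S_n$ placed in degree $-n$ (each transposition in reordering the factors introduces a $(-1)^{(-1)(-1)}=-1$). This is exactly $\bigwedge^{|U|}(\KK\cdot U)$ sitting in degree $-|U|$, i.e. $s^{-|U|}\Lambda[U]=(s^{-k}\circ\Lambda_k)[U]$ for $k=|U|$, which is the $U$-component of $\Lambda^{\larr}=\sum_{k\geq 0}s^{-k}\circ\Lambda_k$. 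Therefore $F(\mil X)[U]\cong\Lambda^{\larr}[U]\otimes F[U]=(\Lambda^{\larr}\odot F)[U]$, which is the right-hand side.

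Finally I would check naturality: for a bijection $\sigma:U\to V$, both $F(\mil X)[\sigma]$ and $(\Lambda^{\larr}\odot F)[\sigma]$ act as the induced map on $(\mil X)^{\otimes U}\otimes F[U]$, permuting tensor factors and relabelling, so the identification is a morphism of species; and the $\dg$-structure on $\mil X$ is trivial (it comes from the zero differential on $X$ via the shift, which scales $0$), so no differential bookkeeping is needed beyond the sign rule already used. The main obstacle is the careful handling of the Koszul signs in the unordered tensor product: one must be sure that placing the singleton species in odd degree $-1$ converts the symmetric (coinvariant) construction into the exterior power rather than leaving it a symmetric power, and that the total shift accumulated is $-|U|$ rather than, say, $0$ or $-\binom{|U|}{2}$. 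Once the sign computation in the previous paragraph is pinned down, identity (\ref{smr}) follows verbatim with $s^{-1}$ replaced by $s^{1}$: a one-dimensional space in odd degree $+1$ again yields the sign representation, now shifted to degree $+|U|$, reproducing $\Lambda^{\rarr}$.
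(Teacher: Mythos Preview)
Your proof is correct and follows essentially the same route as the paper. The paper also collapses the substitution to the discrete partition and then identifies $\bigotimes_{u\in U}s^{-1}\KK$ with $s^{-|U|}\Lambda[U]$; the only difference is that the paper cites this last identification as the Ginzburg--Kapranov lemma $\bigotimes_{j\in J}s^{-1}V_j^{\centerdot}\cong s^{-|J|}\bigl(\bigotimes_{j\in J}V_j^{\centerdot}\bigr)\otimes\Lambda[J]$, whereas you derive it directly from the Koszul sign rule.
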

\begin{proof}
Let us prove equation (\ref{sml}), the proof of (\ref{smr}) is
similar. By the definition of substitution we have that \be F(\mil
X)[U]=\left(\bigotimes_{b\in U}(\mil X)[\{b\}]\right)\otimes
F[U]=\left(\bigotimes_{b\in U}s^{-1}\KK\cdot\{b\}\right)\otimes
F[U]=\left(s^{-|U|}\Lambda[U]\right)\otimes F[U].\eeq The last
step because we have the isomorphism (see \cite{G-K} lemma
(3.2.9)) \be\label{49} \bigotimes_{j\in
J}s^{-1}V_j^{\centerdot}=s^{-|J|} \left(\bigotimes_{j\in
J}V_j^{\centerdot}\right)\otimes \Lambda[J].\eeq \end{proof}

Their generating functions are: \begin{eqnarray}F(\mil X)(x)&=&
F(\mir X)(x)=\sum_{k=0}^{\infty}(-1)^k F_k(x)=F(-x)\\\Ch F(\mil
X)(\xx)&=&\Ch F(\mir X)(\xx)=\sum_{k=0}^{\infty}(-1)^k\Ch
F_k(\xx)\odot e_k(\xx)=\Ch F(-\xx)\odot
\sum_{k=0}^{\infty}e_k(\xx).\end{eqnarray}
\begin{ex} {\em Koszul Complexes.} We have the identities
\begin{eqnarray}
E(\mil X)=\Al\odot E=\Al,&&\, E(\mir X)=\Lambda^{\rarr}\odot
E=\Lambda^{\rarr}.
\end{eqnarray}
Let $P=\mil X+X$ and $Q=\mir X+X$ be the $\dg$-tensor species with
differential $d_{-1}:X=P^{\underline{-1}}\rightarrow
P^{\underline{o}}=X$ (resp. $d'_{0}:X=Q^{\underline{0}}\rightarrow
Q^{\underline{1}}=X$) in each case the trivial isomorphism and
zero for $k\neq -1$ (respectively $k\neq 0$). As $\gm$-tensor
species $E(P)$ and $E(Q)$ are  respectively equal
to \begin{eqnarray}E(\mil X+X)&=&E(\mil X)\cdot E=\Al\cdot E,\\
E(\mir X+X)&=&E\cdot E(\mir X)=E\cdot\Ar.\end{eqnarray} As
$\dg$-tensor species, the differentials that comes from $P$ and
$Q$ by using the definition of substitution of species are easily
seen to be:

\begin{eqnarray} d_k:((\Al)^{\underline{k}}\cdot
E)[U]&\rightarrow & ((\Al)^{\underline{k+1}}\cdot E)[U]\mbox{, $k=-|U|,\dots,0$}\\
d_k(a_1\wedge\dots \wedge a_k\otimes U_2)&=&\sum_{i=1}^k
(-1)^{i-1}a_1\wedge\dots \wedge\widehat{a_i}\wedge\dots \wedge
a_k\otimes U_2\cup\{a_i\}\\d'_k:(
E\cdot(\Ar)^{\underline{k}})[U]&\rightarrow & (
E\cdot(\Ar)^{\underline{k+1}})[U]\mbox{, $k=0,\dots,|U|$}\\
d_k'(U_1\otimes a_1\wedge a_2\wedge\dots \wedge a_k)&=&\sum_{a\in
U_1}(U_1-\{a\})\otimes a\wedge a_1\wedge a_2\wedge\dots \wedge a_k
\end{eqnarray}

By equation \ref{hompre}, $\Ho E(\mil X +X)=\Ho E(\mir X +
X)=E(0)=1$.
\end{ex}

Let $F$ be of the form $F=1+F_+$. The inverse of its generating
function is \be
F(x)^{-1}=(1+F_+(x))^{-1}=1-F_+(x)+F_+(x)^2-\dots=\LL(-F_+(x)).\eeq
This motivates the following definition.
\begin{defi}{\em Multiplicative Inverses.} Let $F$ be as above.
We  define two `multiplicative inverses' of $F$, \begin{eqnarray}
\label{inversamul}F^{\mils 1}&:=&\LL(\mil F_+),\\
F^{\mirs 1}&:=&\LL(\mir F_+).\end{eqnarray}Obviously, \be F^{\mils
1}(x)=F^{\mirs 1}(x)=F(x)^{-1}\eeq\be\Ch F^{\mils 1}(\xx)=\Ch
F^{\mirs 1}(\xx)=\sum_{k=0}^{\infty} (-1)^k (\Ch
F_+(\xx))^k=\left(\Ch F(\xx)\right)^{-1}.\eeq
\end{defi}
Denote by $\Sc$ the set species of commutative Schr\"{o}der trees,
or generalized commutative parenthesizations. It satisfies the
implicit equation \be \Sc=X+E_{2^+}(\Sc).\eeq

The structures of $\Sc[U]$ are rooted trees whose leaves are
labelled with the elements of $U,$ and whose internal vertices,
each one with at least two sons, are unlabelled. More generally,
let $G$ be a species of the form $G=X+G_{2^+}$. The species of
$G$-enriched Schr\"{o}der trees (see for example \cite{shroeder,
Joyal2}) is the solution (fixed point) of the implicit equation
\be \Sc_{G}=X+G_{2^+}(\Sc_G).\label{schroder}\eeq  $\Sc_{G}$ is
the free operad generated by $G$. This kind of structures should
be called Hipparchus trees, who computed in the second century
B.C. the number of composed propositions out of $10$ simple
propositions (see \cite{Stanley} p. 213, and
\cite{Stanley-Hipparcus}). In modern combinatorial language, the
number of generalized non-commutative parenthesizations (or
bracketings) using $10$ undistinguishable symbols. In our
notation: $|\Sc_{\LL_+}[10]|/10!$.
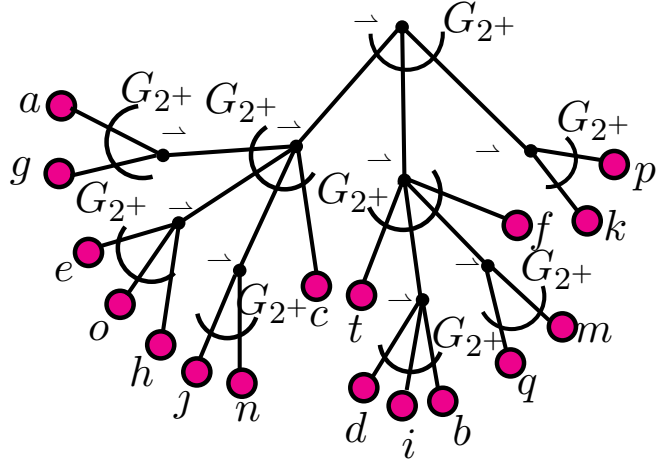
\begin{figure}\begin{center}
\scalebox{1.5} 
{
\begin{pspicture}(0,-2.08375)(6.2028127,2.0221875)
\psdots[dotsize=0.12](3.6009376,1.78375)
\psdots[dotsize=0.12](2.6609375,0.72375)
\psdots[dotsize=0.12](3.6209376,0.42375)
\psdots[dotsize=0.12](4.7409377,0.68375)
\psdots[dotsize=0.12](3.7809374,-0.63625)
\psdots[dotsize=0.12](4.3609376,-0.33625)
\psdots[dotsize=0.12](1.4809375,0.64375)
\psdots[dotsize=0.12](1.6209375,0.04375)
\psdots[dotsize=0.12](2.1609375,-0.37625)
\pscircle[linewidth=0.04,dimen=outer,fillstyle=solid,fillcolor=Magenta](0.5609375,0.48375){0.14}
\pscircle[linewidth=0.04,dimen=outer,fillstyle=solid,fillcolor=Magenta](0.8209375,-0.21625){0.14}
\pscircle[linewidth=0.04,dimen=outer,fillstyle=solid,fillcolor=Magenta](1.4609375,-1.03625){0.14}
\pscircle[linewidth=0.04,dimen=outer,fillstyle=solid,fillcolor=Magenta](2.1809375,-1.37625){0.14}
\pscircle[linewidth=0.04,dimen=outer,fillstyle=solid,fillcolor=Magenta](1.1009375,-0.67625){0.14}
\pscircle[linewidth=0.04,dimen=outer,fillstyle=solid,fillcolor=Magenta](1.7809376,-1.27625){0.14}
\pscircle[linewidth=0.04,dimen=outer,fillstyle=solid,fillcolor=Magenta](0.5809375,1.08375){0.14}
\pscircle[linewidth=0.04,dimen=outer,fillstyle=solid,fillcolor=Magenta](3.9609375,-1.53625){0.14}
\pscircle[linewidth=0.04,dimen=outer,fillstyle=solid,fillcolor=Magenta](2.8409376,-0.51625){0.14}
\pscircle[linewidth=0.04,dimen=outer,fillstyle=solid,fillcolor=Magenta](3.2609375,-1.41625){0.14}
\psline[linewidth=0.036cm](3.6009376,1.80375)(2.6209376,0.68375)
\psline[linewidth=0.036cm](3.5809374,0.40375)(3.5609374,0.40375)
\psline[linewidth=0.036cm](4.7209377,0.68375)(3.6009376,1.78375)
\psline[linewidth=0.036cm](2.6609375,0.72375)(1.6209375,0.04375)
\psline[linewidth=0.036cm](2.6409376,0.68375)(2.1609375,-0.35625)
\psline[linewidth=0.036cm](2.6809375,0.68375)(2.8409376,-0.39625)
\psline[linewidth=0.036cm](3.6009376,0.40375)(3.2609375,-0.47625)
\psline[linewidth=0.036cm](3.6209376,0.42375)(3.7609375,-0.59625)
\psline[linewidth=0.036cm](3.6209376,0.42375)(4.3409376,-0.31625)
\psline[linewidth=0.036cm](3.6209376,0.44375)(4.5209374,0.08375)
\psline[linewidth=0.036cm](2.6209376,0.72375)(1.4809375,0.64375)
\pscircle[linewidth=0.04,dimen=outer,fillstyle=solid,fillcolor=Magenta](5.2409377,0.06375){0.14}
\pscircle[linewidth=0.04,dimen=outer,fillstyle=solid,fillcolor=Magenta](5.4809375,0.56375){0.14}
\pscircle[linewidth=0.04,dimen=outer,fillstyle=solid,fillcolor=Magenta](4.6209373,0.02375){0.14}
\pscircle[linewidth=0.04,dimen=outer,fillstyle=solid,fillcolor=Magenta](3.6009376,-1.57625){0.14}
\pscircle[linewidth=0.04,dimen=outer,fillstyle=solid,fillcolor=Magenta](3.2409375,-0.59625){0.14}
\psline[linewidth=0.036cm](1.4409375,0.66375)(0.6609375,1.06375)
\psline[linewidth=0.036cm](1.4209375,0.64375)(0.6609375,0.46375)
\psline[linewidth=0.036cm](1.6209375,0.04375)(0.9409375,-0.15625)
\psline[linewidth=0.036cm](1.6009375,0.04375)(1.1609375,-0.59625)
\psline[linewidth=0.036cm](1.6209375,0.02375)(1.4809375,-0.91625)
\psline[linewidth=0.036cm](2.1209376,-0.41625)(1.8009375,-1.17625)
\psline[linewidth=0.036cm](2.1609375,-0.35625)(2.1609375,-1.27625)
\pscircle[linewidth=0.04,dimen=outer,fillstyle=solid,fillcolor=Magenta](4.5609374,-1.19625){0.14}
\pscircle[linewidth=0.04,dimen=outer,fillstyle=solid,fillcolor=Magenta](5.0209374,-0.87625){0.14}
\usefont{T1}{ptm}{m}{n}
\rput(1.5823437,0.81375){$\mirs$}
\psline[linewidth=0.036cm](3.7609375,-0.63625)(3.3409376,-1.31625)
\psline[linewidth=0.036cm](3.7809374,-0.63625)(3.6409376,-1.43625)
\psline[linewidth=0.036cm](3.7809374,-0.63625)(3.9209375,-1.43625)
\psline[linewidth=0.036cm](4.3409376,-0.33625)(4.5209374,-1.07625)
\psline[linewidth=0.036cm](4.3609376,-0.31625)(4.9009376,-0.81625)
\psline[linewidth=0.036cm](4.7409377,0.68375)(5.1609373,0.14375)
\psline[linewidth=0.036cm](4.7409377,0.70375)(5.3609376,0.60375)
\usefont{T1}{ptm}{m}{n}
\rput(3.2823439,1.81375){$\mirs$}
\usefont{T1}{ptm}{m}{n}
\rput(1.6423438,0.15375){$\mirs$}
\usefont{T1}{ptm}{m}{n}
\rput(3.4023438,0.57375){$\mirs$}
\usefont{T1}{ptm}{m}{n}
\rput(2.6023438,0.87375){$\mirs$}
\usefont{T1}{ptm}{m}{n}
\rput(1.9823438,-0.30625){$\mirs$}
\usefont{T1}{ptm}{m}{n}
\rput(3.5823438,-0.66625){$\mirs$}
\usefont{T1}{ptm}{m}{n}
\rput(4.362344,0.65375){$\mirs$}
\usefont{T1}{ptm}{m}{n}
\rput(4.182344,-0.34625){$\mirs$}
\psline[linewidth=0.036cm](3.6009376,1.78375)(3.6209376,0.44375)
\psarc[linewidth=0.036](3.6409376,1.72375){0.32}{180.0}{3.3664606}
\usefont{T1}{ptm}{m}{n}
\rput(4.302344,1.83375){$G_{ 2^+}$}
\usefont{T1}{ptm}{m}{n}
\rput(5.3123436,0.95375){$G_{2^+}$}
\psarc[linewidth=0.036](4.8909373,0.61375){0.25}{266.1859}{49.085617}
\psarc[linewidth=0.04](2.5609374,0.64375){0.3}{122.47119}{329.03625}
\psarc[linewidth=0.04](1.3009375,0.76375){0.32}{97.125015}{285.9454}
\psarc[linewidth=0.04](3.6109376,0.31375){0.33}{203.1986}{36.469234}
\psarc[linewidth=0.04](2.0509374,-0.70625){0.27}{191.30994}{341.56506}
\psarc[linewidth=0.04](3.6809375,-0.99625){0.28}{188.1301}{338.19858}
\usefont{T1}{ptm}{m}{n}
\rput(2.1923437,1.11375){$G_{2^+}$}
\usefont{T1}{ptm}{m}{n}
\rput(1.4423437,1.21375){$G_{ 2^+}$}
\psarc[linewidth=0.04](1.3809375,-0.17625){0.3}{141.3402}{312.27368}
\usefont{T1}{ptm}{m}{n}
\rput(1.0423437,0.23375){$G_{ 2^+}$}
\usefont{T1}{ptm}{m}{n}
\rput(2.4623437,-0.66625){$G_{ 2^+}$}
\usefont{T1}{ptm}{m}{n}
\rput(4.202344,-0.96625){$G_{ 2^+}$}
\usefont{T1}{ptm}{m}{n}
\rput(3.1823437,0.31375){$G_{ 2^+}$}
\psarc[linewidth=0.036](4.5709376,-0.60625){0.29}{208.30075}{18.434948}
\usefont{T1}{ptm}{m}{n}
\rput(4.992344,-0.36625){$G_{2^+}$}
\usefont{T1}{ptm}{m}{n}
\rput(0.30234376,1.11375){$a$}
\usefont{T1}{ptm}{m}{n}
\rput(4.132344,-1.78625){$b$}
\usefont{T1}{ptm}{m}{n}
\rput(2.8623438,-0.78625){$c$}
\usefont{T1}{ptm}{m}{n}
\rput(3.1923437,-1.76625){$d$}
\usefont{T1}{ptm}{m}{n}
\rput(0.60234374,-0.38625){$e$}
\usefont{T1}{ptm}{m}{n}
\rput(4.842344,-0.02625){$f$}
\usefont{T1}{ptm}{m}{n}
\rput(0.22234374,0.51375){$g$}
\usefont{T1}{ptm}{m}{n}
\rput(1.2923437,-1.26625){$h$}
\usefont{T1}{ptm}{m}{n}
\rput(3.6623437,-1.90625){$i$}
\usefont{T1}{ptm}{m}{n}
\rput(1.6723437,-1.52625){$j$}
\usefont{T1}{ptm}{m}{n}
\rput(5.492344,0.03375){$k$}
\usefont{T1}{ptm}{m}{n}
\rput(5.302344,-0.92625){$m$}
\usefont{T1}{ptm}{m}{n}
\rput(2.2323437,-1.62625){$n$}
\usefont{T1}{ptm}{m}{n}
\rput(0.91234374,-0.90625){$o$}
\usefont{T1}{ptm}{m}{n}
\rput(5.7523437,0.45375){$p$}
\usefont{T1}{ptm}{m}{n}
\rput(4.7123437,-1.42625){$q$}
\usefont{T1}{ptm}{m}{n}
\rput(3.1923437,-0.88625){$t$}
\end{pspicture}
}

\end{center}

\caption {Graphical representation of the  substitutional
inverse}\label{substinverse}
\end{figure}

 Let us introduce some notation in order to give an explicit
description of the object $\Sc_{G}[U],$ for  a finite set $U$. For
a tree $t\in\Sc[U]$, we denote by $\mathrm{Iv}(t)$ the set of
internal vertices of $t$. For a vertex $v\in \mathrm{Iv}(t)$
denote by $t_v$ the subtree of $t$ having $v$ as a root and as
vertices all de descendants of $v$ in $t$. Let $U_v$ be the set of
leaves of $t_v$. We shall identify the internal (unlabelled)
vertex $v$ with $U_v$. The set $U_v$ will be used as a label for
$v$. In the same way we identify  $\mathrm{Iv}(t)$ with the set
$\{U_v|v\in \mathrm{Iv}(t)\}$. Let $\{v_1,v_2,\dots,v_k\}$ be the
set of sons of $v$ in $t$. We denote by $\pi_v$ the partition
$\{U_{v_i}|i=1,2,\dots,k\}$ of the set $U_v$ of leaves of $t_v$,
generated by the leaves of the set of trees
$\{t_{v_i}|i=1,2,\dots,k\}$ attached to $v$. Consistently, we
identify the partition $\pi_v$ with the set of sons of $v$. The
object $\Sc_{G}[U]$ is explicitly given by the
formula\be\label{explic} \Sc_{G}[U]=\bigoplus_{t\in
\Sc[U]}\bigotimes_{v\in\mathrm{Iv}(t)}G_{2^+}[\pi_v]. \eeq $\Sc_G$
has a natural grading $\Sc_G^{\grader}=\sum_{k=0}^{\infty}
\Sc_G^{\underline{k}}$, where $\Sc_G^{\underline{0}}=X$ and for
$k\geq 1$, $\Sc_G^{\underline{k}}$  is the species of the
$G$-enriched Shr\"oder trees with exactly $k$ internal vertices.

From equation (\ref{schroder}), the generating function $\Sc_G(x)$
 is the solution of the implicit equation \be \Sc_G(x)=x+G_{2^+}(\Sc_G(x)).\eeq Then, we
 have that \be\Sc_G(x)-G_{2^+}(\Sc_G(x))=x,\eeq and equivalently, that
 the substitutional inverse  $(x-G_{\geq 2}(x))^{\langle-1\rangle}$
 of $(x-G_{2^+}(x)),$ is $\Sc_G(x).$  Similarly, the Frobenius
 character, $\Ch\Sc_G(\xx)$ is the plethystic
 inverse of $(p_1(\xx)-\Ch G_{2^+}(\xx))$;
 \be \Ch\Sc_G(\xx)=(p_1(\xx)-\Ch G_{2^+}(\xx))^{\langle-1\rangle}.\eeq This motivates the following definition.
\begin{defi}{\em Substitutional Inverse.} Let $G$ be a species as above. We define two `substitutional
inverses' of $G$, \begin{eqnarray} G^{\langle\mils 1\rangle}&:=&\Sc_{\mils G}, \\
G^{\langle\mirs 1\rangle}&:=&\Sc_{\mirs G}.\end{eqnarray}
\end{defi}

It is easy to prove that
 \be G^{\langle\mils
1\rangle}(x)=G^{\langle\mirs
1\rangle}(x)=G^{\langle-1\rangle}(x),\eeq \noindent and that
\be\Ch G^{\langle\mils 1\rangle}(\xx)=\Ch G^{\langle\mirs
1\rangle}(\xx)=(\Ch G (\xx))^{\langle-1\rangle}.\eeq

\section{Monoidal categories on species}

 The category
$\mathcal{V}^{\BB}$ is monoidal  with respect to the product of
species, having $1$ as identity object. A monoid in
 $(\mathcal{V}^{\BB},\cdot\,,1)$ is a species $M$ together with
morphisms $\nu:M \cdot M\rightarrow M$ and $e:1\rightarrow M$,
satisfying the associative and unity properties (see
\cite{Maclane} for the general definition of monoidal categories
and of a monoid in a monoidal category).
 In this article we only consider monoids of the form
 $M=1+M_+$. Observe that in that case the morphism $e:1\rightarrow M$ is unique.
Then, its monoidal structure is completely determined by giving
the morphism $\nu:M\cdot M\rightarrow M$ satisfying the
associative and identity properties: for every finite set $U$ \be
\nu(\nu(m_{U_1}\otimes m_{U_2})\otimes m_{U_3})=\nu(m_{U_1}\otimes
\nu(m_{U_2}\otimes m_{U_3}))\mbox{, } m_{U_1}\otimes
m_{U_2}\otimes m_{U_3}\in (M.M.M)[U], \eeq \noindent and, \be
\nu(e\otimes m)=\nu(m\otimes e)=m\; \mbox{, }e\otimes m\in
(M_1\cdot M)[U]=(1\cdot M)[U]\mbox{, }m\otimes e\in (M\cdot
M_1)[U]=(M\cdot 1)[U].\eeq

 Given a monoid $(M,\nu)$, a
species $N$ is called a (right) $M$-module if there exists a right
action $\tau:N.M\rightarrow M$, of $M$  on $N$ that satisfies the
pseudoassociative and identity properties: for every finite set
$U$ we have \be \tau(\tau(n_{U_1}\otimes m_{U_2})\otimes
m_{U_3})=\tau(n_{U_1}\otimes \nu(m_{U_2}\otimes m_{U_3}))\mbox{, }
n_{U_1}\otimes m_{U_2}\otimes m_{U_3}\in (N.M.M)[U], \eeq
 \noindent and, \be \tau(n\otimes
e)=n\; \mbox{, for }n\otimes e\in (N\cdot M_1)[U]=(N\cdot
1)[U].\eeq

In a standard way we define the dual notions of comonoid an
comodule with respect to a comonoid.

 Let us denote by $\mathcal{V}^{\BB}_+$ the category of species
$F$ satisfying $F[\emptyset]=0$. $\mathcal{V}^{\BB}_+$ is a
monoidal category with respect to the substitution of species and
having $X$ as identity object.

 An {\em operad} $(\Ope,\eta, e)$ is defined to be
a monoid in the monoidal category $(\mathcal{V}^{\BB}_+, X,
\circ)$. Equivalently, $\eta:\Ope(\Ope)\rightarrow \Ope$ and
$e:X\rightarrow \Ope$ are morphisms in $\mathcal{V}^{\BB}_+$
(natural transformations) that satisfy the associativity and
identity properties. $\Ope$ will be called a set-operad when
$\mathcal{V}=\fs$. Similarly, $\Ope$ will be called an operad, a
$\gm$-operad or a $\dg$-operad if the category $\mathcal{V}$ is
respectively $\vect$, $\mgr$ or $\dgr$.

 We only
consider here operads of the form $\Ope=X+\Ope_{2^+}$, whose
structure is completely determined by giving the morphism
$\eta:\Ope(\Ope)\rightarrow\Ope$.

\subsection{Koszul duality for quadratic monoids and modules}
\begin{defi} {\em Quadratic Monoids}.
Let $F$ be a tensor species such that $F[\emptyset]=\emptyset$.
The species $$\LL(F)=1+F+F^2+F^3\dots$$ is the free monoid
generated by
 $F$. Let $R$ be a subspecies of $F^2$ and let $\mathcal{R}_M=\langle R\rangle$
be the monoid ideal generated by $R$ in $\LL(F)$. Explicitly
\begin{equation}\mathcal{R}_M=\sum_{k=0}^{\infty}\mathcal{R}_M^{\underline{k}},
\end{equation}
\noindent where
\begin{equation}
\mathcal{R}_M^{\underline{k}}=\sum_{i=0}^{k-2}F^i R
F^{k-2-i}\subseteq F^k.
\end{equation}
The monoid
$M=\LL(F)/\mathcal{R}_M=1+F+F^2/R+F^3/\mathcal{R}_M^{\underline{3}}+\dots$,
will be called the quadratic monoid with generators in $F$,
quadratic relations in $R_M$, and denoted by $M=\mathcal{M}(F,R)$.
 There is a natural grading on the monoid $M$, the
corresponding graded monoid will be denoted by $M^{\grader}$,
$M^{\underline{k}}=F^k/\mathcal{R}_M^{\underline{k}}$.
\end{defi}
\begin{defi}{\em Quadratic $M$-modules}. Let $M$ be a quadratic monoid as above
and $G$ an arbitrary tensor species. The species $$G\cdot \LL(F)
=G+G\cdot F +G\cdot F^2 +\dots$$ is the free (right)
$\LL(F)$-module generated by $G$. Let $R_N\subseteq G.F$ be a
subspecies of $G.F$, and $\mathcal{R}_{M,N}$ the submodule
generated in $G\cdot \LL(F)$ by $\mathcal {R}_M$ and $R_N$;
\begin{equation}
\mathcal{R}_{M,N}=\sum_{k=0}^{\infty}\mathcal{R}_{M,N}^{\underline{k}},
\end{equation}
\noindent where
\begin{eqnarray}
\mathcal{R}_{M,N}^{\underline{0}}&=&0\subseteq G\mbox{,
}\\\mathcal{R}_{M,N}^{\underline{1}}&=&R_N\subseteq G\cdot
F\mbox{, }
\\\mathcal{R}_{M,N}^{\underline{k}}&=&R_N\cdot F^{k-1}+\sum_{i=0}^{k-2}F^i
R_M F^{k-2-i}\cdot G + \subseteq G\cdot F^k\mbox{, }k\geq 2.
\end{eqnarray}
The $M$-module \begin{equation}N=\left(\LL(F)\cdot
G\right)/\mathcal{R}_{M,N}=G+(F\cdot G)/R_N+(F^2\cdot
G)/\mathcal{R}_{M,N}^{\underline{2}}+\dots\end{equation}\noindent
will be called the quadratic $M$-module generated by $G$ with
relations in $R_N$, and denoted by
\begin{equation}N=\mathscr{M}_M(G,R_N).\end{equation} $N$ has a
natural grading, the corresponding $M^{\grader}$-graded module
will be denoted by $N^{\grader}$,
\begin{equation}\label{relmod}N^{\underline{k}}:=(
G\cdot F^k)/\mathcal{R}_{M,N}^{\underline{k}}\mbox{
}(k=0,1,2,\dots).\end{equation}
\end{defi}
\begin{defi}{\em Quadratic duality.} Let $M=\mathcal{M}(F,R_M)$ be
a quadratic monoid. Define the quadratic dual of $M$ by \be
M^{!.}:=\mathcal{M}(F^*,R_M^{\perp}),\end{equation} \noindent
where $R_M^{\perp}$ is the annihilator of $R_M$ in $(F^2)^*
=(F^*)^2$. Let $N=\mo_M(G,R_N)$ be a quadratic $M$-module. The
quadratic dual of $N$ is the $M^{!.}$-module defined by
\begin{equation}
N^{!.}=\mo_{M^{!.}}(G^*,R_N^{\perp}),
\end{equation}
\noindent where $R_{N}^{\perp}$ is the annihilator of $R_N$ in
$(F\cdot G)^*=F^*\cdot G^*$. The dual $M^{\coop .}:=(M^{!.})^*$ is
a comonoid. Similarly, for a quadratic $M$-module $N$, $N^{\coop
.}:=(N^{!.})^*$ is an $M^{\coop }$-comodule.
\end{defi}
Observe that the species of relations $R_M$ and $R_N$ are
respectively the kernels of $$\nu^{\underline{2}}:F\cdot
F\rightarrow M^{\underline{2}}\mbox{, and
}\tau^{\underline{1}}:G\cdot F\rightarrow N^{\underline{1}}.$$
Then, the orthogonal relations are respectively the images of
$(\nu^{\underline{2}})^*$ and $(\tau^{\underline{1}})^*$.

\begin{ex}
$\LL$ is a quadratic monoid, $\nu:\LL\cdot\LL\rightarrow \LL$
being the concatenation of linear orders. $\LL=\mathcal{M}(X,0)$,
then
$\LL^{!.}=\mathcal{M}(X^*,(X^*)^2)=\LL(X^*)/\langle(X^*)^2\rangle=1+X^*.$
\end{ex}

\begin{ex}The species $E$ is a monoid. There is a unique natural transformation
$\nu:E\cdot E\rightarrow E$ sending each product $U_1\otimes
U_2\in (E\cdot E)[U]$, to $U_1\uplus U_2=U$. $(E,\nu)$ is
quadratic, $E=\mathcal{M}(X,R_E)$, where
\begin{equation}R_E[\{a,b\}]=\KK\{a\otimes b-b\otimes a\}\subset
X^2[\{a,b\}].\end{equation} $R_E^{\perp}=\KK\{a^*\otimes
b^*+b^*\otimes a^*\}$, and clearly its quadratic dual is
$\Lambda^*$. Its monoidal structure given by
$\nu':\Lambda^*\cdot\Lambda^*\rightarrow \Lambda^*$, $\nu'$ being
the concatenation of wedge monomials.
\\Consider the species $E_{j^+}$, for a fixed integer $j$, $j\geq
1$. $E_{j^+}$ is an $E$-module, $\tau:E_{j^+}\cdot E\rightarrow
E_{j^+}$ being the restriction of $\nu$. Similarly, the species
$\Lambda_{j^+}$ is a $\Lambda$-module, with
$\tau':\Lambda_{j^+}\cdot\Lambda\rightarrow \Lambda_{j^+}$ being
the restriction of $\nu'$.
\end{ex}

We have the following proposition.
\begin{prop}\label{hookmodules}
For every $j\geq 1$ we have that $E_{j^+}$ is a quadratic
$E$-module whose dual (a $\Lambda^*$-module) is isomorphic to the
following sum of Specht  representations with hook shapes,
\begin{equation}
E_{j^+}^{!.}=\sum_{k\geq 0}\mathcal{S}_{(j,1^{k})}.
\end{equation}
Moreover,
$(E_{j^+}^{!.})^{\underline{k}}=\mathcal{S}_{(j,1^{k})}$. In a
similar way, $\Lambda_{j^+}$ is a quadratic $\Lambda$-module. Its
dual (an $E^*$-module) is isopmorphic to a sum of Specht
representations with hook shapes, as follows
\begin{equation}\label{hookdual}
\Lambda_{j^+}^{!.}=\sum_{k\geq 0}\mathcal{S}_{(k,1^j)},
\end{equation}
\noindent $(\Lambda_{j^+}^{!.})^{\underline{k}}$ being isomorphic
to $\mathcal{S}_{(k,1^{j})}$.\end{prop}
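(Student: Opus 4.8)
The plan is to compute the quadratic dual of the $E$-module $E_{j^+}$ directly from the definition, identify the underlying graded species degree by degree, and recognize each graded piece as a skew Specht module that turns out to be an ordinary hook Specht module. First I would record the concrete data: $E = \mathcal{M}(X,R_E)$ with $R_E[\{a,b\}] = \KK\{a\otimes b - b\otimes a\}$ as in the preceding example, so that $E^{!.} = \Lambda^*$ and the dual of $E_{j^+}$ is a $\Lambda^*$-module $\mo_{E^{!.}}((E_{j^+})^*, R_N^\perp)$, where $R_N \subseteq E_{j^+}\cdot X$ is the kernel of the module structure map $\tau^{\underline 1}: E_{j^+}\cdot X \to (E_{j^+})^{\underline 1} = E_{(j+1)^+}$. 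Since $E_{j^+}[V]$ is one-dimensional (spanned by $V$ itself) for $|V|\geq j$ and zero otherwise, $(E_{j^+}\cdot X)[U] = \bigoplus_{a\in U} E_{j^+}[U\setminus\{a\}]$ is $|U|$-dimensional when $|U|\geq j+1$, and $\tau^{\underline 1}$ is the surjection onto the $1$-dimensional space $E_{(j+1)^+}[U]$ sending each basis vector to the generator; hence $R_N[U]$ is the codimension-one ``sum-zero'' subspace, exactly the analogue of $R_E$ but with $j$ frozen leaves on one side.

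Next I would dualize: $R_N^\perp[U] \subseteq (E_{j^+})^*\cdot X^*$ is one-dimensional, spanned by the symmetric sum $\sum_{a\in U}(U\setminus\{a\})^*\otimes a^*$, just as $R_E^\perp$ was spanned by $a^*\otimes b^* + b^*\otimes a^*$. So $(E_{j^+})^{!.} = \mo_{\Lambda^*}((E_{j^+})^*, R_N^\perp)$, and by the module analogue of the computation $E^{!.} = \Lambda^*$, the graded piece $((E_{j^+})^{!.})^{\underline k}$ is the quotient of $(E_{j^+})^*\cdot (X^*)^k$ by the submodule relations. Concretely, on a set $U$ with $|U| = j+k$ this is the space of functions attaching an antisymmetric tensor in $k$ of the leaves and a "symmetric'' (order-forgetting) tensor in the remaining $j$, i.e. the image of $\bigwedge^k(\KK\cdot U)\otimes \mathrm{Sym}$-type data. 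The cleanest way to finish is to observe that this is precisely the induced/Specht description of the skew shape $(j+k)/(k)$ wedged appropriately — and that $\mathcal{S}_{(j+k)/(k)}$ in the relevant normalization is the hook $\mathcal{S}_{(j,1^k)}$: a single row of $j$ cells "symmetrized'' glued to a column of $k$ cells "antisymmetrized.'' I would justify the identification $((E_{j^+})^{!.})^{\underline k} \cong \mathcal{S}_{(j,1^k)}$ either by a direct Young-symmetrizer argument (the space carries exactly the action making it the hook module, since $\Lambda$ contributes the column antisymmetry and $E$ contributes the row triviality), or by dimension count combined with the known fact that hooks $(j,1^k)$ are the only irreducibles compatible with "one symmetric block of size $j$, one antisymmetric block of size $k$.''

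For the $\Lambda$-version, $\Lambda_{j^+}$ is a $\Lambda$-module with $\Lambda^{!.} = E^*$ (dual of the first displayed example), and the argument is entirely parallel with the roles of symmetry and antisymmetry exchanged: $R_N$ for $\Lambda_{j^+}$ is the kernel of $\Lambda_{j^+}\cdot X \to \Lambda_{(j+1)^+}$, its orthogonal is one-dimensional spanned by an antisymmetric combination, and the resulting $E^*$-module has graded piece $\mathcal{S}_{(k,1^j)}$ — the transpose hook — because now the $j$ frozen leaves carry antisymmetry (a column of length $j$) and the $k$ module-generators carry symmetry (a row of length $k$). The main obstacle I anticipate is the bookkeeping in the last step: making the passage from the explicit "quotient of $(E_{j^+})^*\cdot(X^*)^k$ by submodule relations'' to the statement "$\cong \mathcal{S}_{(j,1^k)}$ with this precise grading'' fully rigorous, since one must check that the quadratic relations propagate in higher degree exactly to cut the free module down to the hook (no more, no less), i.e. that the module is genuinely quadratic with no hidden higher relations. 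This is where I would either invoke a Gröbner/PBW-type argument for these particular monoids or, more cheaply, match graded dimensions: $\sum_k \dim \mathcal{S}_{(j,1^k)}[n] x^n/n!$ should agree with the dimension series forced by the quadratic presentation, and since the module is generated in degree $0$ with quadratic relations, dimension agreement plus the evident surjection $\mathcal{S}_{(j,1^k)} \twoheadrightarrow ((E_{j^+})^{!.})^{\underline k}$ (or the reverse) forces isomorphism degree by degree.
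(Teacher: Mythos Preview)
Your approach is essentially the paper's: compute $R_N^\perp$ as the one-dimensional span of $\sum_{a\in U}(U\setminus\{a\})^*\otimes a^*$, then identify each graded piece of the quadratic dual with the hook Specht module. The paper finishes this identification more directly than your proposed dimension count or Young-symmetrizer argument, by observing that the orthogonal relation above together with the anticommuting relations inherited from $\Lambda^*$ are precisely the Garnir relations presenting $\mathcal{S}_{(j,1^k)}$; note also that the generator $G$ of the module is $E_j$ (the degree-zero piece), not $E_{j^+}$.
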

\begin{proof}

The generator of $E_{j^+}$ is obviously the species $E_j$. Let $U$
be a set of cardinal $j+1$. By the definition of product of tensor
species, $( E_{j}\cdot X)[U]$ is the vector space with basis $$\{
(U-\{a\})\otimes a|a\in U\}.$$ Since
$\tau^{\underline{1}}((U-\{a\})\otimes a)=U-\{a\}\cup \{a\}=U$,
for every $a\in U$, $R_{E_{j^+}}^{\perp}[U]$ is the
one-dimensional vector space generated by
\begin{equation}\label{garnir}\sum_{a\in U}
 (U-\{a\})^*\otimes a^*.\end{equation}
 The consequence of relation (\ref{garnir}) is the Garnir element of a (dual)
 Specht module of shape $(j,1)$. The figure \ref{Garnir} is a tableau form
 of the consequences of equation (\ref{garnir}) for $j=3$ and $U=\{a,b,c,d\}$.
\begin{figure}
\begin{center}
\scalebox{1.5} 
{
\begin{pspicture}(0,-0.42)(7.381875,0.42)
\definecolor{color157}{rgb}{0.8,0.0,1.0}
\usefont{T1}{ptm}{m}{n}
\rput(0.2309375,-0.1915625){a*}
\usefont{T1}{ptm}{m}{n}
\rput(0.5975,0.21){c}
\psframe[linewidth=0.04,dimen=outer](0.38,0.0)(0.0,-0.38)
\psline[linewidth=0.04cm,linecolor=color157](0.4,0.0)(0.4,0.36)
\psline[linewidth=0.04cm,linecolor=color157](0.8,-0.02)(0.8,0.38)
\psframe[linewidth=0.04,dimen=outer](2.16,0.02)(1.78,-0.36)
\psline[linewidth=0.04cm,linecolor=color157](2.18,0.0)(2.18,0.4)
\psline[linewidth=0.04cm,linecolor=color157](2.62,0.0)(2.62,0.4)
\psframe[linewidth=0.04,dimen=outer](3.96,0.02)(3.58,-0.36)
\psline[linewidth=0.04cm,linecolor=color157](3.98,0.02)(3.98,0.38)
\psline[linewidth=0.04cm,linecolor=color157](4.38,0.0)(4.38,0.4)
\psframe[linewidth=0.04,dimen=outer](5.78,0.02)(5.4,-0.36)
\psline[linewidth=0.04cm,linecolor=color157](5.8,0.02)(5.8,0.38)
\psline[linewidth=0.04cm,linecolor=color157](6.2,0.0)(6.2,0.4)
\usefont{T1}{ptm}{m}{n}
\rput(0.20421875,0.21){b}
\usefont{T1}{ptm}{m}{n}
\rput(2.8045313,0.21){d}
\usefont{T1}{ptm}{m}{n}
\rput(3.81125,-0.19){c*}
\usefont{T1}{ptm}{m}{n}
\rput(1.4614062,0.05){$+$}
\usefont{T1}{ptm}{m}{n}
\rput(6.9714065,0.11){$=0$}
\usefont{T1}{ptm}{m}{n}
\rput(3.2814062,0.05){$+$}
\usefont{T1}{ptm}{m}{n}
\rput(5.081406,0.05){$+$}
\psframe[linewidth=0.04,linecolor=color157,dimen=outer](1.24,0.4)(0.0,-0.04)
\psframe[linewidth=0.04,linecolor=color157,dimen=outer](3.02,0.42)(1.78,-0.02)
\psframe[linewidth=0.04,linecolor=color157,dimen=outer](4.82,0.42)(3.58,-0.02)
\psframe[linewidth=0.04,linecolor=color157,dimen=outer](6.64,0.42)(5.4,-0.02)
\usefont{T1}{ptm}{m}{n}
\rput(2.0078125,-0.19){b*}
\usefont{T1}{ptm}{m}{n}
\rput(4.206406,0.21){b}
\usefont{T1}{ptm}{m}{n}
\rput(6.0065627,0.21){b}
\usefont{T1}{ptm}{m}{n}
\rput(2.388125,0.21){c}
\usefont{T1}{ptm}{m}{n}
\rput(6.3879685,0.21){c}
\usefont{T1}{ptm}{m}{n}
\rput(4.6045313,0.21){d}
\usefont{T1}{ptm}{m}{n}
\rput(5.621406,-0.19){d*}
\usefont{T1}{ptm}{m}{n}
\rput(1.0045313,0.21){d}
\usefont{T1}{ptm}{m}{n}
\rput(1.993125,0.2084375){a}
\usefont{T1}{ptm}{m}{n}
\rput(3.793125,0.2084375){a}
\usefont{T1}{ptm}{m}{n}
\rput(5.593125,0.2084375){a}
\end{pspicture}
}

\end{center}\caption{Garnir relation on
$\mathcal{S}_{(3,1)}[\{a,b,c,d\}]$.}\label{Garnir}
\end{figure}
Then, $(E_j X)^*/R_{E_{j^+}}^{\perp}=\mathcal{S}_{(j,1)}$. For a
set $U$, $|U|=k+j$,  $( E_j\cdot X^{k})[U]$ is the vector space
generated by vectors of the form $ U_2\otimes l$, where
$l=(u_1,u_2,\dots,u_k)$ is a linear order of length $k$ over a set
$U_2$, $|U_2|=k$, $U_1$ is
 a set of cardinal $j$ and $U_1\uplus U_2=U$. Recall that
 $\mathcal{R}_E^{\perp}=\mathcal{R}_{\Lambda^*}$
 is the species of anticommuting relations. The space of relations
 $\mathcal{R}^{\underline{k}}_{\Lambda^*,E^{!.}_{j^+}}[U]$ is
 generated by
 \begin{eqnarray}
  U_2^*\otimes (u_1,\dots,u_i,u_{i+1},\dots,u_k)^*&+&
 U_2^*\otimes (u_1,\dots,u_{i+1},u_i,\dots,u_k)^*\mbox{, $i=1,2\dots,k-1$}\\
 U_2^*\otimes(u_1,\dots,u_k)^*&+&\sum_{u\in
U_2}(U_2\cup \{u_1\}-\{u\})^*\otimes (u,u_2,\dots,u_k)^*
 \end{eqnarray}

It is not difficult to see that $( E_j\cdot
X^{k})^*[U]/(\mathcal{R}^{\underline{k}}_{\Lambda^*,E^{!.}_{j^+}}[U])$
is isomorphic to the Specht module of hook shape
  $(j,1^{k})$. Equivalently, $(E_{j^+}^{!.})^{\underline{k}}$ is
  isomorphic to $\mathcal{S}_{(j,1^k)}$.

Equation (\ref{hookdual}) is proved in a similar way.\end{proof}

  \subsection{\label{Koszul.}Bar and Cobar constructions}
 Let $M=\mathcal{M}(F,R)$ be a quadratic monoid. Let
$M^{\grader}$ be the associated graded monoid.
 Consider the free $\gm$-comonoid generated by $\mil
(M^{\grader})$

\be\label{inverba}\Ba(M):=\LL(\mil
M^{\grader}_+)=(M^{\grader})^{\mil 1}.\eeq For a quadratic
$M$-module $N=\mo_M(G,R_N)$ and $N^{\grader}$ its associated
graded $M^{\grader}$-module, define the $\gm$-species $\Ba(M,N)$
by \be\label{inverbamod}\Ba(M,N):= N^{\grader}\cdot\LL(\mil
M^{\grader}_+)=N^{\grader}(M^{\grader})^{\mil 1}.\eeq

From equations (\ref{inverba}) and (\ref{inverbamod}) we get
\be\label{grado}
\Ba(M)^{\underline{k}}=\sum_{r=1}^{\infty}\left(\sum_{\j_1+j_2+\dots+j_r=k+r}M_+^{\underline{j_1}}
M_+^{\underline{j_2}}\dots M_+^{\underline{j_r}}\right),\mbox{
}k\geq 0,\eeq \noindent and \be
\Ba(M,N)^{\underline{k}}=\sum_{r=1}^{\infty}\left(\sum_{\j_1+j_2+\dots+j_r+j_{r+1}=k+r}
N^{\underline{j_1}} M_+^{\underline{j_2}}\dots
M_+^{\underline{j_r}}M_+^{\underline{j_{r+1}}}\right)\mbox{,
}k\geq 0.\eeq Hence $\Ba(M)^{\underline{0}}=\LL(F)$ and
$\Ba(M,N)^{\underline{0}}= G\cdot\LL(F)$.

 We now transform $\Ba(M)$ and $\Ba(M,N)$ into $\dg$-tensor species, by
providing them with differentials. For a fixed finite set $U$ let
us define $d_U:\Ba(M)[U]\rightarrow \Ba(M)[U]$ and
$d_U^{N}:\Ba(M,N)\rightarrow \Ba(M,N)$ by
\begin{equation}\label{Badif} d_U(m_{1}\otimes
m_{2}\otimes\dots\otimes m_{r})=\sum_{i=1}^{r-1}
(-1)^{i-1}m_{1}\otimes m_{2}\otimes\dots\otimes \nu(m_{i}\otimes
m_{i+1})\otimes\dots\otimes m_{r}\end{equation}\begin{equation}
d_U^N(n_1\otimes m_2\otimes\dots\otimes m_{r+1})=\tau(n_1\otimes
m_2)\otimes\dots \otimes m_{r}\otimes m_{r+1}-n_1\otimes
d_U(m_{2}\otimes\dots\otimes m_{r+1}),
\end{equation} \noindent
where $m_{1}\otimes m_{2}\otimes\dots\otimes m_{r}$, is a generic
decomposable element of $\Ba(M)^{\underline{k}}[U]$ for some
$k\geq 0$, and similarly with respect to $n_1\otimes m_2\otimes
\dots\otimes m_{r+1}. $\\ The cobar $\dg$-tensor species $\Cob(M)$
and  $\Cob(M,N)$ are defined to be respectively the dual of
$\Ba(M)$ and $\Ba(M,N)$;
\begin{equation}
\Cob(M)=\Ba(M)^*=(\LL(\mir(M^{\grader})^*),d^*)=(((M^{\grader})^*)^{\mir
1},d^*)\end{equation}\begin{equation}
\Cob(M,N)=\Ba(M,N)^*=((N^{\grader})^*\cdot\LL(\mir(M^{\grader})^*)
,(d^N)^*)=((N^{\grader})^*(\cdot(M^{\grader})^*)^{\mir
1},(d^N)^*)\end{equation}

The generating functions of $\Ba(M)$, $\Ba(M,N)$, $\Cob(M)$ and
$\Cob(M,N)$ are
\begin{eqnarray}\label{genba}
\Ba(M)(x)=\Cob(M)(x)&=&\left(M^{\grader}(x)\right)^{-1}\\
\label{chba}\Ch\Cob(M)(\xx)= \Ch\Ba(M)(\xx)&=&\left(\Ch
M^{\grader}(\xx)\right)^{-1}\\ \label{genbamod}
\Ba(M,N)(x)=\Cob(M,N)(x)&=&\frac{N^{\grader}(x)}{M^{\grader}(x)}\\
\label{chbamod} \Ch\Ba(M,N)(\xx)=\Ch\Cob(M,N)(\xx)&=&\frac{\Ch
N^{\grader}(\xx)}{\Ch M^{\grader}(\xx)}.
\end{eqnarray}

From equation (\ref{grado}), $\Cob(M)^{\underline{-1}}=\sum_{n\geq
2}\sum_{i=0}^{n-2}(F^*)^{i}(M^{\underline{2}})^*(F^*)^{n-i-2}$,
and $\Cob(M)^{\underline{0}}=\LL(F^*)$. From that, the image of
$(d)^*_{-1}$ is equal to \be\mathrm{Im}(d)^*_{-1}=\sum_{n\geq
2}\sum_{i=0}^{n-2}(F^*)^{i}\nu^*(M^{\underline{2}})^*(F^*)^{n-i-2}=\sum_{n\geq
2}\sum_{i=0}^{n-2}(F^*)^{i}R^{\perp}(F^*)^{n-i-2}=\langle
R^{\perp}\rangle\subseteq \LL(F^*)\eeq \noindent and we have
$H^{0} (\Cob(M))=\LL(F^*)/\langle R^{\perp}\rangle=M^{!.}$. In a
similar way, for $N$ as above, we get  $H^0(\Cob(M,N))=N^{!.}$. By
duality we have $H^0(\Ba(M))=(M^{!.})^*=M^{\coop .}$,
$H^0(\Ba(M,N))=(N^{!.})^*=N^{\coop .}$.

Observe that the bar and cobar complexes are the sum of
subcomplexes $$\Ba(M)=\sum_k\Ba^{(k)}(M)\mbox{,  }
\Ba(M,N)=\sum_k\Ba^{(k)}(M,N),$$ $$\Cob(M)=\sum_k
\Cob^{(k)}(M)\mbox{,  } \Cob(M,N)=\sum_{k}\Cob^{(k)}(M,N).$$ For
example, the vectors in $\Ba^{(k)}(M)$ (respectively
$\Ba^{(k)}(M,N)$) are those of degree $k$ before the shifting:
\begin{eqnarray} 0 &\rightarrow & F^k \stackrel{d}{\rightarrow}
\sum_{i=0}^{k-2}F^i
M^{\underline{2}}F^{k-i-2}\stackrel{d}{\rightarrow}\dots\stackrel{d}{\rightarrow}
M^{\underline{k}}\rightarrow 0,\\
 0
&\rightarrow &  G\cdot F^k \stackrel{d}{\rightarrow}
N^{\underline{1}}F^{k-1}+\sum_{i=0}^{k-2}G\cdot F^i
M^{\underline{2}}F^{k-i-2}\stackrel{d}{\rightarrow}\dots\stackrel{d}{\rightarrow}
N^{\underline{k}}\rightarrow 0.
\end{eqnarray} Clearly
$H^0\Ba^{(k)}(M)=(M^{\coop .})^{\underline{k}}$ and
$H^0\Ba^{(k)}(M,N)=(N^{\coop .})^{\underline{k}}.$
\begin{defi}{\em Koszul Monoids and modules.} A quadratic monoid $M$
is said to be {\em Koszul} if $H^{i}(\Ba(M))=0,\; i>0,$
(equivalently, if $H^{i}(\Cob(M))=0,\; i<0$). In other words, if
the $\gm$-tensor species $\Ho\Ba(M)$ and $\Ho\Cob(M)$ are both
concentrated in degree zero. For a Koszul monoid $M$, a quadratic
$M$-module $N$ is said to be Koszul if $\Ho\Ba(M,N)$ is
concentrated in degree zero (equivalently, if $\Ho\Cob (M,N)$ is
concentrated in degree zero).
\end{defi}
\begin{prop}\label{genk}Let $M$ be a Koszul monoid and $N$ a Koszul $M$-module. Then
\begin{eqnarray}
M^{!.}(x)&=&(M^{\grader}(x))^{-1}\\\Ch M^{!.}(\xx)&=&(\Ch
M^{\grader}(\xx))^{-1}\\
N^{!.}(x)&=&\frac{N^{\grader}(x)}{M^{\grader}(x)}\\
\Ch N^{!.}(\xx)&=&\frac{\Ch N^{\grader}(\xx)}{\Ch
M^{\grader}(\xx)}
\end{eqnarray}
\end{prop}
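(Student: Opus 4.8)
The plan is to exploit the fact that the bar and cobar complexes decompose as direct sums of finite subcomplexes $\Ba^{(k)}(M)$ and $\Cob^{(k)}(M)$ (and likewise in the module case), and that taking cohomology via the functor $\Ho$ preserves all the operations on species together with the generating functions (equations (\ref{hompre}) and (\ref{preservation})). Since $M$ is Koszul, $\Ho\Ba(M)$ is concentrated in degree zero, so $\Ho\Ba(M)=H^0\Ba(M)=M^{\coop .}=(M^{!.})^*$. Dualizing at the level of $\gm$-tensor species multiplies the generating function $F(x)$ by nothing (dimensions are unchanged) and sends $\Ch F(\xx)$ to itself on each component, so $M^{!.}(x)=M^{\coop .}(x)=\Ho\Ba(M)(x)$ and similarly for the Frobenius character.

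First I would record that, because $\Ho\Ba(M)$ is concentrated in degree zero, the Euler--Poincar\'e principle gives
\be
\Ho\Ba(M)(x)=\Ba(M)(x)\mbox{, and }\Ch\Ho\Ba(M)(\xx)=\Ch\Ba(M)(\xx),
\eeq
which is exactly equation (\ref{preservation}) applied to the $\dg$-tensor species $\Ba(M)$. Then I would invoke the already-computed generating functions of the bar construction, equations (\ref{genba}) and (\ref{chba}), namely $\Ba(M)(x)=(M^{\grader}(x))^{-1}$ and $\Ch\Ba(M)(\xx)=(\Ch M^{\grader}(\xx))^{-1}$. Combining the two displays and the identification $\Ho\Ba(M)=M^{\coop .}$ together with $M^{\coop .}(x)=M^{!.}(x)$, $\Ch M^{\coop .}(\xx)=\Ch M^{!.}(\xx)$ (dualization preserves dimensions and traces), yields the first two identities
\be
M^{!.}(x)=(M^{\grader}(x))^{-1}\mbox{, }\Ch M^{!.}(\xx)=(\Ch M^{\grader}(\xx))^{-1}.
\eeq

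For the module identities I would argue in precisely the same way with $\Ba(M,N)$ in place of $\Ba(M)$: since $N$ is a Koszul $M$-module, $\Ho\Ba(M,N)$ is concentrated in degree zero and equals $H^0\Ba(M,N)=N^{\coop .}=(N^{!.})^*$; the Euler characteristic identity (\ref{preservation}) then gives $N^{!.}(x)=\Ba(M,N)(x)$ and $\Ch N^{!.}(\xx)=\Ch\Ba(M,N)(\xx)$, and equations (\ref{genbamod}) and (\ref{chbamod}) evaluate the right-hand sides as $N^{\grader}(x)/M^{\grader}(x)$ and $\Ch N^{\grader}(\xx)/\Ch M^{\grader}(\xx)$ respectively.

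The only genuinely substantive point, rather than a routine bookkeeping step, is the passage from "$\Ho\Ba(M)$ concentrated in degree zero" to the numerical identity between $\Ho\Ba(M)(x)$ and $\Ba(M)(x)$: this is the statement that the Euler--Poincar\'e characteristic of a complex agrees with that of its cohomology, componentwise in the $\BB$-grading and in the symmetric-group-character sense. That fact is already isolated in the excerpt as the identities $\chi(F^{\grader}[n])=\chi(HF^{\grader}[n])$ and $\mathrm{tr}F^{\grader}[\alpha]=\mathrm{tr}HF^{\grader}[\alpha]$, hence equation (\ref{preservation}); with it in hand the proof is a two-line chain of equalities per formula. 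I do not expect any real obstacle; one should only be mildly careful that the sign conventions in the alternating sums $F^{\grader}(x)=\sum_k(-1)^kF^{\underline{k}}(x)$ are consistent between $\Ba(M)$, its dual $\Cob(M)$, and $M^{\coop .}$, but since the cohomology is concentrated in a single degree (degree zero) all signs are $+1$ and the issue evaporates.
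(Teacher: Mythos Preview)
Your proof is correct and follows exactly the same route as the paper's own proof, which is the one-line remark that the result follows from equations (\ref{genba})--(\ref{chbamod}) together with (\ref{preservation}). You have simply unpacked that line: Koszulness forces $\Ho\Ba(M)$ (resp.\ $\Ho\Ba(M,N)$) to be concentrated in degree zero and hence equal to $M^{\coop .}$ (resp.\ $N^{\coop .}$), equation (\ref{preservation}) identifies its generating functions with those of $\Ba(M)$ (resp.\ $\Ba(M,N)$), and (\ref{genba})--(\ref{chbamod}) supply the right-hand sides; the final step $M^{\coop .}(x)=M^{!.}(x)$ and $\Ch M^{\coop .}=\Ch M^{!.}$ is the invariance of dimensions and traces under dualization, which is harmless here.
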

\begin{proof}The result follows from equations (\ref{genba})-(\ref{chbamod}),
and (\ref{preservation}).
\end{proof}

The previous proposition gives the following necessary conditions
for a quadratic monoid to be Koszul.
\begin{prop}Let $M$ be a quadratic monoid. If $M$ is Koszul then
the coefficients of the exponential generating function
$(M^{\grader}(x))^{-1}$ are non-negative integers. More generally,
the coefficient of the symmetric function $(\Ch
M^{\grader}(\xx))^{-1}$, when expanded in terms of Schur
functions, are non-negative integers.
\end{prop}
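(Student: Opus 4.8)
The plan is to derive this directly from the preceding Proposition \ref{genk}, which already tells us that when $M$ is Koszul, the dimension generating function of $M^{!.}$ equals $(M^{\grader}(x))^{-1}$ and the Frobenius character of $M^{!.}$ equals $(\Ch M^{\grader}(\xx))^{-1}$. The key observation is that $M^{!.}$, being an actual tensor species, has generating functions with manifestly non-negative coefficients in the appropriate bases: by the definition in equation (\ref{gen}), $M^{!.}(x)=\sum_{n}\mathrm{dim}(M^{!.}[n])x^n/n!$, and each $\mathrm{dim}(M^{!.}[n])$ is a non-negative integer since it is the dimension of a vector space. Likewise, by the Frobenius characteristic theory recalled after equation (\ref{Macd}), $\Ch M^{!.}(\xx)$ is the Frobenius characteristic of a genuine sequence of symmetric group representations $\{M^{!.}[n]\}$, hence expands in the Schur basis with coefficients equal to the multiplicities of the irreducible representations $\mathcal{S}_{\lambda}$ in $M^{!.}[n]$, which are non-negative integers.

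So the proof is essentially two lines: First I would invoke Proposition \ref{genk} to write $(M^{\grader}(x))^{-1}=M^{!.}(x)$ and $(\Ch M^{\grader}(\xx))^{-1}=\Ch M^{!.}(\xx)$. Then I would observe that the right-hand sides are, respectively, the exponential generating function of the dimension sequence of the tensor species $M^{!.}$ and the Frobenius character of $M^{!.}$; by the general facts about tensor species recalled in Section 2, both have the claimed integrality and positivity properties — the coefficients $\mathrm{dim}(M^{!.}[n])$ are non-negative integers, and the Schur-expansion coefficients of $\Ch M^{!.}(\xx)$ are the representation-theoretic multiplicities $\langle M^{!.}[n], \mathcal{S}_{\lambda}\rangle$, again non-negative integers.

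I do not anticipate a genuine obstacle here; the statement is a formal consequence of Proposition \ref{genk} together with the bookkeeping that a quadratic dual $M^{!.}=\mathcal{M}(F^*,R_M^{\perp})$ is by construction a tensor species (a quotient of $\LL(F^*)$ in the category $\vect^{\BB}$), so all its homogeneous pieces are honest finite-dimensional representations of symmetric groups. The only point worth stating carefully is the translation between ``coefficient of $x^n/n!$'' and ``dimension'', and between ``coefficient of the Schur function $s_{\lambda}$'' and ``multiplicity of $\mathcal{S}_{\lambda}$'', both of which are standard (see \cite{Joyal1, Macdonald}). One could add the remark that this gives a practical obstruction: if one computes $(\Ch M^{\grader}(\xx))^{-1}$ for a candidate quadratic monoid and finds a negative Schur coefficient, then $M$ cannot be Koszul — but that is already implicit in the statement as phrased.
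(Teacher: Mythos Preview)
Your proposal is correct and matches the paper's approach exactly: the paper does not even write out a separate proof for this proposition, merely introducing it with the sentence ``The previous proposition gives the following necessary conditions for a quadratic monoid to be Koszul,'' so the intended argument is precisely the one you give --- invoke Proposition~\ref{genk} to identify $(M^{\grader}(x))^{-1}$ and $(\Ch M^{\grader}(\xx))^{-1}$ with the generating function and Frobenius character of the tensor species $M^{!.}$, and then read off non-negativity from the fact that these are dimensions and irreducible multiplicities.
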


\subsection {Koszul analytic algebras}
 We study here the connection of Koszul monoids in species with
 classical Koszul algebras by means of the Schur correspondence between tensor species
 and analytic functors. This correspondence is an
 equivalence of categories when the chacteristic of the field $\K$ is zero.

  Let $\widetilde{F}():\vect\rightarrow
\mathcal{V}$ be the analytic functor associated to a species
$F:\BB\rightarrow \mathcal{V}$ ($\mathcal{V}=\vect,\, \mgr,\,
\mbox{or }$ $\dgr$); \be
\widetilde{F}(V)=\bigoplus_{n=0}^{\infty}\left( F[n]\otimes
V^{\otimes n}\right)_{S_n}.\eeq \noindent It is well known that
the $\widetilde{F.G}(V)=\widetilde{F}(V)\otimes\widetilde{G}(V)$.
Then, a quadratic monoid $(M,\nu)$ defines an analytic functor
$(\widetilde{M},\tilde{\nu})$, with a structure of graded algebra
when evaluated on a finite dimensional vector space $V$, \be
\widetilde{\nu}_{\scriptscriptstyle{V}}:\widetilde{M}(V)\otimes\widetilde{M}(V)\rightarrow
\widetilde{M}(V).\eeq
 Assuming that $\widetilde{M^{\underline{k}}}(V)$ is finite dimensional for every $k\geq 0$,
 its Hilbert series is defined by
 \be \widetilde
{M}(V,t)=\sum_{k=0}^{\infty}\mathrm{dim}\widetilde{M^{\underline{k}}}(V)
t^k.\eeq \noindent  Since
$\mathrm{dim}\widetilde{M^{\underline{k}}}(V)=\Ch
M^{\underline{k}}(\overbrace{1,1,\dots,1}^n,0,0,\dots)$,
$n=\mathrm{dim}V$ (see \cite{Macdonald}), we have

\be \widetilde{M}(V,t)=\sum_{k=0}^{\infty}\Ch
M^{\underline{k}}(\overbrace{1,1,\dots,1}^n,0,0,\dots)t^k. \eeq

\noindent

A species $F$ is called {\em polynomial} if the number of integers
$k$, such that $F[k]\neq 0$, is finite.
 In this section we assume that all the quadratic monoids and modules are generated by polynomial species.
 The reader may verify that in this case, for every finite dimensional space $V$, the corresponding
 algebra $(\widetilde{M}(V),\tilde{\nu}_V)$ is quadratic with
 generators in $W=\widetilde{F}(V)$ (a finite dimensional vector
 space), and relations in $\widetilde{R}_M(V)\subseteq W\otimes
 W=\widetilde{F}(V)\otimes\widetilde{F}(V)$. In a similar way,
 for every quadratic $M$-module $N=\mo_M(G,R_M)$, the corresponding $\widetilde{M}(V)$-module
 $\widetilde{N}(V)$ is quadratic with generators
 in $Q=\widetilde{G}(V)$, and relations in $R_N(V)\subseteq
 \widetilde{F}(V)\otimes \widetilde {G}(V)=W\otimes Q$.

The bar and cobar constructions in section \ref{Koszul.} become
the classical bar and cobar constructions for the algebra
$(\widetilde{M}(V),\tilde{\nu}_{\scriptscriptstyle{V}})$ and
module $(\widetilde{N}(V),\tilde{\tau}_V)$. Similarly with respect
to the definitions of Koszul algebra, Koszul module, and quadratic
duals (see \cite{Priddy, quadal}).

We invoke Schur classical equivalence between representations of
the symmetric groups and homogeneous polynomial representations of
the general linear group (see \cite{Macdonald}). It is well known
that this equivalence can be restated by saying that the category
of tensor species and the category of analytic functors are
equivalent (see \cite{Joyal1}). So, we can go backwards in this
construction and obtain the following proposition.
\begin{prop} \label{translation} The monoid $(M,\nu)$ is quadratic
(resp. Koszul) if and only if
$(\widetilde{M}(V),\tilde{\nu}_{\scriptscriptstyle{V}})$ is a
quadratic (resp. Koszul) algebra, for every finite dimensional
vector space $V$. In a similar way, an $M$-module $(N,\tau)$ is
quadratic (resp. Koszul) if and only if
$(\widetilde{N}(V),\tilde{\tau}_V)$ is a quadratic (resp. Koszul)
$\widetilde{M}(V)$-module for every finite dimensional vector
space $V$.
\end{prop}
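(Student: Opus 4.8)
The strategy is to reduce the statement to the fact that the Schur–Weyl correspondence $F \mapsto \widetilde{F}$ is an equivalence of symmetric monoidal categories between tensor species and analytic (polynomial) functors, and that this equivalence commutes with all the constructions appearing in the definition of ``quadratic'' and ``Koszul''. First I would recall (this is classical, see \cite{Macdonald, Joyal1}) that over a field $\K$ of characteristic zero the assignment $F \mapsto \widetilde{F}(-)$ is fully faithful and monoidal, so that $\widetilde{F \cdot G}(V) = \widetilde{F}(V) \otimes \widetilde{G}(V)$ naturally, and a sequence $0 \to A \to B \to C \to 0$ of tensor species is exact if and only if $0 \to \widetilde{A}(V) \to \widetilde{B}(V) \to \widetilde{C}(V) \to 0$ is exact for every (equivalently, for every sufficiently large) finite dimensional $V$; the point here is that $F[n]$ is recovered from the multilinear part of $\widetilde{F}$, so nothing is lost. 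In particular the free monoid $\LL(F)$ goes to the tensor algebra $T(\widetilde{F}(V))$, the ideal $\R_M = \langle R \rangle$ goes to the two-sided ideal $\langle \widetilde{R}_M(V)\rangle$, and hence $\widetilde{M}(V) = T(W)/\langle \widetilde{R}_M(V)\rangle$ with $W = \widetilde{F}(V)$ and relations $\widetilde{R}_M(V) \subseteq W \otimes W$: this is exactly a quadratic algebra in the sense of Priddy. The same argument applied to $G \cdot \LL(F)$ and the submodule $\R_{M,N}$ shows $\widetilde{N}(V)$ is a quadratic $\widetilde{M}(V)$-module. This gives the ``quadratic iff quadratic'' halves, together with the (trivial but necessary) remark that if $\widetilde{M}(V)$ is quadratic for all $V$ then $M$ must have been quadratic to begin with, since the species-level relations are detected by the multilinear components.

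For the ``Koszul iff Koszul'' part, the key observation is that the bar construction $\Ba(M)$ defined in Section \ref{Koszul.} is built entirely from the operations $\cdot$, $\LL$, the shift $s^{-1}$, and the differential $d$ coming from $\nu$ via \eqref{Badif}; since $\widetilde{(-)}$ is monoidal and additive, and since shifting and the differential are preserved verbatim (the sign conventions in $\mgr$ and $\dgr$ match those used to define the analytic functor), we get $\widetilde{\Ba(M)}(V) = \mathrm{Bar}(\widetilde{M}(V))$, the classical bar complex of the quadratic algebra $\widetilde{M}(V)$, as a $\dg$-vector space — and likewise $\widetilde{\Ba(M,N)}(V)$ is the classical bar complex of the module. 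Because the cohomology functor $H$ commutes with the analytic-functor construction (this is again the exactness-detection property: $H^i$ of a complex of tensor species evaluated at $V$ equals $H^i$ of the evaluated complex, as $\widetilde{(-)}$ is exact), we obtain $H^i(\Ba(M))\widetilde{\,}(V) = H^i(\mathrm{Bar}(\widetilde{M}(V)))$. Therefore $\Ho\Ba(M)$ is concentrated in degree zero if and only if $H^i(\mathrm{Bar}(\widetilde{M}(V))) = 0$ for $i > 0$ and every finite dimensional $V$, which is precisely the definition of $\widetilde{M}(V)$ being a Koszul algebra; the module case is identical with $\Ba(M,N)$ in place of $\Ba(M)$.

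The main obstacle — and the one place where the polynomiality hypothesis (adopted at the start of the subsection) is genuinely used — is making precise that the evaluation $F \mapsto \widetilde{F}(V)$ reflects exactness and does not collapse information for finite dimensional $V$. For a polynomial species $F$ concentrated in cardinalities $\le d$, taking $V$ of dimension $\ge d$ already recovers $F[n]$ for all relevant $n$ (as the $n$-homogeneous part of $\widetilde{F}(V)$ is $GL(V)$-isomorphic to a sum of Schur functors $\Sy_\lambda(V)$ indexed by $\lambda \vdash n$, and these are nonzero and linearly independent once $\dim V \ge n$), so exactness of a bounded complex of such species is equivalent to exactness after evaluating at any single large enough $V$, hence at all $V$. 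Once this faithful-exactness statement is in hand, everything else is formal bookkeeping: match the species-level free/quadratic constructions with their linear-algebra counterparts, match the two bar complexes degree by degree, and invoke Schur's equivalence to traverse the biconditional in both directions. I would also note explicitly that the finite-dimensionality assumption on each $\widetilde{M^{\underline{k}}}(V)$, needed merely for the Hilbert series to make sense, is automatic under the standing polynomiality hypothesis.
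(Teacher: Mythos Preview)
Your proof is correct and follows essentially the same approach as the paper, which simply invokes Schur's classical equivalence between tensor species and analytic functors (citing \cite{Macdonald, Joyal1}) and the observation that the species-level bar/cobar constructions become the classical ones upon applying $\widetilde{(-)}$. You have spelled out in considerably more detail what the paper leaves implicit --- in particular the role of polynomiality in ensuring that evaluation at large enough $V$ reflects exactness --- but the underlying strategy is identical.
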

Quadratic algebras $A$ and $A^!$ are Koszul simultaneously (see
\cite{quadal} Corollary 3.3.). By the previous proposition we
obtain the following.
\begin{prop} Let $M$ be a quadratic monoid. Then, $M$ is Koszul
if and only if its quadratic dual $M^!$ is so.
\end{prop}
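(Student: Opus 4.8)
The plan is to reduce the statement for species-level quadratic monoids to the corresponding classical fact about quadratic associative algebras, which has already been set up in the excerpt. First I would recall that by Proposition \ref{translation}, a quadratic monoid $M$ in $(\vect^{\BB},\cdot\,,1)$ is Koszul if and only if the analytic algebra $(\widetilde{M}(V),\tilde{\nu}_{\scriptscriptstyle{V}})$ is a Koszul algebra for every finite dimensional vector space $V$. So the claim $M$ Koszul $\iff M^{!.}$ Koszul will follow once I identify the analytic algebra attached to the quadratic dual monoid with the quadratic dual of the analytic algebra attached to $M$.

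Concretely, the key step is the identification $\widetilde{M^{!.}}(V) \cong \bigl(\widetilde{M}(V)\bigr)^{!}$ as quadratic algebras, for every finite dimensional $V$. To see this, note that $M^{!.} = \mathcal{M}(F^*,R_M^{\perp})$, and the analytic functor of $F^*$ evaluated on $V$ is canonically $\bigl(\widetilde{F}(V)\bigr)^*$ (using that $\widetilde{F}$ sends $F$ concentrated in one degree — after restricting to the polynomial case — to a suitable $\mathrm{GL}(V)$-representation, and the Schur correspondence is compatible with duals since $\mathrm{char}\,\K = 0$; also $\widetilde{F\cdot G}(V) = \widetilde{F}(V)\otimes\widetilde{G}(V)$ so $\widetilde{(F^*)^2}(V) = \bigl(\widetilde{F}(V)^*\bigr)^{\otimes 2} = \bigl(\widetilde{F}(V)^{\otimes 2}\bigr)^*$). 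Under this identification, the orthogonal relations $R_M^{\perp} \subseteq (F^*)^2$ correspond exactly to $\widetilde{R_M}(V)^{\perp} \subseteq \bigl(\widetilde{F}(V)^{\otimes 2}\bigr)^*$, because taking annihilators commutes with the exact functor $\widetilde{(-)}(V)$ and with the duality coming from Schur equivalence. Hence $\widetilde{M^{!.}}(V)$ is the quadratic algebra with generators $\widetilde{F}(V)^*$ and relations $\widetilde{R_M}(V)^{\perp}$, which is precisely $\bigl(\widetilde{M}(V)\bigr)^{!}$ in the sense of Priddy.

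Granting that identification, the proof is a short chain of equivalences: $M$ is Koszul $\iff$ $\widetilde{M}(V)$ is Koszul for all $V$ (Proposition \ref{translation}) $\iff$ $\bigl(\widetilde{M}(V)\bigr)^{!}$ is Koszul for all $V$ (by the cited classical fact, \cite{quadal} Corollary 3.3) $\iff$ $\widetilde{M^{!.}}(V)$ is Koszul for all $V$ (by the identification of the previous paragraph) $\iff$ $M^{!.}$ is Koszul (Proposition \ref{translation} again, now applied to the monoid $M^{!.}$). One should be slightly careful about the distinction between $M^{!.}$ and $M^{!} = (M^{\coop.})$-type conventions and about the double-dual identification $(M^{!.})^{!.} \cong M$, but in characteristic zero with finite dimensional components this is automatic, so it causes no trouble.

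The main obstacle is really the bookkeeping in the identification $\widetilde{M^{!.}}(V) \cong \bigl(\widetilde{M}(V)\bigr)^{!}$: one must check that the Schur correspondence intertwines the dual species functor with vector space duality of $\mathrm{GL}(V)$-representations, and that this intertwining is compatible with the pairing $(F^2)^* = (F^*)^2$ used to define $R_M^{\perp}$, so that annihilators on both sides match up. All of this is formal once the polynomiality hypothesis guarantees everything in sight is finite dimensional and $\mathrm{char}\,\K = 0$ guarantees the correspondence is an equivalence preserving all the relevant structure; I would state it as a lemma and verify it by unwinding the definition of $\widetilde{(-)}(V)$ on $F^*$ and on $F^2$. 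I do not expect any genuine difficulty beyond this, since the heavy lifting — the classical $A$ Koszul $\iff A^{!}$ Koszul and the translation Proposition \ref{translation} — is already available.
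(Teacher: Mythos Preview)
Your approach is correct and matches the paper's own proof, which is even terser: it simply cites the classical fact that $A$ and $A^{!}$ are Koszul simultaneously (\cite{quadal} Corollary 3.3) and invokes Proposition \ref{translation}. The identification $\widetilde{M^{!.}}(V) \cong \bigl(\widetilde{M}(V)\bigr)^{!}$ that you spell out is left entirely implicit in the paper, so your version is if anything more complete.
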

\begin{defi} For a monoid $(M,\nu)$ we call the pair
$(\widetilde{M},\tilde{\nu})$ an {\em analytic algebra}. It will
be called quadratic (resp. Koszul) if the corresponding monoid
$(M,\nu)$ is quadratic (resp. Koszul).
\end{defi}

\begin{ex}
Consider the monoids $\LL$,  $E$ and $\Lambda$. The corresponding
analytic algebras are the tensor $(\TT)$, symmetric $(\Sy)$ and
exterior $(\bigwedge)$ algebras. The three of them are Koszul.
Then $\LL$, $E$ and $\Lambda$ are Koszul monoids. The Koszul dual
of $E$ is $\Lambda^*$, and the Koszul dual of $\LL$ is
$1^*+X^*=1+X$.
\end{ex}

\begin{defi}
Let $M^{\grader}$ be  a positively graded monoid and $k$ a
positive integer. We define the Veronese power $M_{(k)}$ as the
tensor species $M_{(k)}=\sum_{j=0}^{\infty}M^{\underline{kj}}$.
$M_{(k)}$ inherits the monoidal structure from $M^{\grader}$ and
is grading is given by
$M_{(k)}^{\underline{r}}=M^{\underline{kr}}$. Let $N^{\grader}$ be
a graded $M^{\grader}$-module. For every $k\geq 0$ we define the
truncated $M$-module $N^{[k]}$ as
$(N^{[k]})^{\underline{j}}=N^{\underline{k+j}}$.
\end{defi}
\begin{prop}\label{Veronese}Let $M$ and $N$ be as above. If the monoid
$M$ is quadratic (Koszul),
 then $M_{(k)}$ is quadratic (respectively Koszul). Similarly with
 respect to $N$, if the $M$-module $N$ is quadratic (Koszul), then
 $N^{[k]}$ is quadratic (Koszul) for each $k\geq 0$.
\end{prop}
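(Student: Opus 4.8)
The plan is to reduce the statement about Veronese powers and truncations of monoids/modules in species to the corresponding classical statements about graded algebras and graded modules, via the Schur correspondence established in Proposition~\ref{translation}. The point is that the constructions $M_{(k)}$ and $N^{[k]}$ commute with the analytic functor: one checks directly from $\widetilde{F\cdot G}(V)=\widetilde F(V)\otimes\widetilde G(V)$ and from the definition of the grading that $\widetilde{M_{(k)}}(V)=\widetilde M(V)_{(k)}$ (the classical $k$-th Veronese subalgebra) and $\widetilde{N^{[k]}}(V)=\widetilde N(V)^{[k]}$ (the classical $k$-th truncation of the graded module) for every finite dimensional $V$. Then quadraticity and Koszulness of $M_{(k)}$, resp. $N^{[k]}$, become, by Proposition~\ref{translation}, the assertions that the Veronese subalgebra of a quadratic (Koszul) algebra is quadratic (Koszul), and that the truncation of a quadratic (Koszul) module is quadratic (Koszul) — both of which are classical facts about Koszul algebras (see e.g.\ \cite{quadal}).

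Concretely, I would proceed in three steps. First, verify the identity $\widetilde{M_{(k)}}(V)=\widetilde M(V)_{(k)}$ as graded algebras: the underlying vector space in degree $r$ is $\widetilde{M^{\underline{kr}}}(V)$ on both sides, and the multiplication $\tilde\nu_V$ restricted to Veronese pieces is exactly the Veronese multiplication, since $M_{(k)}$ inherits $\nu$ from $M^{\grader}$; the analogous identity $\widetilde{N^{[k]}}(V)=\widetilde N(V)^{[k]}$ for modules is checked the same way using $\tilde\tau_V$. Second, invoke the classical results: if $A$ is a quadratic (Koszul) algebra then each Veronese subalgebra $A_{(k)}$ is quadratic (Koszul), and if $B$ is a quadratic (Koszul) $A$-module then each truncation $B^{[k]}$ is a quadratic (Koszul) $A_{(k)}$-module. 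Third, run Proposition~\ref{translation} backwards: since $\widetilde{M_{(k)}}(V)$ is quadratic (Koszul) for every finite dimensional $V$, the monoid $M_{(k)}$ is quadratic (Koszul), and likewise for $N^{[k]}$.

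The main obstacle I anticipate is not conceptual but bookkeeping: one must make sure the analytic-functor identifications respect \emph{all} the structure simultaneously — the grading, the algebra (resp. module) multiplication, and, crucially, that the \emph{generators} and \emph{relations} of the Veronese/truncation match on both sides so that ``quadratic'' really does correspond to ``quadratic.'' In particular, the Veronese of a quadratic algebra is quadratic with generators in degree $k$ and relations coming from the degree-$2k$ relations, and one needs the Schur equivalence to be compatible with taking such sub- and quotient-structures; this is where invoking Proposition~\ref{translation} (which already packages the quadratic/Koszul dictionary) does the real work, and the only genuine check is the functorial identity of the first step. An alternative, purely species-theoretic route would be to build the bar complexes $\Ba(M_{(k)})$ and $\Ba(M_{(k)},N^{[k]})$ directly and compare their cohomology with that of $\Ba(M)$ and $\Ba(M,N)$, but this essentially re-proves the classical Veronese/Koszul statements inside species and is strictly more laborious, so I would only fall back on it if the characteristic-zero Schur equivalence were for some reason unavailable here (it is not — the excerpt assumes $\mathrm{char}\,\K=0$ throughout this section).
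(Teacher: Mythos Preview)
Your proposal is correct and follows essentially the same route as the paper: cite the classical Veronese/truncation results for quadratic and Koszul algebras and modules (the paper points to \cite{Backelin} and \cite{quadal}, Proposition~1.1), then transfer them to species via Proposition~\ref{translation}. One small slip: $N^{[k]}$ is a truncated $M$-module (not an $M_{(k)}$-module), and the classical statement you need is that the shift $B^{[k]}$ of a quadratic (Koszul) $A$-module is again a quadratic (Koszul) $A$-module.
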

\begin{proof}The Veronese power of a quadratic (Koszul) algebra is
quadratic (Koszul) (see \cite{Backelin}). Let $A$ be an algebra
and $B$ and $A$-module. Then if $A$ and $B$ are quadratic or
Koszul, then the same is true for $B^{[k]}$ (see \cite{quadal},
proposition 1.1.). The result then follows by proposition
\ref{translation}.
\end{proof}
By this proposition, the modules in proposition \ref{hookmodules}
are Koszul.

\begin{defi}{\tbf{Segre and Manin products of monoids}}. Let $M^{\grader}$
and $N^{\grader}$ be two graded monoids. The Segre product $M\seg
N$ is the graded monoid defined by \be (M\seg
N)^{\underline{k}}=M^{\underline{k}}\cdot N^{\underline{k}}\mbox{,
}k\geq 0\eeq \noindent and obvious product. If $M$ and $N$ are
quadratic, the black circle or Manin product is defined as follows
\be M\seb N=(M^{!.}\seg N^{!.})^{!.}. \eeq
\end{defi}
The black circle product for quadratic algebras was introduced by
Manin in \cite{Manin}.
\begin{prop}If $M$ and $N$ are quadratic (resp. Koszul), then
$M\seg N$  is quadratic (resp. Koszul).
\end{prop}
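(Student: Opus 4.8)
The plan is to reduce the statement about monoids in species to the corresponding statement about analytic algebras, and then invoke known results about Segre products of quadratic and Koszul algebras. First I would observe that for the ``quadratic'' half of the claim, one checks directly from the definitions that if $M=\mathcal{M}(F,R_M)$ and $N=\mathcal{M}(G,R_N)$ are quadratic, then the Segre product $M\seg N$ is again a quadratic monoid: its generators lie in the degree-one component $(M\seg N)^{\underline 1}=F\cdot G$, and the relations are computed in $(F\cdot G)^2=F^2\cdot G^2$; one identifies the space of quadratic relations as $(R_M\cdot G^2)+(F^2\cdot R_N)$ together with the ``shuffle'' relations forcing the two tensor factors to have equal lengths, exactly mirroring the classical description of the Segre product of quadratic algebras. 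This is a routine unravelling of the grading conventions already set up in the excerpt, so I would state it without grinding through the bookkeeping.

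For the Koszul half, I would pass through the Schur correspondence. By Proposition \ref{translation}, the monoid $M$ (resp. $N$) is Koszul if and only if $\widetilde M(V)$ (resp. $\widetilde N(V)$) is a Koszul algebra for every finite-dimensional $V$; and since the analytic functor takes products of species to tensor products of algebras, it also takes the Segre product of monoids to the Segre product of algebras, i.e. $\widetilde{M\seg N}(V)$ is the Segre product $\widetilde M(V)\circ\widetilde N(V)$ of the two graded algebras. Hence it suffices to cite the classical fact that the Segre product of two Koszul algebras is Koszul (this is due to Backelin--Fröberg; in the excerpt the Veronese/Segre circle of ideas is already invoked via \cite{Backelin} in the proof of Proposition \ref{Veronese}, and the Manin-product definition introduced just above gives the needed vocabulary). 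Combining: $M,N$ Koszul $\Rightarrow$ $\widetilde M(V),\widetilde N(V)$ Koszul for all $V$ $\Rightarrow$ $\widetilde M(V)\circ\widetilde N(V)=\widetilde{M\seg N}(V)$ Koszul for all $V$ $\Rightarrow$ $M\seg N$ Koszul, again by Proposition \ref{translation}.

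The main obstacle I anticipate is not the algebra-theoretic input but the verification that the analytic functor genuinely intertwines the species-level Segre product with the algebra-level Segre product. The product $M\cdot N$ of species becomes $\widetilde M(V)\otimes\widetilde N(V)$, but the Segre product is defined componentwise in the grading, $(M\seg N)^{\underline k}=M^{\underline k}\cdot N^{\underline k}$, so one must check that applying $\widetilde{(-)}$ to this graded species and reading off the induced grading reproduces the diagonal subalgebra $\bigoplus_k \widetilde{M^{\underline k}}(V)\otimes\widetilde{N^{\underline k}}(V)$ of $\widetilde M(V)\otimes\widetilde N(V)$ with its Segre grading. This is true because $\widetilde{(-)}$ is additive and monoidal and commutes with the grading-by-subspecies decomposition, but it is the one place where the two notions of ``Segre'' (one combinatorial, one on graded algebras) have to be matched carefully; everything else is an appeal to Propositions \ref{translation} and to \cite{Backelin}. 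One should also note, as a sanity check, that the generating-function identity $\Ch(M\seg N)^{\grader}(\xx)=\sum_k \Ch M^{\underline k}(\xx)\odot \Ch N^{\underline k}(\xx)$ is consistent with the Hilbert-series formula for Segre products, which confirms the grading has been set up correctly.
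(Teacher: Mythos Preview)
Your approach is essentially the same as the paper's: reduce to analytic algebras via Proposition~\ref{translation} and invoke the Backelin--Fr\"oberg result from \cite{Backelin} that Segre products of Koszul algebras are Koszul. The paper's proof is a one-line reference to exactly this strategy (``Similar to the proof of proposition~\ref{Veronese}. See \cite{Backelin}\ldots'').

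One small correction to your sketch of the quadratic part: you mention extra ``shuffle'' relations beyond $(R_M\cdot G^2)+(F^2\cdot R_N)$. The paper records immediately after the proposition that in fact $R_{M\seg N}=R_M\cdot (N^{\underline{1}})^2+(M^{\underline{1}})^2\cdot R_N$, with nothing further. Because the Cauchy product of species is symmetric monoidal, the identification $(F\cdot G)^2\cong F^2\cdot G^2$ is already built in, so no additional shuffle-type relations are needed at the species level.
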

\begin{proof}Similar to the proof of proposition \ref{Veronese}.
See \cite{Backelin} for the analogous result about Koszulness of
the Segre product of Koszul algebras.
\end{proof}
Note that if $M$ and $N$ are quadratic, $M\seg N$ is generated by
$M^{\underline{1}}\cdot N^{\underline{1}}$, and  $$R_{M\seg
N}=R_M\cdot (N^{\underline{1}})^2+(M^{\underline{1}})^2\cdot
R_N.$$ As a consequence, $$R_{M\seb N}=R_M\cdot R_N.$$

\section{M\"obius species, c-monoids, and Cohen-Macaulyness}

A {\em partially ordered set} or {\em poset} $(P,\leq)$ is a set
together with a partial order. We usually refer to a poset by its
underlying set $P$. If $P$ and $Q$ are two posets we define the
direct product $P\times Q$ as the poset with underlying set the
cartesian product of the respective underlying sets with order
relation  $(p,q)\leq_{P\times Q} (p',q')$ if $p\leq_P p'$ and
$q\leq_Q q'$. The direct sum or coproduct  $P\sqcup Q$ is the
poset with underline set the disjoint union of the respective
underlying sets and such that $x\leq y$ in $P\sqcup Q$ either if
$x,y\in P$ and $x\leq_P y$ or $x,y\in Q$ and $x\leq_Q y$.

Denote respectively by $\mrm{Max}(P)$ and $\mrm{Min}(P)$ the set
of maximal and minimal elements of $P$. When $\mrm{Min}(P)$ has
one element, $\hat{0}$ denotes that unique element. Similarly, if
$\mrm{Max}(P)$ has only one element it is denoted by $\hat{1}$. A
set with $\hat{0}$ and $\hat{1}$ is called {\em bounded}.

An interval $[x,y]$ in $P$, $x,y\in P$ is the subposet
$[x,y]=\{z|x\leq z\leq y\}$. We say that $y$ covers $z$ if
$|[x,y]|=2$ and denote it by $x\prec y$. A {\em chain} is a
totally ordered subset $x_0<x_1<\dots<x_l$ of $P$. Its length is
defined to be $l$. A {\em maximal chain} of an interval $[x,y]$ is
one of the form $$x=x_0\prec x_1\prec\dots\prec x_l=y.$$

A poset is {\em pure} if for every interval $[x,y]$, all the
maximal chains have the same length. A bounded poset that is pure
is called a {\em graded poset}. The length of a maximal chain is
called the rank of $P$ ($\mrm{rk}(P)$).

We follow the conventions in \cite{Vallette} for the definition of
order complexes, homology of posets and Cohen-Macaulay posets. We
refer the reader to \cite{Bjorner} and the more recent notes
\cite{Wachs} for tools and techniques on poset topology.

Denote by $\Delta(P)$ the set of chains $x_0<x_1<\dots<x_l$ of $P$
such that $x_0\in\mrm{Min}(P)$ and $x_l\in\mrm{Max}(P)$.
$\Delta(P)=\uplus_{l}\Delta_l(P)$, $\Delta_l(P)$ being the set of
chains of length $l$. Define
$\partial_l:\KK\Delta_l(P)\rightarrow\Delta_{l-1}(P)$,
\begin{equation*}
\partial_i(x_0<x_1<\dots<x_l)=\sum_{i=1}^{l-1}(-1)^{i-1}(x_0<x_1<\dots<x_{i-1}<x_{i+1}<\dots<x_l).
\end{equation*}
The chain complex $(\KK\Delta(P),\partial)$ is called the {\em
order complex} of $P$. The homology of $P$ with coefficients in
$\KK$ is the homology of the complex $\KK\Delta(P)$. It is denoted
by $H_*(P,\KK)$.
\begin{defi} Let $P$ be a graded poset. It is said to be
Cohen-Macaulay over $\KK$ if for every interval $[x,y]$ of $P$ the
homology is concentrated in top rank: \be H_r([x,y],\KK)=0\mbox{,
for $r\neq \mrm{rk}([x,y])$.}\eeq
\end{defi}

\begin{defi}Let $\Int$ be the category of finite families of finite posets with $\hat{0}$ and
$\hat{1}$, where for two objects $A$ and $B$ of $\Int$, a morphism
$f:A\rightarrow B$ is an isomorphism of posets $f:\coprod_{I\in
A}I\rightarrow \coprod_{I'\in B}I'$. A {\em M\"obius} species is a
functor $P:\BB\rightarrow \Int$. For an object $A\in \Int$, define
the  M\"obius cardinality $|A|_{\mu}=\sum_{I\in A}\mu(I).$ If
$A=\emptyset$, we set $\mu(A)=0$. For a permutation
$\sigma:U\rightarrow U$, let
$\mathrm{Fix}_P[\sigma]=\{I_{\sigma}|I\in M[U]\mbox{,
}P[\sigma](I)=I\}$, where $I_{\sigma}$ is the subposet of elements
of $I$ fixed by $\sigma$; $I_{\sigma}=\{x\in I|P[\sigma]x=x\}$ The
M\"obius cardinal $|\mathrm{Fix}P[\sigma]|_{\mu}|$ only depends on
the cycle type of $\sigma$. This leads to analogous of the
generating functions for ordinary species. \be\Mob
P(x)=\sum_{n=0}^{\infty}|P[n]|_{\mu}\frac{x^n}{n!}, \eeq \be \Ch
P(\xx)=\sum_
{\alpha}|\mathrm{Fix}P[\alpha]|_{\mu}\frac{p_{\alpha}(\xx)}{z_{\alpha}}.\eeq
$P$ is said to be Cohen-Macaulay if for every finite set $U$, all
the posets in $P[U]$ are Cohen-Macaulay.
\end{defi}

Let  $(M,\nu)$ be a monoid in the monoidal category of set-species
$(\fs^{\BB},\cdot, 1)$. Recall that the elements of the set
$(M\cdot M)[U]$ are ordered pairs of the form $(m_{U_1},
m_{U_2})$, $U_1\uplus U_2=U$, where $m_{U_i}\in M[U_i]\mbox{, }
i=1,2$. $(M,\nu)$ is called a c-monoid if it satisfies the left
cancellation law \be
\nu(m_{U_1},m_{U_2})=\nu(m_{U_1},m'_{U_2})\Rightarrow
m_{U_2}=m'_{U_2}.\eeq
\subsection{The posets associated to a $c$-monoid}
 From a c-monoid $(M,\nu)$ we can define a
partially ordered set $$(\biguplus_{U_1\subseteq U}
M[U_1],\leq_{\nu})=((E\cdot M)[U],\nu).$$ for each finite set $U$.
The relation $\leq_{\nu}$ on the set defined by  \be
m_{U_1}\leq_{\nu} m_{U_2} \mbox{ if there exists }m'_{U'_2},\;
U_1\uplus U'_2=U_2 \mbox{ such that
 }\nu(m_{U_1}, m'_{U'_2})=m_{U_2}.
 \eeq
 This posets have a zero the unique element of $M[\emptyset]$. All the elements of
 $M[U]$ are maximal in $(\biguplus_{U_1\subseteq U}
M[U_1],\leq_{\nu})$, but in general, an element $m_1\in M[U_1]$
$U_1\neq U$ could be also maximal. To avoid this situation we
define $\Pom_M[U]$ to be the subposet of $(\biguplus_{U_1\subseteq
U} M[U_1],\leq_{\nu})$ whose set of maximals elements is equal to
$M[U]$,
$$\Pom_M[U]:=\{m_1| \mbox{ such that there exists $m\in M[U]$ with }m_1\leq_{\nu}
m\}.$$ In other words, $\Pom_M[U]=\bigcup_{m\in M[U]}[\hat{0},m]$.

 The partial order of $\Pom_M[U]$ is functorial, for every bijection
 $f:U\rightarrow V$ between finite sets we define $\Pom_M[f]:\Pom_M[U]\rightarrow \Pom_M[V]$
 by
 \be\label{actionP} \Pom_M[f]m_1=M[f_{U_1}]m_1 =(EM)[f]m_1\eeq
 \noindent  where $m_1\in M[U_1]$ and $f_{U_1}$ is the
 restriction of $f$ to $U_1$. From the cancellative property in $c$-monoiods
 we obtain the following theorem and corollaries (see \cite{Julia-Miguel}, theorem
 3.2.)
 \begin{theo}The family of posets $\{\Pom_M[U]|U\mbox{ a finite
 set}\}$ satisfies the following properties:
 \begin{enumerate}
 \item $\Pom_M[f]:\Pom_M[U]\rightarrow\Pom_M[V]$ is an order
 isomorphism.
 \item $\Pom_M[U]$ has a $\hat{0}$ the unique element of
 $M[\emptyset]$.
 \item For a finite set $U_1\subseteq U$, and $m_1\in M[U_1]$ an element of $\Pom_M[U]$,
 the order coideal
 $$\mathcal{C}_{m_1}=\{m_2\in\Pom_M[U]|m_2\geq m_1\},$$ is isomorphic to
 $\Pom_M[U-U_1]$.
 \end{enumerate}
\end{theo}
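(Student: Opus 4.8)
The three claims are essentially unwinding of the definitions of $\Pom_M$ and the action $\Pom_M[f]$, together with the left-cancellation property of the c-monoid. The plan is to treat them in the stated order, since the second and third rely on the structural picture established while proving the first.

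For item (1), I would first check that $\Pom_M[f]$ is well defined as a map $\Pom_M[U]\to\Pom_M[V]$: if $m_1\in M[U_1]$ lies in $\Pom_M[U]$, there is $m\in M[U]$ with $m_1\leq_\nu m$, say $\nu(m_1,m'_{U'_2})=m$ with $U_1\uplus U'_2=U$; applying $M[f]$ and using naturality of $\nu$ gives $\nu(M[f_{U_1}]m_1,\, M[f_{U'_2}]m'_{U'_2})=M[f]m$, so $\Pom_M[f]m_1\leq_\nu M[f]m\in M[V]$, hence $\Pom_M[f]m_1\in\Pom_M[V]$. The same naturality computation, read in both directions, shows $\Pom_M[f]$ is order-preserving and that $\Pom_M[f^{-1}]$ is its inverse, so it is an order isomorphism. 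This is routine but is the natural place to record how the functoriality of $\nu$ interacts with $\leq_\nu$.

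Item (2) is immediate: $M[\emptyset]$ has a unique element, call it $\hat 0$, and for any $m_1\in M[U_1]$ the unit axiom $\nu(e,m_1)=m_1$ (reading $e$ as that element of $M[\emptyset]$, and using $M=1+M_+$ so that $e$ is forced) gives $\hat 0\leq_\nu m_1$; hence $\hat 0\in\Pom_M[U]$ (it sits below every maximal $m\in M[U]$) and is the minimum.

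For item (3), the plan is to exhibit an explicit bijection $\Phi:\Pom_M[U-U_1]\to\col_{m_1}$ and show it and its inverse are order-preserving. Define $\Phi(m'_{W})=\nu(m_1,m'_{W})$ for $m'_W\in M[W]$, $W\subseteq U-U_1$; since $m'_W\leq_\nu m''$ for some $m''\in M[U-U_1]$, one gets $\nu(m_1,m'_W)\leq_\nu\nu(m_1,m'')\in M[U]$, so $\Phi(m'_W)\in\Pom_M[U]$ and visibly $\Phi(m'_W)\geq_\nu m_1$, i.e.\ $\Phi$ lands in $\col_{m_1}$. Conversely, any $m_2\geq_\nu m_1$ in $\Pom_M[U]$ is $\nu(m_1,m'_W)$ for some $m'_W$, so $\Phi$ is surjective, and \emph{here is where left cancellation is essential}: it guarantees that the witness $m'_W$ with $\nu(m_1,m'_W)=m_2$ is unique, so $\Phi$ is injective and the inverse map $m_2\mapsto m'_W$ is well defined. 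Order-preservation in the forward direction is a one-line associativity computation: $m'_W\leq_\nu m'_{W'}$, say $\nu(m'_W,m''_{W''})=m'_{W'}$, yields $\nu(\nu(m_1,m'_W),m''_{W''})=\nu(m_1,m'_{W'})$, i.e.\ $\Phi(m'_W)\leq_\nu\Phi(m'_{W'})$; the reverse implication again uses left cancellation to peel off the common prefix $m_1$. The only mildly delicate point — and the one I would treat most carefully — is this last step, making sure that "$\nu(m_1,a)\leq_\nu\nu(m_1,b)$ implies $a\leq_\nu b$" genuinely follows from the single left-cancellation axiom together with associativity; everything else is bookkeeping.
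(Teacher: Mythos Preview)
Your argument is correct and is precisely the natural unwinding the paper has in mind; the paper itself does not spell out a proof here but simply attributes the result to the cancellation property and refers to \cite{Julia-Miguel}, Theorem~3.2, so there is no alternative approach to compare against. Your identification of left cancellation plus associativity as the engine behind item~(3), and in particular behind the order-reflection step $\nu(m_1,a)\leq_\nu\nu(m_1,b)\Rightarrow a\leq_\nu b$, is exactly right: from $\nu(\nu(m_1,a),c)=\nu(m_1,b)$ associativity gives $\nu(m_1,\nu(a,c))=\nu(m_1,b)$ and cancellation yields $\nu(a,c)=b$.

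One small point to tighten: in the surjectivity part of item~(3) you produce, for $m_2\in\col_{m_1}$, a witness $m'_W$ with $\nu(m_1,m'_W)=m_2$, but you do not check that this $m'_W$ actually lies in $\Pom_M[U-U_1]$ (rather than merely in the ambient $\biguplus_{W\subseteq U-U_1}M[W]$). This is easy: since $m_2\in\Pom_M[U]$ there is $m\in M[U]$ and $m''$ with $\nu(m_2,m'')=m$; associativity gives $\nu(m_1,\nu(m'_W,m''))=m$, so $\nu(m'_W,m'')\in M[U-U_1]$ and $m'_W\leq_\nu\nu(m'_W,m'')$ witnesses $m'_W\in\Pom_M[U-U_1]$. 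With that line added, the proof is complete.
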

\begin{cor}If $f:U\rightarrow V$ is a bijection, and $[\hat{0},m]$, $m\in M[U]$,  an
interval of $\Pom_M[U]$, then, the restriction of $\Pom_M[f]$ to
$[\hat{0},m]$, $\Pom_M[f]|_{[\hat{0},m]}:[\hat{0},m]\rightarrow
[\hat{0},M[f]m]$ is an isomorphism of posets.
\end{cor}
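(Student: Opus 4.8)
The plan is to deduce the corollary directly from part (1) of the preceding theorem, using only the elementary fact that an order isomorphism restricts to an isomorphism between corresponding intervals. First I would invoke part (1): $\Pom_M[f]\colon\Pom_M[U]\to\Pom_M[V]$ is an order isomorphism. For $x,y\in\Pom_M[U]$ this means $x\leq_{\nu}y$ if and only if $\Pom_M[f]x\leq_{\nu}\Pom_M[f]y$; in particular, for the fixed $m\in M[U]$, the map $\Pom_M[f]$ sends $[\hat{0},m]=\{x\mid x\leq_{\nu}m\}$ bijectively onto $\{z\mid z\leq_{\nu}\Pom_M[f]m\}=[\,\Pom_M[f]\hat{0},\ \Pom_M[f]m\,]$, preserving order in both directions, with inverse the restriction of $\Pom_M[f^{-1}]=\Pom_M[f]^{-1}$ (using that $\Pom_M$ is a functor on the groupoid $\BB$). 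So it only remains to identify the two endpoints of the image interval.

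Second, I would compute $\Pom_M[f]\hat{0}$ and $\Pom_M[f]m$ from the definition (\ref{actionP}) of the functorial action. By part (2), $\hat{0}$ is the unique element of $M[\emptyset]$, in $\Pom_M[U]$ and in $\Pom_M[V]$ alike. Since (\ref{actionP}) says $\Pom_M[f]$ acts on an element of $M[U_1]$ by $M[f_{U_1}]$, where $f_{U_1}$ is the restriction of $f$ to $U_1$, applying it to the element of $M[\emptyset]$ uses $f_{\emptyset}=\mathrm{id}_{\emptyset}$, hence $\Pom_M[f]\hat{0}=M[\mathrm{id}_{\emptyset}]\hat{0}=\hat{0}$ by functoriality of $M$. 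For the maximal element $m\in M[U]$ the relevant restriction is $f_{U}=f$, so $\Pom_M[f]m=M[f]m$. Substituting these into the image interval obtained in the first step yields exactly that $\Pom_M[f]|_{[\hat{0},m]}\colon[\hat{0},m]\to[\hat{0},M[f]m]$ is an isomorphism of posets.

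I expect no genuine obstacle here: the whole content is packaged in part (1) of the theorem, and the only verifications needed --- that $\hat{0}$ is fixed and that a maximal element $m\in M[U]$ is sent to $M[f]m$ --- are formal consequences of functoriality and part (2). If desired, one can phrase the argument more generally: for any $m_1\in\Pom_M[U]$ (not necessarily maximal), $\Pom_M[f]$ restricts to an isomorphism $[\hat{0},m_1]\to[\hat{0},\Pom_M[f]m_1]$, and the corollary is the special case $m_1=m\in M[U]=\mathrm{Max}(\Pom_M[U])$, where additionally $\Pom_M[f]m=M[f]m$.
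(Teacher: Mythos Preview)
Your argument is correct and is exactly the intended one: the paper states this corollary without proof because it is an immediate consequence of part~(1) of the theorem together with the explicit description (\ref{actionP}) of the action, precisely as you unwind it. There is nothing to add.
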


\begin{cor}\label{important}
Every interval $[m_1,m_2]$  of $\Pom_M[U]$, $m_i\in M[U_i]$,
$i=1,2$, $U_1\subseteq U_2\subseteq U$, is isomorphic to the
interval $[\hat{0},m_2']$ of $\Pom_M[U_2-U_1]$, $m_2'$ being the
unique element of $M[U_2-U_1]$ such that $\nu(m_1,m_2')=m_2.$
\end{cor}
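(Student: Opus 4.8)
The plan is to read the statement off item~(3) of the preceding theorem, with the left cancellation law supplying the uniqueness of $m_2'$. First I would record that $m_2'$ is well defined: since $m_1\leq_{\nu} m_2$ with $m_1\in M[U_1]$, $m_2\in M[U_2]$ and $U_1\subseteq U_2$, the definition of $\leq_{\nu}$ produces some $m'\in M[U_2-U_1]$ with $\nu(m_1,m')=m_2$, and if $\nu(m_1,m'')=m_2$ as well then $m'=m''$ by the left cancellation property of the $c$-monoid $(M,\nu)$; so I may write $m_2'$ for this common value.

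Next I would make the isomorphism of item~(3) concrete. The order coideal $\mathcal{C}_{m_1}=\{x\in\Pom_M[U]\,:\,x\geq_{\nu} m_1\}$ is, by that theorem, isomorphic to $\Pom_M[U-U_1]$, the isomorphism $\varphi:\Pom_M[U-U_1]\to\mathcal{C}_{m_1}$ being realized on an element $n\in M[W]$, $W\subseteq U-U_1$, by $\varphi(n)=\nu(m_1,n)\in M[U_1\uplus W]$. (Its bijectivity is exactly left cancellation, and $n_1\leq_{\nu} n_2\Leftrightarrow\nu(m_1,n_1)\leq_{\nu}\nu(m_1,n_2)$ follows from associativity in one direction and from cancellation in the other.) Under $\varphi$ the bottom element $\hat{0}$ — the unit of $M$, i.e.\ the unique point of $M[\emptyset]$ — is sent to $\nu(m_1,\hat{0})=m_1$, and $m_2'$ is sent to $\nu(m_1,m_2')=m_2$. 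Hence $\varphi$ carries the interval $[\hat{0},m_2']$, computed inside $\Pom_M[U-U_1]$, isomorphically onto $\{x\,:\,m_1\leq_{\nu} x\leq_{\nu} m_2\}=[m_1,m_2]$, the last equality holding because $\mathcal{C}_{m_1}$ carries the order induced from $\Pom_M[U]$ and contains $[m_1,m_2]$.

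Finally I would identify $[\hat{0},m_2']$ taken in $\Pom_M[U-U_1]$ with $[\hat{0},m_2']$ taken in $\Pom_M[U_2-U_1]$. This is immediate: the order ideal below $m_2'$ is intrinsic to $m_2'$ and does not see the ambient finite set, since any $x\leq_{\nu} m_2'$ satisfies $\nu(x,z)=m_2'$ for some $z$, forcing the underlying set of $x$ to be contained in $U_2-U_1$; so the two intervals have the same elements and the same order. Composing this identification with the isomorphism of the previous paragraph yields $[m_1,m_2]\cong[\hat{0},m_2']$ inside $\Pom_M[U_2-U_1]$, as claimed.

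There is no real obstacle here: the combinatorial content is already packaged in item~(3) of the theorem, and what remains is the bookkeeping of underlying sets together with repeated, routine appeals to left cancellation for uniqueness. If one prefers a self-contained argument one can instead define $\psi:[m_1,m_2]\to[\hat{0},m_2']$ directly by writing each $x$ uniquely as $x=\nu(m_1,y)$ and setting $\psi(x)=y$, verifying $y\in M[U_2-U_1]$ and $y\leq_{\nu} m_2'$ via associativity and cancellation; but this merely reproves the special case of item~(3) that is needed.
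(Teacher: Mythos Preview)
Your argument is correct and is exactly the intended one: the paper states this as an immediate corollary of item~(3) of the preceding theorem and gives no separate proof, so deducing it from the coideal isomorphism $\mathcal{C}_{m_1}\cong\Pom_M[U-U_1]$ together with left cancellation (for the uniqueness of $m_2'$) is precisely what is expected. Your additional remark identifying $[\hat{0},m_2']$ inside $\Pom_M[U-U_1]$ with the same interval inside $\Pom_M[U_2-U_1]$ is the right piece of bookkeeping to close the argument.
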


  Now we can define the inverse M\"obius species $M^{-1}.$
 \begin{defi} Let $M$ be a c-monoid a s above. Define the M\"obius species
 $$M^{-1}[U]=\{[\hat{0},m]|m\in M[U]\},$$
 for a bijection $f:U\rightarrow U'$, the isomorphism
 $M^{-1}[f]:\coprod_{m\in M[U]}[\hat{0},m]\rightarrow
\coprod_{m'\in M[U']}[\hat{0},m']$ is given by
 \be \label{action}M^{-1}[f]m_1=M[f_{U_1}]m_1.\eeq
 where $U_1$ is the subset of $U$ such that  $m_1\in M[U_1]$. It
 is clear that if $m_1\in[\hat{0},m]$, then $M^{-1}[f]m_1\in
 [\hat{0},M[f]m]$.
 \end{defi}
 By M\"obius inversion in the poset $\Pom_M[U]$ we obtain (see
 \cite{Julia-Miguel}, theorem 3.3.)
 \be \label{Mobgen}\Mob M^{-1}(x)=(M(x))^{-1}.
\eeq Let $\Pom_{M}[U,\sigma]$ be the subposet of $\Pom_M[U]$ of
the elements fixed by the permutation $\sigma:U\rightarrow U$,
under the action given by equation (\ref{actionP}). In a similar
way, by M\"obius inversion on $\Pom_{M}[U,\sigma]$, we obtain
\be\label{MobChar} \Ch M^{-1}(\xx)=(\Ch M(\xx))^{-1}.\eeq

\begin{prop}
Assume that $M=\mathcal{M}(F,R)$ is a quadratic $c$-monoid. Then,
every interval $[\hat{0},m]$ with $m\in M^{\underline{k}}[U]$ is
graded with rank $k$.
\end{prop}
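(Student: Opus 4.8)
The plan is to show that in a quadratic $c$-monoid $M=\mathcal{M}(F,R)$, every maximal chain in an interval $[\hat 0, m]$ with $m\in M^{\underline k}[U]$ has length exactly $k$; purity together with the existence of $\hat 0$ and the maximal element $m$ then gives gradedness with rank $k$. The key point is that the grading on $M^{\grader}$ is compatible with the covering relation: if $m_1\prec m_2$ in $\Pom_M[U]$ with $m_1\in M^{\underline{a}}[U_1]$ and $m_2\in M^{\underline{b}}[U_2]$, then $b=a+1$. Granting this, a maximal chain $\hat 0=m_0\prec m_1\prec\dots\prec m_l=m$ forces the degrees to increase by one at each step from $0$ to $k$, hence $l=k$, independently of the chain.

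First I would reduce the covering claim to a statement about the quadratic monoid itself using Corollary \ref{important}: the interval $[m_1,m_2]$ is isomorphic to $[\hat 0, m_2']$ in $\Pom_M[U_2-U_1]$, where $m_2'$ is the unique element of $M[U_2-U_1]$ with $\nu(m_1,m_2')=m_2$, and this isomorphism is degree-shifting in the sense that the degree of $m_2'$ in $M^{\grader}$ equals $b-a$ (since $\nu$ respects the grading: $\nu$ maps $M^{\underline a}\cdot M^{\underline{a'}}$ into $M^{\underline{a+a'}}$). So $m_1\prec m_2$ iff $[\hat 0,m_2']$ has exactly two elements, i.e. iff $m_2'$ is an atom of $\Pom_M[U_2-U_1]$. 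Thus it suffices to prove: the atoms of $\Pom_M[W]$ are exactly the elements of $M^{\underline 1}[W]=F[W]$ — equivalently, an element of $M^{\underline d}[W]$ with $d\ge 2$ is never an atom, and every element of $F[W]$ covers $\hat 0$.

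The second half is immediate: an element $f\in F[W]=M^{\underline 1}[W]$ lies above $\hat 0$, and nothing lies strictly between, because anything below $f$ would be an element of some $M^{\underline{d'}}[W']$ with $W'\subsetneq W$ or $d'<1$, forcing $d'=0$, $W'=\emptyset$, i.e. $\hat 0$. For the first half, take $m\in M^{\underline d}[W]$, $d\ge 2$. Since $M^{\underline d}=F^d/\mathcal{R}_M^{\underline d}$ and $M$ is a $c$-monoid, I would lift $m$ to an expression $\nu(f_1,\nu(f_2,\dots))$ with the $f_i$ generators concentrated on a decomposition $W=W_1\uplus\dots\uplus W_d$; then $f_1\in F[W_1]=M^{\underline 1}[W_1]$ satisfies $\hat 0 \ne f_1 <_{\nu} m$ and $f_1 \ne m$ (different underlying sets, as $d\ge 2$ means $W_1\subsetneq W$). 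Hence $m$ is not an atom. The left-cancellation property is what makes the order $\leq_\nu$ well-behaved here (it is used already in the cited Theorem and Corollary \ref{important}); the degree-monotonicity of $\nu$ is what pins down the rank.

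The main obstacle I anticipate is making the "lift a degree-$d$ element to a product of $d$ generators" step rigorous in the species setting: one must check that every nonzero class $m\in M^{\underline d}[W]=(F^d/\mathcal{R}_M^{\underline d})[W]$ has a representative that is genuinely decomposable along a set partition of $W$ into $d$ blocks, so that peeling off one generator lands in $M^{\underline{d-1}}$ and not back in $M^{\underline d}$. This is where finite-dimensionality of $F$ and the explicit description $F^d[W]=\bigoplus_{W_1\uplus\dots\uplus W_d=W}F[W_1]\otimes\dots\otimes F[W_d]$ come in, together with the fact that the quotient map $F^d\to M^{\underline d}$ is surjective and $c$-monoid cancellation guarantees the resulting shorter product is a bona fide element of $\Pom_M[W]$ strictly below $m$. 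Once that bookkeeping is done, purity and the rank computation are automatic.
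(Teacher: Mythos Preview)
Your argument is correct and follows essentially the same route as the paper: both proofs rest on the fact that every element $m\in M^{\underline{k}}[U]$ is (the class of) a product $f_1\cdots f_k$ of generators, so that maximal chains in $[\hat 0,m]$ correspond to successive left factors and therefore all have length $k$. The paper states this in one line; you unpack it by isolating the lemma ``covers increase degree by exactly one,'' which via Corollary~\ref{important} reduces to ``atoms are precisely the degree-one elements,'' and then you verify both directions of that.

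One remark on the obstacle you flag at the end: it is not actually an obstacle, because a $c$-monoid is by definition a monoid in \emph{set} species. So $M^{\underline{d}}[W]$ is a quotient \emph{set} of $F^d[W]=\biguplus_{W_1\uplus\cdots\uplus W_d=W}F[W_1]\times\cdots\times F[W_d]$, and every element of it is literally an equivalence class of such $d$-tuples; picking a representative is automatic, with no linear-algebra subtleties about decomposability or nonzero classes. Your appeal to finite-dimensionality is unnecessary here. Once you drop that worry, your proof is a clean expansion of the paper's terse argument.
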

\begin{proof}
If $m\in M^{\underline{k}}[U]$, then, there exist $f_1,
f_2,\dots,f_k$ elements of $F$ such that $$m=\overline{f_1\otimes
f_2\otimes\dots\otimes f_k}\in F^{k}[U]/\langle R\rangle.$$ Every
maximal chain in $[\hat{0},m]$ has to be of the form
$$\hat{0}\prec m_1\prec m_2\prec\dots\prec m_k=m$$
where $m_i=\nu(m_{i-1},f_i')$ for $i=1,2,\dots,k$ and $f'_1\otimes
f'_2\otimes\dots\otimes f'_k\in\overline{f_1\otimes
f_2\otimes\dots\otimes f_k}.$\end{proof} From the previous
proposition we easily obtain the corollary.
\begin{cor}The poset $\Pom_M^{\underline{k}}[U]:=\bigcup_{m\in
M^{\underline{k}}[U]}[\hat{0},m]$, is pure.
\end{cor}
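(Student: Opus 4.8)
The plan is to reduce an arbitrary interval of $\Pom_M^{\underline{k}}[U]$ to a principal interval $[\hat 0,m']$ with $m'$ \emph{homogeneous}, and then to quote the preceding proposition, which asserts exactly that such an interval is graded.

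First I would record that $\Pom_M^{\underline{k}}[U]=\bigcup_{m\in M^{\underline{k}}[U]}[\hat 0,m]$ is an order ideal (down‑set) of $\Pom_M[U]$, being a union of principal ideals. Consequently, if $x\leq_{\nu} y$ are two elements of $\Pom_M^{\underline{k}}[U]$, the interval $[x,y]$ is entirely contained in some $[\hat 0,m]$ with $m\in M^{\underline{k}}[U]$, and the interval computed in the subposet $\Pom_M^{\underline{k}}[U]$ agrees with the one computed in $\Pom_M[U]$ (together with its covering relations, hence its maximal chains). So it suffices to prove that every such $[x,y]$ has all its maximal chains of one common length.

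Next I would attach a degree to each element. Since $M=\mathcal{M}(F,R)=\sum_{i\geq 0}M^{\underline{i}}$, each $m_1\in M[U_1]$ lies in a unique $M^{\underline{i}}[U_1]$; write $\deg(m_1)=i$, so $\deg(\hat 0)=0$. Take $x\in M^{\underline{i}}[U_1]$ and $y\in M^{\underline{j}}[U_2]$ with $x\leq_{\nu}y$; then $U_1\subseteq U_2\subseteq U$. By Corollary \ref{important}, $[x,y]$ is isomorphic to the interval $[\hat 0,y']$ of $\Pom_M[U_2-U_1]$, where $y'$ is the unique element of $M[U_2-U_1]$ with $\nu(x,y')=y$. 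Because the grading on $M^{\grader}$ is compatible with the product (the multiplication sends $M^{\underline{a}}\cdot M^{\underline{b}}$ into $M^{\underline{a+b}}$), we get $y'\in M^{\underline{j-i}}[U_2-U_1]$.

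Finally I would apply the preceding proposition to the quadratic $c$-monoid $M$ and the homogeneous element $y'\in M^{\underline{j-i}}[U_2-U_1]$: the interval $[\hat 0,y']$ is graded of rank $j-i$, so all of its maximal chains have length $j-i$. Transporting along the poset isomorphism of Corollary \ref{important}, every maximal chain of $[x,y]$ has length $j-i$, which depends only on the endpoints $x,y$. As $[x,y]$ was an arbitrary interval of $\Pom_M^{\underline{k}}[U]$, the poset is pure. There is no genuine obstacle in this argument; the only point requiring a word of care is that an element of $\Pom_M[U]$ carries a well‑defined degree and that this degree is additive under $\nu$, both of which are immediate from the definition of the graded monoid $M^{\grader}$ attached to a quadratic monoid.
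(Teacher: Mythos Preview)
Your argument is correct, and it ultimately rests on the same proposition as the paper's one-line ``from the previous proposition we easily obtain the corollary.'' The only difference is that you take an unnecessary detour.

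After your first paragraph you are already done. You have shown that any interval $[x,y]$ of $\Pom_M^{\underline{k}}[U]$ coincides with the interval $[x,y]$ inside some $[\hat 0,m]$, $m\in M^{\underline{k}}[U]$. The preceding proposition says precisely that $[\hat 0,m]$ is \emph{graded}, and in the paper's terminology graded means bounded and pure. Purity of $[\hat 0,m]$ says that every one of its subintervals, in particular $[x,y]$, has all maximal chains of a common length. That finishes the proof.

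Your second and third paragraphs---assigning degrees, invoking Corollary~\ref{important} to transport $[x,y]$ to $[\hat 0,y']$ with $y'\in M^{\underline{j-i}}$, and reapplying the proposition there---are correct and do yield the extra information that $\deg$ is a rank function with $\mathrm{rk}([x,y])=\deg(y)-\deg(x)$, but none of this is needed for the bare statement that $\Pom_M^{\underline{k}}[U]$ is pure. The paper's intended argument is just the short one above.
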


\begin{theo}\label{C-M0}

 Let
$M=\mathcal{M}(F,R)$ be a quadratic c-monoid. Then, the tensor
species $U\mapsto H_k(P^{\underline{k}}_M[U],\KK)$ is isomorphic
to $(M^{\coop .})^{\underline{k}}$. Moreover, $M$ is Koszul if and
only if
 for every finite set $U$ and every $k$, the homology $ H_*(\Pom^{\underline{k}}_M[U],\KK)$
 is concentrated in top rank ;
 $H_r(\Pom^{\underline{k}}_M[U],\KK)=0$, for $r\neq k$.

\end{theo}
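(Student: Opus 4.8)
The plan is to identify the augmented bar complex $\Ba^{(k)}(M)[U]$ with the (reduced, augmented) order chain complex of the poset $\Pom^{\underline{k}}_M[U]$, up to a shift in degree, and then read off both assertions from this identification. First I would set up the bijective dictionary. A decomposable generator of $\Ba^{(k)}(M)^{\underline{-r}}[U]$ is a tensor $m_1\otimes m_2\otimes\dots\otimes m_r$ with each $m_i\in M^{\underline{j_i}}_+[U_i]$, $U=U_1\uplus\dots\uplus U_r$, and $j_1+\dots+j_r=k$. Using the monoid product $\nu$, such a tensor determines a strictly increasing chain $\hat 0\prec\!\!\!\prec \nu(m_1)\prec\!\!\!\prec \nu(m_1,m_2)\prec\!\!\!\prec\dots\prec\!\!\!\prec \nu(m_1,\dots,m_r)=m$ in the interval $[\hat 0,m]\subseteq\Pom^{\underline{k}}_M[U]$, where each step has rank jump $j_i\ge 1$ (by the grading proposition just proved, ranks add). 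The cancellativity hypothesis (\emph{c}-monoid) is exactly what makes this assignment injective on each fixed $m$: the factors $m_2,\dots,m_r$ are uniquely recovered from the partial products by left cancellation, via Corollary \ref{important}. Conversely any chain from $\hat 0$ to some $m\in M^{\underline{k}}[U]$ factors uniquely through the $c$-monoid structure. So, summing over $m\in M^{\underline{k}}[U]$, the degree-$(-r)$ part of $\Ba^{(k)}(M)[U]$ is the free $\KK$-module on chains of length $r$ in $\Pom^{\underline{k}}_M[U]$ running from a minimum to a maximum — i.e.\ $\KK\Delta_r(\Pom^{\underline{k}}_M[U])$.

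Next I would check that the differentials agree. The bar differential (\ref{Badif}) merges adjacent factors, $m_i\otimes m_{i+1}\mapsto \nu(m_i\otimes m_{i+1})$, with sign $(-1)^{i-1}$; under the dictionary above, merging the $i$-th and $(i+1)$-st factors is precisely deleting the $i$-th internal vertex $x_i$ of the chain $\hat 0=x_0\prec\!\!\!\prec x_1\prec\!\!\!\prec\dots\prec\!\!\!\prec x_r=m$, with the matching sign. Hence $d$ corresponds termwise to $\partial$ on $\KK\Delta(\Pom^{\underline{k}}_M[U])$. Therefore, as complexes (after the shift built into $\Ba$), $\Ba^{(k)}(M)[U]\cong \KK\Delta(\Pom^{\underline{k}}_M[U])$, so $H_i\Ba^{(k)}(M)[U]\cong H_{?}(\Pom^{\underline{k}}_M[U],\KK)$ with the index correspondence dictated by the shift. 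Tracking the shift carefully: a length-$r$ chain sits in homological degree $r$ on the poset side, and in degree $-(r)$ before shifting, hence in degree $-(r)+(\text{something})$ after; the correct bookkeeping gives that $H^0\Ba^{(k)}(M)[U]$ — which by the discussion preceding Definition (Koszul Monoids) equals $(M^{\coop.})^{\underline{k}}[U]$ — corresponds to top homology $H_k(\Pom^{\underline{k}}_M[U],\KK)$, since the longest chains (those of length $k$, using only generators of internal degree $1$) are exactly the maximal chains of the rank-$k$ pure poset $\Pom^{\underline{k}}_M[U]$. This yields the first assertion: $U\mapsto H_k(\Pom^{\underline{k}}_M[U],\KK)$ is isomorphic to $(M^{\coop.})^{\underline{k}}$.

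For the equivalence, recall that $M$ is Koszul iff $\Ho\Ba(M)$ is concentrated in degree zero, i.e.\ iff $H_i\Ba^{(k)}(M)=0$ for all $k$ and all $i$ not corresponding to the top. Via the isomorphism of complexes, $H_i\Ba^{(k)}(M)[U]$ is a homology group of $\Pom^{\underline{k}}_M[U]$, and vanishing in all but the top degree is precisely the condition $H_r(\Pom^{\underline{k}}_M[U],\KK)=0$ for $r\neq k$. Conversely, if the poset homology is concentrated in top rank for every $U$ and every $k$, then every $\Ba^{(k)}(M)$ has homology concentrated in the one degree, so $\Ho\Ba(M)$ is concentrated in degree zero, i.e.\ $M$ is Koszul. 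Naturality in $U$ is automatic since both the bar construction and $\Pom_M[-]$ are functors on $\BB$ and the dictionary is manifestly equivariant under relabelling bijections (it uses only $\nu$ and restrictions of bijections, cf.\ (\ref{actionP})).

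\textbf{Main obstacle.} The only genuinely delicate point is the degree bookkeeping: the bar complex carries two gradings — the ``weight'' $k$ (preserved, giving the splitting $\Ba=\sum_k\Ba^{(k)}$) and the homological/shift degree — and one must verify that ``concentrated in top rank $k$'' on the poset side matches ``concentrated in degree zero after the shift $s^{-1}$'' on the $\Ba$ side, uniformly in $k$. This is where the purity of $\Pom^{\underline{k}}_M[U]$ (the preceding corollary) is essential: it guarantees the top rank is $k$ for every interval, so that ``top homology'' is an unambiguous single degree and matches $H^0\Ba^{(k)}$. Once the shift is pinned down on length-$1$ chains (the generators in $F=M^{\underline1}_+$), the rest is forced. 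Everything else — injectivity of the chain-to-tensor correspondence (from left cancellation), compatibility of differentials, functoriality — is routine given Corollary \ref{important} and the grading proposition.
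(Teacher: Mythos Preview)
Your proposal is correct and follows essentially the same argument as the paper: both construct the explicit bijection $m_1\otimes\dots\otimes m_r \mapsto (\hat 0 <_\nu \nu(m_1) <_\nu \dots <_\nu \nu(m_1,\dots,m_r))$ between bar-tensors and chains in $\Pom^{\underline{k}}_M[U]$, invoke left cancellation for bijectivity, and verify that the bar differential matches the simplicial boundary $\partial$. The only difference is cosmetic: the paper pins down the degree bookkeeping at the outset by reindexing $C^{(k),\underline{r}}[U]:=\KK\Delta_{k-r}(\Pom^{\underline{k}}_M[U])$, so that $\alpha$ is literally an isomorphism of $\dg$-species, whereas you leave the shift to be sorted out later---your indices $\underline{-r}$ should in fact read $\underline{k-r}$, which is exactly the correction your ``main obstacle'' paragraph anticipates.
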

\begin{proof} We reverse the enumeration in the complex
$\KK\Delta_*(\Pom^{\underline{k}}_M[U])$ to define an appropriate
$\dg$-tensor species. Let $C^{(k),\grader}$ be defined by \be
C^{(k),\underline{r}}[U]:=\KK
\Delta_{k-r}(\Pom^{\underline{k}}_M[U]), \eeq \noindent where
$r=0,1,2,\dots,k.$ The differential
$\hat{d}_r:C^{(k),\underline{r}}\rightarrow
C^{(k),\underline{r+1}}$ is equal to $\partial_{k-r}.$  Let \be
m_{U_1}\otimes m_{U_2}\otimes\dots\otimes m_{U_r}\in
\Ba^{(k)}(M)^{\underline{j}}[U]\mbox{,   } m_{U_i}\in
M[U_i]\mbox{, }i=1,2,\dots,r.\eeq  Let $V_i=\biguplus_{s=1}^i
U_s$, and define recursively, $$\overline{m}_{V_1}=m_{U_1}\mbox{,
  }\overline{m}_{V_i}=\nu(\overline{m}_{V_{i-1}},m_{U_i})\mbox{,
 $i=2,\dots,r$}.$$ It is clear that
$\hat{0}<_{\nu}\overline{m}_{V_1}<_{\nu}\overline{m}_{V_2}<_{\nu}\dots<_{\nu}\overline{m}_{V_r}$
is a chain in $C^{(k),\underline{k-r}}[U]$. By the left
cancellation law, the correspondence \be \alpha:m_{U1}\otimes
m_{U_2}\otimes\dots\otimes m_{U_r}\mapsto
\hat{0}<_{\nu}\overline{m}_{V_1}<_{\nu}\overline{m}_{V_2}<_{\nu}\dots<_{\nu}\overline{m}_{V_r}\in
M[U] \eeq
 \noindent is bijective between basic elements of
 $\Ba(M)^{(k),\underline{k-r}}[U]$ and $C^{(k),\underline{k-r}}[U]$
 respectively. Since $$ \alpha(m_{U1}\otimes\dots\otimes\nu(m_{U_{i}},m_{U_{i+1}})
 \otimes\dots\otimes
m_{U_r})=
\hat{0}<_{\nu}\overline{m}_{V_1}<_{\nu}\overline{m}_{V_2}<_{\nu}\dots<_{\nu}
\overline{m}_{V_{i-1}}<_{\nu}\overline{m}_{V_{i+1}}<_{\nu}\overline{m}_{V_r}.$$
\noindent $\alpha$ extends by linearity to an isomorphism of
$\dg$-tensor species $\alpha:\Ba^{(k)}(M)\rightarrow C^{(k)}.$
Because of this we have
$H_k(\Pom_M^{\underline{k}}[U],\KK)=H^0(C^{(k),\grader}[U])=H^0\Ba^{(k)}(M)=(M^{\coop
.})^{\underline{k}}$, and that $\Ho\Ba^{(k)}(M)$ is concentrated
in degree zero if and only if $\Ho C^{(k),\grader}$ is so for
every $k=0,1,2,\dots.$\end{proof}
\begin{lem} Let $M=\mathcal{M}(F,R)$ be a quadratic c-monoid as above.
 The inverse species $M^{-1}$ is Cohen-Macaulay if and only if
for every finite set $U$ and every poset $[\hat{0},m]\in
M^{-1}[U]$, the homology $H_*([\hat{0},m])$ is concentrated in top
rank $k=\mathrm{rk}([\hat{0},m])$.
\end{lem}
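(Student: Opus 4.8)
The plan is to treat the two implications separately. The forward implication is a formality: it falls straight out of the definitions once one observes that the posets making up $M^{-1}[U]$ are exactly the principal intervals $[\hat 0, m]$, $m\in M[U]$. The converse is the substantive direction, and I intend to deduce it from Corollary \ref{important}, which identifies an arbitrary subinterval of $\Pom_M[U]$ with a principal interval $[\hat 0, m']$ attached to a smaller ground set.

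For the forward implication I would argue as follows. Assume $M^{-1}$ is Cohen-Macaulay. By the definition of a Cohen-Macaulay M\"obius species this means that for every finite set $U$ every poset in $M^{-1}[U]=\{[\hat 0, m]\mid m\in M[U]\}$ is Cohen-Macaulay. Specializing the Cohen-Macaulay condition for $[\hat 0, m]$ to the interval $[x,y]=[\hat 0, m]$ itself gives $H_r([\hat 0, m],\KK)=0$ for $r\neq \mathrm{rk}([\hat 0, m])$, which is precisely the concentration in top rank asserted on the right-hand side.

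For the converse I would assume that for every finite set $V$ and every $[\hat 0, m]\in M^{-1}[V]$ the homology $H_*([\hat 0, m],\KK)$ is concentrated in top rank, fix a finite set $U$ and $m\in M[U]$, and show $[\hat 0, m]$ is Cohen-Macaulay. First, $[\hat 0, m]$ is bounded, with $\hat 0$ the unit and $\hat 1=m$, and by the proposition that for a quadratic c-monoid every interval $[\hat 0, m]$ with $m\in M^{\underline k}[U]$ is graded of rank $k$, it is graded. Next, given any interval $[m_1, m_2]$ of $[\hat 0, m]$, with $m_i\in M[U_i]$ and $U_1\subseteq U_2\subseteq U$, Corollary \ref{important} supplies an order isomorphism $[m_1,m_2]\cong[\hat 0, m_2']$, where $m_2'$ is the unique element of $M[U_2-U_1]$ with $\nu(m_1, m_2')=m_2$; in particular $[\hat 0, m_2']\in M^{-1}[U_2-U_1]$, so it is graded and $\mathrm{rk}([m_1,m_2])=\mathrm{rk}([\hat 0, m_2'])$. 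Since homology of a poset is an order-isomorphism invariant, the hypothesis applied to the ground set $U_2-U_1$ then yields $H_r([m_1,m_2],\KK)\cong H_r([\hat 0, m_2'],\KK)=0$ for every $r\neq \mathrm{rk}([\hat 0, m_2'])=\mathrm{rk}([m_1,m_2])$. As $[m_1,m_2]$ ranges over all intervals of $[\hat 0, m]$ this is exactly the Cohen-Macaulay property of $[\hat 0, m]$, and as $U$ and $m$ vary it gives the Cohen-Macaulayness of $M^{-1}$.

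The essential point, and the only step carrying any real content, is the appeal to Corollary \ref{important}: it converts the hypothesis, which a priori constrains only the ``top'' intervals $[\hat 0, m]$, into information about every subinterval, by passing to the complementary ground set $U_2-U_1$. I expect the only things requiring care to be bookkeeping: verifying that the isomorphism of Corollary \ref{important} matches ranks on the two sides (so that ``top rank'' transports correctly) and disposing of the degenerate case $U_1=U_2$, where $[m_1,m_2]$ collapses to a single point. Neither of these is a genuine obstacle.
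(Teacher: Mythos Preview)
Your proof is correct and follows the same line as the paper's: the key (and only substantive) ingredient is Corollary~\ref{important}, which identifies an arbitrary subinterval $[m_1,m_2]$ with a principal interval $[\hat 0,m_2']$ over the complementary ground set. The paper records only that one sentence; you have simply unpacked the details.
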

\begin{proof}
The result follows directly from corollary \ref{important}.
\end{proof}
\begin{theo}\label{C-M}Let $M$ be a c-monoid as above. Then, $M$
is Koszul if and only if the M\"obius species $M^{-1}$ is
Cohen-Macaulay.
\end{theo}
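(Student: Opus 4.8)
The plan is to derive Theorem~\ref{C-M} from Theorem~\ref{C-M0} and the Lemma immediately preceding the statement, the link between them being a decomposition of the homology of the pure poset $\Pom^{\underline{k}}_M[U]$ into the homologies of its maximal intervals. Recall $\Pom^{\underline{k}}_M[U]=\bigcup_{m\in M^{\underline{k}}[U]}[\hat{0},m]$; it has $\hat{0}$ as unique minimal element, and since each $[\hat{0},m]$ with $m\in M^{\underline{k}}[U]$ is graded of rank $k$ (by the Proposition above asserting exactly this), no two distinct elements of $M^{\underline{k}}[U]$ are comparable, so $M^{\underline{k}}[U]$ is precisely the set of maximal elements of $\Pom^{\underline{k}}_M[U]$. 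Hence every chain in $\Delta(\Pom^{\underline{k}}_M[U])$ runs from $\hat{0}$ to a well-defined top element $m\in M^{\underline{k}}[U]$, and because the boundary operator $\partial$ never deletes the endpoints of a chain, the order complex splits as a direct sum of subcomplexes indexed by that top element:
\[
\KK\Delta(\Pom^{\underline{k}}_M[U])=\bigoplus_{m\in M^{\underline{k}}[U]}\KK\Delta([\hat{0},m]).
\]
Taking homology gives $H_r(\Pom^{\underline{k}}_M[U],\KK)=\bigoplus_{m\in M^{\underline{k}}[U]}H_r([\hat{0},m],\KK)$ for every $r$.

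With this in hand the remainder is assembly. Each $[\hat{0},m]$, $m\in M^{\underline{k}}[U]$, has rank $k$, so the homology of $\Pom^{\underline{k}}_M[U]$ is concentrated in top rank $k$ if and only if, for every such $m$, the homology of $[\hat{0},m]$ is concentrated in its own top rank $k=\mathrm{rk}([\hat{0},m])$. By Theorem~\ref{C-M0}, $M$ is Koszul exactly when $H_r(\Pom^{\underline{k}}_M[U],\KK)=0$ for all finite $U$, all $k\geq 0$ and all $r\neq k$; by the equivalence just recorded, this holds exactly when $H_r([\hat{0},m],\KK)=0$ for all finite $U$, all $m\in M[U]$ and all $r\neq\mathrm{rk}([\hat{0},m])$. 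Since the posets occurring in the family $M^{-1}[U]$ are precisely the intervals $[\hat{0},m]$ with $m\in M[U]$, this last condition is the hypothesis of the preceding Lemma, namely that $M^{-1}$ is Cohen--Macaulay; so both implications follow simultaneously.

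The only step that is not purely formal is the passage between ``$M^{-1}$ Cohen--Macaulay'', which by definition demands homology concentration for \emph{every} subinterval of every poset in $M^{-1}[U]$, and the condition above, which mentions only the intervals $[\hat{0},m]$ themselves. This is exactly what Corollary~\ref{important} provides --- any subinterval of some $[\hat{0},m]$ is isomorphic to an interval $[\hat{0},m']$ again on the list --- and it is already absorbed into the preceding Lemma, so in the write-up I would just cite that Lemma rather than reargue it. Apart from that, the proof is the standard splitting of an order complex along its top vertex, followed by Theorem~\ref{C-M0}.
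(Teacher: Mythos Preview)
Your proof is correct and follows exactly the paper's approach: the paper's proof consists of the single displayed identity $H_r(\Pom^{\underline{k}}_M[U],\KK)=\bigoplus_{m\in M^{\underline{k}}[U]}H_{r}([\hat{0},m],\KK)$ followed by an appeal to the preceding Lemma and Theorem~\ref{C-M0}. You have simply supplied the (standard) justification for that identity via the splitting of the order complex along its maximal element, and made explicit the role of Corollary~\ref{important} that the paper leaves implicit inside the Lemma.
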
\begin{proof}We have the identity
\be H_r(\Pom^{\underline{k}}_M[U],\KK)=\bigoplus_{m\in
M^{\underline{k}}[U]}H_{r}([\hat{0},m],\KK)\mbox{,
 }r=0,1,2,\dots,k.\eeq Then, $H_r(\Pom^{\underline{k}}_M[U],\KK)= 0$ if and only if
$H_r([\hat{0},m],\KK)=0$ for every $m\in M^{\underline{k}}[U]$.
The result follows from the previous lemma and theorem \ref{C-M0}.
\end{proof}
If $M$ is a Koszul quadratic c-monoid, since $((M^{\coop
.})^{\grader}(x))^{-1}=M(x)$ and $((\Ch M^{\coop
.})^{\grader})^{-1}(\xx)=\Ch M(\xx)$, we have
\begin{eqnarray} \Mob
M^{-1}(x)&=&(M^{\coop
.})^{\grader}(x),\\
\Ch M^{-1}(\xx)&=& \Ch(M^{\coop .})^{\grader}(\xx).
\end{eqnarray}
Moreover, these identities can be refined a little more. Using
standard arguments, we can prove that the M\"obius cardinalities
have the following interpretations:
\begin{eqnarray}\label{mdim}\sum_{m\in
M^{\underline{k}}[n]}\mu(\hat{0},m)&=&(-1)^k \mathrm{dim}(M^{\coop .})^{\underline{k}}[n],\\
\label{mtr}\sum_{m\in
M^{\underline{k}}[n],M[\sigma]m=m}\mu([\hat{0},m]_{\sigma})&=&(-1)^k\mathrm{tr}(M^{\coop
.})^{\underline{k}}[\sigma].\end{eqnarray}
\begin{ex}For the  c-monoid $E$, $\Pom_E[U]=[\emptyset, U]=\mathscr{B}(U)$ is
the Boolean algebra of the parts of $U$. It is well known that
$\mathscr{B}(U)$ is Cohen-Macaulay. This is another way of
obtaining that $E$ is a Koszul monoid (over $\mathbb{Z}$).
\end{ex}
\subsection{The posets associated to a c-module}
\begin{defi} Let $M$ be a c-monoid and $N$ a right (set) $M$-module
such that $N[\emptyset]=\emptyset$.
 $N$ is said to be a c-($M$)-module if it satisfies the left
cancellation law: $\tau(n,m)=\tau(n,m')\Rightarrow m=m'$.
\end{defi}
Define a partial order on $E.N[U]=\biguplus_{U_1\subseteq
U}N[U_1]$ by: $n_1\leq_{\tau} n_2$ if there exists $m\in
M[U_2-U_1]$ such that $\tau(n_1,m)=n_2$. $E.N[U]$ does not have  a
$\hat{0}$. In that case define $\widehat{E.N}[U]=\hat{0}+E.N[U]$,
then trim the maximal elements not in $N[U]$, to obtain \be
\Pom_{M,N}[U]=\bigcup_{n\in N[U]}[\hat{0},n]\eeq \noindent
$[\hat{0},n]$ being an interval of $\widehat{E.N}[U]$. As in the
case of posets induced by $c$-monoids, the order is functorial,
and we have the following theorem and corollaries.

 \begin{theo}The family of posets $\{\Pom_{M,N}[U]|U\mbox{ a finite
 set}\}$ satisfies the following properties:
 \begin{enumerate}
 \item For a bijection $f:U\rightarrow V$, $\Pom_{M,N}[f]:\Pom_{M,N}[U]\rightarrow
 \Pom_{M,N}[V]$ is an order
 isomorphism.

 \item For a finite set $U_1\subseteq U$, and $n_1\in N[U_1]$ an element of $\Pom_{M,N}[U]$,
 the order coideal
 $$\mathcal{C}_{n_1}=\{n_2\in\Pom_{M,N}[U]|n_2\geq n_1\},$$ is isomorphic to
 $\Pom_{M,N}[U-U_1]$.
 \end{enumerate}
\end{theo}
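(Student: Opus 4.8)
The statement is the module analogue of the theorem on the family $\{\Pom_M[U]\}$ recorded above (see \cite{Julia-Miguel}, theorem 3.2), and the plan is to imitate that proof, systematically replacing the cancellation law and associativity of the monoid $(M,\nu)$ by the left cancellation law of the $c$-module $N$ and the pseudo-associativity $\tau(\tau(n,m),m')=\tau(n,\nu(m,m'))$ of the action $\tau$. The functorial structure on $\{\Pom_{M,N}[U]\}$ is the obvious one: for a bijection $f\colon U\to V$ one puts $\Pom_{M,N}[f](\hat{0})=\hat{0}$ and, for $n_1\in N[U_1]$ with $U_1\subseteq U$, $\Pom_{M,N}[f](n_1)=N[f_{U_1}](n_1)$, in analogy with (\ref{actionP})--(\ref{action}).

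For part (1): each restriction $N[f_{U_1}]\colon N[U_1]\to N[f(U_1)]$ is a bijection and $\hat{0}\mapsto\hat{0}$, so the underlying map of sets is a bijection with inverse $\Pom_{M,N}[f^{-1}]$. Naturality of $\tau$ shows that this map, and likewise its inverse, carries the defining relation ``there is $m$ with $\tau(n_1,m)=n_2$'' forward (using $f(U_2)-f(U_1)=f(U_2-U_1)$), as well as the property ``$n_2$ extends, under the $M$-action, to an element of $N[U]$''. Hence the map is order preserving in both directions, i.e.\ an order isomorphism, and since it respects the trimming it descends to an isomorphism $\Pom_{M,N}[f]\colon\Pom_{M,N}[U]\to\Pom_{M,N}[V]$ of the trimmed posets, not merely a map of $\widehat{E\cdot N}[U]$ onto $\widehat{E\cdot N}[V]$.

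For part (2): fix $U_1\subseteq U$ and $n_1\in N[U_1]$ lying in $\Pom_{M,N}[U]$. Then $\mathcal{C}_{n_1}$ consists of the $n_2\in N[U_2]$ with $U_1\subseteq U_2\subseteq U$ such that $n_1\le_\tau n_2$ and $n_2$ extends under the $M$-action to some element of $N[U]$. By left cancellation in $N$, each such $n_2$ determines a unique increment $m(n_2)\in M[U_2-U_1]$ with $\tau(n_1,m(n_2))=n_2$; this assignment is injective and sends the least element $n_1$ to $\hat{0}$, and the identity $\tau(n_1,\nu(m,m'))=\tau(\tau(n_1,m),m')$ together with cancellation shows that $n_2\le_\tau n_2'$ in $\mathcal{C}_{n_1}$ if and only if $m(n_2)\le_\nu m(n_2')$, so $n_2\mapsto m(n_2)$ is an order embedding of $\mathcal{C}_{n_1}$. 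The remaining task is to identify its image with $\Pom_{M,N}[U-U_1]$: because $\tau$ sends $N[U_1]\cdot M[U-U_1]$ into $N[U]$, one may describe the image intrinsically over the set $U-U_1$ by an ``extends to the top'' condition, and then match it, ground set and order, with $\bigcup_{n\in N[U-U_1]}[\hat{0},n]=\Pom_{M,N}[U-U_1]$, which yields $\mathcal{C}_{n_1}\cong\Pom_{M,N}[U-U_1]$.

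The order-preservation statements in both parts are routine pseudo-associativity bookkeeping and the identity/unit checks are immediate. The step I expect to be the main obstacle is the final identification in part (2): matching the trimming that $\mathcal{C}_{n_1}$ inherits from $\Pom_{M,N}[U]$ with the trimming used to define $\Pom_{M,N}[U-U_1]$ --- that is, showing that an element sitting above $n_1$ extends inside $\Pom_{M,N}[U]$ precisely when its increment extends inside $\Pom_{M,N}[U-U_1]$ --- since this is the one place where the module $N$ and the monoid action genuinely interact rather than this being a purely formal consequence of the definitions.
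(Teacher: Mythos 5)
Your part (1) is correct and is exactly what the paper intends: the paper in fact gives no written proof of this theorem, saying only that, as in the $c$-monoid case, the order is functorial (the model being theorem 3.2 of \cite{Julia-Miguel}), and your naturality-plus-restriction argument supplies those routine details, including the point that the trimming is respected.

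The gap is precisely the step you flagged at the end of part (2), and it is not a bookkeeping nuisance but an obstruction. Your increment map $n_2\mapsto m(n_2)$ lands in $\{\hat{0}\}\uplus\biguplus_{V\subseteq U-U_1}M[V]$: increments are elements of the \emph{monoid} $M$, not of the module $N$. Running your own ``extends to the top'' analysis, $n_2=\tau(n_1,m)$ with $m\in M[V]$ belongs to $\Pom_{M,N}[U]$ exactly when $M[(U-U_1)-V]\neq\emptyset$, and this is exactly the trimming condition defining $\Pom_{M}[U-U_1]$; so your order embedding identifies $\mathcal{C}_{n_1}$ with $\Pom_{M}[U-U_1]$, and there is no way to match its image with $\bigcup_{n\in N[U-U_1]}[\hat{0},n]$, whose ground set consists of elements of $N$. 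Indeed the identification with $\Pom_{M,N}[U-U_1]$ fails in general: take $M=\mathrm{Cosh}$, $N=\mathrm{Sinh}$, $U=\{a,b,c\}$, $U_1=\{a\}$, $n_1=\{a\}$; then $\mathcal{C}_{n_1}=\{\{a\},\{a,b,c\}\}$ is a two-element chain, while $\Pom_{\mathrm{Cosh},\mathrm{Sinh}}[\{b,c\}]$ is empty because $\mathrm{Sinh}[\{b,c\}]=\emptyset$. The conclusion of part (2) should be read with $\Pom_{M}[U-U_1]$ as the target, which is consistent with the lemma immediately following in the paper (every interval $[n_1,n_2]$ of $\Pom_{M,N}[U]$ is isomorphic to an interval $[\hat{0},m]$ of a monoid poset); with that correction your argument closes, but as written the final matching step cannot be carried out.
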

\begin{cor}If $f:U\rightarrow V$ is a bijection, and $[\hat{0},n]$, $n\in N[U]$,  an
interval of $\Pom_{M,N}[U]$, then, the restriction of
$\Pom_{M,N}[f]$ to $[\hat{0},n]$,
$\Pom_{M,N}[f]|_{[\hat{0},n]}:[\hat{0},n]\rightarrow
[\hat{0},N[f]n]$ is an isomorphism of posets.
\end{cor}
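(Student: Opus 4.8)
The plan is to deduce this corollary from part (1) of the theorem just stated, exactly as the analogous corollary for $c$-monoids was deduced from property (1) of the corresponding theorem. Since $\Pom_{M,N}[f]\colon\Pom_{M,N}[U]\to\Pom_{M,N}[V]$ is an order isomorphism, its restriction to the subposet $[\hat 0,n]$ is automatically an order isomorphism onto its image; so the whole task reduces to showing that this image is precisely the interval $[\hat 0,N[f]n]$, noting that $N[f]n\in N[V]$ so that this interval genuinely lives in $\Pom_{M,N}[V]$.

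First I would establish the inclusion $\Pom_{M,N}[f]\bigl([\hat 0,n]\bigr)\subseteq[\hat 0,N[f]n]$. Take $n_1\leq_{\tau}n$ with $n_1\in N[U_1]$, $U_1\subseteq U$; by the definition of $\leq_{\tau}$ there is $m\in M[U-U_1]$ with $\tau(n_1,m)=n$. I would then invoke naturality of the module action $\tau\colon N\cdot M\to N$ with respect to the bijection $f$: applying $N[f]$ to $\tau(n_1,m)=n$ and commuting it past $\tau$ yields $N[f]n=\tau\bigl(N[f|_{U_1}]n_1,\,M[f|_{U-U_1}]m\bigr)$, where $M[f|_{U-U_1}]m\in M[f(U-U_1)]=M[V-f(U_1)]$. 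This exhibits $\Pom_{M,N}[f]n_1=N[f|_{U_1}]n_1$ as an element $\leq_{\tau}N[f]n$ in $\Pom_{M,N}[V]$; and $\hat 0$ is sent to $\hat 0$, which lies in $[\hat 0,N[f]n]$.

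Next I would obtain the reverse inclusion by running the same computation for $f^{-1}$, using that $\Pom_{M,N}[f^{-1}]$ is the inverse of $\Pom_{M,N}[f]$ (again by part (1) of the theorem together with functoriality of the action). If $n_2'\leq_{\tau}N[f]n$ in $\Pom_{M,N}[V]$, then $\Pom_{M,N}[f^{-1}]n_2'\leq_{\tau}\Pom_{M,N}[f^{-1}]\bigl(N[f]n\bigr)=N[f^{-1}f]n=n$, so $\Pom_{M,N}[f^{-1}]n_2'\in[\hat 0,n]$ and hence $n_2'\in\Pom_{M,N}[f]\bigl([\hat 0,n]\bigr)$. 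Combining the two inclusions gives the equality of images, and the corollary follows.

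I do not expect a genuine obstacle here: the argument is formal naturality bookkeeping, entirely parallel to the $c$-monoid corollary. The only point needing a little care is that the restricted isomorphism must be shown onto the whole target interval rather than merely a subposet of it, and this is exactly the reverse inclusion handled via $f^{-1}$.
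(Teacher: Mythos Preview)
Your proposal is correct and matches the paper's intended argument: the paper states this corollary without proof, as an immediate consequence of part~(1) of the preceding theorem, exactly parallel to the $c$-monoid case. One minor simplification: once you know $\Pom_{M,N}[f]$ is an order isomorphism sending $\hat 0\mapsto\hat 0$ and $n\mapsto N[f]n$, the equality $\Pom_{M,N}[f]\bigl([\hat 0,n]\bigr)=[\hat 0,N[f]n]$ follows from the general fact that any order isomorphism carries intervals to intervals, so the explicit naturality computation (while perfectly valid) is not strictly needed.
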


\begin{lem} Let $[n_1,n_2]$ be an
interval of $\Pom_{M,N}[U]$, $n_i\in N[U_i]$, $U_1\subseteq
U_2\subseteq U$. Then, $[n_1,n_2]$ is isomorphic to $[\hat{0},m]$,
$m\in M[U_2-U_1]$ being the unique element such that
$\tau(n_1,m)=n_2$.
\end{lem}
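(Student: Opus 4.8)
The plan is to construct an explicit order isomorphism $\phi\colon[n_1,n_2]\to[\hat 0,m]$, where $[\hat 0,m]$ is read as the interval of $\Pom_M[U_2-U_1]$; this makes sense because $m\in M[U_2-U_1]$ is a maximal element of that poset. (The statement is the module analogue of Corollary \ref{important}, and the argument will follow the same lines.) First I would record that $m$ exists and is unique: the relation $n_1\leq_\tau n_2$ provides some $m\in M[U_2-U_1]$ with $\tau(n_1,m)=n_2$, and if $\tau(n_1,m)=\tau(n_1,m')$ then $m=m'$ by the left cancellation law for the c-module $N$. Next, for an arbitrary $n'\in[n_1,n_2]$, say $n'\in N[U']$ with $U_1\subseteq U'\subseteq U_2$, the relation $n_1\leq_\tau n'$ yields a unique $a=a(n')\in M[U'-U_1]$ with $\tau(n_1,a)=n'$ (again by cancellation in $N$), and I would set $\phi(n')=a(n')$.

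The next step is to check that $\phi$ lands in $[\hat 0,m]$ and to build the inverse. Since $n'\leq_\tau n_2$, there is $b\in M[U_2-U']$ with $\tau(n',b)=n_2$; using the pseudoassociativity of the action,
$$\tau(n_1,\nu(a,b))=\tau(\tau(n_1,a),b)=\tau(n',b)=n_2=\tau(n_1,m),$$
so cancellation in $N$ forces $\nu(a,b)=m$, i.e. $a\leq_\nu m$ and $a\in[\hat 0,m]\subseteq\Pom_M[U_2-U_1]$. In the other direction I would define $\psi\colon[\hat 0,m]\to[n_1,n_2]$ by $\psi(a)=\tau(n_1,a)$: for $a\leq_\nu m$ choose $b$ with $\nu(a,b)=m$, and then $\tau(\psi(a),b)=\tau(n_1,\nu(a,b))=\tau(n_1,m)=n_2$, so $n_1\leq_\tau\psi(a)\leq_\tau n_2$. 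That $\phi$ and $\psi$ are mutually inverse is immediate: $\phi(\psi(a))$ is the unique $a'$ with $\tau(n_1,a')=\tau(n_1,a)$, hence $a'=a$; and $\psi(\phi(n'))=\tau(n_1,a(n'))=n'$ by the defining property of $a(n')$.

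Finally I would verify that $\phi$ and $\psi$ preserve order. If $n'\leq_\tau n''$ in $[n_1,n_2]$, pick $c\in M$ with $\tau(n',c)=n''$; then $\tau(n_1,\nu(a(n'),c))=\tau(\tau(n_1,a(n')),c)=\tau(n',c)=n''=\tau(n_1,a(n''))$, so $\nu(a(n'),c)=a(n'')$ by cancellation, i.e. $a(n')\leq_\nu a(n'')$. Symmetrically, from $a\leq_\nu a'$ with $\nu(a,c)=a'$ one gets $\tau(\psi(a),c)=\tau(n_1,\nu(a,c))=\psi(a')$, hence $\psi(a)\leq_\tau\psi(a')$. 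Thus $\phi$ is an order isomorphism, as claimed. The whole argument uses only the pseudoassociativity of $\tau$ and the left cancellation law of the c-module $N$ (no cancellativity of $M$ is needed); the one point requiring attention is the bookkeeping of the underlying finite sets, so that each intermediate element lies in the asserted component — $a(n')\in M[U'-U_1]$, $\nu(a,b)=m\in M[U_2-U_1]$, and so on — which is routine, exactly as in the monoid case of Corollary \ref{important}.
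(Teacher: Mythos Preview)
Your argument is correct and is precisely the natural proof the paper leaves implicit: the lemma is stated there without proof, as the module analogue of Corollary~\ref{important}, and your explicit construction of the mutually inverse order maps $\phi(n')=a(n')$ and $\psi(a)=\tau(n_1,a)$, checked via pseudoassociativity and the cancellation law of the c-module $N$, is exactly how one fills in the details. Your closing remark that cancellativity of $M$ itself is not invoked is a nice observation, though in the paper's setting $M$ is already assumed to be a c-monoid so that $\Pom_M$ is available.
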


\begin{theo}Let $Q_{N,M}$ be the M\"obius species defined as \be
Q_{N,M}[U]=\{[\hat{0},n]|n\in N[U]\}.\eeq \noindent Then
\begin{eqnarray}\Mob Q_{N,M}(x)&=&-N(x)\Mob
M^{-1}(x)=-\frac{N(x)}{M(x)}\\
\Ch Q_{N,M}(\xx)&=&-\frac{\Ch N(\xx)}{\Ch M(\xx)}
\end{eqnarray}
\end{theo}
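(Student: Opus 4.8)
The plan is to reduce both stated identities to a single convolution relation between M\"obius cardinalities, and then to extract that relation from the defining recursion of the M\"obius function on the posets $\Pom_{M,N}[U]$. Write $q[U]:=|Q_{N,M}[U]|_{\mu}=\sum_{n\in N[U]}\mu_{\Pom_{M,N}[U]}(\hat 0,n)$; this depends only on $|U|$ by functoriality of $\Pom_{M,N}$, so $\Mob Q_{N,M}(x)=\sum_{n}q[n]\,x^{n}/n!$. The first claim is then equivalent to
\[
\sum_{U_1\subseteq U}q[U_1]\,\bigl|M[U\setminus U_1]\bigr|=-\,\bigl|N[U]\bigr|\qquad\text{for every finite set }U,
\]
since the left-hand side is exactly the coefficient read off from the product of exponential generating functions $\Mob Q_{N,M}(x)\,M(x)$. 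Once this is established, dividing by $M(x)$ (legitimate because $M=1+M_+$ has constant term $1$) and using $\Mob M^{-1}(x)=M(x)^{-1}$ (equation~(\ref{Mobgen})) gives $\Mob Q_{N,M}(x)=-N(x)\Mob M^{-1}(x)=-N(x)/M(x)$.

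To prove the convolution relation, first fix $n\in N[U]$. Since $N[\emptyset]=\emptyset$ we have $U\neq\emptyset$, and since $\hat 0$ is an \emph{adjoined} bottom of $\widehat{E.N}[U]$ we have $n>\hat 0$. The fundamental relation $\sum_{\hat 0\le n_1\le n}\mu(\hat 0,n_1)=0$, combined with $E.N[U]=\biguplus_{U_1\subseteq U}N[U_1]$ and separating off the term $n_1=\hat 0$, reads
\[
1+\sum_{U_1\subseteq U}\ \sum_{\substack{n_1\in N[U_1]\\ n_1\le n}}\mu_{\Pom_{M,N}[U]}(\hat 0,n_1)=0.
\]
Here $\mu_{\Pom_{M,N}[U]}(\hat 0,n_1)$ equals the corresponding summand of $q[U_1]$, because the interval $[\hat 0,n_1]$ consists only of elements of $N[U_2]$ with $U_2\subseteq U_1$ and is therefore the same poset whether computed inside $\Pom_{M,N}[U]$ or inside $\Pom_{M,N}[U_1]$. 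Summing the display over all $n\in N[U]$ and interchanging the order of summation, a given $n_1\in N[U_1]$ is weighted by $\#\{n\in N[U]:n\ge n_1\}$; by the left cancellation law and the Lemma preceding the theorem, $m\mapsto\tau(n_1,m)$ is a bijection of $M[U\setminus U_1]$ onto $\{n\in N[U]:n\ge n_1\}$, so this weight is $\bigl|M[U\setminus U_1]\bigr|$. This yields $\bigl|N[U]\bigr|+\sum_{U_1\subseteq U}q[U_1]\,\bigl|M[U\setminus U_1]\bigr|=0$. The case $N[U]=\emptyset$, where the sum over $n$ is empty, must be treated separately but is immediate from the module axioms: for each $U_1\subseteq U$ either $N[U_1]=\emptyset$, whence $q[U_1]=0$, or $N[U_1]\neq\emptyset$, in which case $\tau$ maps $N[U_1]\times M[U\setminus U_1]$ into $N[U]=\emptyset$, forcing $M[U\setminus U_1]=\emptyset$; in either case every term vanishes, as does $\bigl|N[U]\bigr|$.

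For the Frobenius character identity I would repeat the argument on the subposets of $\sigma$-fixed points. For a permutation $\sigma$ of $U$ and a $\sigma$-fixed $n\in N[U]$, the M\"obius relation on the fixed interval $\bigl([\hat 0,n]\bigr)_{\sigma}$ gives $1+\sum\mu_{([\hat 0,n_1])_{\sigma}}(\hat 0,n_1)=0$, the sum over $\sigma$-fixed $n_1\le n$; summing over $\sigma$-fixed $n\in N[U]$ and observing, once more via left cancellation and the identity $\sigma\,\tau(n_1,m)=\tau(n_1,\sigma m)$ valid when $n_1$ is fixed, that the number of $\sigma$-fixed $n\in N[U]$ with $n\ge n_1$ equals the number of $\sigma$-fixed elements of $M[U\setminus U_1]$, one arrives at
\[
\mathrm{tr}\,N[\sigma]+\sum_{\sigma(U_1)=U_1}\bigl|\mathrm{Fix}\,Q_{N,M}[\sigma|_{U_1}]\bigr|_{\mu}\cdot\mathrm{tr}\,M[\sigma|_{U\setminus U_1}]=0,
\]
which is precisely the relation $\Ch Q_{N,M}(\xx)\cdot\Ch M(\xx)=-\Ch N(\xx)$; dividing by $\Ch M(\xx)$ gives $\Ch Q_{N,M}(\xx)=-\Ch N(\xx)/\Ch M(\xx)$. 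The main thing to get right is the bookkeeping of the adjoined bottom $\hat 0$, which is exactly what produces the overall minus sign (in contrast with $M^{-1}$, where the bottom is the genuine element of $M[\emptyset]$), together with matching the fixed-point counts against the multiplication rule for Frobenius characters of a product; I do not expect a serious obstacle beyond these routine verifications.
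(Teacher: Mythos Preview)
Your argument is correct and close in spirit to the paper's, but the two are organized dually, and it is worth noting the difference. The paper fixes $n\in N[U]$ and uses the \emph{upper} M\"obius recursion $\sum_{\hat 0\le n_1\le n}\mu(n_1,n)=0$; after separating off $n_1=\hat 0$ it invokes the lemma $[n_1,n]\cong[\hat 0,m]$ (with $m\in M[U_2\setminus U_1]$) to identify each $\mu(n_1,n)$ with a M\"obius value of $M^{-1}$, and so obtains directly $\Mob Q_{N,M}(x)+N(x)\,\Mob M^{-1}(x)=0$. You instead use the \emph{lower} recursion $\sum_{\hat 0\le n_1\le n}\mu(\hat 0,n_1)=0$ and, after swapping sums, replace the weight of each $n_1$ by the cardinality $|M[U\setminus U_1]|$ of its upper set in $N[U]$; this yields $\Mob Q_{N,M}(x)\,M(x)+N(x)=0$, from which you divide. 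Your route is slightly more elementary in that it never needs the interval isomorphism lemma, only the bijection $m\mapsto\tau(n_1,m)$ coming from left cancellation; the paper's route has the advantage that $\Mob M^{-1}$ appears intrinsically rather than via a final division. Both character computations proceed in the same way, by restricting to fixed-point subposets; your formulation of that step is fine.
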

\begin{proof} \begin{eqnarray} \sum_{n\in N[U]}\sum_{\hat{0}\leq n_1\leq n}
\mu(n_1,n)&=&\sum_{n\in N[U]}\mu(\hat{0},n)+\sum_{n\in
N[U]}\sum_{\hat{0}<n_1\leq n}\mu(n_1,n)=0\\
&=&|Q_{N,M}[U]|_{\mu}+\sum_{n\in N[U]}\sum_{\emptyset\neq
U_1\subseteq
U}\sum_{n_1\in N[U_1]}\mu(n_1,n)=0\\
&=&|Q_{N,M}[U]|_{\mu}+\sum_{\emptyset\neq U_1\subseteq
U}|N[U_1]|\sum_{m\in M[U-U_1]}\mu(\hat{0},m)=0\\
&=&|Q_{N,M}[U]|_{\mu}+\sum_{\emptyset\neq U_1\subseteq
U}|N[U_1]||M^{-1}[U-U_1]|_{\mu}=0.\end{eqnarray} From that, we get
\be \Mob Q_{N,M}(x)+N(x)\Mob M^{-1}(x)=0.\eeq The proof of the
character formula is similar, by M\"obius inversion on the poset
$$\Pom_{M,N}[U,\sigma]=\{n\in \Pom_{M,N}[U]|\Pom_{M,N}[\sigma]n=n\}.$$
\end{proof}
Analogously to the c-monoid case we have the following results.
\begin{prop}The poset $\Pom_{M,N}^{\underline{k}}[U]:=\bigcup_{n\in
N^{\underline{k-1}}[U]}[\hat{0},n]$ is pure with
$\mrm{rk}([\hat{0},n])=k$ for every $n$ maximal in
$\Pom_{M,N}^{\underline{k}}[U]$.
\end{prop}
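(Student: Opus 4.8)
The plan is to transcribe, almost line for line, the argument given for the quadratic $c$-monoid case (the proposition that $[\hat{0},m]$ is graded of rank $k$ for $m\in M^{\underline{k}}[U]$, together with its corollary on the purity of $\Pom_M^{\underline{k}}[U]$), with the module action $\tau$ replacing the multiplication $\nu$ and the left cancellation law for the $c$-$M$-module $N$ replacing that for $M$. The grading on $N=\mo_M(G,R_N)$ does the counting: since $N$ is quadratic, every $n\in N^{\underline{k-1}}[U]$ is the class of a decomposable tensor $g\otimes f_1\otimes\cdots\otimes f_{k-1}$ with $g\in G[U_0]$, $f_i\in F[U_i]$, and $U_0\uplus U_1\uplus\cdots\uplus U_{k-1}=U$.

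First I would exhibit a maximal chain of length $k$ from $\hat{0}$ to $n$. Viewing $g$ as an element of $N^{\underline{0}}[U_0]$ and setting $n_0=g$, $n_i=\tau(n_{i-1},f_i)$ for $i=1,\dots,k-1$, this is
$$\hat{0}\prec n_0\prec n_1\prec\cdots\prec n_{k-1}=n.$$
That $\hat{0}\prec n_0$ and that each $n_{i-1}\prec n_i$ is a genuine covering is where the $c$-module hypothesis enters: if some $x$ satisfied $n_{i-1}<_\tau x<_\tau n_i$, then $x=\tau(n_{i-1},a)$ and $n_i=\tau(x,b)=\tau(n_{i-1},\nu(a,b))$ with $a,b\in M_+$, and left cancellation for $N$ forces $\nu(a,b)=f_i\in F=M^{\underline{1}}$, impossible by grading ($a,b\in M_+$ forces degree $\geq 2$). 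Since $\tau$ respects the grading, $n_i\in N^{\underline{i}}$, so this chain indeed has length $k$.

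Next I would show that every maximal chain from $\hat{0}$ to $n$ has length $k$. The point is that a cover $m_1\prec m_2$ in $\Pom_{M,N}[U]$ is always realized by acting with a single generator: writing $m_2=\tau(m_1,m)$ with $m\in M_+$, if $m$ had degree $\geq 2$ we could factor $m=\nu(f',m'')$ with $f'\in F$ and $m''\in M_+$, and then $\tau(m_1,f')$ would sit strictly between $m_1$ and $m_2$ (strictness on each side being immediate from the underlying-set bookkeeping). Hence along any maximal chain $\hat{0}\prec m_0\prec\cdots\prec m_\ell=n$ one has $m_0\in N^{\underline{0}}$ (a cover from $\hat{0}$ forces an atom, and the atoms are exactly the elements of $G$) and $m_j\in N^{\underline{j}}$ for every $j$; since $n\in N^{\underline{k-1}}[U]$ this gives $\ell=k$.

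Finally, distinct elements of $N^{\underline{k-1}}[U]$ are incomparable in $\Pom_{M,N}[U]$ (a relation between two of them would have to be realized by an element of $M[\emptyset]=\{e\}$), so the maximal elements of $\Pom_{M,N}^{\underline{k}}[U]$ are precisely the $n\in N^{\underline{k-1}}[U]$, each satisfying $\mrm{rk}([\hat{0},n])=k$; combined with the lemma above, which identifies an arbitrary interval $[n_1,n_2]$ of $\Pom_{M,N}[U]$ with an interval $[\hat{0},m]$ of a $c$-monoid poset (already known to be graded), this yields purity of $\Pom_{M,N}^{\underline{k}}[U]$. I expect no serious obstacle here: the only slightly delicate point is the correct invocation of the left cancellation law for $N$ in the covering argument, and this is a direct analogue of what is done in the $c$-monoid case.
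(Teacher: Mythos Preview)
Your proposal is correct and follows exactly the approach the paper intends: the paper gives no separate proof here, merely signalling that the argument is ``analogous to the c-monoid case'' (Proposition~6 and its corollary), and you have faithfully carried out that analogy, including the appeal to the lemma identifying $[n_1,n_2]$ with a c-monoid interval. One tiny slip: in your second paragraph the conclusion should read $\ell=k-1$ (so that the chain $\hat{0}\prec m_0\prec\cdots\prec m_{k-1}=n$ has length $k$), not $\ell=k$; the rank statement is unaffected.
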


\begin{theo}\label{C-Mod}

 Let $M$ be a Koszul c-monoid, and
$N$ a quadratic c-module on $M$. Then, the tensor species
$U\mapsto H_k(P^{\underline{k}}_{M,N}[U],\KK)$ is isomorphic to
$(N^{\coop .})^{\underline{k-1}}$. Moreover, $N$ is Koszul if and
only if
 for every finite set $U$ and every $k$, the homology $ H_*(\Pom^{\underline{k}}_{M,N}[U],\KK)$
 is concentrated in top rank;
 $H_r(\Pom^{\underline{k}}_{M,N}[U],\KK)=0$, for $r\neq k$.
\end{theo}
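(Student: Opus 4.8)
The plan is to mimic the proof of Theorem \ref{C-M0}, which handles the monoid case, and adapt the chain-complex isomorphism to the module setting. First I would set up, for a fixed finite set $U$ and fixed $k$, the reversed order complex of the poset $\Pom^{\underline{k}}_{M,N}[U]$: define a $\dg$-tensor species $C^{(k),\grader}$ by $C^{(k),\underline{r}}[U]:=\KK\Delta_{k-r}(\Pom^{\underline{k}}_{M,N}[U])$ for $r=0,1,\dots,k$, with differential $\hat d_r$ equal (up to sign) to the simplicial boundary $\partial_{k-r}$. The goal is to exhibit an isomorphism of $\dg$-tensor species between $\Ba^{(k)}(M,N)$ and this $C^{(k)}$, just as $\alpha$ did for $\Ba^{(k)}(M)$ and $C^{(k)}$ in Theorem \ref{C-M0}.

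The key construction is the module analogue of the map $\alpha$. A decomposable element of $\Ba^{(k)}(M,N)$ has the form $n_{U_1}\otimes m_{U_2}\otimes\dots\otimes m_{U_r}$ with $n_{U_1}\in N[U_1]$ and $m_{U_i}\in M[U_i]$. Setting $V_i=\biguplus_{s=1}^i U_s$, I would define recursively $\overline n_{V_1}=n_{U_1}$ and $\overline n_{V_i}=\tau(\overline n_{V_{i-1}},m_{U_i})$ for $i=2,\dots,r$, using the $M$-action $\tau$ on $N$. This produces a chain $\hat 0<_\tau\overline n_{V_1}<_\tau\dots<_\tau\overline n_{V_r}$ in $\Pom_{M,N}[U]$ of length $r$, landing (after the index reversal) in $C^{(k),\underline{k-r}}[U]$. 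By the left-cancellation law for the c-module $N$ (and the one for the c-monoid $M$ governing the portion of the product to the right of the first slot), this correspondence is a bijection on basis elements. One then checks that applying $\tau$ to the first two factors, or applying $\nu$ to an adjacent pair of $m$'s, corresponds exactly to deleting the appropriate element from the chain — which is precisely what the differential $d^N_U$ of equation (\ref{Badif})-style, defined right after it, does, matching $\partial$. Extending by linearity gives the desired $\dg$-tensor species isomorphism $\Ba^{(k)}(M,N)\cong C^{(k)}$.

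Once the isomorphism is in place, the conclusion is formal. We have $H^0 C^{(k),\grader}[U]=H_k(\Pom^{\underline{k}}_{M,N}[U],\KK)$ (top-rank homology of the reversed complex equals the degree-zero cohomology), and $H^0\Ba^{(k)}(M,N)=(N^{\coop.})^{\underline{k-1}}$ by the discussion preceding Definition of Koszul modules (recall $\Ba(M,N)^{\underline{0}}=G\cdot\LL(F)$ and the indexing shift by one coming from the extra $N$-factor, consistent with the statement's $\underline{k-1}$). This gives the first assertion: $U\mapsto H_k(\Pom^{\underline{k}}_{M,N}[U],\KK)$ is isomorphic to $(N^{\coop.})^{\underline{k-1}}$. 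For the second assertion, $N$ is Koszul iff $\Ho\Ba(M,N)$ is concentrated in degree zero, iff each $\Ho\Ba^{(k)}(M,N)$ is, iff (via the isomorphism) each $\Ho C^{(k),\grader}$ is, iff $H_r(\Pom^{\underline{k}}_{M,N}[U],\KK)=0$ for $r\neq k$ and all $U,k$.

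The main obstacle I expect is bookkeeping the index shift and the interaction of the two cancellation laws at the junction between the $N$-factor and the $M$-factors: one must verify that the differential $d^N_U$, whose first term uses $\tau$ and whose remaining terms use $\nu$ with the sign conventions of (\ref{Badif}), transports under $\alpha$ to the alternating simplicial boundary without a sign mismatch, and that purity (the preceding Proposition, $\mrm{rk}([\hat 0,n])=k$ for $n\in N^{\underline{k-1}}[U]$) guarantees the grading is the honest one so that ``top rank'' really means degree $k$. These are the same technical points as in Theorem \ref{C-M0}, now carried out in the module-indexed setting, so no genuinely new idea is needed — only care with the degree shift $N^{\underline{j}}$ versus $\Pom^{\underline{k}}_{M,N}$ with $k=j+1$.
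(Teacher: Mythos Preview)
Your proposal is correct and follows exactly the approach the paper intends: the paper does not give an explicit proof of Theorem~\ref{C-Mod} but introduces it with ``Analogously to the c-monoid case we have the following results,'' so adapting the proof of Theorem~\ref{C-M0} via the map $\alpha$ built from the module action $\tau$ (in place of $\nu$) and the two left-cancellation laws is precisely what is meant. Your attention to the index shift $\Pom_{M,N}^{\underline{k}}=\bigcup_{n\in N^{\underline{k-1}}[U]}[\hat 0,n]$, which accounts for the $\underline{k-1}$ in the statement, is the only point requiring care beyond the monoid argument, and you have flagged it correctly.
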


\begin{cor}The $M$-module $N$ is Koszul if and only if the M\"obius species $Q_{N,M}$
is a Cohen-Macaulay. Moreover we have
\begin{eqnarray} \Mob
Q_{N,M}(x)&=&-(N^{\coop
.})^{\grader}(x),\\
\Ch Q_{M,N}(\xx)&=& -\Ch(N^{\coop .})^{\grader}(\xx),\\
\label{moddim}\sum_{n\in
N^{\underline{k}}[n]}\mu(\hat{0},m)&=&(-1)^{k+1} \mathrm{dim}(N^{\coop .})^{\underline{k}}[n],\\
\label{modtr}\sum_{n\in
N^{\underline{k}}[n],N[\sigma]n=n}\mu([\hat{0},n]_{\sigma})&=&(-1)^{k+1}\mathrm{tr}(N^{\coop
.})^{\underline{k}}[\sigma].\end{eqnarray}
\end{cor}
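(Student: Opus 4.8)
The plan is to imitate the proof of Theorem \ref{C-M}, with $\Pom_{M,N}$ in place of $\Pom_M$ and the interval lemma above in place of Corollary \ref{important}; throughout, $M$ is a Koszul quadratic c-monoid and $N$ a quadratic c-$M$-module, so that the hypotheses of Theorem \ref{C-Mod} are in force. The combinatorial input is a decomposition of the relevant order complexes. Since the boundary map of an order complex fixes the two endpoints of a chain, and since (by the purity proposition above) the maximal elements of $\Pom^{\underline{k}}_{M,N}[U]$ are exactly the elements of $N^{\underline{k-1}}[U]$, which are pairwise incomparable, the complex $\KK\Delta(\Pom^{\underline{k}}_{M,N}[U])$ splits as the direct sum over $n\in N^{\underline{k-1}}[U]$ of the subcomplexes $\KK\Delta([\hat{0},n])$. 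Hence $H_r(\Pom^{\underline{k}}_{M,N}[U],\KK)\cong\bigoplus_{n\in N^{\underline{k-1}}[U]}H_r([\hat{0},n],\KK)$, so $H_*(\Pom^{\underline{k}}_{M,N}[U],\KK)$ is concentrated in top rank for all $U$ and $k$ if and only if each interval $[\hat{0},n]$, with $n$ ranging over all module species $N[V]$, has homology concentrated in its top rank.

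Next I would establish the module analogue of the lemma preceding Theorem \ref{C-M}: $Q_{N,M}$ is Cohen--Macaulay if and only if every interval $[\hat{0},n]\in Q_{N,M}[U]$ has homology concentrated in top rank. Indeed, $Q_{N,M}$ being Cohen--Macaulay means exactly that every poset $[\hat{0},n]$ with $n\in N[U]$ is Cohen--Macaulay, that is, that every subinterval of every such $[\hat{0},n]$ has homology concentrated in top rank. A subinterval based at $\hat{0}$ is either trivial or is itself of the form $[\hat{0},n']$ with $n'\in N[V]$, hence already among the posets under consideration; a subinterval $[n_1,n_2]$ with $\hat{0}<_\tau n_1$ is, by the interval lemma above, isomorphic to an interval $[\hat{0},m]$ of some $\Pom_M[V]$, and all such intervals have homology concentrated in top rank because $M$ is Koszul, hence $M^{-1}$ is Cohen--Macaulay by Theorem \ref{C-M}. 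Combining this with the splitting of the first paragraph and with Theorem \ref{C-Mod}, one obtains: $N$ is Koszul $\;\Leftrightarrow\;$ $H_*(\Pom^{\underline{k}}_{M,N}[U],\KK)$ is concentrated in top rank for all $U,k$ $\;\Leftrightarrow\;$ every $[\hat{0},n]$ is $\;\Leftrightarrow\;$ $Q_{N,M}$ is Cohen--Macaulay, which is the first assertion of the corollary.

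For the remaining identities I would argue as for $M$ in equations (\ref{mdim}) and (\ref{mtr}), now assuming $N$ Koszul. By the purity proposition, for $n\in N^{\underline{k}}[U]$ the interval $[\hat{0},n]$ is graded of rank $k+1$, so by the equivalence just proved its homology is concentrated in degree $k+1$; hence $\mu(\hat{0},n)=(-1)^{k+1}\mathrm{dim}H_{k+1}([\hat{0},n],\KK)$, and carrying out the same computation $S_n$-equivariantly — via the Hopf trace formula, exactly as for (\ref{mdim})--(\ref{mtr}) — yields $\mu([\hat{0},n]_\sigma)=(-1)^{k+1}\mathrm{tr}\big(\sigma\mid H_{k+1}([\hat{0},n],\KK)\big)$ whenever $N[\sigma]n=n$. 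Summing over $n\in N^{\underline{k}}[U]$ and invoking the splitting of the first paragraph together with the isomorphism of tensor species $H_{k+1}(\Pom^{\underline{k+1}}_{M,N}[U],\KK)\cong(N^{\coop .})^{\underline{k}}[U]$ of Theorem \ref{C-Mod} produces (\ref{moddim}) and (\ref{modtr}). Summing (\ref{moddim}) over $k$ then gives $\sum_{n\in N[U]}\mu(\hat{0},n)=-\sum_k(-1)^k\mathrm{dim}(N^{\coop .})^{\underline{k}}[U]=-\chi\big((N^{\coop .})^{\grader}[U]\big)$; assembling these over all finite sets $U$ into exponential generating functions gives $\Mob Q_{N,M}(x)=-(N^{\coop .})^{\grader}(x)$, and the character identity $\Ch Q_{N,M}(\xx)=-\Ch(N^{\coop .})^{\grader}(\xx)$ follows in the same way from (\ref{modtr}). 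This is consistent with the identity $\Mob Q_{N,M}(x)=-N(x)/M(x)$ of the theorem above.

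I expect the main obstacle to be the reduction establishing the Cohen--Macaulay equivalence: making precise that, granted the Koszulness of $M$, Cohen--Macaulayness of $Q_{N,M}$ imposes nothing beyond the vanishing of off-top homology of the intervals $[\hat{0},n]$. Along the way one must be careful with the shift by one between the rank $k+1$ of $[\hat{0},n]$ (for $n\in N^{\underline{k}}$) and the module degree $k$ appearing in Theorem \ref{C-Mod}, and with the $S_n$-equivariance that must be tracked through that isomorphism and through the Hopf trace formula in order to obtain the trace identity (\ref{modtr}).
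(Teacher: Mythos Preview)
Your proposal is correct and follows exactly the route the paper intends: the corollary is stated without proof, but your argument is the direct module analogue of the proof of Theorem~\ref{C-M}, using the splitting $H_r(\Pom^{\underline{k}}_{M,N}[U],\KK)\cong\bigoplus_{n}H_r([\hat{0},n],\KK)$, the interval lemma (which reduces proper subintervals to intervals of $\Pom_M$, already Cohen--Macaulay since $M$ is Koszul), and Theorem~\ref{C-Mod}. Your handling of the rank shift $k\mapsto k+1$ coming from the artificial $\hat{0}$ is correct, and the derivation of the four displayed identities via the Hopf trace formula is precisely the ``standard arguments'' the paper invokes for (\ref{mdim})--(\ref{mtr}).
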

\begin{ex} {\em Truncated Posets.} Let $M=\mathcal{M}(F,R)$ be a Koszul $c$-monoid.
For a fixed positive integer $l$, $M^{[l]}=\sum_{j\geq l}
M^{\underline{j}}$ is a cancellative Koszul $M$-module.
$\Pom_{M,M^{[l]}}[U]$ is the poset obtained from $\Pom_{M}[U]$ by
removing the elements of rank $j$, $0<j< l$. The orthogonal
relation $R_{M^{[l]}}^{\perp}[U]$ is the linear span of elements
of the form $$\sum_{\nu(m_1,f)=m}m_1^*\otimes f^*\in
(M^{\underline{l}})^*\cdot F^*,$$ where in the sum, $m$ is in
$M^{\underline{l+1}}[U]$. For $M=E$, $\Pom_{E,E^{[l]}}[U]$ is the
truncated Boolean algebra $\mathscr{B}^{[l]}[U]$. Its top homolgy
is given in the first part of proposition \ref{hookmodules} (see
\cite{Solomon} for a general result on the top homology of rank
selected Boolean algebras).
\end{ex}
\section{Applications: Andr\'e's alternating sequences, Bessel
functions, and generalizations.} The Andr\'e generating function
for alternating sequences \cite{Andre} is the prototypical example
of the combinatorial applications of Koszul duality for product
monoids and modules. Define the species $\mathrm{Cosh}$ and
$\mathrm{Sinh}$ to be respectively the even and odd parts of $E$,
 \begin{eqnarray}\mathrm{Cosh}&=&\sum_{k=0}^{\infty}
 E_{2k}=E_{(2)}\\
 \mathrm{Sinh}&=&\sum_{k=0}^{\infty}E_{2k+1}=E_{(2)+1}
\end{eqnarray}

$\mathrm{Cosh}$ is a c-monoid (see \cite{Julia-Miguel}, Example
3.8.), and $\mathrm{Sinh}$ is a $\mathrm{Cosh}$-module. Since $E$
is a Koszul monoid and hence a Koszul $E$-module, by proposition
\ref{Veronese}, both are quadratic and Koszul.
$\mathrm{Cosh}=\mathcal{M}(E_2, R_{\mathrm{Cosh}})$, where for a
set $U$ of cardinal $4$, the vector space of relations
$R_{\mathrm{Cosh}}[U]\subset E_2^2[U]$ is generated by the vectors
of the form $U_1\otimes U_2-U_1'\otimes U_2'$,
$|U_1|=|U_2|=|U_1'|=|U_2'|=2$, and such that $U_1\uplus
U_2=U_1'\uplus U_2'=U$. Its annihilator
$R_{\mathrm{Cosh}}^{\perp}[U]\subset (E_2^*)^2[U]$ is the
one-dimensional vector space generated by \be\label{granir3}
\sum_{U_1\uplus U_2=U}U^*_1\otimes U^*_2.\eeq

In a similar way
$\mathrm{Sinh}=\mo_{\mathrm{Cosh}}(X,R_{\mathrm{Sinh}})$, where
$R_{\mathrm{Sinh}}[U]\subset (X\cdot E_2)[U]$, $|U|=3$, is the
vector space generated by the differences $ a\otimes (U-\{a\})
-a'\otimes (U-\{a'\})  \mbox{, }a,a'\in U.$ Its annihilator is the
one-dimensional vector space generated by

\begin{equation}\label{garnir4}\sum_{a\in U}
 a^*\otimes (U-\{a\})^*.\end{equation}

For a finite set $U=\{1,2,\dots,2k\}$ of even cardinal,
$\mathrm{Cosh}^{!.}[U]=\LL(E_2)[U]/\langle
R_{\mathrm{Cosh}}^{\perp}\rangle[U]$ is the vector space generated
by vectors of the form $U_1^*\otimes U_2^*\otimes\dots\otimes
U_k^*$, $\biguplus_{i=1}^k U_i=U$, and $|U_i|=2$,  under the
relations \be\label{garnir3} \sum_{U_i\uplus
U_{i+1}=H}U^*_i\otimes U^*_{i+1}=0,\;H\subseteq U\mbox{, $|H|=4,$
for $i=1,2,\dots, k-1.$}\eeq

 Let $\tau_{n}$ be the border strip $B_{\alpha}$,
$\alpha$ being the composition $(\overbrace{2,2,\dots,2}^k)$ if
$n=2k$ is even and $\alpha=(\overbrace{2,2,\dots,2}^{k},1)$ if
$n=2k+1$ is odd ($B_{\alpha}$ as defined in \cite{Stanley} p.
383). The relations (\ref{garnir3}) are the Garnir elements of the
skew Specht representation whose shape is $\tau_{2k}$. Then \be
\mrm{Cosh}^{!.}=\sum_{k=0}^{\infty}\mathcal{S}_{\tau_{2k}}.\eeq

In the same way we prove that \be
\mrm{Sinh}^{!.}=\sum_{k=0}^{\infty}\mathcal{S}_{\tau_{2k+1}}.\eeq
The basis of $\mathcal{S}_{\tau_{2k}}[2k]$, and
$\mathcal{S}_{\tau_{2k+1}}[2k+1]$ are in bijection with the
standard tableaux of shapes $\tau_{2k}$ and $\tau_{2k+1}$
respectively. They can be identified with the alternating
permutations of length $2k$ and $2k+1$ respectively.

We have the generating functions
\begin{eqnarray}
\mathrm{Cosh}(x)&=&\sum_{k=0}^{\infty}\frac{x^{2k}}{(2k)!}=\cosh(x)\\
\mathrm{Sinh}(x)&=&\sum_{k=0}^{\infty}\frac{x^{2k+1}}{(2k+1)!}=\sinh(x)\\
\mathrm{Cosh}^{\grader}(x)&=&\sum_{k=0}^{\infty}(-1)^k\frac{x^{2k}}{(2k)!}=\cos(x)\\
\mathrm{Sinh}^{\grader}(x)&=&\sum_{k=0}^{\infty}(-1)^k\frac{x^{2k+1}}{(2k+1)!}=\sin(x)\\
\Ch \mrm{Cosh}^{\grader}(x)&=&\sum_{k=0}^{\infty}(-1)^k
h_{2k}(\xx)\\ \Ch
\mrm{Sinh}^{\grader}(x)&=&\sum_{k=0}^{\infty}(-1)^k
h_{2k+1}(\xx)\end{eqnarray}

Denote by $\mathcal{E}_k$ the Euler number, the number of
alternating permutations of length $k$.  From proposition
\ref{genk} we obtain
\begin{eqnarray}
\mrm{Cosh}^{!.}(x)&=&\frac{1}{\cos(x)}=\sec(x)=\sum_{k=0}^{\infty}\mathcal{E}_{2k}\frac{x^{2k}}{2k!}\\
\mrm{Sinh}^{!.}(x)&=&\frac{\sin(x)}{\cos(x)}=\tan(x)=\sum_{k=0}^{\infty}\mathcal{E}_{2k+1}\frac{x^{2k+1}}{(2k+2)!}\\
\Ch\mrm{Cosh}^{!.}(\xx)&=&\frac{1}{\sum_{k=0}^{\infty}(-1)^k
h_{2k}(\xx)}=\sum_{k=0}^{\infty}S_{\tau_{2k}}(\xx)\\
\Ch\mrm{Sinh}^{!.}(\xx)&=&\frac{\sum_{k=0}^{\infty}(-1)^k
h_{2k+1}(\xx)}{\sum_{k=0}^{\infty}(-1)^k
h_{2k}(\xx)}=\sum_{k=0}^{\infty}S_{\tau_{2k+1}}(\xx).
\end{eqnarray}

 All of this can be straightforwardly generalized to the monoid
$E_{(n)}$, and its modules $E_{(n)+r}$, $r=1,2,\dots,n-1$.

Observe that $\Pom_{\mathrm{Cosh}}[U]$ is the poset of subsets of
$U$ having even cardinality. By theorem \ref{C-M},
$\Pom_{\mathrm{Cosh}}[U]$ is Cohen-Macaulay for every $U$. This is
indeed a consequence of a classical result:
$\Pom_{\mrm{Cosh}}[2k]$ is the rank restricted Boolean algebra
$\mathscr{B}_S[2k]$, $\mathscr{B}_S[2k]=\{V||V|\in S\}$,
$S=\{0,2,4,\dots,2k\}$, hence it is Cohen Macaulay (see
\cite{Bjorner} Theorem 5.2).  We have \be\Ch H_k(
P_{\mathrm{Cosh}}[2k],\KK)(\xx)=S_{\tau_{2k}}(\xx).
 \eeq\noindent
Since $\mrm{dim}\mathcal{S}_{\tau_{r}}[r]= \mathcal{E}_{r}$, by
equations (\ref{mdim}) and (\ref{moddim}), we can compute the
M\"obius function
\begin{eqnarray*}\mu(\Pom_{\mathrm{Cosh}}[2k])&=&(-1)^k\mathcal{E}_{2k},\\
\mu(\Pom_{\mathrm{Cosh,Sinh}}[2k+1])&=&(-1)^{k+1}\mathcal{E}_{2k+1}.\end{eqnarray*}
See \cite{Stanley0} Proposition 3.16.4. or
\cite{Stanley-Binomial}, Corollary 3.5., for a q-generalization of
this M\"obius function formula, in the context of binomial posets.

The combinatorics of Bessel functions is the simplest and most
natural application of the Koszulness of the Segre and Manin
products of Koszul monoids. Consider the Segre product of $E$ with
itself, $E\seg E$. The elements of $E\seg E[U]$ are pairs of sets
$(U_1,U_2)$, $U_1\uplus U_2=U$, $|U_1|=|U_2|$. $E\seg E$ is
clearly a c-monoid. Its generating functions are

\begin{eqnarray} (E\seg
E)(x)&=&\sum_{k=0}^{\infty}E_k(x)^2=\sum_{k=0}^{\infty}\frac{x^{2k}}{k!^2}=I_0(2x)\\
\Ch(E\seg E)(\xx)&=&\sum_{k=0}^{\infty}h_k^2(\xx)\\
(E\seg
E)^{\grader}(x)&=&\sum_{k=0}^{\infty}(-1)^k\frac{x^{2k}}{k!^2}=J_0(2x)\\
\Ch(E\seg E)^{\grader}(\xx)&=&\sum_{k=0}^{\infty}(-1)^k
h_k^2(\xx).
\end{eqnarray}
\noindent where $J_0(x)$ and $I_0(x)$ are respectively the
ordinary and hyperbolic Bessel functions. The species of
generators is $X\cdot X=X^2$.
 The species of relations $R[U]\subset X^4[U]=(X^2\cdot X^2)[U]$ is the vector space generated
 by the differences:
\be  l_1\otimes l_2-l_3\otimes l_4\eeq \noindent where $l_1\otimes
l_2, l_3\otimes l_4\in X^2[U_1]\otimes X^2[U_2]$, $U_1\uplus
U_2=U.$ $R^{\perp}$ is the vector space generated by the set\be
\left\{\sum_{ l_1\otimes l_2\in X^2[U_1]\otimes
X^2[U_2]}l^*_1\otimes l^*_2|U_1\uplus U_2=U\right\}\eeq For
$U=\{1,2,3,4\}$, $U_1=\{1,2\}$, $U_2=\{3,4\}$, the relation
becomes the following identity in $(\Lambda\seb\Lambda)[4]=(E\seg
E)^{!.}[4]=(X^2\cdot X^2)[4]/R^{\perp}[4],$

\begin{eqnarray}\label{rr}
&&\overline{(1,2)^*\otimes (3,4)^*}+\overline{(2,1)^*\otimes
(3,4)^*}+\overline{(1,2)^*\otimes (4,3)^*}+\overline{(2,1)^*\otimes(4,3)^*}=0,\\
&&\overline{(1,2)^*\otimes (3,4)^*}=-\overline{(2,1)^*\otimes
(3,4)^*}-\overline{(1,2)^*\otimes
(4,3)^*}-\overline{(2,1)^*\otimes (4,3)^*}.\end{eqnarray}

By this relations, the vector space $(E\seg E)^{!.}[2k]=(X^k\cdot
X^k)[2k]/\mathcal{R}^{\perp}[2k]$ has a basis that can be
identified with the set of elements of the form $l^*_1\otimes
l^*_2\in (X^k\cdot X^k)[2k]$, where in the pair
$(l_1,l_2)=(a_1a_2\dots a_k, b_1b_2\dots b_k)$ the configuration
$a_i<a_{i+1}$, $b_i<b_{i+1}$ is not allowed (forbidden rise-rise
configuration). From that, we recover the result of \cite{Carlitz}
that the coefficient $\mathfrak{f}_{2k}$ in the expansion \be
(E\seg
E)^{!.}(x)=J_0^{-1}(x)=\sum_{k=0}^{\infty}\mathfrak{f}_{2k}\frac{x^{2k}}{k!^2}\eeq
\noindent counts the pairs of linear orders $(l_1,l_2)\in
X^k[k]\times X^k[k]$,  with forbidden rise-rise configuration. A
similar interpretation can be obtained from the coefficients of
the monomials in the expansion of the character of $(E\seg
E)^{!.}$ \be \Ch (E\seg E)^{!.}=\frac{1}{\sum_{k=0}^{\infty}(-1)^k
h_k^2(\xx)}. \eeq

The ordinary generating function of the Manin product $E\seb
E=(\Lambda\seg\Lambda)^{!}$ is also $J_0^{-1}(x)$. Its Frobenius
character is \be \Ch (E\seb
E)(\xx)=\frac{1}{\sum_{k=0}^{\infty}(-1)^k e_k^2(\xx)}.\eeq See
\cite{Carlitz} for a combinatorial interpretation of that. As
before, the species of generators of $\Lambda\seg\Lambda$ is
$X\cdot X=X^2$.

The species of relations $R[U]\subset X^4[U]=(X^2\cdot X^2)[U]$
 is the vector space generated
 by the differences:
\be  l_1\otimes l_2- \mathrm{sig}\sigma.\mathrm{sig}\tau(\sigma
l_1\otimes \tau l_2)\eeq \noindent where $l_1\otimes l_2\in
X^2[U_1]\otimes X^2[U_2]$, $U_1\uplus U_2=U,$ $\sigma$ and $\tau$
being permutations of $U_1$ and $U_2$ respectively. $R^{\perp}$ is
the vector space generated by the set \be \left\{\sum_{
\sigma,\tau }\mathrm{sig}\sigma.\mathrm{sig}\tau (\sigma
l^*_1\otimes \tau l^*_2)|U_1\uplus U_2=U\right\}\eeq For
$U=\{1,2,3,4\}$, $U_1=\{1,2\}$, $U_2=\{3,4\},$ we have the
identity in $(E\seb E)[4]=(\Lambda\seb
\Lambda)^{!.}[4]=$$(X^2\cdot X^2)[4]/R^{\perp}[4],$

\be \overline{(1,2)^*\otimes
(3,4)^*}=-\overline{(2,1)^*\otimes(3,4)^*}-\overline{(1,2)^*\otimes
(4,3)^*}+\overline{(2,1)^*\otimes (4,3)^*}.\eeq

As before, the basis of $(E\seb E)[2k]$ is also in bijection with
pairs of linear orders with forbidden rise-rise configuration.

All of this can be generalized to arbitrarily many Segre and Manin
products of E: $E\seg E\seg\dots\seg E$ and $E\seb E\seb\dots\seb
E$.
\begin{figure}\begin{center}
\scalebox{1} 
{
\begin{pspicture}(0,-1.858125)(9.762813,1.858125)
\definecolor{color632}{rgb}{0.8,0.0,0.8}
\usefont{T1}{ptm}{m}{n}
\rput(4.842969,-1.6303124){$(\emptyset,\emptyset)$}
\usefont{T1}{ptm}{m}{n}
\rput(4.862344,1.6696875){$(\{a,b\},\{c,d\})$}
\usefont{T1}{ptm}{m}{n}
\rput(1.0723437,-0.0103125){$(\{a\},\{c\})$}
\usefont{T1}{ptm}{m}{n}
\rput(3.5923438,-0.0103125){$(\{a\},\{d\})$}
\usefont{T1}{ptm}{m}{n}
\rput(6.112344,-0.0103125){$(\{b\},\{c\})$}
\usefont{T1}{ptm}{m}{n}
\rput(8.632343,-0.0103125){$(\{b\},\{d\})$}
\psline[linewidth=0.04cm,linecolor=color632](4.7829375,-1.4603125)(1.3629375,-0.2603125)
\psline[linewidth=0.04cm,linecolor=color632](4.7829375,-1.4603125)(3.5429375,-0.2003125)
\psline[linewidth=0.04cm,linecolor=color632](4.8029375,-1.4403125)(6.0829377,-0.2203125)
\psline[linewidth=0.04cm,linecolor=color632](4.8229375,-1.4603125)(8.602938,-0.2203125)
\psline[linewidth=0.04cm,linecolor=color632](1.0629375,0.2196875)(4.8229375,1.4996876)
\psline[linewidth=0.04cm,linecolor=color632](3.5629375,0.1596875)(4.8429375,1.5196875)
\psline[linewidth=0.04cm,linecolor=color632](6.1029377,0.1796875)(4.8429375,1.4996876)
\psline[linewidth=0.04cm,linecolor=color632](8.602938,0.1796875)(4.8429375,1.4996876)
\end{pspicture}
}

\end{center} \caption{Interval of the scale poset $\Pom_{E\seg
E}[\{a,b,c,d\}].$}\label{scale}
\end{figure}

\subsection{The scale posets}
 We call $\Pom_{E\seg E}[U]$ the {\em scale poset}. The elements of it
 are balanced ordered pairs of subsets of $U$, $(V_1,V_2)$,
$|V_1|=|V_2|$, $V_1\cap V_2=\emptyset$, ordered by simultaneous
inclusion: $(V'_1,V'_2)\leq (V_1,V_2)$ if $V'_1\subseteq V_1$ and
$V'_2\subseteq V_2$. The maximal elements are the balanced ordered
pairs of sets $(U_1,U_2)$, $U_1\uplus U_2=U$, and all the posets
$[(\emptyset,\emptyset), (U_1,U_2)]\in
 (E\seg E)^{-1}[U]$ are isomorphic.
 By theorem \ref{C-M}, they are Cohen-Macaulay.
 The character of the top homology $\Ch
H_k(\Pom_{E\seg E}[2k],\KK)(\xx)$ is the homogeneous part of
degree $2k$ in the expansion of the symmetric function
$\left(\sum_{j=0}^{\infty}(-1)^j h_j^2(\xx)\right)^{-1}$. By
equation (\ref{mdim}), the M\"obius cardinality \be |(E\seg
E)^{-1}[2k]|_{\mu}=\sum_{U_1\uplus
U_2=[2k]}\mu([(\emptyset,\emptyset),(U_1,U_2)])=\left(\begin{matrix}2k\\k\end{matrix}\right)
\mu[(\emptyset,\emptyset),([k],[k])] \eeq \noindent is equal to
$(-1)^k\mrm{dim}(E\seg E)^{!.}[2k]
=(-1)^k\left(\begin{matrix}2k\\k\end{matrix}\right)\mathfrak{f}_{2k}$.
Then, \be \mu[(\emptyset,\emptyset),([k],[k])]=(-1)^k
\mathfrak{f}_{2k},\eeq \noindent and we have \be\Mob (E\seg
E)^{-1}(x)=
\frac{1}{I_0(2x)}=\sum_{k=0}^{\infty}(-1)^k\mathfrak{f}_{2k}\frac{x^{2k}}{k!^2}.\eeq

The scale posets can be generalized by considering, for fixed $k$
and $n$, the $c$-monoid $$\mrm{Lib_{(k,n)}}=\overbrace{E_{(k)}\seg
E_{(k)}\seg\dots\seg E_{(k)}}^{n}.$$ The elements of
$\Pom_{\mrm{Lib_{(k,n)}}}[U]$ are $n$-tuples of sets having the
same cardinal, a multiple of $k$, and ordered by simultaneous
inclusion. $\mrm{Lib}^{-1}_{(k,n)}$ is Cohen-Macaulay, and we have
the generating functions
\begin{eqnarray}
\Mob
\mrm{Lib_{(k,n)}}^{-1}(x)&=&\left(\sum_{r=0}^{\infty}\frac{x^{nkr}}{(rk)!^n}\right)^{-1},\;
\mrm{Lib}_{(k,n)}^{\coop .}(x)=\left(\sum_{r=0}^{\infty}(-1)^r\frac{ x^{nkr}}{(rk)!^n}\right)^{-1}\\
\Ch
\mrm{Lib_{(k,n)}}^{-1}(\xx)&=&\frac{1}{\sum_{r=0}^{\infty}h_{kr}^n(\xx)},\;\Ch
\mrm{Lib_{(k,n)}}^{\coop
.}(\xx)=\frac{1}{\sum_{r=0}^{\infty}(-1)^r h_{kr}^n(\xx)}.
\end{eqnarray}

\subsection{c-Operads}

Let $R$ and $S$ be two set-species such that
$S[\emptyset]=\emptyset$. Recall that the elements of the
substitution $R(S)[U]$ are pairs of the form $(a,r)$, $a$ being an
assembly of $S$-structures, $$a=\{s_B\}_{B\in \pi}\mbox{, }s_B\in
S[B]\mbox{, $B\in\pi$}\mbox{, $\pi$ a partition of $U$,}$$ and
$r\in R[\pi]$.

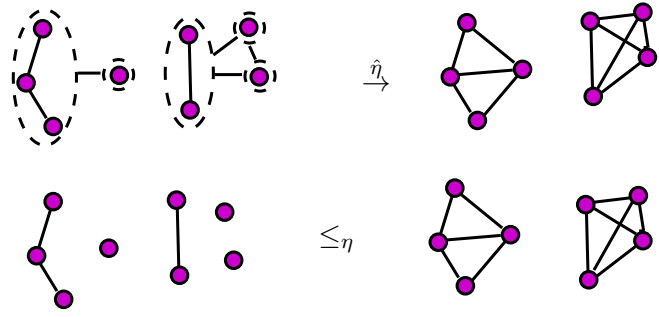
\begin{figure}
\begin{center}
\scalebox{1} 
{
\begin{pspicture}(0,-2.04)(8.58,2.04)
\definecolor{color838b}{rgb}{0.8,0.0,0.8}
\pscircle[linewidth=0.04,dimen=outer,fillstyle=solid,fillcolor=color838b](0.55,-0.61){0.13}
\pscircle[linewidth=0.04,dimen=outer,fillstyle=solid,fillcolor=color838b](0.33,-1.33){0.13}
\pscircle[linewidth=0.04,dimen=outer,fillstyle=solid,fillcolor=color838b](0.69,-1.91){0.13}
\pscircle[linewidth=0.04,dimen=outer,fillstyle=solid,fillcolor=color838b](2.19,-0.59){0.13}
\pscircle[linewidth=0.04,dimen=outer,fillstyle=solid,fillcolor=color838b](1.29,-1.23){0.13}
\pscircle[linewidth=0.04,dimen=outer,fillstyle=solid,fillcolor=color838b](2.23,-1.59){0.13}
\pscircle[linewidth=0.04,dimen=outer,fillstyle=solid,fillcolor=color838b](2.83,-0.75){0.13}
\psline[linewidth=0.04cm,fillcolor=color838b](0.52,-0.7)(0.36,-1.22)
\psline[linewidth=0.04cm,fillcolor=color838b](0.38,-1.44)(0.6,-1.8)
\psline[linewidth=0.04cm,fillcolor=color838b](2.2,-0.7)(2.22,-1.48)
\usefont{T1}{ptm}{m}{n}
\rput(4.311406,-1.11){$\leq_{\eta}$}
\pscircle[linewidth=0.04,dimen=outer,fillstyle=solid,fillcolor=color838b](5.89,-0.43){0.13}
\pscircle[linewidth=0.04,dimen=outer,fillstyle=solid,fillcolor=color838b](5.67,-1.15){0.13}
\pscircle[linewidth=0.04,dimen=outer,fillstyle=solid,fillcolor=color838b](6.03,-1.73){0.13}
\pscircle[linewidth=0.04,dimen=outer,fillstyle=solid,fillcolor=color838b](6.63,-1.05){0.13}
\psline[linewidth=0.04cm,fillcolor=color838b](5.86,-0.52)(5.7,-1.04)
\psline[linewidth=0.04cm,fillcolor=color838b](5.72,-1.26)(5.94,-1.62)
\psline[linewidth=0.04cm,fillcolor=color838b](5.98,-0.52)(6.52,-0.96)
\psline[linewidth=0.04cm,fillcolor=color838b](5.78,-1.12)(6.5,-1.06)
\psline[linewidth=0.04cm,fillcolor=color838b](6.12,-1.66)(6.56,-1.14)
\pscircle[linewidth=0.04,dimen=outer,fillstyle=solid,fillcolor=color838b](0.41,1.69){0.13}
\pscircle[linewidth=0.04,dimen=outer,fillstyle=solid,fillcolor=color838b](0.19,0.97){0.13}
\pscircle[linewidth=0.04,dimen=outer,fillstyle=solid,fillcolor=color838b](0.55,0.39){0.13}
\pscircle[linewidth=0.04,dimen=outer,fillstyle=solid,fillcolor=color838b](2.35,1.61){0.13}
\pscircle[linewidth=0.04,dimen=outer,fillstyle=solid,fillcolor=color838b](1.43,1.07){0.13}
\pscircle[linewidth=0.04,dimen=outer,fillstyle=solid,fillcolor=color838b](2.37,0.61){0.13}
\pscircle[linewidth=0.04,dimen=outer,fillstyle=solid,fillcolor=color838b](3.29,1.05){0.13}
\psline[linewidth=0.04cm,fillcolor=color838b](0.38,1.6)(0.22,1.08)
\psline[linewidth=0.04cm,fillcolor=color838b](0.24,0.86)(0.46,0.5)
\psline[linewidth=0.04cm,fillcolor=color838b](2.36,1.5)(2.38,0.72)
\usefont{T1}{ptm}{m}{n}
\rput(4.8414063,1.09){$\stackrel{\hat{\eta}}{\rightarrow}$}
\pscircle[linewidth=0.04,dimen=outer,fillstyle=solid,fillcolor=color838b](6.05,1.77){0.13}
\pscircle[linewidth=0.04,dimen=outer,fillstyle=solid,fillcolor=color838b](5.83,1.05){0.13}
\pscircle[linewidth=0.04,dimen=outer,fillstyle=solid,fillcolor=color838b](6.19,0.47){0.13}
\pscircle[linewidth=0.04,dimen=outer,fillstyle=solid,fillcolor=color838b](7.69,1.79){0.13}
\pscircle[linewidth=0.04,dimen=outer,fillstyle=solid,fillcolor=color838b](6.79,1.15){0.13}
\pscircle[linewidth=0.04,dimen=outer,fillstyle=solid,fillcolor=color838b](7.73,0.79){0.13}
\pscircle[linewidth=0.04,dimen=outer,fillstyle=solid,fillcolor=color838b](8.45,1.31){0.13}
\psline[linewidth=0.04cm,fillcolor=color838b](6.02,1.68)(5.86,1.16)
\psline[linewidth=0.04cm,fillcolor=color838b](5.88,0.94)(6.1,0.58)
\psline[linewidth=0.04cm,fillcolor=color838b](7.7,1.68)(7.72,0.9)
\psline[linewidth=0.04cm,fillcolor=color838b](6.14,1.68)(6.68,1.24)
\psline[linewidth=0.04cm,fillcolor=color838b](5.94,1.08)(6.66,1.14)
\psline[linewidth=0.04cm,fillcolor=color838b](6.28,0.54)(6.72,1.06)
\psline[linewidth=0.04cm,fillcolor=color838b](7.84,0.84)(8.36,1.24)
\psline[linewidth=0.04cm,fillcolor=color838b](8.36,1.4)(7.8,1.74)
\psellipse[linewidth=0.04,linestyle=dashed,dash=0.16cm
0.16cm,dimen=outer](0.44,1.04)(0.44,0.92)
\psellipse[linewidth=0.04,linestyle=dashed,dash=0.16cm
0.16cm,dimen=outer](2.36,1.1)(0.34,0.76)
\pscircle[linewidth=0.04,linestyle=dashed,dash=0.16cm
0.16cm,dimen=outer](1.42,1.08){0.22}
\pscircle[linewidth=0.04,linestyle=dashed,dash=0.16cm
0.16cm,dimen=outer](3.3,1.06){0.22}
\psline[linewidth=0.04cm,fillcolor=color838b](0.88,1.1)(1.18,1.1)
\psline[linewidth=0.04cm,fillcolor=color838b](2.7,1.08)(3.06,1.08)
\pscircle[linewidth=0.04,dimen=outer,fillstyle=solid,fillcolor=color838b](3.15,1.71){0.13}
\pscircle[linewidth=0.04,linestyle=dashed,dash=0.16cm
0.16cm,dimen=outer](3.16,1.7){0.22}
\psline[linewidth=0.04cm](2.68,1.34)(3.0,1.58)
\pscircle[linewidth=0.04,dimen=outer,fillstyle=solid,fillcolor=color838b](8.39,1.91){0.13}
\psline[linewidth=0.04cm](8.28,1.88)(7.78,1.82)
\psline[linewidth=0.04cm](8.4,1.82)(8.46,1.44)
\psline[linewidth=0.04cm](8.46,1.44)(8.4,1.44)
\pscircle[linewidth=0.04,dimen=outer,fillstyle=solid,fillcolor=color838b](7.63,-0.65){0.13}
\pscircle[linewidth=0.04,dimen=outer,fillstyle=solid,fillcolor=color838b](7.67,-1.65){0.13}
\pscircle[linewidth=0.04,dimen=outer,fillstyle=solid,fillcolor=color838b](8.39,-1.13){0.13}
\psline[linewidth=0.04cm,fillcolor=color838b](7.64,-0.76)(7.66,-1.54)
\psline[linewidth=0.04cm,fillcolor=color838b](7.78,-1.6)(8.3,-1.2)
\psline[linewidth=0.04cm,fillcolor=color838b](8.3,-1.04)(7.74,-0.7)
\pscircle[linewidth=0.04,dimen=outer,fillstyle=solid,fillcolor=color838b](8.33,-0.53){0.13}
\psline[linewidth=0.04cm](8.22,-0.56)(7.72,-0.62)
\psline[linewidth=0.04cm](8.34,-0.62)(8.4,-1.0)
\psline[linewidth=0.04cm](8.4,-1.0)(8.34,-1.0)
\pscircle[linewidth=0.04,dimen=outer,fillstyle=solid,fillcolor=color838b](2.95,-1.39){0.13}
\psline[linewidth=0.04cm](3.16,1.46)(3.24,1.28)
\psline[linewidth=0.04cm](8.32,1.82)(7.78,0.88)
\psline[linewidth=0.04cm](8.26,-0.6)(7.74,-1.52)
\end{pspicture}
}
 \end{center}\caption{Order induced on  $\mathscr{G}=E(\mathscr{G}_c)$ by the $c$-operad  $\mathscr{G}_c$
 of simple,
connected graphs.}\label{ograph}
\end{figure}

\begin{defi}{\em c-operads.} Let $(\mathscr{C},\eta)$ be a set-operad.
We say that $\mathscr{C}$ is a $c$-operad if it satisfies the left
cancellation law: For every pair of elements $(a,c), (a,c')\in
\mathscr{C}(\mathscr{C})[U]$, if $\eta(a,c)=\eta(a,c')$, then
$c=c'$. $\eta$ induces a natural transformation
$\hat{\eta}:E(\mathscr{C})(\mathscr{C})\rightarrow E(\mathscr{C})$
defined as the composition  $\hat{\eta}=E(\eta)\circ \alpha$, \be
E(\mathscr{C})(\mathscr{C})\stackrel{\alpha}{\rightarrow}
E(\mathscr{C}(\mathscr{C}))\stackrel{E(\eta)}{\rightarrow}E(\mathscr{C}).\eeq
\noindent $\alpha$ being the associativity isomorphism for the
substitution of species.\end{defi}
 The
elements of $E(\mathscr{C})(\mathscr{C})[U]$ are pairs of
assemblies $(a_1,a_2)$, where $a_1=\{c_B\}_{B\in\pi}$ is an
assembly of $\mathscr{C}$-structures over the set $U$, and $a_2$
is an assembly of $\mathscr{C}$-structures on the set $\pi$. More
explicitly, if $a_2=\{\tilde{c}_{B'}|B'\in\pi'\}$, $\pi'$ is the
partition induced by the assembly $a_2$ (a partition on the blocks
of $\pi$), then $a_1$ can be written as an union of subassemblies
$a_1=\biguplus_{B'\in\pi'}a_1^{B'}$, $a_1^{B'}=\{c_B\in a_1|B\in
B'\}$. $\hat{\eta}$ is explicitly evaluated as follows
$$\hat{\eta}(a_1,a_2)=\{\eta(a_1^{B'},\tilde{c}_{B'})|B'\in \pi'\}.$$

 $\hat{\eta}$ inherits the
left cancellation law: if
$\hat{\eta}(a_1,a_2')=\hat{\eta}(a_1,a_2'')$, then $a_2'=a_2''$.
We can define a partially ordered set
$(E(\mathscr{C})[U],\leq_{\eta})$, for every finite set $U$. The
partial order $\leq_{\eta}$ defined by \be a_1\leq_{\eta}
a_2,\mbox{ if there exists $a_2'\in\mathscr{C}[\pi]$, such that
}\eta(a_1,a_2')=a_2,\eeq \noindent $\pi$ being the partition
associated to the assembly $a_1$. See figure \ref{ograph} for the
partial order induced by the operad of simple, connected graphs
$\mathscr{G}_c$, on the simple graphs
$\mathscr{G}=E(\mathscr{G}_c)$ (assemblies of connected graphs)
(\cite{Julia-Miguel}, Example 3.19.)

The assemblies with only one structure $\{c_U\}$,
$c_U\in\mathscr{C}[U]$, are maximal elements of this poset.
However, as in the c-monoid case, in general an assembly with more
than one element could be maximal. We define the poset
$\mathscr{P}_{\mathscr{C}}[U]$ to be the subposet of
$(E(\mathscr{C})[U],\leq_{\eta})$ whose maximal elements are the
assemblies with only one structure. Explicitly
$$
\mathscr{P}_{\mathscr{C}}[U]:=\bigcup_{c_U\in\mathscr{C}[U]}[\hat{0},\{c_U\}].$$
The order on $\Pop_{\Cop}[U]$ is also functorial. For a bijection
$f:U\rightarrow U'$, the bijection
$$\Pop_{\Cop}[f]:\Pop_{\Cop}[U]\rightarrow\Pop_{\Cop}[U']\mbox{, }\Pop_{\Cop}[f]a=E(\Cop)[f]a$$
 is an order isomorphism.

From the properties of $c$-operads the following theorem follows
(see \cite{Julia-Miguel} Theorem 3.4.)
\begin{theo}The family $\{\Pop_{\Cop}[U]| \mbox{ $U$ a finite
set}\}$ satisfies the following properties:
\begin{enumerate}
\item $\Pop_{\Cop}[U]$ has a $\hat{0}$ equal to the assembly of
elements all in $\Cop_1=X$. \item For $a_1\in \Pop_{\Cop}[U],$ the
order coideal $\mathcal{C}_{a_1}$ is isomorphic to
$\Pop_{\Cop}[\pi]$, $\pi$ being the partition induced by the
assembly $a_1$. Every interval $[a_1,a_2]$ in $\Pop_{\Cop}[U]$ is
isomorphic to $[\hat{0},a_2']$ of $\Pop_{\Cop}[\pi]$, $a_2'$ being
the unique assembly such that $\hat{\eta}(a_1,a_2')=a_2$. \item If
$a_1=\{c_B|B\in\pi\}$, the interval $[\hat{0},a_1]$ is isomorphic
to the direct product
$$\prod_{B\in \pi}[\hat{0},\{c_B\}],$$
where $[\hat{0},\{c_B\}]$ is an interval of $\Pop_{\Cop}[B]$ for
all blocks $B\in\pi$.
\end{enumerate}
\end{theo}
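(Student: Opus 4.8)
The plan is to imitate, verbatim in structure, the proof for c-monoids (the theorem on the posets $\Pom_M[U]$ together with corollary \ref{important}): one replaces the product of species by the substitution, a single $M$-structure on a subset $U_1\subseteq U$ by an assembly of $\Cop$-structures inducing a partition $\pi$ of $U$, and the left cancellation law of $\nu$ by the left cancellation law inherited by $\hat\eta=E(\eta)\circ\alpha$. The statement itself is Theorem 3.4 of \cite{Julia-Miguel}, so the task is to reproduce that argument.

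First I would check that $\leq_\eta$ is a partial order on $E(\Cop)[U]$. Reflexivity is the identity axiom of the operad: $\hat\eta(a,\mathbf 1_\pi)=a$, where $\mathbf 1_\pi$ is the assembly of $X$-structures indexed by the blocks of the partition $\pi$ induced by $a$. Transitivity is the associativity of $\eta$, used through $\alpha$ to merge two successive witnessing assemblies into one. Antisymmetry needs no grading: from $\hat\eta(a_1,a_2')=a_2$ one reads off that the partition induced by $a_1$ refines the one induced by $a_2$, so $a_1\leq_\eta a_2\leq_\eta a_1$ forces the two partitions to coincide, whence the witnessing assembly is indexed by singleton blocks, is therefore $\mathbf 1_\pi$, and $a_1=a_2$.

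Item (1) is then immediate: for each $c_U\in\Cop[U]$ the identity axiom gives $\hat\eta(\hat0,\{c\})=\{c_U\}$, where $c\in\Cop[\hat\pi]$ is the transport of $c_U$ along the canonical bijection between $U$ and its discrete partition $\hat\pi$; hence $\hat0\in[\hat0,\{c_U\}]$ for every $c_U$, so $\hat0$ is the minimum of $\Pop_{\Cop}[U]=\bigcup_{c_U}[\hat0,\{c_U\}]$. For item (2), with $\pi$ the partition induced by $a_1$, I would send $a_2\ge_\eta a_1$ to the unique assembly $a_2'\in E(\Cop)[\pi]$ with $\hat\eta(a_1,a_2')=a_2$, uniqueness being exactly the inherited left cancellation. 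Transitivity puts $a_1\leq_\eta a_2\leq_\eta\{c_U\}$ for a suitable $c_U$; peeling $a_1$ off on the left via $\alpha$ shows $a_2'\leq_\eta\{\tilde c\}$ in $\Pop_{\Cop}[\pi]$, so the map lands there, the inverse is $b\mapsto\hat\eta(a_1,b)$, and order is preserved in both directions by one more use of associativity. This yields $\mathcal{C}_{a_1}\cong\Pop_{\Cop}[\pi]$ carrying $a_1$ to $\hat0_\pi$, and restricting to $[a_1,a_2]=[a_1,a_2]\cap\mathcal{C}_{a_1}$ gives the interval statement. For item (3), writing $a_1=\{c_B\mid B\in\pi\}$, I would unwind $\hat\eta(a,a')=a_1$ for $a\leq_\eta a_1$: the partition induced by $a$ refines $\pi$, and $a$ splits block by block into sub-assemblies $a^B$ over $B$ with $a^B\leq_\eta\{c_B\}$ in $\Pop_{\Cop}[B]$; the assignment $a\mapsto(a^B)_{B\in\pi}$ and its glueing inverse are well defined and mutually inverse order isomorphisms $[\hat0,a_1]\cong\prod_{B\in\pi}[\hat0,\{c_B\}]$, using that a set-assembly over $\biguplus_{B\in\pi}B$ is precisely a family of set-assemblies over the blocks.

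The main obstacle will be purely organizational: each of the three constructions hinges on showing a candidate inverse map is well defined and an order isomorphism, and every such check, after carefully tracking partitions of partitions, collapses to the operad associativity packaged in $\hat\eta=E(\eta)\circ\alpha$ together with the left cancellation law of $\hat\eta$. Once that partition bookkeeping is set up cleanly, no step is genuinely difficult, which is why the result is essentially imported from \cite{Julia-Miguel}.
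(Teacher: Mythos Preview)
Your proposal is correct and matches the paper's approach: the paper does not give its own proof of this theorem but simply defers to \cite{Julia-Miguel}, Theorem 3.4, and you have correctly identified this and sketched the argument that appears there. The structure you outline---verifying $\leq_\eta$ is a partial order via the operad axioms, then establishing items (1)--(3) using the inherited left cancellation of $\hat\eta$ and the blockwise decomposition of assemblies---is exactly the c-monoid argument transported to the substitution setting, as intended.
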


\begin{defi}{\em M\"obius substitutional inverse}. Let
$\mathscr{C}$ be a c-operad. Define the M\"obius (substitutional)
inverse $\mathscr{C}^{\langle -1\rangle}$; $$ \mathscr{C}^{\langle
-1\rangle}[U]=\{[\hat{0},\{c_U\}]:[\hat{0},\{c_U\}]\mbox{ an
interval of $\Pop_{\mathscr{C}}[U],$ } c_U\in \mathscr{C}[U]\},$$
for a bijection $f:U\rightarrow U'$, the isomorphism
 $\Cop^{\langle -1\rangle}[f]:\coprod_{c\in \Cop[U]}[\hat{0},\{c\}]\rightarrow
\coprod_{c'\in \Cop[U']}[\hat{0},\{c'\}]$ is given by
 \be \label{actionop}\Cop^{\langle -1\rangle}[f]a_1=E(\Cop)[f]a_1.\eeq
 where $a_1$ is an assembly  $a_1\leq_{\eta} \{c_U\}$, for some $c_U\in \Cop[U]$.
 It
 is clear that if $a_1\in[\hat{0},\{c_U\}]$, then $\Cop^{\langle -1\rangle}[f]a_1\in
 [\hat{0},\{\Cop[f]c_U\}]$.
 \end{defi}

As in the case of $c$-monoids, by M\"obius inversion on the posets
$\Pop_{\Cop}[U]$ and $\Pop_{\Cop}[U,\sigma]$, we obtain
\begin{eqnarray} \Mob \Cop^{\langle -1\rangle}(x)&=&(\Cop(x))^{\langle
-1\rangle}\\ \Ch \Cop^{\langle
-1\rangle}(\xx)&=&(\Ch\Cop(\xx))^{\langle
-1\rangle}.\end{eqnarray}
\begin{ex} The species $E_+$ is a $c$-operad (the operad
$\mathscr{C}\mrm{om}$ in \cite{G-K}). It induces the M\"obius
species $E_+^{\langle -1 \rangle}[U]=\{\Pi[U]\},$  where $\Pi[U]$
is the classical poset of partitions ordered by refinement.
\begin{eqnarray*} \Ch E_+^{\langle
-1\rangle}(\xx)=\sum_{n=1}^{\infty}\sum_{\alpha\vdash
n}\mu(\Pi_{\alpha}[n])\frac{p_{\alpha}(\xx)}{z_{\alpha}}&=&(\Ch
E_+(\xx))^{\langle -1\rangle}\\
&=&\left(e^{\sum_{n=1}^{\infty}\frac{p_n(\xx)}{n}}-1\right)^{\sinverse}
=\ln\left(\prod_{n=1}^{\infty}(1+p_n)^{\mu(n)/n}\right),\end{eqnarray*}
from this, we recover  Hanlon result about the M\"obius function
of $\Pi_{\sigma}[U]$ (\cite{Hanlon} Theorem 4.)\\The c-operads are
closed under pointing ~\cite{Julia-Miguel}. The species of pointed
sets $E^{\bullet}[U]=(XD)E[U]=U$ is a c-operad
(\cite{Julia-Miguel}, Example 3.13. (3)). It induces the poset of
pointed partitions $\Pop_{E^{\bullet}}[U]$, with $n=|U|$ maximal
elements. $E^{\bullet}(x)=xe^x$ and $\Ch
E^{\bullet}(\xx)=p_1(\xx)e^{\sum_{n=0}^{\infty}\frac{p_n(\xx)}{n}}$.
We obtain that $$f(x)=\Mob
(E^{\bullet})^{\sinverse}(x)=(xe^x)^{\sinverse},$$ and
$$g(\xx)=\Ch (E^{\bullet})^{\sinverse}(\xx)=
\left(p_1(\xx)e^{\sum_{n=0}^{\infty}\frac{p_n(\xx)}{n}}\right)^{\sinverse},$$
are the solutions of the equations: $$f(x)=xe^{-f(x)},$$ and
$$g(\xx)=p_1(\xx)\exp\left({\sum_{n=0}^{\infty}-g(x_1^n,x_2^n,\dots)/n}\right).$$
By Lagrange inversion in one variable we obtain
$|(E^{\bullet})^{\sinverse}[n]|_{\mu}=(-1)^{n-1}n^{n-1}$. Because
all the intervals in $(E^{\bullet})^{\sinverse}[n]$ are
isomorphic, and there are $n$ of them, we have
$\mu[\hat{0},\{([n],1)\}]=(-1)^{n-1}n^{n-2}$. The M\"obius
cardinality for $\mrm{Fix}E^{\bullet}[\alpha]$ can be computed by
using the plethystic Lagrange inversion formula.  The $n$-pointed
set species $E^{\bullet^n}=(XD)^n E$ is a $c$-operad, inducing the
family of posets of $n$-pointed partitions. The generating
function of $E^{\bullet^n}$ is equal to $(xD)^ne^x=\phi_n(x)e^x$,
$\phi_n(x)$ being the exponential polynomials.
\end{ex}

\section{Koszul duality for operads}
\subsection{Partial Composition}\label{partial}
Let $(\Ope,\eta)$ be a tensor, $\gm$, or $\dg-$operad. For each
finite set, the morphism
\be\eta_U:\Ope(\Ope)[U]=\bigoplus_{\pi\in\Pi[U]}
\left(\bigotimes_{B\in\pi}\Ope[B]\right)\otimes\Ope[\pi]\rightarrow
\Ope[U],\eeq \noindent decomposes as a direct sum of morphisms
$\eta_U^{\pi}$, $\pi\in\Pi[U]$, \be
\eta_{U}^{\pi}:\left(\bigotimes_{B\in\pi}\Ope[B]\right)\otimes\Ope[\pi]\rightarrow\Ope[U].\eeq
Choosing the partition with only one non-singleton block
$U_1\subset U$; $\pi=\{U_1\}\cup \{\{u\}|u\in U_2\}$,
$U_2=U\backslash U_1$, the natural transformation $\eta_U^{\pi}$
goes from $\Ope[U_1]\otimes\Ope[U_2\uplus\{U_1\}]$ to $\Ope[U]$.
By the definition of derivative, for each proper decomposition
$U_1\uplus U_2=U$, we have a natural morphism from
$\Ope[U_1]\otimes \Ope'[U_2]$ to $\Ope[U]$. Using the definition
of product, we obtain a natural transformation (partial
composition) from the product $\Ope\cdot\Ope'$ to $\Ope$. The
product $\Ope\cdot\Ope'$ is clearly isomorphic to the species
$\Sc_{\Ope}^{\underline{2}}$ of $\Ope$-enriched trees with exactly
two internal vertices. By abuse of language we denote this partial
composition with the same symbol $\star$ used as `ghost element'
in the definition of derivative. Iterating this procedure we
obtain the natural transformations (partial compositions) denoted
by $\star_1$ and $\star_2$, from
$\Ope\cdot(\Ope\cdot\Ope')'=\Ope^2\cdot\Ope''+\Ope\cdot\Ope'^2$ to
$\Ope\cdot\Ope'$. The composed of those partial compositions
$\star_1\circ\star_2$ and $\star_2\circ\star_1$ go from
$\Ope^2\cdot\Ope''+\Ope\cdot(\Ope')^2$ to $\Ope$. The
associativity of $\eta$ gives us the identity
$\star_1\circ\star_2=\star_2\circ\star_1$ (see figure
(\ref{commuta})). Conversely, from a partial composition
$\star:\Ope\cdot\Ope'\rightarrow\Ope$ such that the derived
partial compositions $\star_1$ and $\star_2$ commute, we can give
$\Ope$ an operad structure, $\eta:\Ope(\Ope)\rightarrow \Ope$.

 For a tree $t$ in $\Sc_{\Ope}[U]$ and a pair
of internal vertices $v$ and $v'$, such that $v$ is a son of $v'$,
a partial composition can be performed through $v$ and the
resulting tree will be denoted by $v(t)\in \Sc_{\Ope}[U]$.
\begin{figure}
\begin{center}
\scalebox{1} 
{
\begin{pspicture}(0,-2.3)(9.56,2.3)
\definecolor{color1496}{rgb}{0.8,0.0,0.8}
\psline[linewidth=0.04cm](1.62,0.36)(0.54,-0.84)
\psline[linewidth=0.04cm](1.72,0.22)(1.48,-1.14)
\psline[linewidth=0.04cm](1.9,0.32)(2.6,-1.1)
\psline[linewidth=0.04cm](1.84,0.56)(2.96,1.68)
\psline[linewidth=0.04cm](2.98,1.7)(3.28,0.56)
\psline[linewidth=0.04cm](2.98,1.66)(2.62,0.52)
\pscircle[linewidth=0.04,linecolor=color1496,dimen=outer](0.42,-0.98){0.2}
\pscircle[linewidth=0.04,linecolor=color1496,dimen=outer](1.48,-1.3){0.2}
\pscircle[linewidth=0.04,linecolor=color1496,dimen=outer](2.7,-1.24){0.2}
\pscircle[linewidth=0.04,linecolor=color1496,dimen=outer](2.58,0.34){0.2}
\pscircle[linewidth=0.04,linecolor=color1496,dimen=outer](3.34,0.38){0.2}
\psarc[linewidth=0.04]{-cc}(2.95,1.57){0.43}{184.63547}{349.5085}
\usefont{T1}{ptm}{m}{n}
\rput(3.6114063,1.67){$\Ope$}
\psarc[linewidth=0.04]{cc-}(1.68,0.2){0.5}{180.0}{4.763642}
\usefont{T1}{ptm}{m}{n}
\rput(0.93140626,0.29){$\Ope$}
\usefont{T1}{ptm}{m}{n}
\rput(5.141406,-0.13){$\stackrel{\star}{\longmapsto}$}
\psline[linewidth=0.04cm](7.66,0.78)(6.52,-0.2)
\psline[linewidth=0.04cm](7.64,0.74)(7.1,-0.4)
\psline[linewidth=0.04cm](6.1,0.2)(7.68,0.8)
\psline[linewidth=0.04cm](7.66,0.78)(7.86,-0.32)
\psline[linewidth=0.04cm](7.66,0.78)(8.66,-0.08)
\pscircle[linewidth=0.04,linecolor=color1496,dimen=outer](5.94,0.1){0.2}
\pscircle[linewidth=0.04,linecolor=color1496,dimen=outer](7.06,-0.58){0.2}
\pscircle[linewidth=0.04,linecolor=color1496,dimen=outer](6.38,-0.3){0.2}
\pscircle[linewidth=0.04,linecolor=color1496,dimen=outer](7.86,-0.52){0.2}
\pscircle[linewidth=0.04,linecolor=color1496,dimen=outer](8.82,-0.18){0.2}
\psarc[linewidth=0.04]{-cc}(7.69,0.71){0.43}{160.46335}{6.8427734}
\usefont{T1}{ptm}{m}{n}
\rput(8.191406,1.05){$\Ope$}
\pscircle[linewidth=0.04,linecolor=color1496,dimen=outer](1.77,0.41){0.19}
\usefont{T1}{ptm}{m}{n}
\rput(1.7514062,0.37){$\star$}
\usefont{T1}{ptm}{m}{n}
\rput(2.5514061,0.35){$b$}
\usefont{T1}{ptm}{m}{n}
\rput(5.9214063,0.13){$a$}
\usefont{T1}{ptm}{m}{n}
\rput(8.801406,-0.15){$c$}
\usefont{T1}{ptm}{m}{n}
\rput(7.021406,-0.53){$e$}
\usefont{T1}{ptm}{m}{n}
\rput(1.4314063,-1.29){$d$}
\usefont{T1}{ptm}{m}{n}
\rput(7.851406,-0.53){$b$}
\usefont{T1}{ptm}{m}{n}
\rput(0.38140625,-0.95){$a$}
\usefont{T1}{ptm}{m}{n}
\rput(3.3014061,0.39){$c$}
\usefont{T1}{ptm}{m}{n}
\rput(2.6614063,-1.21){$e$}
\usefont{T1}{ptm}{m}{n}
\rput(6.331406,-0.29){$d$}
\psframe[linewidth=0.04,framearc=0.15,dimen=outer](9.56,2.3)(0.0,-2.3)
\usefont{T1}{ptm}{m}{n}
\rput(5.0914063,-1.79){$\star:\Ope\cdot \Ope'\rightarrow \Ope$}
\end{pspicture}

}\end{center}\caption{graphical representation of the partial
composition $\star:\Ope\cdot\Ope'\rightarrow\Ope$, for an operad
$\Ope$.}
\end{figure}
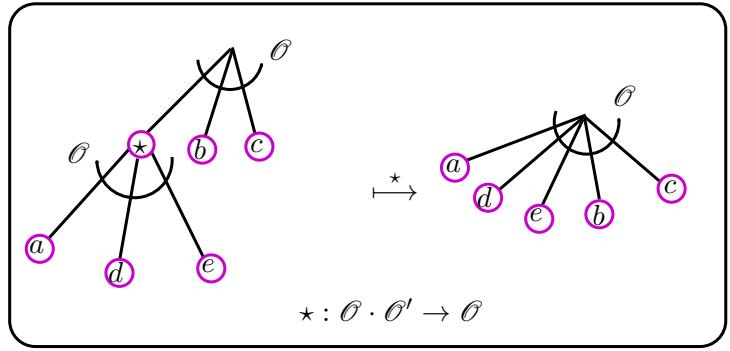

 \begin{defi} {\em Quadratic Operads}. Let $G$ be a tensor species of the form
$G=X+G_{2^+}$. The species of enriched Schr\"oder trees $\Sc_G$ is
the free operad generated by $G$. Let $R$ be a subspecies of
$\Sc_G^{\underline{2}}$. Define $\Rel_{\Ope}=\led R \rid$ to be
the operad ideal generated by R in  $\Sc_G$. The operad
$\Ope=\Sc_G/\Rel_{\Ope}$ will be called the quadratic operad
generated by $G$ with quadratic relations in $R$, and denoted by
$\Ope={\mathcal O}(G,R)$. There is a natural graduation on $\Ope$
given by
$\Ope^{\grader}=\Ope^{\underline{0}}+\Ope^{\underline{1}}+
\sum_{k=2}^{\infty}\Sc_G^{\underline{k}}/\Rel_{\Ope}^{\underline{k}},$
where $\Ope^{\underline{0}}=X$ and $\Ope^{\underline{1}}=G_{2^+}$.
The {\em quadratic dual cooperad} of $\Ope$ (see \cite{Fresse}) is
defined by

\be \Ope^{\coop}:=\mathcal{O}(G^*,R^{\perp})^*.\eeq When $G$ is
concentrated in cardinality $2$, the {\em quadratic dual}  (see
\cite{G-K}) of $\Ope$ is defined by \be
\Ope^{!}:=\mathcal{O}(G^*\odot\Lambda_2,\widetilde{R}^{\perp}),\eeq
\noindent where $\widetilde{R}$ is the kernel of the natural
transformation
$\star^{\underline{2}}:\Sc_{G\odot\Lambda_2}^{\underline{2}}\rightarrow
\Ope_3\odot\Lambda_3,$ $\star$ being the partial composition on
the operad $\Ope\odot\Lambda$.
\end{defi}
\begin{rem}\label{remark1}Observe that if $\Ope={\mathcal O}(G,R)$, $R$ is the
kernel of $$\star:G_{2^+}\cdot G_{2^+}'\rightarrow
\Ope^{\underline{2}}.$$ \noindent Hence, $\Ope$ is quadratic if
and only if $\Ope=\Sc_G/\led\mathrm{Ker}(\star)\rid$,
$G_{2^+}=\Ope^{\underline{1}}$.
\end{rem}

\subsection{Bar construction for operads} We follow here B. Fresse
(see \cite{Fresse}) on his generalization of the original
Ginzburg-Kapranov definition of Koszul operads. Let
$\Ope=\mathcal{O}(G,R)$ be a quadratic operad and $\Ope^{\grader}$
its corresponding graded operad. We are going to construct a
$\dg$-species by defining a differential on the inverse
${\Ope^{\grader}}^{\langle\mils
1\rangle}=\Sc_{\mils\Ope^{\grader}}$. Explicitly, using equations
(\ref{49}) and (\ref{explic}) \be
\Sc_{\mils\Ope}[U]=\bigoplus_{t\in
\Sc_E[U]}\bigotimes_{v\in\mathrm{Iv}(t)}\mil \Ope[\pi_v]=
\bigoplus_{t\in
\Sc_E[U]}s^{-|\mathrm{Iv}(t)|}\left(\bigotimes_{v\in\mathrm{Iv}(t)}
\Ope[\pi_v]\right)\otimes \Lambda[\mathrm{Iv}(t)]\eeq

Hence, a decomposable element of $\Sc_{\mils
\Ope^{\grader}}^{\underline{k}}[U]$ can be written in the form
$v_1\wedge v_2\wedge\dots\wedge v_{r-1}\wedge v_r\otimes t$ for
some $r\geq 0$, where $t$ is an $\Ope$-enriched Schr\"oder tree
$t\in \Sc_{\Ope}^{\underline{r}}[U]$ and $\{v_1,v_2,\dots,v_r\}$
is the set of internal vertices of $t$ in any order such that the
root is indexed as last element $v_r$. Define
$\tilde{d}:\Sc_{\mils \Ope^{\grader}}^{\underline{k}}\rightarrow
\Sc_{\mils \Ope^{\grader}}^{\underline{k+1}}\mbox{,
}k=0,1,2,\dots,$
\begin{equation}\label{Baod}
 \tilde{d}(v_1\wedge v_2\wedge \dots\wedge v_{r-1}\wedge
v_r\otimes t)= \sum_{i=1}^{r-1}(-1)^{i-1}v_1\wedge
v_2\wedge\dots\wedge\widehat{v_i}\wedge\dots\wedge v_r\otimes
v_i(t).\end{equation} \noindent Clearly $\tilde{d}^2=0$, and this
will be called the  bar construction for operads and denoted by
$\Bao(\Ope)$; $\Bao(\Ope)=({\Ope^{\grader}}^{\langle \mil
1\rangle},d).$ It is easy to see that
$\Bao(\Ope)^{\underline{0}}=\Sc_G$, and that the cohomology
$H^{0}\Bao(\Ope)$ is isomorphic to the quadratic dual cooperad
$\Ope^{\coop}$.

\begin{figure}
\begin{center}
\scalebox{1} 
{
\begin{pspicture}(0,-5.92)(10.8,5.92)
\definecolor{color1496}{rgb}{0.8,0.0,0.8}
\psline[linewidth=0.04cm](1.72,3.66)(0.64,2.46)
\psline[linewidth=0.04cm](1.82,3.52)(1.64,2.24)
\psline[linewidth=0.04cm](2.0,3.54)(2.7,2.2)
\psline[linewidth=0.04cm](2.02,3.9)(3.06,4.94)
\psline[linewidth=0.04cm](3.08,5.0)(3.38,3.86)
\psline[linewidth=0.04cm](3.08,4.96)(2.72,3.82)
\pscircle[linewidth=0.04,linecolor=color1496,dimen=outer](0.52,2.32){0.2}
\pscircle[linewidth=0.04,linecolor=color1496,dimen=outer](1.61,2.03){0.23}
\pscircle[linewidth=0.04,linecolor=color1496,dimen=outer](2.8,2.06){0.2}
\pscircle[linewidth=0.04,linecolor=color1496,dimen=outer](2.68,3.64){0.2}
\pscircle[linewidth=0.04,linecolor=color1496,dimen=outer](3.44,3.68){0.2}
\psarc[linewidth=0.04]{cc-cc}(3.05,4.87){0.43}{184.63547}{349.5085}
\usefont{T1}{ptm}{m}{n}
\rput(3.7114062,4.97){$\Ope$}
\psarc[linewidth=0.04]{cc-cc}(1.78,3.5){0.5}{180.0}{4.763642}
\usefont{T1}{ptm}{m}{n}
\rput(1.0314063,3.59){$\Ope$}
\usefont{T1}{ptm}{m}{n}
\rput(5.7214065,2.89){$\stackrel{\star_1}{\longrightarrow}$}
\psline[linewidth=0.04cm](8.68,4.32)(7.56,3.38)
\psline[linewidth=0.04cm](8.66,4.28)(8.12,3.14)
\psline[linewidth=0.04cm](7.12,3.74)(8.7,4.34)
\psline[linewidth=0.04cm](8.68,4.32)(8.88,3.22)
\psline[linewidth=0.04cm](8.68,4.32)(9.68,3.46)
\pscircle[linewidth=0.04,linecolor=color1496,dimen=outer](6.96,3.64){0.2}
\pscircle[linewidth=0.04,linecolor=color1496,dimen=outer](8.08,2.96){0.2}
\pscircle[linewidth=0.04,linecolor=color1496,dimen=outer](8.88,3.02){0.2}
\pscircle[linewidth=0.04,linecolor=color1496,dimen=outer](9.78,3.34){0.2}
\psarc[linewidth=0.04]{-cc}(8.71,4.25){0.43}{160.46335}{6.8427734}
\usefont{T1}{ptm}{m}{n}
\rput(9.231406,4.47){$\Ope$}
\pscircle[linewidth=0.04,linecolor=color1496,dimen=outer](1.93,3.75){0.25}
\usefont{T1}{ptm}{m}{n}
\rput(1.9514062,3.77){$\star_1$}
\usefont{T1}{ptm}{m}{n}
\rput(2.6514063,3.65){$b$}
\usefont{T1}{ptm}{m}{n}
\rput(6.9414062,3.67){$a$}
\usefont{T1}{ptm}{m}{n}
\rput(9.741406,3.35){$c$}
\usefont{T1}{ptm}{m}{n}
\rput(8.041407,3.01){$e$}
\usefont{T1}{ptm}{m}{n}
\rput(0.8514063,0.61){$d$}
\usefont{T1}{ptm}{m}{n}
\rput(8.871407,3.01){$b$}
\usefont{T1}{ptm}{m}{n}
\rput(0.48140624,2.35){$a$}
\usefont{T1}{ptm}{m}{n}
\rput(3.4414062,3.71){$c$}
\usefont{T1}{ptm}{m}{n}
\rput(2.7614062,2.09){$e$}
\usefont{T1}{ptm}{m}{n}
\rput(6.431406,2.01){$d$}
\psframe[linewidth=0.04,framearc=0.15,dimen=outer](10.8,5.92)(0.0,-5.92)
\usefont{T1}{ptm}{m}{n}
\rput(2.2414062,5.49){\psframebox[linewidth=0.04]{$\Ope\cdot
\Ope'^2$}}
\pscircle[linewidth=0.04,linecolor=color1496,dimen=outer](0.9,0.6){0.2}
\pscircle[linewidth=0.04,linecolor=color1496,dimen=outer](2.04,0.6){0.2}
\psline[linewidth=0.04cm](1.48,1.84)(0.98,0.74)
\psline[linewidth=0.04cm](1.62,1.82)(1.96,0.76)
\usefont{T1}{ptm}{m}{n}
\rput(2.0014062,0.61){$f$}
\usefont{T1}{ptm}{m}{n}
\rput(1.6114062,2.05){$\star_2$}
\usefont{T1}{ptm}{m}{n}
\rput(7.391406,3.23){$\star_2$}
\usefont{T1}{ptm}{m}{n}
\rput(7.181406,1.63){$f$}
\pscircle[linewidth=0.04,linecolor=color1496,dimen=outer](7.24,1.64){0.2}
\pscircle[linewidth=0.04,linecolor=color1496,dimen=outer](6.46,2.02){0.2}
\psline[linewidth=0.04cm](7.2,3.06)(6.52,2.2)
\psline[linewidth=0.04cm](7.38,2.98)(7.26,1.84)
\psarc[linewidth=0.04]{-cc}(7.22,2.76){0.46}{177.51045}{317.72632}
\usefont{T1}{ptm}{m}{n}
\rput(7.731406,2.51){$\Ope$}
\psarc[linewidth=0.04]{-cc}(1.59,1.41){0.43}{182.48955}{338.9625}
\usefont{T1}{ptm}{m}{n}
\rput(2.2314062,1.43){$\Ope$}
\psline[linewidth=0.04cm](1.9,-3.68)(0.84,-4.82)
\psline[linewidth=0.04cm](2.24,-3.74)(2.9,-5.08)
\psline[linewidth=0.04cm](2.26,-3.38)(3.3,-2.24)
\psline[linewidth=0.04cm](3.28,-2.28)(3.58,-3.42)
\psline[linewidth=0.04cm](3.28,-2.32)(2.92,-3.46)
\pscircle[linewidth=0.04,linecolor=color1496,dimen=outer](0.7,-4.98){0.2}
\pscircle[linewidth=0.04,linecolor=color1496,dimen=outer](3.0,-5.22){0.2}
\pscircle[linewidth=0.04,linecolor=color1496,dimen=outer](2.9,-3.64){0.2}
\psarc[linewidth=0.04]{-cc}(3.25,-2.41){0.43}{184.63547}{349.5085}
\psarc[linewidth=0.04]{cc-}(1.98,-3.78){0.5}{180.0}{4.763642}
\usefont{T1}{ptm}{m}{n}
\rput(1.2314062,-3.69){$\Ope$}
\usefont{T1}{ptm}{m}{n}
\rput(2.1514063,-3.53){$\star_1$}
\usefont{T1}{ptm}{m}{n}
\rput(2.8514063,-3.63){$b$}
\usefont{T1}{ptm}{m}{n}
\rput(1.2914063,-5.13){$d$}
\usefont{T1}{ptm}{m}{n}
\rput(0.68140626,-4.93){$a$}
\usefont{T1}{ptm}{m}{n}
\rput(2.9614062,-5.19){$e$}
\pscircle[linewidth=0.04,linecolor=color1496,dimen=outer](1.34,-5.14){0.2}
\pscircle[linewidth=0.04,linecolor=color1496,dimen=outer](2.18,-5.2){0.2}
\psline[linewidth=0.04cm](1.96,-3.76)(1.36,-4.98)
\psline[linewidth=0.04cm](2.08,-3.76)(2.16,-5.04)
\usefont{T1}{ptm}{m}{n}
\rput(2.1414063,-5.19){$f$}
\usefont{T1}{ptm}{m}{n}
\rput(3.6414063,-3.57){$c$}
\pscircle[linewidth=0.04,linecolor=color1496,dimen=outer](3.64,-3.58){0.2}
\usefont{T1}{ptm}{m}{n}
\rput(3.7714062,-2.35){$\Ope$}
\usefont{T1}{ptm}{m}{n}
\rput(9.191406,1.71){\psframebox[linewidth=0.04]{$\Ope\cdot
\Ope'$}} \psline[linewidth=0.04cm](8.64,-2.86)(8.1,-4.0)
\psline[linewidth=0.04cm](7.1,-3.4)(8.68,-2.8)
\psline[linewidth=0.04cm](8.66,-2.82)(8.86,-3.92)
\psline[linewidth=0.04cm](8.66,-2.82)(9.66,-3.68)
\pscircle[linewidth=0.04,linecolor=color1496,dimen=outer](6.94,-3.5){0.2}
\pscircle[linewidth=0.04,linecolor=color1496,dimen=outer](8.06,-4.18){0.2}
\pscircle[linewidth=0.04,linecolor=color1496,dimen=outer](8.86,-4.12){0.2}
\pscircle[linewidth=0.04,linecolor=color1496,dimen=outer](9.76,-3.8){0.2}
\psarc[linewidth=0.04]{-cc}(8.69,-2.89){0.43}{160.46335}{6.8427734}
\usefont{T1}{ptm}{m}{n}
\rput(9.211407,-2.67){$\Ope$}
\usefont{T1}{ptm}{m}{n}
\rput(6.9214063,-3.47){$a$}
\usefont{T1}{ptm}{m}{n}
\rput(9.721406,-3.79){$c$}
\usefont{T1}{ptm}{m}{n}
\rput(8.001407,-4.19){$e$}
\usefont{T1}{ptm}{m}{n}
\rput(8.851406,-4.13){$b$}
\usefont{T1}{ptm}{m}{n}
\rput(7.1914062,-3.87){$d$}
\usefont{T1}{ptm}{m}{n}
\rput(7.4214063,-4.29){$f$}
\pscircle[linewidth=0.04,linecolor=color1496,dimen=outer](7.48,-4.28){0.2}
\pscircle[linewidth=0.04,linecolor=color1496,dimen=outer](7.22,-3.86){0.2}
\psline[linewidth=0.04cm](8.62,-2.86)(7.32,-3.74)
\psline[linewidth=0.04cm](8.64,-2.82)(7.52,-4.1)
\usefont{T1}{ptm}{m}{n}
\rput(1.7514062,-1.55){\psframebox[linewidth=0.04]{$\Ope\cdot
\Ope'$}}
\usefont{T1}{ptm}{m}{n}
\rput(8.591406,-1.69){\psframebox[linewidth=0.04]{$\Ope$}}
\usefont{T1}{ptm}{m}{n}
\rput{-90.0}(2.9201562,3.9807813){\rput(3.4314063,0.55){$\stackrel{\star2}{\longrightarrow}$}}
\usefont{T1}{ptm}{m}{n}
\rput(5.661406,-1.67){$\stackrel{\star_1}{\longrightarrow}$}
\usefont{T1}{ptm}{m}{n}
\rput{-90.0}(8.1301565,7.0307813){\rput(7.561406,-0.53){$\stackrel{\star_2}{\longrightarrow}$}}
\pscircle[linewidth=0.04,linecolor=color1496,dimen=outer](2.09,-3.55){0.25}
\pscircle[linewidth=0.04,linecolor=color1496,dimen=outer](7.41,3.21){0.25}
\end{pspicture}
}

\end{center}\caption{Graphical representation of the commutation
of partial compositions.}\label{commuta}
\end{figure}
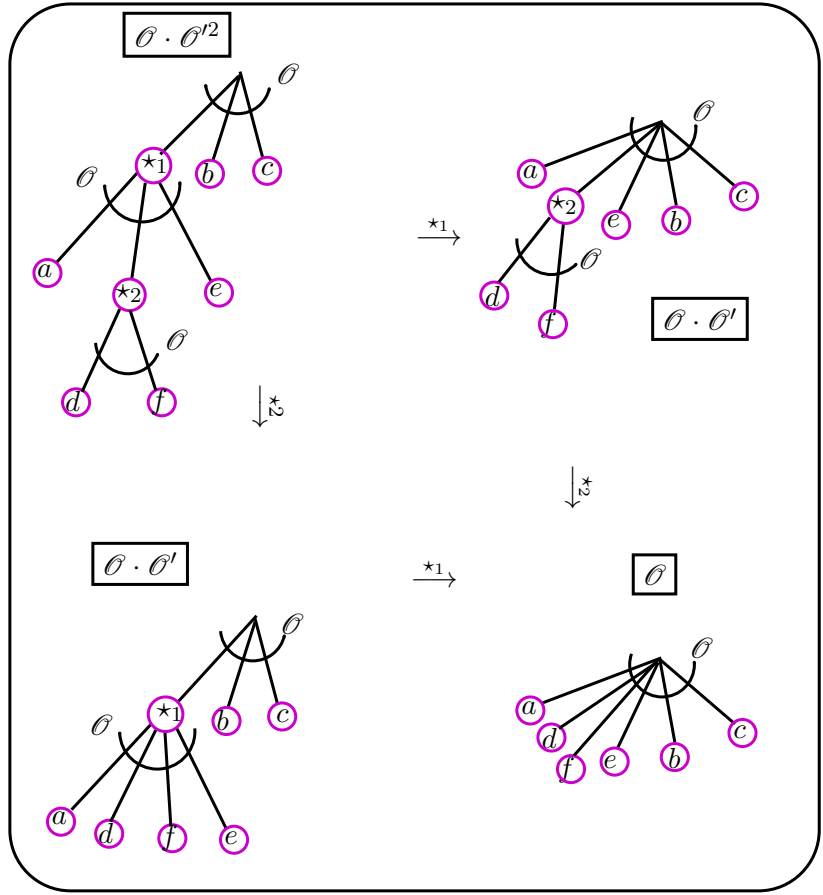
\begin{defi} A quadratic operad $\Ope$ is called Koszul if the
$\mathbf{g}$-species $\Ho \Bao(\Ope)$ is concentrated in degree
zero.
\begin{prop}If $\Ope$ is Koszul, then
\begin{eqnarray}
\Ope^{\coop}(x)&=&(\Ope^{\grader}(x))^{\langle -1\rangle}\\
\Ch\Ope^{\coop}(\xx)&=&(\Ch\Ope^{\grader}(\xx))^{\langle
-1\rangle}.
\end{eqnarray}

\end{prop}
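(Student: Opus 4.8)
The plan is to run the argument used for the multiplicative analogue, Proposition~\ref{genk}, with the operadic bar construction $\Bao(\Ope)$ in place of $\Ba(M)$ and the substitutional inverse in place of the multiplicative one. Three ingredients do all the work. First, as observed just above the definition of a Koszul operad, $H^{0}\Bao(\Ope)$ is isomorphic to the quadratic dual cooperad $\Ope^{\coop}$. Second, saying that $\Ope$ is Koszul is saying precisely that the $\gm$-tensor species $\Ho\Bao(\Ope)$ is concentrated in degree zero; hence, under that hypothesis, $\Ho\Bao(\Ope)=H^{0}\Bao(\Ope)=\Ope^{\coop}$ as $\gm$-tensor species. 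Third, by the identities (\ref{preservation}) the functor $\Ho$ leaves both generating functions $(-)(x)$ and $\Ch(-)(\xx)$ unchanged. Combining these, for a Koszul operad $\Ope$ one gets
\begin{eqnarray*}
\Ope^{\coop}(x) &=& \bigl(\Ho\Bao(\Ope)\bigr)(x)=\Bao(\Ope)(x),\\
\Ch\Ope^{\coop}(\xx) &=& \Ch\bigl(\Ho\Bao(\Ope)\bigr)(\xx)=\Ch\Bao(\Ope)(\xx),
\end{eqnarray*}
so the proposition is reduced to computing the generating functions of the $\dg$-tensor species $\Bao(\Ope)$.

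For that I would recall that $\Bao(\Ope)=\bigl({\Ope^{\grader}}^{\langle\mils 1\rangle},\tilde{d}\bigr)=\bigl(\Sc_{\mils\Ope^{\grader}},\tilde{d}\bigr)$: its underlying $\gm$-tensor species is exactly the substitutional inverse ${\Ope^{\grader}}^{\langle\mils 1\rangle}$, and the Euler--Poincar\'e generating functions $(-)(x)$ and $\Ch(-)(\xx)$ of a $\dg$-tensor species depend only on the underlying $\gm$-tensor species. Applying the generating function identities for substitutional inverses, $G^{\langle\mils 1\rangle}(x)=G^{\langle-1\rangle}(x)$ and $\Ch G^{\langle\mils 1\rangle}(\xx)=(\Ch G(\xx))^{\langle-1\rangle}$, with $G=\Ope^{\grader}$, one obtains
\begin{eqnarray*}
\Bao(\Ope)(x) &=& (\Ope^{\grader}(x))^{\langle-1\rangle},\\
\Ch\Bao(\Ope)(\xx) &=& (\Ch\Ope^{\grader}(\xx))^{\langle-1\rangle}.
\end{eqnarray*}
Equivalently, writing $y=\Bao(\Ope)(x)$, the implicit equation $\Sc_{\mils\Ope^{\grader}}=X+(\mils\Ope^{\grader})_{2^+}(\Sc_{\mils\Ope^{\grader}})$ together with $\Ope^{\underline{0}}=X$ forces $\Ope^{\grader}(y)=x$, i.e.\ $y$ is the compositional inverse of $\Ope^{\grader}(x)$; the character statement follows in the same way from the plethystic form of this implicit equation. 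Feeding these two displays into the previous ones yields the two asserted identities.

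The whole computation is the operadic transcription of the monoid case, and the only place asking for a little care is this last step: one must check that the homological grading carried by $\tilde{d}$ --- the number of internal vertices of an enriched Schr\"oder tree, twisted by the sign coming from the factors $\Lambda[\mathrm{Iv}(t)]$ in the description of $\Sc_{\mils\Ope^{\grader}}$ --- is precisely the grading for which the alternating sum of dimensions, respectively of characters, reproduces the substitutional inverse. This is exactly the bookkeeping behind equations (\ref{genba})--(\ref{chbamod}) in the monoid setting, so no new phenomenon arises; everything else in the argument is formal.
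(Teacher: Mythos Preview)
Your proof is correct and is precisely the argument the paper has in mind: the paper's own proof is the single line ``Similar to the proof of proposition~(\ref{genk})'', and you have faithfully unpacked that analogy, replacing $\Ba(M)$ by $\Bao(\Ope)$, the multiplicative inverse identities (\ref{genba})--(\ref{chbamod}) by the substitutional inverse identities for ${\Ope^{\grader}}^{\langle\mils 1\rangle}$, and invoking (\ref{preservation}) together with the Koszul hypothesis $\Ho\Bao(\Ope)=\Ope^{\coop}$.
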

\begin{proof}Similar to the proof of proposition (\ref{genk}).
\end{proof}

\end{defi}
\subsection {The operads of $M$-enriched and small trees} We now consider the species $\Arb$, of rooted trees where all the
vertices (internal and leaves) are labelled. It is the solution of
the implicit equation \be \mathscr{A}=XE(\Arb).\eeq

More generally, for an arbitrary species $M$, $|M[\emptyset]|=1$,
the species $\Arb_M$ of $M$-enriched trees is defined as the
solution of the implicit equation
\begin{equation}\label{etree}
\Arb_M=XM(\Arb_M).
\end{equation}

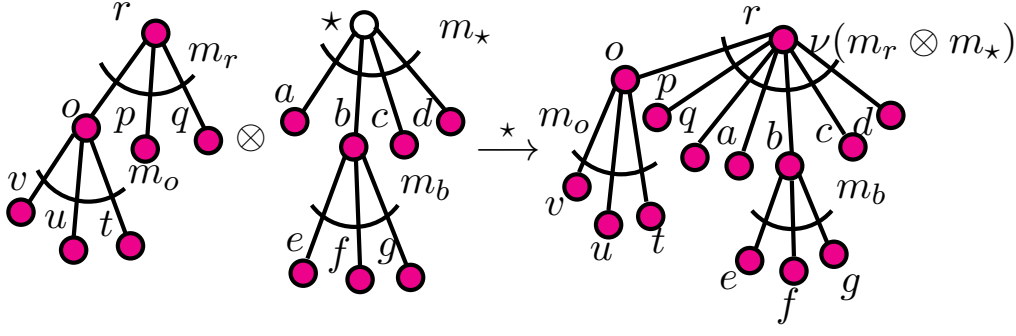
\begin{figure}\begin{center}
\scalebox{1.4} 
{
\begin{pspicture}(0,-1.74375)(10.8828125,1.4221874)
\pscircle[linewidth=0.04,dimen=outer,fillstyle=solid,fillcolor=Magenta](0.7609375,-1.03625){0.14}
\pscircle[linewidth=0.04,dimen=outer,fillstyle=solid,fillcolor=Magenta](0.2609375,-0.67625){0.14}
\pscircle[linewidth=0.04,dimen=outer,fillstyle=solid,fillcolor=Magenta](4.3409376,0.18375){0.14}
\pscircle[linewidth=0.04,dimen=outer,fillstyle=solid,fillcolor=Magenta](3.4809375,-1.31625){0.14}
\pscircle[linewidth=0.04,dimen=outer,fillstyle=solid,fillcolor=Magenta](2.0409374,0.00375){0.14}
\pscircle[linewidth=0.04,dimen=outer,fillstyle=solid,fillcolor=Magenta](2.9409375,-1.25625){0.14}
\pscircle[linewidth=0.04,dimen=outer,fillstyle=solid,fillcolor=Magenta](3.4209375,-0.05625){0.14}
\pscircle[linewidth=0.04,dimen=outer,fillstyle=solid](3.5209374,1.10375){0.14}
\pscircle[linewidth=0.04,dimen=outer,fillstyle=solid,fillcolor=Magenta](0.8809375,0.12375){0.14}
\pscircle[linewidth=0.04,dimen=outer,fillstyle=solid,fillcolor=Magenta](1.5409375,1.02375){0.14}
\pscircle[linewidth=0.04,dimen=outer,fillstyle=solid,fillcolor=Magenta](2.8609376,0.18375){0.14}
\pscircle[linewidth=0.04,dimen=outer,fillstyle=solid,fillcolor=Magenta](3.9609375,-1.29625){0.14}
\pscircle[linewidth=0.04,dimen=outer,fillstyle=solid,fillcolor=Magenta](1.4409375,-0.07625){0.14}
\pscircle[linewidth=0.04,dimen=outer,fillstyle=solid,fillcolor=Magenta](1.3009375,-0.99625){0.14}
\pscircle[linewidth=0.04,dimen=outer,fillstyle=solid,fillcolor=Magenta](3.9009376,-0.03625){0.14}
\usefont{T1}{ptm}{m}{n}
\rput(2.7623436,0.45375){$a$}
\usefont{T1}{ptm}{m}{n}
\rput(3.3123438,0.27375){$b$}
\usefont{T1}{ptm}{m}{n}
\rput(3.6623437,0.21375){$c$}
\usefont{T1}{ptm}{m}{n}
\rput(4.072344,0.27375){$d$}
\usefont{T1}{ptm}{m}{n}
\rput(2.8623438,-0.96625){$e$}
\usefont{T1}{ptm}{m}{n}
\rput(3.2823439,-1.08625){$f$}
\usefont{T1}{ptm}{m}{n}
\rput(3.7423437,-1.04625){$g$}
\usefont{T1}{ptm}{m}{n}
\rput(1.2223438,1.23375){$r$}
\usefont{T1}{ptm}{m}{n}
\rput(0.59234375,-0.74625){$u$}
\usefont{T1}{ptm}{m}{n}
\rput(1.7723438,0.19375){$q$}
\usefont{T1}{ptm}{m}{n}
\rput(3.2123437,1.13375){$\star$}
\usefont{T1}{ptm}{m}{n}
\rput(1.0723437,-0.78625){$t$}
\usefont{T1}{ptm}{m}{n}
\rput(1.2523438,0.19375){$p$}
\psline[linewidth=0.04cm](1.4409375,0.94375)(0.9209375,0.22375)
\psline[linewidth=0.04cm](1.5209374,0.90375)(1.4609375,0.04375)
\psline[linewidth=0.04cm](1.6009375,0.94375)(2.0009375,0.12375)
\psline[linewidth=0.04cm](0.7609375,0.06375)(0.3409375,-0.57625)
\psline[linewidth=0.04cm](0.8409375,0.00375)(0.7609375,-0.91625)
\psline[linewidth=0.04cm](0.9209375,0.02375)(1.2609375,-0.89625)
\psline[linewidth=0.04cm](3.4409375,1.04375)(2.9409375,0.28375)
\psline[linewidth=0.04cm](3.5009375,1.00375)(3.4409375,0.06375)
\psline[linewidth=0.04cm](3.5809374,1.00375)(3.8809376,0.06375)
\psline[linewidth=0.04cm](3.6209376,1.08375)(4.2809377,0.30375)
\psline[linewidth=0.04cm](3.4409375,-0.15625)(3.4809375,-1.19625)
\psline[linewidth=0.04cm](3.3409376,-0.13625)(2.9809375,-1.11625)
\psline[linewidth=0.04cm](3.5009375,-0.11625)(3.9209375,-1.17625)
\usefont{T1}{ptm}{m}{n}
\rput(6.592344,0.19375){$q$}
\usefont{T1}{ptm}{m}{n}
\rput(0.23234375,-0.38625){$v$}
\usefont{T1}{ptm}{m}{n}
\rput(0.73234373,0.31375){$o$}
\usefont{T1}{ptm}{m}{n}
\rput(2.4423437,0.03375){$\otimes$}
\usefont{T1}{ptm}{m}{n}
\rput(2.0923438,0.81375){$m_r$}
\usefont{T1}{ptm}{m}{n}
\rput(4.4823437,1.03375){$m_{\star}$}
\usefont{T1}{ptm}{m}{n}
\rput(1.5223438,-0.34625){$m_o$}
\usefont{T1}{ptm}{m}{n}
\rput(4.1023436,-0.44625){$m_b$}
\pscircle[linewidth=0.04,dimen=outer,fillstyle=solid,fillcolor=Magenta](8.520938,0.24375){0.14}
\pscircle[linewidth=0.04,dimen=outer,fillstyle=solid,fillcolor=Magenta](7.6009374,-1.23625){0.14}
\pscircle[linewidth=0.04,dimen=outer,fillstyle=solid,fillcolor=Magenta](7.1809373,-1.13625){0.14}
\pscircle[linewidth=0.04,dimen=outer,fillstyle=solid,fillcolor=Magenta](7.5609374,-0.23625){0.14}
\pscircle[linewidth=0.04,dimen=outer,fillstyle=solid,fillcolor=Magenta](7.0809374,-0.23625){0.14}
\pscircle[linewidth=0.04,dimen=outer,fillstyle=solid,fillcolor=Magenta](7.9809375,-1.07625){0.14}
\pscircle[linewidth=0.04,dimen=outer,fillstyle=solid,fillcolor=Magenta](8.160937,-0.05625){0.14}
\usefont{T1}{ptm}{m}{n}
\rput(6.9623437,0.05375){$a$}
\usefont{T1}{ptm}{m}{n}
\rput(7.412344,0.05375){$b$}
\usefont{T1}{ptm}{m}{n}
\rput(7.882344,0.07375){$c$}
\usefont{T1}{ptm}{m}{n}
\rput(8.252344,0.19375){$d$}
\usefont{T1}{ptm}{m}{n}
\rput(6.9823437,-1.32625){$e$}
\usefont{T1}{ptm}{m}{n}
\rput(7.5423436,-1.56625){$f$}
\usefont{T1}{ptm}{m}{n}
\rput(8.1423435,-1.38625){$g$}
\usefont{T1}{ptm}{m}{n}
\rput(8.732344,0.93375){$\nu(m_r\otimes m_{\star})$}
\usefont{T1}{ptm}{m}{n}
\rput(8.242344,-0.48625){$m_b$}
\psarc[linewidth=0.04](1.5009375,0.98375){0.54}{206.56505}{318.36646}
\psarc[linewidth=0.04](0.9009375,-0.03625){0.54}{206.56505}{309.80557}
\pscircle[linewidth=0.04,dimen=outer,fillstyle=solid,fillcolor=Magenta](5.8409376,-0.79625){0.14}
\pscircle[linewidth=0.04,dimen=outer,fillstyle=solid,fillcolor=Magenta](5.5409374,-0.43625){0.14}
\pscircle[linewidth=0.04,dimen=outer,fillstyle=solid,fillcolor=Magenta](6.6609373,-0.15625){0.14}
\pscircle[linewidth=0.04,dimen=outer,fillstyle=solid,fillcolor=Magenta](6.0009375,0.58375){0.14}
\pscircle[linewidth=0.04,dimen=outer,fillstyle=solid,fillcolor=Magenta](7.5009375,0.96375){0.14}
\pscircle[linewidth=0.04,dimen=outer,fillstyle=solid,fillcolor=Magenta](6.3009377,0.22375){0.14}
\pscircle[linewidth=0.04,dimen=outer,fillstyle=solid,fillcolor=Magenta](6.2409377,-0.71625){0.14}
\usefont{T1}{ptm}{m}{n}
\rput(7.202344,1.19375){$r$}
\usefont{T1}{ptm}{m}{n}
\rput(5.7723436,-1.04625){$u$}
\usefont{T1}{ptm}{m}{n}
\rput(6.3123436,-0.94625){$t$}
\usefont{T1}{ptm}{m}{n}
\rput(5.3323436,-0.62625){$v$}
\usefont{T1}{ptm}{m}{n}
\rput(5.8923435,0.85375){$o$}
\psarc[linewidth=0.04](3.6009376,1.16375){0.54}{206.56505}{325.00797}
\psline[linewidth=0.04cm](7.6209373,0.94375)(8.420938,0.32375)
\psline[linewidth=0.04cm](7.5609374,0.88375)(8.080937,0.06375)
\psline[linewidth=0.04cm](7.5209374,0.86375)(7.5809374,-0.11625)
\psline[linewidth=0.04cm](7.1209373,-0.09625)(7.4409375,0.86375)
\psline[linewidth=0.04cm](6.7009373,-0.01625)(7.4209375,0.88375)
\psline[linewidth=0.04cm](7.4809375,-0.33625)(7.2409377,-1.01625)
\psline[linewidth=0.04cm](7.6409373,-0.29625)(7.9209375,-0.97625)
\psarc[linewidth=0.04](7.5109377,1.07375){0.59}{188.1301}{329.03625}
\usefont{T1}{ptm}{m}{n}
\rput(5.4223437,0.19375){$m_o$}
\psarc[linewidth=0.04](7.5809374,-0.33625){0.5}{220.60129}{319.3987}
\psarc[linewidth=0.04](3.4309375,-0.30625){0.51}{216.46924}{323.53076}
\usefont{T1}{ptm}{m}{n}
\rput(4.882344,0.01375){$\stackrel{\star}{\longrightarrow}$}
\usefont{T1}{ptm}{m}{n}
\rput(6.3923435,0.51375){$p$}
\psline[linewidth=0.04cm](7.6009374,-1.09625)(7.5809374,-0.37625)
\psline[linewidth=0.04cm](7.38,0.94375)(6.4,0.28375)
\psline[linewidth=0.04cm](7.4,1.02375)(6.12,0.60375)
\psline[linewidth=0.04cm](5.56,-0.29625)(5.9,0.50375)
\psline[linewidth=0.04cm](5.84,-0.65625)(5.96,0.46375)
\psline[linewidth=0.04cm](6.2,-0.57625)(6.04,0.46375)
\psarc[linewidth=0.04](5.96,0.26375){0.58}{217.87498}{305.5377}
\end{pspicture}
}

\end{center}
\caption{The partial composition for the operad of $M$-enriched
rooted trees}
\end{figure}

The species $\Arb_M$ is explicitly defined as follows
\be\label{explicit} \Arb_M[U]=\bigoplus_{t\in
\Arb[U]}\bigotimes_{u\in U}M[t^{-1}(u)],\eeq \noindent where for
each vertex $u$ of the rooted tree $t\in \Arb[U]$, $t^{-1}(u)$
denotes the fiber (set of sons) of $u$ in $t$. An $M$-enriched
tree $t^M_U$ is a decomposable element of $\Arb_M[U]$.  From
equation (\ref{explicit}) each one of it is of the form
$t_U^M=(t_U,\otimes_{u\in U} m_{u})$, where $t_U\in\Arb[U]$ and
for each $u\in U$, $m_{u}\in M[t^{-1}(u)]$.

The (set) species of rooted trees $\Arb$ has a c-operad structure
$\eta:\Arb(\Arb)\rightarrow \Arb$. An element of $\Arb(\Arb)[U]$
is a pair $(a,t')$ where $a=\{t_B\}_{B\in\pi}$ is a forest of
rooted trees and $t'$ is a rooted tree on the blocks of the
partition $\pi$. The tree $t=\eta(a,t')$ will have all the edges
in the forest $a$ plus a few more defined as follows: for every
pair of trees $t_{B_1}$ and $t_{B_2}$ such that $B_1$ and $B_2$
form an edge in $t'$, insert an edge between the roots of
$t_{B_1}$ and $t_{B_2}$.
 The operadic structure of $\Arb=\Arb_E$ can be generalized to the
 species of rooted trees enriched with a monoid $(M,\nu)$, $\nu:M.M\rightarrow M$
 (see \cite{Miguel}, and
\cite{Julia-Miguel} for a set-operad version of this
construction). For that end, it is enough to describe an
appropriate partial composition
$\star:\Arb_M\cdot\Arb_M'\rightarrow\Arb_M$. Let $t^M_{U_1}\otimes
t^M_{U_2},\, U_1\uplus U_2=U$ be an element of
$(\Arb_M\cdot\Arb_M')[U]$, $t^M_{U_1}\in\Arb_M[U_1]$ and
$t^M_{U_2}\in\Arb_{M}'[U_2]=\Arb_{M}[U_2\uplus\{\star\}]$ being
$M$-enriched trees. We are going to describe the tree
$t^M_U=\star(t^M_{U_1}\otimes t^M_{U_2})\in \Arb_M[U]$. Assume
first that the ghost element of $t^M_{U_2}$ is the root. Then
$t^M_U$ is obtained by imbedding $t^M_{U_1}$ into $t^M_{U_2}$
using the following procedure. Place the tree $t^M_{U_1}$ into
$t_{U_2}^M$ by replacing the (ghost) root of it by the root $r$ of
$t^M_{U_1}$ and enriching its fiber
$t^{-1}_{U}(r)=t_{U_1}^{-1}(r)\uplus t_{U_2}^{-1}(\star)$ with the
vector $\nu(m_{r}\otimes m_{\star})$, $m_{r}\in
M[t_{U_1}^{-1}(r)]$ and $m_{\star}\in M[t_{U_2}^{-1}(\star)]$
being the structures enriching the fibers of the roots of
$t_{U_1}$ and $t_{U_2}$ respectively. When the ghost element of
$t^M_{U_2}$ is not its root, imbed $t^M_{U_1}$ in $t^M_{U_2}$ by
using the previous procedure with the subtree of $t^M_{U_2}$
formed by the descendants of $\star$ ($\star$ will be the root of
it). By the associativity of $\nu$ we have the commutation of the
partial compositions.

An $M$-enriched tree will be called {\em small} if all the
vertices different from the root are leaves. Denote by $\stree_M$
the species of $M$-enriched small trees. $\stree_M$ is clearly
isomorphic to $X M$.
\begin{figure}
\begin{center}
\scalebox{1} {
\begin{pspicture}(0,-2.3)(16.461876,2.3)
\definecolor{color1496}{rgb}{0.8,0.0,0.8}
\psline[linewidth=0.04cm](2.2,0.56)(1.12,-0.64)
\psline[linewidth=0.04cm](2.3,0.42)(2.06,-0.94)
\psline[linewidth=0.04cm](2.48,0.52)(3.18,-0.9)
\psline[linewidth=0.04cm](2.42,0.76)(3.54,1.88)
\psline[linewidth=0.04cm](3.56,1.9)(3.86,0.76)
\psline[linewidth=0.04cm](3.56,1.86)(3.2,0.72)
\pscircle[linewidth=0.04,linecolor=color1496,dimen=outer](1.0,-0.78){0.2}
\pscircle[linewidth=0.04,linecolor=color1496,dimen=outer](2.06,-1.1){0.2}
\pscircle[linewidth=0.04,linecolor=color1496,dimen=outer](3.28,-1.04){0.2}
\pscircle[linewidth=0.04,linecolor=color1496,dimen=outer](3.16,0.54){0.2}
\pscircle[linewidth=0.04,linecolor=color1496,dimen=outer](3.92,0.58){0.2}
\psarc[linewidth=0.04]{-cc}(3.53,1.77){0.43}{232.76517}{349.5085}
\usefont{T1}{ptm}{m}{n}
\rput(4.001406,1.87){$M$}
\psarc[linewidth=0.04]{cc-}(2.46,0.06){0.5}{212.47119}{344.0546}
\usefont{T1}{ptm}{m}{n}
\rput(2.9214063,0.09){$M$}
\usefont{T1}{ptm}{m}{n}
\rput(5.101406,0.07){$\stackrel{\star}{\longmapsto}$}
\psline[linewidth=0.04cm](7.98,0.98)(6.84,0.0)
\psline[linewidth=0.04cm](7.96,0.94)(7.42,-0.2)
\psline[linewidth=0.04cm](6.42,0.4)(8.0,1.0)
\psline[linewidth=0.04cm](7.98,0.98)(8.18,-0.12)
\psline[linewidth=0.04cm](7.98,0.98)(8.98,0.12)
\pscircle[linewidth=0.04,linecolor=color1496,dimen=outer](6.26,0.3){0.2}
\pscircle[linewidth=0.04,linecolor=color1496,dimen=outer](6.7,-0.1){0.2}
\pscircle[linewidth=0.04,linecolor=color1496,dimen=outer](9.14,0.02){0.2}
\psarc[linewidth=0.04]{-cc}(8.01,0.91){0.43}{198.43495}{6.8427734}
\usefont{T1}{ptm}{m}{n}
\rput(8.241406,1.23){$\nu(M M)$}
\usefont{T1}{ptm}{m}{n}
\rput(6.2414064,0.33){$a$}
\usefont{T1}{ptm}{m}{n}
\rput(9.121407,0.05){$c$}
\usefont{T1}{ptm}{m}{n}
\rput(6.6514063,-0.09){$d$}
\usefont{T1}{ptm}{m}{n}
\rput(6.1314063,0.73){$X$}
\pscircle[linewidth=0.04,linecolor=color1496,dimen=outer](7.38,-0.38){0.2}
\pscircle[linewidth=0.04,linecolor=color1496,dimen=outer](8.18,-0.32){0.2}
\usefont{T1}{ptm}{m}{n}
\rput(7.3414063,-0.33){$e$}
\usefont{T1}{ptm}{m}{n}
\rput(8.171406,-0.33){$b$}
\pscircle[linewidth=0.04,linecolor=color1496,dimen=outer](2.35,0.61){0.19}
\usefont{T1}{ptm}{m}{n}
\rput(2.3314064,0.55){$\star$}
\usefont{T1}{ptm}{m}{n}
\rput(3.1314063,0.55){$b$}
\usefont{T1}{ptm}{m}{n}
\rput(2.0114062,-1.09){$d$}
\usefont{T1}{ptm}{m}{n}
\rput(0.96140623,-0.75){$a$}
\usefont{T1}{ptm}{m}{n}
\rput(3.8814063,0.59){$c$}
\usefont{T1}{ptm}{m}{n}
\rput(3.2414062,-1.01){$e$}
\psframe[linewidth=0.04,framearc=0.15,dimen=outer](15.1,2.3)(0.0,-2.3)
\usefont{T1}{ptm}{m}{n}
\rput(4.811406,-1.61){$\star:XM\cdot M \rightarrow  XM$}
\usefont{T1}{ptm}{m}{n}
\rput(1.2514062,0.03){$X$}
\psline[linewidth=0.04cm](11.0,0.5)(10.26,-0.14)
\psline[linewidth=0.04cm](11.82,0.38)(11.34,-0.82)
\psline[linewidth=0.04cm](11.9,0.4)(12.02,-0.84)
\psline[linewidth=0.04cm](11.16,0.78)(12.28,1.9)
\psline[linewidth=0.04cm](12.3,1.92)(12.6,0.78)
\psline[linewidth=0.04cm](12.3,1.88)(11.94,0.74)
\pscircle[linewidth=0.04,linecolor=color1496,dimen=outer](10.13,-0.25){0.21}
\pscircle[linewidth=0.04,linecolor=color1496,dimen=outer](11.38,-1.02){0.2}
\pscircle[linewidth=0.04,linecolor=color1496,dimen=outer](12.02,-1.02){0.2}
\pscircle[linewidth=0.04,linecolor=color1496,dimen=outer](11.9,0.56){0.2}
\pscircle[linewidth=0.04,linecolor=color1496,dimen=outer](12.66,0.6){0.2}
\psarc[linewidth=0.04]{-cc}(12.27,1.79){0.43}{235.7843}{330.5241}
\usefont{T1}{ptm}{m}{n}
\rput(12.7014065,1.81){$M$}
\usefont{T1}{ptm}{m}{n}
\rput(12.281406,-0.03){$M$}
\usefont{T1}{ptm}{m}{n}
\rput(14.021406,0.05){$\stackrel{\star}{\longmapsto} 0$}
\pscircle[linewidth=0.04,linecolor=color1496,dimen=outer](11.09,0.63){0.19}
\usefont{T1}{ptm}{m}{n}
\rput(11.871407,0.47){$\star$}
\usefont{T1}{ptm}{m}{n}
\rput(11.071406,0.61){$b$}
\usefont{T1}{ptm}{m}{n}
\rput(11.331407,-1.01){$d$}
\usefont{T1}{ptm}{m}{n}
\rput(10.101406,-0.23){$a$}
\usefont{T1}{ptm}{m}{n}
\rput(12.621407,0.61){$c$}
\usefont{T1}{ptm}{m}{n}
\rput(11.981406,-0.99){$e$}
\usefont{T1}{ptm}{m}{n}
\rput(12.501407,-1.67){$\star:X^2 M M '\rightarrow  0$}
\usefont{T1}{ptm}{m}{n}
\rput(10.4114065,0.35){$X$}
\usefont{T1}{ptm}{m}{n}
\rput(11.371407,1.41){$X$}
\psarc[linewidth=0.04](11.84,-0.08){0.32}{155.55605}{341.56506}
\psline[linewidth=0.04cm](9.6,2.28)(9.58,-2.24)
\end{pspicture}
}
\end{center}\caption{The partial composition for the operad of
small trees.}\label{partialstree}
\end{figure}
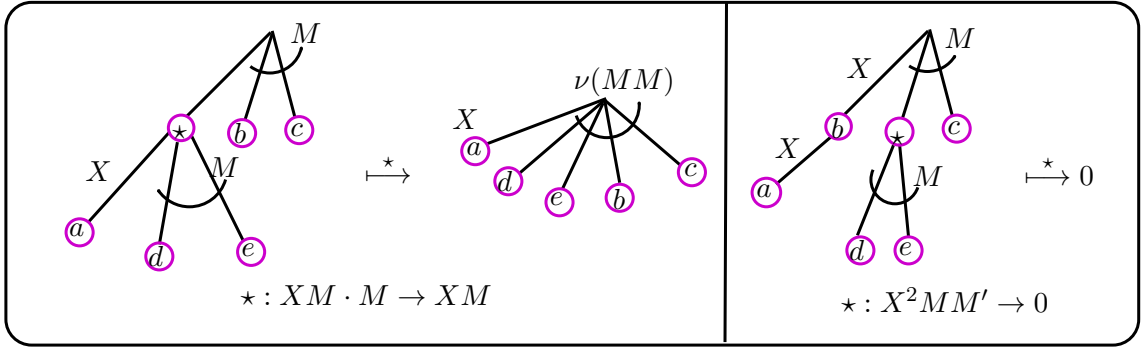

By the product rule for the derivative (see \cite{Joyal2}), we
have that $\stree_M\stree'_M=XM(M+XM')=XM^2+X^2MM'$. Hence, if $M$
is a monoid $(M,\nu)$, $\stree_M$ has a natural operad structure
given by the partial composition $\star=I_X.\nu+0$

\be\label{starstree} \star:XM^2+X^2MM'\longrightarrow XM\eeq The
commutation of partial compositions follows from the associativity
of the product $\nu$. The proof of the following proposition is
easy, and left to reader.
\begin{prop} The operad $\stree_M$ is quadratic if and only if $M$ is a
quadratic monoid. Moreover, we have that if $M=\mathcal{M}(F,R)$,
then $\stree_M=\mathcal{O}(XF,XR+X^2FF')$ and conversely.
\end{prop}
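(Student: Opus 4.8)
The plan is to reduce both implications to Remark~\ref{remark1} and to a single structural identity, $\Sc_{XF}/\led X^{2}FF'\rid\cong\stree_{\LL(F)}$, comparing the free operad on $XF$ with the operad of small trees over the \emph{free} monoid $\LL(F)$. I would first record the arity-$2$ data. For any tensor species $F$ concentrated in cardinality $\ge 1$ the species $XF$ is concentrated in cardinality $\ge 2$, and the product rule for the derivative, $(XF)'=X'F+XF'=F+XF'$, gives $XF\cdot(XF)'=XF\cdot F+XF\cdot XF'=XF^{2}+X^{2}FF'$. By Remark~\ref{remark1} a quadratic operad is recovered from its generators $G_{2^{+}}$, placed in degree $1$, as $\Sc_{G}/\led\mathrm{Ker}\,\star^{\underline{2}}\rid$, where $\star^{\underline{2}}\colon G_{2^{+}}\cdot G_{2^{+}}'\to\Ope^{\underline{2}}$. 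For $\Ope=\stree_{M}$, whose generators satisfy $\stree_{M}^{\underline{1}}\cong XF$ with $F=M^{\underline{1}}$, the partial composition is $I_{X}\cdot\nu+0$ by (\ref{starstree}); hence $\star^{\underline{2}}$ is $I_{X}\cdot\nu^{\underline{2}}$ on the summand $XF^{2}$ and is $0$ on the summand $X^{2}FF'$. Since $X\cdot(-)$ is exact, $\mathrm{Ker}\,\star^{\underline{2}}=XR+X^{2}FF'$ with $R=\mathrm{Ker}\bigl(\nu^{\underline{2}}\colon F\cdot F\to M^{\underline{2}}\bigr)$, the quadratic relations of $M$.

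Next comes the key lemma: \emph{$\Sc_{XF}/\led X^{2}FF'\rid\cong\stree_{\LL(F)}=X\,\LL(F)$ as operads}. By (\ref{explic}) a basis of $\Sc_{XF}$ is indexed by $XF$-enriched Schröder trees, the decoration at an internal vertex $v$ being a choice of a distinguished son of $v$ together with an $F$-decoration of the remaining sons. One checks directly that $X^{2}FF'\subseteq\Sc_{XF}^{\underline{2}}$ is the coordinate subspace of the two-vertex enriched trees in which the lower internal vertex is attached to a \emph{non}-distinguished son of the root; consequently the operad ideal $\led X^{2}FF'\rid$ is spanned by exactly those enriched Schröder trees that have some internal vertex with an internal child at a non-distinguished slot. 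The complementary basis consists of the ``caterpillars'' — trees whose internal vertices form a single chain, each the distinguished son of the next, with leaves hanging off — and a caterpillar with spine decorations $f_{1},\dots,f_{k}$ (read from the leaf end to the root) corresponds to the small tree carrying $f_{1}\otimes\cdots\otimes f_{k}\in F^{\otimes k}=\LL(F)^{\underline{k}}$. This is a bijection of bases, and it intertwines the operad structures: modulo the ideal the only surviving partial compositions graft at distinguished slots, where by (\ref{starstree}) they output $\nu$-products, and in the free monoid those are concatenations of words.

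Finally, I would conclude. For ``$M$ quadratic $\Rightarrow\stree_{M}$ quadratic'', write $M=\mathcal{M}(F,R)=\LL(F)/\mathcal{R}_{M}$. Inside $X\,\LL(F)$ the operad ideal generated by $XR$ equals $X\mathcal{R}_{M}$, because in $\stree_{\LL(F)}$ the partial compositions only iterate the free-monoid product $\nu$, so this operad ideal is $X$ times the monoid ideal $\langle R\rangle=\mathcal{R}_{M}$. With the key lemma,
\[
\Sc_{XF}\big/\led XR+X^{2}FF'\rid\;\cong\;X\,\LL(F)\big/X\mathcal{R}_{M}\;=\;X\bigl(\LL(F)/\mathcal{R}_{M}\bigr)\;=\;XM\;=\;\stree_{M},
\]
so $\stree_{M}=\mathcal{O}\bigl(XF,\,XR+X^{2}FF'\bigr)$ is quadratic, which also proves the ``moreover''. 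For the converse, if $\stree_{M}$ is quadratic then Remark~\ref{remark1} and the arity-$2$ computation give $\stree_{M}=\Sc_{XF}\big/\led XR+X^{2}FF'\rid=\mathcal{O}(XF,XR+X^{2}FF')$, with $\stree_{M}^{\underline{1}}\cong XF$ (the $X$-multiple of the species of indecomposables of $M$, realized as a complement of $M_{+}^{2}$ in $M_{+}$) and $R=\mathrm{Ker}(\nu\colon F\cdot F\to M)$; the forward implication, applied to $\mathcal{M}(F,R)$, identifies this operad with $\stree_{\mathcal{M}(F,R)}=X\,\mathcal{M}(F,R)$. Thus $XM=X\,\mathcal{M}(F,R)$; since $X\cdot(-)$ is faithful and the monoid multiplication is recovered from the $XM\cdot M\to XM$ component of $\star$ (again (\ref{starstree})), $M=\mathcal{M}(F,R)$ is a quadratic monoid.

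The only step demanding real (if routine) work is the key lemma: checking that $\led X^{2}FF'\rid$ is a monomial subspace spanned precisely by the non-caterpillar enriched trees, and that the residual caterpillar basis together with the induced compositions reassembles into $X\,\LL(F)$ with its operad structure. This is bookkeeping with the descriptions (\ref{explic}) and (\ref{starstree}); everything else in the argument is forced by Remark~\ref{remark1} and those two formulas.
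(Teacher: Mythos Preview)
The paper does not actually prove this proposition; it states that the proof ``is easy, and left to reader.'' So there is nothing to compare against, and your job was precisely to supply what the author omitted. Your argument does this correctly and in a well-organized way.

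The structural heart of your proof --- the lemma $\Sc_{XF}/\led X^{2}FF'\rid\cong\stree_{\LL(F)}$, realized by identifying the quotient basis with ``caterpillar'' trees --- is exactly the right device. Your verification that $\led X^{2}FF'\rid$ is spanned by the non-caterpillar $XF$-enriched Schr\"oder trees is sound: the relation $X^{2}FF'$ is a direct summand of $\Sc_{XF}^{\underline{2}}$, so the operad ideal it generates is a coordinate subspace in the (tree, distinguished-son) basis, and a tree survives in the quotient exactly when every internal child is distinguished, forcing a linear spine. The check that the induced partial composition on caterpillars matches the one on $X\LL(F)$ (concatenate when grafting at the distinguished leaf, zero otherwise) is also correct. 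From there, the computation $\mathrm{Ker}(\star^{\underline{2}})=XR+X^{2}FF'$ and the passage to $X\mathcal{M}(F,R)$ by quotienting out $X\langle R\rangle$ go through as you wrote.

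One small point deserves a sentence more in the converse direction. You conclude $\stree_{M}\cong\stree_{\mathcal{M}(F,R)}$ as operads and then assert $M\cong\mathcal{M}(F,R)$ as monoids. This amounts to the functor $M\mapsto\stree_{M}$ being fully faithful. It is, and your parenthetical hints at the reason: from the operad datum $\star\colon (XM)\cdot(XM)'\to XM$ one reads off $I_{X}\cdot\nu$ on the canonical summand $XM\cdot M\subset XM\cdot(XM)'$, and since $X\cdot(-)$ is faithful (in characteristic zero this is immediate from Frobenius characters, $\Ch(XN)=p_{1}\Ch(N)$), one recovers both $M$ as a species and $\nu$ as a morphism. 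Spelling this out explicitly would close the only loose end.
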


\begin{theo}\label{main}Let $M$ be a species (of any kind) of the
form $M=1+M_+$. Then, the species $\Sc_{XM}$ and $\Arb_{\LL(M_+)}$
are isomorphic. Moreover, if $M$ is a quadratic monoid, we have
the following isomorphism of $\dg$-tensor species \be
\label{Bao-ba}\Bao(\stree_M)=\Arb_{\Ba(M)}\,.\eeq
\end{theo}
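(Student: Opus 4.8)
The plan is to establish the first (ungraded) isomorphism by a fixed-point argument together with an explicit bijection, and then to upgrade that bijection to the differential-graded level by checking that it preserves the cohomological grading and intertwines the two differentials.

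\emph{The ungraded isomorphism.} Since $XM=X+XM_+$ we have $(XM)_{2^+}=XM_+$, so the defining equation for the enriched Schr\"oder trees reads $\Sc_{XM}=X+(XM_+)(\Sc_{XM})=X+\Sc_{XM}\cdot M_+(\Sc_{XM})$. Substituting this identity into itself $n$ times gives $\Sc_{XM}=X\sum_{j=0}^{n}\bigl(M_+(\Sc_{XM})\bigr)^{j}+\Sc_{XM}\cdot\bigl(M_+(\Sc_{XM})\bigr)^{n+1}$, and because $M_+(\Sc_{XM})$ is concentrated in cardinalities $\geq 1$ the last term vanishes on any fixed finite set once $n$ is large; hence $\Sc_{XM}=X\cdot\LL\bigl(M_+(\Sc_{XM})\bigr)=X\cdot\LL(M_+)(\Sc_{XM})$. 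Thus $\Sc_{XM}$ is a solution of $Y=X\cdot\LL(M_+)(Y)$, which has a unique solution (built up degree by degree, using that $\LL(M_+)[\emptyset]$ is one-dimensional), namely $\Arb_{\LL(M_+)}$. The resulting isomorphism $\Phi$ can be described directly: in a tree $t\in\Sc_{XM}[U]$ call an edge a \emph{spine edge} if it joins an internal vertex to the son singled out by the $X$-factor of its $XM_+$-decoration; the spine edges partition the vertices of $t$ into maximal chains, each ending at a leaf, and $\Phi$ collapses each chain to that leaf. The $M_+$-decorations met along a chain, read from the top, form the left-comb $\LL(M_+)$-decoration at the corresponding vertex of the rooted tree, while the non-spine edges of $t$ become the edges of that tree; conversely, one recovers $\Phi^{-1}$ by expanding each vertex of an $\LL(M_+)$-enriched rooted tree into the corresponding spine of $XM_+$-decorated vertices.

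\emph{From the ungraded to the $\dg$ statement.} By the proposition preceding the theorem $\stree_M=\mathcal{O}(XF,XR+X^2FF')$, and since the $X^2FF'$-summand of the partial composition is killed (only the spine compositions survive) its natural grading is $\stree_M^{\grader}=XM^{\grader}$ with $\stree_M^{\underline{0}}=X$. Consequently $(\mils\stree_M^{\grader})_{2^+}=X\cdot\mil M^{\grader}_+$, so the underlying graded species of $\Bao(\stree_M)=\Sc_{\mils\stree_M^{\grader}}$ is $\Sc_{X+X\mil M^{\grader}_+}$; the argument of the first part (which is purely formal and applies verbatim with any $\gm$-species in the role of $M_+$) identifies this with $\Arb_{\LL(\mil M^{\grader}_+)}=\Arb_{\Ba(M)}$, the isomorphism being $\Phi$ once more. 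That $\Phi$ preserves the cohomological degree is a counting check: $\Phi$ gives a bijection between the internal vertices of the Schr\"oder tree and the comb-slots of the rooted-tree decoration, matching the $M$-degree at a vertex with the $M$-degree of the corresponding slot; a decomposable element of $\Sc_{\mils\stree_M^{\grader}}$ supported on a tree with $r$ internal vertices of decoration-degrees $k_1,\dots,k_r$ lies in degree $\sum_i k_i-r$ by (\ref{49}), and the same count, $\sum(\text{slot-degrees})-(\text{number of slots})$, gives the degree on the $\Arb_{\Ba(M)}$ side because $\Ba(M)=\LL(\mil M^{\grader}_+)$.

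\emph{The differentials.} On $\Bao(\stree_M)$ the differential $\tilde d$ of (\ref{Baod}) is the signed sum of the edge-contractions $v_i(t)$, each carried out by the partial composition $\star\colon XM^2+X^2MM'\to XM$ of $\stree_M$. Since $\star$ annihilates the $X^2MM'$-summand, only the contractions of spine edges contribute, and contracting a spine edge replaces the two $M$-decorations $m,m'$ at its ends by their product $\nu(m\otimes m')$. Under $\Phi$ a spine edge of $t$ corresponds precisely to a pair of adjacent comb-slots at a single vertex of $\Phi(t)$, and its contraction corresponds to multiplying those two slots; summing over all spine edges therefore reproduces, vertex by vertex, the bar differential (\ref{Badif}) of the monoid $M$, which is exactly the differential that $\Arb$ inherits from the $\dg$-species $\Ba(M)$. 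Hence $\Phi$ is an isomorphism of $\dg$-tensor species, as claimed. The one delicate point is the matching of signs: the differential on $\Sc_{\mils\stree_M^{\grader}}$ carries the Koszul signs produced by the factor $\Lambda[\mathrm{Iv}(t)]$, i.e.\ by the chosen $\wedge$-ordering of the internal vertices, whereas the monoid bar differential carries the explicit signs $(-1)^{i-1}$ indexed by the position within one comb. I would reconcile these by numbering the internal vertices of the Schr\"oder tree so that within each spine chain they appear consecutively from the top, the block of indices of a chain preceding those of the chains hanging below it; with such an ordering the sign $(-1)^{i-1}$ of $\tilde d$ restricted to a chain agrees with the sign in (\ref{Badif}) at the corresponding vertex, and the transpositions needed to move $v_i$ to the front contribute exactly the sign already carried by the $\Lambda$-twist. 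This bookkeeping is routine, but it is the only non-formal step.
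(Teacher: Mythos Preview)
Your proof is correct and follows essentially the same strategy as the paper's: both derive the implicit equation $\Sc_{XM}=X\cdot\LL(M_+)(\Sc_{XM})$ via the ``preferred son'' (spine) structure, invoke uniqueness of the fixed point to get $\Arb_{\LL(M_+)}$, and then check that under this identification the operadic bar differential $\tilde d$ collapses (because the $X^2MM'$-part of $\star$ is zero) to the monoid bar differential $d$ along each spine chain. The only cosmetic differences are that you phrase the bijection globally (collapsing all spine chains at once) and obtain the implicit equation by iteration, whereas the paper describes the same bijection recursively via the main spine and reads off the implicit equation directly; your sign bookkeeping is also slightly more explicit than the paper's, which leaves a $\pm$ in the recursive formula.
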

\begin{proof}  An $XM$-enriched tree $t$ in $\Sc_{ XM}[U]$ is a rooted tree,
where each internal node $v\in \mathrm{Iv}(t)$ is decorated with
an element of $M_+[\pi_v-\{v'\}]\equiv \KK\{v'\} \otimes
M_+[\pi_v-\{v'\}]\subseteq (XM_+)[\pi_v]=\bigoplus_{v'\in
\pi_v}\KK\{v'\} \otimes M_+[\pi_v-\{v'\}],$ $\pi_v$ standing for
the set of sons of $v$. So, at each internal vertex $v$, a
preferred son $v'=p(v)$ is chosen and an element $m_v$ is put on
the rest of them ($m_v\in M[\pi_v-\{p(v)\}]$). By starting at the
root of t, we can construct a unique path from the root to a leaf
(the {\em distinguished leaf} of $t$), by letting the successor of
an internal node be its preferred son. We shall call this path the
{\em main spine} of $t$. For each internal vertex $v$ in the main
spine, and for each
 $w\in\pi_v-\{p(v)\} $, the descendants of $w$ form a tree in
$\Sc_{XM}[U_{v'}]$. The $XM$-enriched tree $t$ is then an element
of the species $X [\LL(M)\circ \Sc_{XM}][U]$ (see figure
\ref{AL}). Since the $M$-enriched trees generates the vector space
$\Sc_{XM}[U]$ for every finite set $U$, $\Sc_{XM}$ satisfies the
implicit equation \be \Sc_{ XM}=X[\LL( M_+)\circ \Sc_{ XM}].\eeq
This is the same implicit equation defining the species
 $\Arb_{\LL( M_+)}$. By the
 theorem of implicit equations for species (\cite{Joyal2}, see also \cite{BLL}),
 straightforwardly extended to this context, we obtain
 the result. As a consequence of that we obtain

 \be\label{gid}(\stree_M^{\grader})^{\langle\mils 1\rangle}=\Sc_{\mils XM^{\grader}}=\Sc_{X(\mils
M^{\grader})}=\Arb_{\LL(\mils M^{\grader})}=\Arb_{(
M^{\grader})^{\mils 1}}.\eeq
\begin{figure}\begin{center}
\scalebox{1.2} 
{
\begin{pspicture}(0,-5.37)(9.26,5.37)
\definecolor{color2203}{rgb}{0.4,0.0,0.6}
\rput{-90.0}(5.7009373,-2.3590624){\pscircle[linewidth=0.04,dimen=outer,fillstyle=solid,fillcolor=Magenta](1.6709375,-4.03){0.14}}
\rput{-90.0}(7.9409375,1.5609375){\pscircle[linewidth=0.04,dimen=outer,fillstyle=solid,fillcolor=Magenta](4.7509375,-3.19){0.14}}
\rput{-90.0}(6.7409377,0.4009375){\pscircle[linewidth=0.04,dimen=outer,fillstyle=solid,fillcolor=Magenta](3.5709374,-3.17){0.14}}
\pscircle[linewidth=0.04,dimen=outer,fillstyle=solid,fillcolor=Magenta](3.2109375,-4.53){0.14}
\pscircle[linewidth=0.04,dimen=outer,fillstyle=solid,fillcolor=Magenta](3.6909375,-4.79){0.14}
\pscircle[linewidth=0.04,dimen=outer,fillstyle=solid,fillcolor=Magenta](4.5109377,-4.09){0.14}
\pscircle[linewidth=0.04,dimen=outer,fillstyle=solid,fillcolor=Magenta](7.1509376,-4.21){0.14}
\pscircle[linewidth=0.04,dimen=outer,fillstyle=solid,fillcolor=Magenta](2.5509374,-2.65){0.14}
\pscircle[linewidth=0.04,dimen=outer,fillstyle=solid,fillcolor=Magenta](7.2109375,-2.61){0.14}
\pscircle[linewidth=0.04,dimen=outer,fillstyle=solid,fillcolor=Magenta](2.3909376,-4.29){0.14}
\pscircle[linewidth=0.04,dimen=outer,fillstyle=solid,fillcolor=Magenta](7.8709373,-3.99){0.14}
\pscircle[linewidth=0.04,dimen=outer,fillstyle=solid,fillcolor=Magenta](6.1509376,-4.47){0.14}
\pscircle[linewidth=0.04,dimen=outer,fillstyle=solid,fillcolor=Magenta](6.1509376,-3.21){0.14}
\pscircle[linewidth=0.04,dimen=outer,fillstyle=solid,fillcolor=Magenta](1.1309375,-3.49){0.14}
\pscircle[linewidth=0.04,dimen=outer,fillstyle=solid,fillcolor=Magenta](4.6909375,-1.59){0.14}
\usefont{T1}{ptm}{m}{n}
\rput(4.3323436,-1.54){$a$}
\usefont{T1}{ptm}{m}{n}
\rput(2.2023437,-2.52){$b$}
\usefont{T1}{ptm}{m}{n}
\rput(0.77234375,-3.68){$c$}
\usefont{T1}{ptm}{m}{n}
\rput(1.4023438,-4.26){$d$}
\usefont{T1}{ptm}{m}{n}
\rput(2.0923438,-4.34){$e$}
\usefont{T1}{ptm}{m}{n}
\rput(3.2123437,-3.18){$f$}
\usefont{T1}{ptm}{m}{n}
\rput(2.8723438,-4.7){$g$}
\usefont{T1}{ptm}{m}{n}
\rput(3.4623437,-5.06){$h$}
\usefont{T1}{ptm}{m}{n}
\rput(4.632344,-4.44){$i$}
\usefont{T1}{ptm}{m}{n}
\rput(4.4223437,-3.08){$j$}
\usefont{T1}{ptm}{m}{n}
\rput(5.822344,-3.22){$k$}
\usefont{T1}{ptm}{m}{n}
\rput(5.7923436,-4.5){$m$}
\usefont{T1}{ptm}{m}{n}
\rput(6.842344,-2.76){$n$}
\usefont{T1}{ptm}{m}{n}
\rput(6.862344,-4.4){$o$}
\usefont{T1}{ptm}{m}{n}
\rput(3.6223438,-1.76){$M_+$}
\usefont{T1}{ptm}{m}{n}
\rput(7.722344,-4.2){$p$}
\psline[linewidth=0.04cm](4.6,-1.63)(2.64,-2.57)
\psline[linewidth=0.04cm](4.6,-1.69)(3.58,-3.05)
\psline[linewidth=0.04cm](4.68,-1.71)(4.74,-3.05)
\psline[linewidth=0.04cm](4.76,-1.65)(6.12,-3.09)
\psline[linewidth=0.04cm](4.8,-1.57)(7.1,-2.57)
\psline[linewidth=0.04cm](2.46,-2.69)(1.22,-3.45)
\psline[linewidth=0.04cm](2.52,-2.75)(1.76,-3.93)
\psline[linewidth=0.04cm](2.58,-2.77)(2.4,-4.17)
\psline[linewidth=0.04cm](3.48,-3.25)(3.22,-4.39)
\psline[linewidth=0.04cm](3.58,-3.27)(3.68,-4.67)
\psline[linewidth=0.04cm](3.64,-3.23)(4.42,-4.03)
\psline[linewidth=0.04cm](6.16,-3.31)(6.14,-4.35)
\psline[linewidth=0.04cm](7.16,-2.71)(7.12,-4.09)
\psline[linewidth=0.04cm](7.28,-2.69)(7.8,-3.89)
\psarc[linewidth=0.02](4.37,-1.98){0.55}{170.21759}{317.8624}
\psarc[linewidth=0.02](5.46,-2.51){0.5}{303.69006}{347.00537}
\psarc[linewidth=0.02](5.54,-2.01){0.5}{326.30994}{14.036243}
\psarc[linewidth=0.02](2.01,-3.24){0.55}{170.83765}{268.02505}
\psarc[linewidth=0.02](2.58,-3.39){0.5}{230.71059}{278.74615}
\psarc[linewidth=0.02](3.46,-3.79){0.5}{228.81407}{308.65982}
\psarc[linewidth=0.02](6.08,-3.53){0.5}{258.69006}{296.56506}
\psarc[linewidth=0.02](7.32,-3.01){0.5}{230.71059}{323.97263}
\usefont{T1}{ptm}{m}{n}
\rput(1.4823438,-2.94){$M_+$}
\usefont{T1}{ptm}{m}{n}
\rput(2.9023438,-3.68){$M_+$}
\usefont{T1}{ptm}{m}{n}
\rput(4.5223436,-3.54){$M_+$}
\usefont{T1}{ptm}{m}{n}
\rput(4.0823436,-4.1){$M_+$}
\usefont{T1}{ptm}{m}{n}
\rput(6.5823436,-3.72){$M_+$}
\usefont{T1}{ptm}{m}{n}
\rput(7.9023438,-3.08){$M_+$}
\usefont{T1}{ptm}{m}{n}
\rput(6.242344,-1.64){$M_+$}
\usefont{T1}{ptm}{m}{n}
\rput(6.202344,-2.46){$M_+$}
\usefont{T1}{ptm}{m}{n}
\rput(4.6714063,-0.96){$\Arb_{\LL(M_+)}$}
\rput{-90.0}(2.8009374,9.900937){\pscircle[linewidth=0.04,dimen=outer,fillstyle=solid,fillcolor=Magenta](6.3509374,3.55){0.14}}
\rput{-90.0}(2.5009375,9.360937){\pscircle[linewidth=0.04,dimen=outer,fillstyle=solid,fillcolor=Magenta](5.9309373,3.43){0.14}}
\psline[linewidth=0.04cm](5.8909373,3.89)(6.2509375,3.61)
\rput{-90.0}(2.0009375,8.940937){\pscircle[linewidth=0.04,dimen=outer,fillstyle=solid,fillcolor=Magenta](5.4709377,3.47){0.14}}
\psline[linewidth=0.06cm,linecolor=color2203](5.8509374,3.91)(5.5509377,3.57)
\psline[linewidth=0.04cm](5.8709373,3.87)(5.9109373,3.55)
\psdots[dotsize=0.16,dotangle=-90.0](5.8709373,3.91)
\psline[linewidth=0.06cm,linecolor=color2203](3.1709375,1.69)(3.01,0.97)
\psline[linewidth=0.06cm,linecolor=color2203](3.0909376,2.23)(3.1709375,1.73)
\pscircle[linewidth=0.04,dimen=outer,fillstyle=solid,fillcolor=Magenta](3.9709375,1.85){0.14}
\pscircle[linewidth=0.04,dimen=outer,fillstyle=solid,fillcolor=Magenta](2.9709375,0.85){0.14}
\psdots[dotsize=0.16](3.1709375,1.69)
\psline[linewidth=0.04cm](3.1109376,2.21)(3.87,1.89)
\pscircle[linewidth=0.04,dimen=outer,fillstyle=solid,fillcolor=Magenta](3.6109376,0.77){0.14}
\pscircle[linewidth=0.04,dimen=outer,fillstyle=solid,fillcolor=Magenta](3.9309375,1.25){0.14}
\psline[linewidth=0.04cm](3.1709375,1.69)(3.81,1.29)
\psline[linewidth=0.04cm](3.1709375,1.69)(3.55,0.85)
\psarc[linewidth=0.027999999](3.4571137,2.216939){0.32711384}{296.3828}{329.333}
\psarc[linewidth=0.027999999](3.3171139,1.5569389){0.32711384}{253.52519}{3.3664606}
\psdots[dotsize=0.16](3.0909376,2.25)
\psline[linewidth=0.055999998cm,linecolor=color2203](1.9909375,2.33)(1.19,2.15)
\psline[linewidth=0.06cm,linecolor=color2203](2.4509375,2.71)(2.0109375,2.37)
\pscircle[linewidth=0.04,dimen=outer,fillstyle=solid,fillcolor=Magenta](1.0909375,2.13){0.14}
\pscircle[linewidth=0.04,dimen=outer,fillstyle=solid,fillcolor=Magenta](1.2909375,1.43){0.14}
\pscircle[linewidth=0.04,dimen=outer,fillstyle=solid,fillcolor=Magenta](1.8309375,1.19){0.14}
\psdots[dotsize=0.16](1.9909375,2.33)
\psline[linewidth=0.04cm](1.9709375,2.33)(1.35,1.49)
\psline[linewidth=0.04cm](2.0109375,2.31)(1.85,1.29)
\psdots[dotsize=0.16](2.5109375,2.75)
\pscircle[linewidth=0.04,dimen=outer,fillstyle=solid,fillcolor=Magenta](2.6109376,1.83){0.14}
\psarc[linewidth=0.027999999](1.9371139,2.0969388){0.32711384}{191.67723}{292.2593}
\psarc[linewidth=0.027999999](2.52,2.5340528){0.31}{246.89244}{301.99594}
\psline[linewidth=0.04cm](2.59,1.93)(2.51,2.73)
\pscircle[linewidth=0.04,dimen=outer,fillstyle=solid,fillcolor=Magenta](4.0709376,2.39){0.14}
\psline[linewidth=0.04cm](3.2909374,3.05)(2.55,2.79)
\psline[linewidth=0.04cm](3.2709374,3.05)(3.11,2.31)
\psline[linewidth=0.04cm](3.29,3.0468514)(3.99,2.47)
\usefont{T1}{ptm}{m}{n}
\rput(3.8623438,2.98){$M_+$}
\psarc[linewidth=0.027999999](3.35,2.9640527){0.36}{173.43134}{341.3335}
\usefont{T1}{ptm}{m}{n}
\rput(2.3023438,1.98){$M_+$}
\psline[linewidth=0.06cm,linecolor=color2203](4.4709377,3.03)(4.5909376,2.51)
\pscircle[linewidth=0.04,dimen=outer,fillstyle=solid,fillcolor=Magenta](5.4309373,2.65){0.14}
\pscircle[linewidth=0.04,dimen=outer,fillstyle=solid,fillcolor=Magenta](4.6509376,2.39){0.14}
\psdots[dotsize=0.16](4.4909377,2.97)
\psline[linewidth=0.04cm](4.3109374,3.51)(4.4509373,3.03)
\psline[linewidth=0.04cm](4.5109377,2.97)(5.29,2.67)
\psline[linewidth=0.1cm,linecolor=color2203](4.9909377,4.41)(4.2909374,3.51)
\psline[linewidth=0.1cm,linecolor=color2203](3.2309375,3.11)(2.2109375,3.23)
\psline[linewidth=0.1cm,linecolor=color2203](4.2709374,3.49)(3.3109374,3.13)
\psdots[dotsize=0.16](3.2909374,3.11)
\pscircle[linewidth=0.04,dimen=outer,fillstyle=solid,fillcolor=Magenta](2.0909376,3.25){0.14}
\psdots[dotsize=0.16](4.3109374,3.51)
\psdots[dotsize=0.24](5.0509377,4.45)
\psline[linewidth=0.04cm](5.0909376,4.45)(5.8509374,3.93)
\usefont{T1}{ptm}{m}{n}
\rput(1.8323437,3.28){$a$}
\usefont{T1}{ptm}{m}{n}
\rput(0.98234373,1.86){$b$}
\usefont{T1}{ptm}{m}{n}
\rput(1.2323438,1.22){$c$}
\usefont{T1}{ptm}{m}{n}
\rput(1.8423438,0.92){$d$}
\usefont{T1}{ptm}{m}{n}
\rput(2.6323438,1.6){$e$}
\usefont{T1}{ptm}{m}{n}
\rput(2.6723437,0.9){$f$}
\usefont{T1}{ptm}{m}{n}
\rput(3.3523438,0.82){$g$}
\usefont{T1}{ptm}{m}{n}
\rput(3.9023438,0.94){$h$}
\usefont{T1}{ptm}{m}{n}
\rput(4.1523438,1.78){$i$}
\usefont{T1}{ptm}{m}{n}
\rput(4.322344,2.22){$j$}
\usefont{T1}{ptm}{m}{n}
\rput(4.882344,2.36){$k$}
\usefont{T1}{ptm}{m}{n}
\rput(5.532344,2.38){$m$}
\usefont{T1}{ptm}{m}{n}
\rput(5.6223435,3.26){$n$}
\usefont{T1}{ptm}{m}{n}
\rput(6.0623436,3.24){$o$}
\usefont{T1}{ptm}{m}{n}
\rput(4.662344,3.46){$M_+$}
\usefont{T1}{ptm}{m}{n}
\rput(5.182344,3.0){$M_+$}
\usefont{T1}{ptm}{m}{n}
\rput(2.8823438,2.48){$M_+$}
\usefont{T1}{ptm}{m}{n}
\rput(3.7623436,1.68){$M_+$}
\usefont{T1}{ptm}{m}{n}
\rput(6.4423437,3.96){$M_+$}
\pscustom[linewidth=0.02,linestyle=dashed,dash=0.16cm 0.16cm] {
\newpath
\moveto(1.8709375,3.01) \lineto(1.9009376,3.01)
\curveto(1.9159375,3.01)(1.9659375,3.005)(2.0009375,3.0)
\curveto(2.0359375,2.995)(2.0959375,2.99)(2.1209376,2.99)
\curveto(2.1459374,2.99)(2.2059374,3.0)(2.2409375,3.01)
\curveto(2.2759376,3.02)(2.3359375,3.06)(2.3609376,3.09)
\curveto(2.3859375,3.12)(2.4359374,3.21)(2.4609375,3.27)
\curveto(2.4859376,3.33)(2.5109375,3.43)(2.5109375,3.47)
\curveto(2.5109375,3.51)(2.5109375,3.58)(2.5109375,3.61)
\curveto(2.5109375,3.64)(2.5109375,3.68)(2.5109375,3.71) }
\usefont{T1}{ptm}{m}{n}
\rput(2.0423439,3.6){$X$}
\usefont{T1}{ptm}{m}{n}
\rput(6.6023436,3.48){$p$}
\usefont{T1}{ptm}{m}{n}
\rput(5.802344,4.42){$M_+$}
\pscustom[linewidth=0.016,linestyle=dashed,dash=0.16cm 0.16cm] {
\newpath
\moveto(4.87,4.519259) \lineto(4.86,4.4757404)
\curveto(4.855,4.4539814)(4.85,4.410463)(4.85,4.388704)
\curveto(4.85,4.366945)(4.85,4.323426)(4.85,4.3016667)
\curveto(4.85,4.2799077)(4.85,4.232037)(4.85,4.205926)
\curveto(4.85,4.179815)(4.855,4.1275926)(4.86,4.1014814)
\curveto(4.865,4.0753703)(4.875,4.031852)(4.88,4.0144444)
\curveto(4.885,3.997037)(4.905,3.9535186)(4.92,3.9274075)
\curveto(4.935,3.9012964)(4.965,3.8577778)(4.98,3.8403704)
\curveto(4.995,3.822963)(5.02,3.7837963)(5.03,3.762037)
\curveto(5.04,3.7402778)(5.065,3.6880555)(5.08,3.6575925)
\curveto(5.095,3.6271296)(5.12,3.5618517)(5.13,3.5270371)
\curveto(5.14,3.4922223)(5.155,3.4356482)(5.16,3.4138892)
\curveto(5.165,3.39213)(5.185,3.352963)(5.2,3.3355553)
\curveto(5.215,3.3181481)(5.26,3.287685)(5.29,3.2746296)
\curveto(5.32,3.261574)(5.36,3.2354627)(5.37,3.2224073)
\curveto(5.38,3.2093518)(5.41,3.1875927)(5.43,3.1788893)
\curveto(5.45,3.1701856)(5.495,3.1527777)(5.52,3.1440742)
\curveto(5.545,3.1353705)(5.595,3.1223147)(5.62,3.1179628)
\curveto(5.645,3.1136112)(5.695,3.1049073)(5.72,3.1005554)
\curveto(5.745,3.0962036)(5.795,3.0787964)(5.82,3.0657406)
\curveto(5.845,3.052685)(5.895,3.030926)(5.92,3.0222223)
\curveto(5.945,3.0135186)(5.99,2.996111)(6.01,2.9874072)
\curveto(6.03,2.9787035)(6.095,2.97)(6.14,2.97)
\curveto(6.185,2.97)(6.275,2.97)(6.32,2.97)
\curveto(6.365,2.97)(6.47,2.97)(6.53,2.97)
\curveto(6.59,2.97)(6.685,2.97)(6.72,2.97)
\curveto(6.755,2.97)(6.83,2.9830554)(6.87,2.9961112)
\curveto(6.91,3.009167)(6.975,3.0439816)(7.0,3.065741)
\curveto(7.025,3.0875)(7.06,3.1397223)(7.07,3.1701853)
\curveto(7.08,3.2006483)(7.095,3.261574)(7.1,3.292037)
\curveto(7.105,3.3225)(7.11,3.3964813)(7.11,3.44)
\curveto(7.11,3.4835186)(7.11,3.5575)(7.11,3.5879629)
\curveto(7.11,3.6184258)(7.11,3.6880555)(7.11,3.7272222)
\curveto(7.11,3.766389)(7.105,3.8490741)(7.1,3.8925927)
\curveto(7.095,3.9361112)(7.055,4.0275)(7.02,4.0753703)
\curveto(6.985,4.123241)(6.91,4.232037)(6.87,4.292963)
\curveto(6.83,4.353889)(6.765,4.436574)(6.74,4.4583335)
\curveto(6.715,4.4800925)(6.635,4.5410185)(6.58,4.5801854)
\curveto(6.525,4.619352)(6.42,4.6846294)(6.37,4.7107406)
\curveto(6.32,4.7368517)(6.235,4.78037)(6.2,4.7977777)
\curveto(6.165,4.815185)(6.1,4.8369446)(6.07,4.841296)
\curveto(6.04,4.8456483)(5.95,4.85)(5.89,4.85)
\curveto(5.83,4.85)(5.735,4.85)(5.7,4.85)
\curveto(5.665,4.85)(5.565,4.85)(5.5,4.85)
\curveto(5.435,4.85)(5.325,4.85)(5.28,4.85)
\curveto(5.235,4.85)(5.16,4.8456483)(5.13,4.841296)
\curveto(5.1,4.8369446)(5.045,4.819537)(5.02,4.8064814)
\curveto(4.995,4.793426)(4.955,4.762963)(4.94,4.7455554)
\curveto(4.925,4.728148)(4.905,4.6889815)(4.9,4.6672225)
\curveto(4.895,4.645463)(4.895,4.6062965)(4.9,4.5888886)
\curveto(4.905,4.5714817)(4.905,4.536667)(4.9,4.5192595)
\curveto(4.895,4.501852)(4.885,4.4757404)(4.87,4.44963) }
\psline[linewidth=0.03cm,linestyle=dashed,dash=0.16cm
0.16cm](3.49,3.67)(3.83,4.79)
\psline[linewidth=0.03cm,linestyle=dashed,dash=0.16cm
0.16cm](3.83,4.81)(4.18,3.99)
\psline[linewidth=0.03cm,linestyle=dashed,dash=0.16cm
0.16cm](3.83,4.81)(4.87,4.65)
\usefont{T1}{ptm}{m}{n}
\rput(3.7614062,5.02){$\LL$}
\usefont{T1}{ptm}{m}{n}
\rput(2.7314062,0.2){$M_+(\Sc_{XM})$}
\usefont{T1}{ptm}{m}{n}
\rput(6.291406,1.56){$M_+(\Sc_{XM})$}
\usefont{T1}{ptm}{m}{n}
\rput(7.6914062,2.76){$M_+(\Sc_{XM})$}
\psarc[linewidth=0.027999999](5.31,4.364053){0.36}{289.83572}{341.3335}
\psarc[linewidth=0.027999999](5.89,3.9440527){0.36}{256.85217}{341.3335}
\usefont{T1}{ptm}{m}{n}
\rput(3.7823439,2.24){$M_+$}
\psarc[linewidth=0.027999999](4.33,3.5440528){0.36}{246.32767}{314.12067}
\psarc[linewidth=0.027999999](4.79,3.0840528){0.36}{279.04608}{322.75238}
\pscustom[linewidth=0.0139999995,linestyle=dashed,dash=0.16cm
0.16cm] {
\newpath
\moveto(3.4510639,3.628205) \lineto(3.4114892,3.5968587)
\curveto(3.3917022,3.5811858)(3.3372872,3.5602884)(3.3026595,3.5550642)
\curveto(3.2680318,3.5498397)(3.1987765,3.5341668)(3.1641488,3.5237179)
\curveto(3.1295211,3.513269)(3.0751061,3.4871473)(3.0553193,3.4714744)
\curveto(3.035532,3.4558015)(2.97617,3.4192307)(2.9365957,3.398333)
\curveto(2.8970215,3.377436)(2.8426065,3.335641)(2.8277662,3.3147438)
\curveto(2.8129253,3.2938461)(2.7782977,3.252051)(2.7585108,3.2311535)
\curveto(2.7387235,3.2102563)(2.6942022,3.17891)(2.6694682,3.1684613)
\curveto(2.644734,3.1580126)(2.6002128,3.1214423)(2.5804255,3.0953205)
\curveto(2.5606384,3.0691986)(2.5210638,3.0274036)(2.501277,3.0117307)
\curveto(2.4814897,2.9960577)(2.441915,2.9647112)(2.4221277,2.9490385)
\curveto(2.4023404,2.9333656)(2.3578193,2.8915703)(2.333085,2.8654487)
\curveto(2.3083508,2.8393269)(2.258883,2.807981)(2.234149,2.8027563)
\curveto(2.2094147,2.797532)(2.155,2.7870834)(2.1253192,2.7818592)
\curveto(2.0956385,2.7766345)(2.0214362,2.766186)(1.976915,2.7609613)
\curveto(1.9323937,2.755737)(1.8631383,2.7505126)(1.8384042,2.7505126)
\curveto(1.8136702,2.7505126)(1.7493618,2.7452884)(1.7097872,2.7400641)
\curveto(1.6702127,2.7348397)(1.5712765,2.7191668)(1.511915,2.708718)
\curveto(1.4525533,2.6982691)(1.328883,2.6721473)(1.2645745,2.6564744)
\curveto(1.2002661,2.6408014)(1.0864894,2.593782)(1.0370213,2.5624359)
\curveto(0.98755324,2.5310898)(0.90345746,2.4631732)(0.8688298,2.4266028)
\curveto(0.8342021,2.3900323)(0.7896808,2.311667)(0.77978724,2.2698717)
\curveto(0.76989365,2.2280767)(0.76,2.1235895)(0.76,2.060897)
\curveto(0.76,1.998205)(0.76989365,1.8832691)(0.77978724,1.8310254)
\curveto(0.7896808,1.7787818)(0.8144148,1.6586218)(0.8292553,1.5907049)
\curveto(0.84409577,1.5227884)(0.86388296,1.4235257)(0.8688298,1.3921796)
\curveto(0.87377656,1.3608334)(0.8985106,1.2929169)(0.9182979,1.2563465)
\curveto(0.9380851,1.219776)(0.9776596,1.1518592)(0.9974469,1.1205127)
\curveto(1.0172342,1.0891665)(1.0518618,1.0264746)(1.0667021,0.99512815)
\curveto(1.0815425,0.9637817)(1.1112235,0.9219873)(1.1260638,0.91153806)
\curveto(1.1409042,0.9010895)(1.200266,0.86451906)(1.2447873,0.8383972)
\curveto(1.2893084,0.8122754)(1.3882446,0.77048033)(1.4426596,0.7548071)
\curveto(1.4970746,0.73913455)(1.6009575,0.7130127)(1.6504256,0.70256406)
\curveto(1.6998936,0.6921155)(1.8087234,0.68166685)(1.868085,0.68166685)
\curveto(1.9274467,0.68166685)(2.0610106,0.67644286)(2.1352127,0.6712183)
\curveto(2.2094147,0.66599363)(2.3726597,0.6503204)(2.4617023,0.63987184)
\curveto(2.5507445,0.6294232)(2.7139893,0.6189746)(2.7881916,0.6189746)
\curveto(2.8623939,0.6189746)(2.9910107,0.6033014)(3.0454254,0.5876282)
\curveto(3.0998404,0.57195556)(3.2136168,0.5406091)(3.2729788,0.5249359)
\curveto(3.3323407,0.5092627)(3.4411705,0.47269166)(3.4906385,0.45179445)
\curveto(3.5401065,0.4308972)(3.6242023,0.41)(3.65883,0.41)
\curveto(3.6934576,0.41)(3.787447,0.41)(3.8468084,0.41)
\curveto(3.9061701,0.41)(4.010053,0.4152246)(4.0545745,0.4204486)
\curveto(4.099096,0.42567262)(4.1831913,0.46746767)(4.222766,0.5040381)
\curveto(4.2623405,0.5406091)(4.3217025,0.6294226)(4.3414893,0.68166625)
\curveto(4.3612766,0.7339099)(4.390958,0.84884584)(4.4008512,0.91153806)
\curveto(4.4107447,0.97423095)(4.4255853,1.0891669)(4.430532,1.1414105)
\curveto(4.4354787,1.1936542)(4.4404254,1.2824683)(4.4404254,1.3190387)
\curveto(4.4404254,1.3556092)(4.4503193,1.4444232)(4.4602127,1.4966669)
\curveto(4.470106,1.5489105)(4.48,1.6429486)(4.48,1.6847436)
\curveto(4.48,1.7265387)(4.48,1.7944549)(4.48,1.8205768)
\curveto(4.48,1.8466986)(4.48,1.8989422)(4.48,1.9250641)
\curveto(4.48,1.951186)(4.470106,1.9982053)(4.4602127,2.0191028)
\curveto(4.4503193,2.04)(4.4354787,2.0974684)(4.430532,2.1340387)
\curveto(4.4255853,2.1706092)(4.4206386,2.2333014)(4.4206386,2.2594233)
\curveto(4.4206386,2.285545)(4.405798,2.3377883)(4.390958,2.3639102)
\curveto(4.3761168,2.390032)(4.336542,2.4475)(4.3118086,2.478846)
\curveto(4.2870746,2.5101922)(4.2475,2.5676603)(4.23266,2.593782)
\curveto(4.217819,2.6199038)(4.193085,2.6721473)(4.1831913,2.6982691)
\curveto(4.173298,2.724391)(4.1535106,2.7766345)(4.143617,2.8027563)
\curveto(4.1337233,2.8288782)(4.0990953,2.896795)(4.074362,2.9385898)
\curveto(4.049628,2.9803846)(3.9952128,3.0744233)(3.9655318,3.1266668)
\curveto(3.935851,3.1789105)(3.866596,3.2833972)(3.8270216,3.335641)
\curveto(3.787447,3.3878846)(3.728085,3.4610257)(3.7082977,3.481923)
\curveto(3.6885107,3.5028205)(3.653883,3.5446155)(3.6390424,3.5655127)
\curveto(3.624202,3.5864103)(3.5945213,3.6229808)(3.5796807,3.6386538)
\curveto(3.5648403,3.654327)(3.5252662,3.67)(3.500532,3.67)
\curveto(3.4757977,3.67)(3.44117,3.654327)(3.4114892,3.6073077) }
\psframe[linewidth=0.04,framearc=0.1,dimen=outer](9.26,5.37)(0.0,-5.37)
\pscustom[linewidth=0.04] {
\newpath
\moveto(3.48,-1.73) \lineto(3.52,-1.71) }
\pscustom[linewidth=0.02,linestyle=dashed,dash=0.16cm 0.16cm] {
\newpath
\moveto(4.272421,4.0120687) \lineto(4.3305264,3.9931033)
\curveto(4.359579,3.9836206)(4.4225264,3.9693966)(4.456421,3.9646552)
\curveto(4.4903154,3.9599137)(4.5581055,3.9267242)(4.592,3.8982759)
\curveto(4.6258945,3.8698275)(4.6791577,3.8318965)(4.6985264,3.8224137)
\curveto(4.717895,3.812931)(4.7614737,3.7892241)(4.785684,3.775)
\curveto(4.809895,3.7607758)(4.8534737,3.7323275)(4.872842,3.7181034)
\curveto(4.8922105,3.7038794)(4.9309473,3.675431)(4.950316,3.661207)
\curveto(4.9696846,3.6469827)(5.003579,3.6090517)(5.0181055,3.5853448)
\curveto(5.032632,3.5616379)(5.061684,3.514224)(5.0762105,3.490517)
\curveto(5.090737,3.4668102)(5.1197896,3.4241378)(5.134316,3.4051723)
\curveto(5.1488423,3.3862069)(5.182737,3.3387933)(5.2021055,3.310345)
\curveto(5.221474,3.2818964)(5.250526,3.2344825)(5.2602105,3.215517)
\curveto(5.269895,3.1965516)(5.3086314,3.1633618)(5.337684,3.149138)
\curveto(5.366737,3.134914)(5.424842,3.1206896)(5.4538946,3.1206896)
\curveto(5.4829473,3.1206896)(5.5313683,3.1159484)(5.5507374,3.111207)
\curveto(5.5701056,3.1064653)(5.6088424,3.0922415)(5.6282105,3.082759)
\curveto(5.647579,3.0732758)(5.7008424,3.0448272)(5.734737,3.0258617)
\curveto(5.7686315,3.0068963)(5.8122106,2.9689655)(5.8218946,2.95)
\curveto(5.831579,2.9310346)(5.8509474,2.874138)(5.860632,2.836207)
\curveto(5.870316,2.7982757)(5.88,2.7366378)(5.88,2.712931)
\curveto(5.88,2.6892242)(5.88,2.6418104)(5.88,2.6181037)
\curveto(5.88,2.5943966)(5.88,2.5469828)(5.88,2.5232759)
\curveto(5.88,2.4995692)(5.8557897,2.4284484)(5.831579,2.3810346)
\curveto(5.8073683,2.3336205)(5.7395787,2.2435346)(5.6959996,2.2008622)
\curveto(5.6524205,2.1581898)(5.57979,2.0823276)(5.5507374,2.0491378)
\curveto(5.5216846,2.0159483)(5.468421,1.963793)(5.44421,1.9448276)
\curveto(5.42,1.9258621)(5.3522105,1.8926728)(5.3086314,1.8784485)
\curveto(5.2650523,1.8642242)(5.182737,1.85)(5.144,1.85)
\curveto(5.1052637,1.85)(5.0423155,1.8547412)(5.0181055,1.8594828)
\curveto(4.993895,1.8642242)(4.9115796,1.8831897)(4.853474,1.897414)
\curveto(4.7953687,1.9116381)(4.722737,1.940086)(4.7082105,1.9543103)
\curveto(4.693684,1.9685346)(4.6549473,2.0112066)(4.630737,2.0396552)
\curveto(4.6065264,2.0681036)(4.567789,2.1155176)(4.5532627,2.134483)
\curveto(4.5387363,2.1534486)(4.5242105,2.2008622)(4.5242105,2.2293103)
\curveto(4.5242105,2.2577589)(4.519368,2.3288794)(4.5145264,2.3715518)
\curveto(4.5096846,2.4142241)(4.480632,2.494828)(4.456421,2.5327587)
\curveto(4.4322104,2.5706897)(4.3934736,2.6465516)(4.3789473,2.6844828)
\curveto(4.364421,2.722414)(4.3498945,2.7935345)(4.3498945,2.8267243)
\curveto(4.3498945,2.8599138)(4.3353686,2.9310346)(4.3208423,2.9689655)
\curveto(4.306316,3.0068963)(4.277263,3.082759)(4.262737,3.1206896)
\curveto(4.2482104,3.1586206)(4.2094736,3.2439654)(4.185263,3.2913795)
\curveto(4.1610527,3.3387933)(4.1174736,3.4336207)(4.098105,3.4810345)
\curveto(4.078737,3.5284486)(4.0545263,3.599569)(4.049684,3.623276)
\curveto(4.044842,3.6469827)(4.04,3.7038794)(4.04,3.7370691)
\curveto(4.04,3.770259)(4.04,3.8413794)(4.04,3.8793104)
\curveto(4.04,3.9172413)(4.0738945,3.9788795)(4.1077895,4.0025864)
\curveto(4.141684,4.026293)(4.1997895,4.05)(4.224,4.05)
\curveto(4.2482104,4.05)(4.282105,4.0405173)(4.3111577,4.0120687)
} \psarc[linewidth=0.02](3.76,-3.55){0.5}{303.69006}{347.00537}
\end{pspicture}
}
\end{center}
\caption{The isomorphism $\Sc_{ X M}=X\cdot [\LL( M)\circ \Sc_{ X
M}]=\Arb_{\LL(M_+)}$.}\label{AL}
\end{figure}

 Equation (\ref{Bao-ba}) is equivalent to the implicit equation
 \be \label{bao}\Bao(\stree_M)=X\Ba(M)(\Bao(\stree_M)).\eeq
We rewrite it as follows

\be (\Sc_{\mils XM^{\grader}},\tilde{d})=X\cdot[(\LL(\mils
M_+^{\grader}),d)((\Sc_{\mils XM^{\grader}},\tilde{d}))]
 \eeq

\noindent where $\tilde{d}$ is as in equation (\ref{Baod}) and $d$
is as in (\ref{Badif}). By equation (\ref{gid}) we only have to
prove that the differentials match. To that end, choose a
Schr\"oder tree $t\in \Sc_{\mils XM^{\grader}}[U]$, and assume
that $t$ has $k$ internal vertices. Order them by indexing the
root as last element $v_k$ and the internal vertices on the main
spine $\{v_1,v_2,\dots,v_j\}$ enumerated as follows: $v_1=p(v_k)$
is he preferred son of the root, and $v_{i+1}=p(v_i)$,
$i=1,2,\dots,j-1$ (see figure (\ref{BaoBar})). Apply the same
procedure recursively on each subtree attached to the
(non-preferred) sons of the vertices on the main spine. By the
definition of partial composition on small trees (see equation
(\ref{starstree}) and figure (\ref{partialstree})) is easy to see
that the sum defining $\tilde{d}(t)$ restricted to the first $j$
terms (the vertices on the main spine) is equal to $d$ applied to
$m_{v_1}\otimes m_{v_2}\otimes\dots\otimes m_{v_j}$. Denote by
$\pi=\{v_{j+1},v_{j+2},\dots,v_s\}=\biguplus_{i=2}^{j}\tilde{\pi}_{v_i},$
$\tilde{\pi}_{v_i}=\pi_{v_i}-\{v_{i+1}\}$, $i=1,\dots,j$, the set
of sons of the vertices on the main spine that are outside of it,
 and by $t_{v_i}$,
the corresponding subtree of the descendants of $v_i$ in $t,$
$i=j+1,j+2,\dots, s.$ We have
\begin{eqnarray}
\tilde{d}(t)&=&d(m_{v_1}\otimes m_{v_2}\otimes\dots\otimes
m_{v_j})\otimes \otimes_{i=j+1}^{s}t_{v_i}\\&+&\sum_{i=j+1}^{s}\pm
m_{v_1}\otimes m_{v_2}\otimes\dots\otimes m_{v_j}\otimes
t_{v_j+1}\otimes\dots\otimes \tilde{d}(t_{v_i})\otimes\dots
\otimes t_{v_s}. \end{eqnarray} Then, we get \be (\Sc_{\mils
XM^{\grader}},\tilde{d})[U]=\bigoplus_{u\in
U}\bigoplus_{\pi\in\Pi[U-\{u\}]}(\LL(\mil
M_+^{\grader}),d)[\pi]\otimes\bigotimes_{B\in\pi}(\Sc_{\mils
XM^{\grader}},\tilde{d})[B].\eeq

\end{proof}

\begin{cor}\label{genchapoton}Let $M$ be a quadratic monoid. Then
we have
 \be \label{dualidad}\stree_M^{\coop}=\Arb_{M^{\coop .}}.\eeq
Moreover, $M$ is a Koszul monoid if and only if $\stree_M$ is a
Koszul operad.
\end{cor}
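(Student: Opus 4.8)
The plan is to deduce everything from Theorem \ref{main} by applying the cohomology functor $\Ho$ to the isomorphism of $\dg$-tensor species $\Bao(\stree_M)=\Arb_{\Ba(M)}$ and reading off the cohomological grading. Since $\Arb_N$ is given by the explicit formula $\Arb_N[U]=\bigoplus_{t\in\Arb[U]}\bigotimes_{u\in U}N[t^{-1}(u)]$ (equation (\ref{explicit})), a finite direct sum of finite tensor products, and since $H$ preserves finite tensor products (equation (\ref{homologypreserve})) and all direct sums, one gets $\Ho\Arb_N=\Arb_{\Ho N}$ for every $\dg$-tensor species $N$. Applying $\Ho$ to (\ref{Bao-ba}) therefore yields
\be
\Ho\Bao(\stree_M)=\Arb_{\Ho\Ba(M)}.
\eeq
Recall from Section \ref{Koszul.} that $\Ba(M)=\sum_k\Ba^{(k)}(M)$ is concentrated in nonpositive cohomological degrees, that $H^0\Ba(M)=M^{\coop .}$, and that $\Ba(M)[\emptyset]=\LL(\mil M^{\grader}_+)[\emptyset]=1[\emptyset]$ is one-dimensional and concentrated in degree $0$ (because $M_+[\emptyset]=0$); hence the $\gm$-tensor species $\Ho\Ba(M)$ has all three properties as well.

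First I would establish the identity $\stree_M^{\coop}=\Arb_{M^{\coop .}}$, valid for every quadratic monoid $M$. By (\ref{explicit}) the cohomological degree of a decorated tree in $\Arb_{\Ho\Ba(M)}[U]$ is the sum of the degrees of its vertex decorations; since $\Ho\Ba(M)$ lives in degrees $\le 0$ and $\Ho\Ba(M)[\emptyset]$ lives in degree $0$, such a tree lies in degree $0$ if and only if every one of its decorations does, so
\be
\bigl(\Arb_{\Ho\Ba(M)}\bigr)^{\underline{0}}=\Arb_{(\Ho\Ba(M))^{\underline{0}}}=\Arb_{M^{\coop .}}.
\eeq
On the other hand $\bigl(\Ho\Bao(\stree_M)\bigr)^{\underline{0}}=H^0\Bao(\stree_M)$, which is the quadratic dual cooperad $\stree_M^{\coop}$ (as recalled just before the definition of Koszul operads). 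Comparing degree-$0$ parts in the first display gives $\stree_M^{\coop}=\Arb_{M^{\coop .}}$.

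For the equivalence: by definition $M$ is Koszul exactly when $\Ho\Ba(M)$ is concentrated in degree $0$, and $\stree_M$ is Koszul exactly when $\Ho\Bao(\stree_M)=\Arb_{\Ho\Ba(M)}$ is concentrated in degree $0$. If $\Ho\Ba(M)$ is concentrated in degree $0$ then every decorated tree has degree $0$, so $\Arb_{\Ho\Ba(M)}$ is concentrated in degree $0$ and $\stree_M$ is Koszul. Conversely, suppose $(\Ho\Ba(M))^{\underline{k}}[V]\ne 0$ for some finite set $V$ and some $k\ne 0$. Take the corolla $t_0\in\Arb[V\uplus\{r\}]$ with root $r$ and leaf set $V$: by (\ref{explicit}) its summand in $\Arb_{\Ho\Ba(M)}[V\uplus\{r\}]$ is $(\Ho\Ba(M))[V]\otimes(\Ho\Ba(M))[\emptyset]^{\otimes V}$, which is a direct summand isomorphic, as a graded vector space, to $(\Ho\Ba(M))[V]$; hence $(\Arb_{\Ho\Ba(M)})^{\underline{k}}[V\uplus\{r\}]\ne 0$ and $\stree_M$ is not Koszul. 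Thus $\stree_M$ is Koszul if and only if $M$ is.

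The one step that needs care is this last implication, propagating Koszulness of the operad $\stree_M$ back to the monoid $M$. It rests on exhibiting $\Ho\Ba(M)$ as a direct summand of $\Arb_{\Ho\Ba(M)}$ through corollas, which uses that $\Ba(M)$ is bounded above in cohomological degree and takes a one-dimensional degree-$0$ value at the empty set, so that the leaf decorations contribute nothing to the grading. Everything else is a purely formal consequence of Theorem \ref{main} together with the fact that $H$ commutes with $\oplus$ and with finite $\otimes$.
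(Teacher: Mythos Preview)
Your proof is correct and follows essentially the same route as the paper's own proof: apply $\Ho$ to (\ref{Bao-ba}) to get $\Ho\Bao(\stree_M)=\Arb_{\Ho\Ba(M)}$, read off the degree-$0$ part to obtain (\ref{dualidad}), and use corollas (what the paper calls the restriction to $\stree_E[U]$) to propagate Koszulness back from $\stree_M$ to $M$. One small slip: by (\ref{grado}) the complex $\Ba(M)$ is concentrated in \emph{nonnegative} cohomological degrees, not nonpositive; but since your argument only uses that all the $j_u$ have the same sign so that $\sum_u j_u=0$ forces every $j_u=0$, this does not affect the validity of the proof.
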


 \begin{proof}Let $G$ be a $\dg$-tensor species of the form
$G=1+G_+$. From the properties of the functor $\Ho$ we have that
\be \Ho\Arb_{G}=\Ho X\cdot\left(\Ho
G\circ\Ho\Arb_G\right)=X\cdot\left(\Ho G\circ\Ho\Arb_G\right),\eeq
\noindent hence, $\Ho\Arb_{G}=\Arb_{\Ho G}$.

From equation (\ref{Bao-ba}) and the explicit form for the species
of enriched rooted trees (equation (\ref{etree})),
\begin{eqnarray}\label{Homology}\Ho \Bao(\stree_M)&=&\Arb_{\Ho \Ba(M)}\\
 \Ho\Bao(\stree_M)[U]&=&\bigoplus_{t\in\Arb[U]}\bigotimes_{u\in
U}\left(\Ho \Ba(M)\right)[t^{-1}u].\end{eqnarray} Then, by the
definition of tensor product in the category $\mgr$
\be\label{Homi}
H^i\Bao(\stree_{M})[U]=\bigoplus_{t\in\Arb[U]}\bigoplus_{\sum_{u\in
U}j_u=i}\left(\bigotimes_{u\in
U}H^{j_u}\Ba(M)[t^{-1}u]\right).\eeq From this it follows that
$H^0\Bao(\stree_M)=\Arb_{H^0 \Ba(M)},$ and hence equation
(\ref{dualidad}). Moreover if $\Ho\Ba(M)$ is concentrated in
degree zero, $\Ho\Bao(\stree_M)$ is also. Conversely, if
$\Ho\Bao(\stree_M)$ is concentrated in degree zero, by restricting
the direct sum in (\ref{Homi}) to the set $\stree_E[U]$ of small
trees, we obtain that for $i\neq 0$,
$X\cdot\left(H^i\Ba(M)\right)=0$. Then $H^i\Ba(M)=0$.\\
\end{proof}

\begin{cor}\label{genchapoton2} The monoid $M$ is Koszul if and only if $\Arb_{M}$ and $\Arb_{M^{!.}}$ are
both Koszul operads.
\end{cor}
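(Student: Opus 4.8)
The plan is to reduce the statement to Corollary~\ref{genchapoton} --- which identifies the quadratic dual cooperad $\stree_M^{\coop}$ with $\Arb_{M^{\coop .}}$ --- together with the involutivity of quadratic duality and the operadic analogue of the fact, used just above for monoids, that a quadratic operad is Koszul exactly when its quadratic dual is. First I would record two bookkeeping facts. (a) For every tensor species $N$ one has $(\Arb_N)^{*}=\Arb_{N^{*}}$ \emph{as operads}: for a finite set $U$ the set $\Arb[U]$ of rooted trees on $U$ is finite, so by (\ref{explicit}) the space $\Arb_N[U]=\bigoplus_{t\in\Arb[U]}\bigotimes_{u\in U}N[t^{-1}(u)]$ is a finite direct sum of finite tensor products and dualizing commutes with both, giving $(\Arb_N[U])^{*}=\bigoplus_{t}\bigotimes_{u}N^{*}[t^{-1}(u)]=\Arb_{N^{*}}[U]$; one then checks that this carries the partial composition of $\Arb_N$ to the dual of that of $\Arb_{N^{*}}$. (b) Quadratic duality is involutive on quadratic monoids, $(M^{!.})^{!.}=M$, since $(R^{\perp})^{\perp}=R$ for a subspecies of the finite dimensional species $F^{2}$; consequently $(M^{\coop .})^{*}=M^{!.}$, and likewise $(M^{!.})^{\coop .}=M^{*}$, using that a double dual is the identity on finite dimensional graded species.

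Next I would establish the auxiliary equivalence: for \emph{any} quadratic monoid $N$,
\[
N \text{ is Koszul}\iff \stree_N \text{ is Koszul}\iff \Arb_{N^{!.}}\text{ is Koszul}.
\]
The first equivalence is Corollary~\ref{genchapoton}. For the second, dualize the identity $\stree_N^{\coop}=\Arb_{N^{\coop .}}$ of (\ref{dualidad}) and apply (a) and (b): $(\stree_N^{\coop})^{*}=\Arb_{(N^{\coop .})^{*}}=\Arb_{N^{!.}}$. Since $\stree_N^{\coop}=\mathcal{O}((XF)^{*},R^{\perp})^{*}$ for the quadratic presentation $\stree_N=\mathcal{O}(XF,R)$ of the Proposition immediately preceding Theorem~\ref{main}, the species $\Arb_{N^{!.}}$ is precisely the quadratic dual operad $\mathcal{O}((XF)^{*},R^{\perp})$ of $\stree_N$ (a double dual, each arity being finite dimensional). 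Now I invoke the operadic Koszul duality principle --- the operad analogue, due to Ginzburg-Kapranov~\cite{G-K} and, in the present generality where the generating species $XF$ need not be concentrated in cardinality $2$, to Fresse~\cite{Fresse} --- that a quadratic operad $\Ope$ is Koszul if and only if its quadratic dual $(\Ope^{\coop})^{*}$ is Koszul; applied to $\Ope=\stree_N$ this gives the required equivalence.

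Finally I would apply the auxiliary equivalence twice. Taking $N=M$ gives: $M$ Koszul $\iff$ $\Arb_{M^{!.}}$ Koszul. Taking $N=M^{!.}$ --- legitimate, since $M^{!.}$ is again a quadratic monoid and $(M^{!.})^{!.}=M$ by (b) --- gives: $M^{!.}$ Koszul $\iff$ $\Arb_{M}$ Koszul. Combining these with the Proposition following Proposition~\ref{translation} (that $M$ is Koszul iff $M^{!.}$ is), all four assertions ``$M$ Koszul'', ``$M^{!.}$ Koszul'', ``$\Arb_M$ Koszul'', ``$\Arb_{M^{!.}}$ Koszul'' become equivalent; in particular $M$ is Koszul if and only if $\Arb_M$ and $\Arb_{M^{!.}}$ are both Koszul operads, which is the claim.

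The hard part will be the invocation, inside the auxiliary equivalence, of operadic Koszul duality for a quadratic operad whose space of generators $XF$ is not arity-$2$-concentrated: one must work with the quadratic dual \emph{cooperad} $\Ope^{\coop}$ in the sense of Fresse rather than the classical $\Ope^{!}$, and be certain that the bar-cobar duality underlying ``$\Ope$ Koszul $\iff$ $(\Ope^{\coop})^{*}$ Koszul'' really is available in that setting. If one prefers to stay self-contained, the same conclusion is reachable by imitating the proof of Corollary~\ref{genchapoton}: write out the cobar construction $\Cob$ of $\Arb_{N^{!.}}$, express it via an analogue of Theorem~\ref{main} as an enriched rooted tree species built from $\Cob(N^{!.})$, and then --- restricting the resulting direct sum decomposition of the cohomology to small trees, exactly as in the proof of Corollary~\ref{genchapoton} --- show its cohomology is concentrated in degree $0$ iff that of $\Cob(N)$ is. The only other point requiring care is that fact (a) is genuinely an isomorphism of operads and not merely of the underlying species.
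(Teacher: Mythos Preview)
Your proposal is correct and follows essentially the same route as the paper: both arguments combine Corollary~\ref{genchapoton} (the equivalence $M$ Koszul $\Leftrightarrow$ $\stree_M$ Koszul) with Fresse's operadic duality $\stree_M$ Koszul $\Leftrightarrow$ $(\stree_M^{\coop})^*=\Arb_{M^{!.}}$ Koszul, and then close the loop using the simultaneous Koszulness of $M$ and $M^{!.}$. Your write-up is more explicit about the bookkeeping --- the identification $(\Arb_N)^*=\Arb_{N^*}$ and the involutivity of quadratic duality --- and correctly flags that the operadic duality statement must be taken in Fresse's generality since the generators $XF$ need not be concentrated in arity $2$; the paper handles this by a direct citation of \cite{Fresse}, Lemmas~5.2.9 and~5.2.10.
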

\begin{proof}By duality according to Fresse
(see \cite{Fresse}, Lemmas 5.2.9. and 5.2.10.), $\stree_M$ is a
Koszul operad if and only if the operad
$(\stree_M^{\coop})^*=(\Arb_{M^{\coop .}})^*=\Arb_{M^{!. }}$ is
so. The result follows from corollary \ref{genchapoton} and from
the fact that $M$ and $M^!$ are simultaneously Koszul.
\end{proof}
Observe that if $M$ is generated by a species $F_1$ concentrated
in cardinality $1$, then $\stree_M$ is generated by $XF_1$,
concentrated in cardinality $2$. The quadratic dual of $\stree_M$,
according to Ginzburg-Kapranov \cite{G-K}, is equal to
$(\Arb_{M^{\coop}})^*\odot\Lambda=\Arb_{M^!}\odot\Lambda$.

We now restate Vallette result (see \cite{Vallette} Theorem 9), in
this context and using our terminology.

\begin{theo} Let $\Cop$ be a quadratic $c$-operad generated by
a species concentrated in some cardinality $k$,
$\Cop=\mathcal{O}(G_k,R)$. Then, $\Cop$ is Koszul if and only if
the M\"obius species $\Cop^{\langle -1\rangle}$ is Cohen-Macaulay.
\end{theo}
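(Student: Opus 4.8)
The plan is to transport to operads the chain-level argument that proves Theorems \ref{C-M0} and \ref{C-M}, with the left cancellation law of $c$-monoids replaced by that of $c$-operads and the family $\Pom_M$ replaced by the family $\Pop_\Cop$. The hypothesis that $\Cop=\mathcal{O}(G_k,R)$ is generated in a single cardinality $k$ enters only through a grading remark: since then $\Cop^{\underline m}$ is concentrated in cardinality $1+m(k-1)$, every covering relation $a_1\prec a_2$ of $\Pop_\Cop[U]$ fuses exactly $k$ blocks of an assembly into one via a single $G_k$-composition, hence drops the number of blocks by $k-1$. So $\Pop_\Cop[U]=0$ unless $|U|=1+m(k-1)$ for some $m$, and then every interval $[\hat 0,\{c_U\}]$, $c_U\in\Cop[U]$, is graded of rank $m$, so $\Pop_\Cop[U]=\bigcup_{c_U}[\hat 0,\{c_U\}]$ is pure; moreover, for such a $U$ the bar complex $\Bao(\Cop)[U]$ is finite, its summand spanned by $\Cop$-enriched Schr\"oder trees with exactly $r$ internal vertices (with a wedge-ordering of them, the root last) sitting in degree $m-r$, with $H^0\Bao(\Cop)[U]=(\Cop^{\coop})^{\underline m}[U]$ as in Section \ref{Koszul.}; and $\Cop$ is Koszul if and only if $\Ho\Bao(\Cop)[U]$ is concentrated in degree zero for every $U$.

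The crux is the operadic counterpart of the isomorphism $\alpha$ from the proof of Theorem \ref{C-M0}. Define a $\dg$-tensor species $C^\grader$ by reversing the enumeration of the order complexes: for a finite set $U$ with $|U|=1+m(k-1)$ put $C^{\underline r}[U]:=\KK\Delta_{m-r}(\Pop_\Cop[U])$, $r=0,\dots,m$, with differential $\hat d_r=\partial_{m-r}$. I would define $\alpha:\Bao(\Cop)\to C^\grader$ by sending a decomposable generator $v_1\wedge\dots\wedge v_r\otimes t$ — a $\Cop$-enriched Schr\"oder tree $t$ on $U$ with $r$ internal vertices, together with an ordering $v_1,\dots,v_r$ of them, $v_r$ the root — to the chain $\hat 0<_\eta a_1<_\eta\dots<_\eta a_r=\{c_U\}$ obtained by starting from the all-$X$ assembly and performing, in that order, the partial compositions carried by $v_1,\dots,v_r$. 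The left cancellation law of $c$-operads makes $\alpha$ a bijection between decomposable generators of $\Bao(\Cop)[U]$ and chains of $C^\grader[U]$. One then verifies that the operadic bar differential $\tilde d$ of equation (\ref{Baod}), which replaces $v_i\otimes t$ by $v_i(t)$, corresponds under $\alpha$ to deletion of the $i$-th element of the chain, i.e.\ to the simplicial boundary $\partial$, with matching signs $(-1)^{i-1}$; hence $\alpha$ is an isomorphism of $\dg$-tensor species. Consequently $H_m(\Pop_\Cop[U],\KK)=H^0\Bao(\Cop)[U]=(\Cop^{\coop})^{\underline m}[U]$, and $\Ho\Bao(\Cop)[U]$ is concentrated in degree zero if and only if $H_*(\Pop_\Cop[U],\KK)$ is concentrated in top rank $m$.

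It remains to match the latter condition with Cohen-Macaulayness of $\Cop^{\langle -1\rangle}$. Distinct maximal elements of $\Pop_\Cop[U]$ span independent subcomplexes of its order complex, so $H_r(\Pop_\Cop[U],\KK)=\bigoplus_{c_U\in\Cop[U]}H_r([\hat 0,\{c_U\}],\KK)$; thus $H_*(\Pop_\Cop[U])$ is concentrated in top rank for every $U$ precisely when every interval $[\hat 0,\{c_U\}]$ of $\Pop_\Cop$ has homology concentrated in top rank. To upgrade this to the full Cohen-Macaulay condition on all subintervals, I would use the structure theorem for the posets $\Pop_\Cop$: any interval $[a_1,a_2]$ is isomorphic to an interval $[\hat 0,a_2']$ of some $\Pop_\Cop[\pi]$, and $[\hat 0,a_2']$, lying below an assembly $\{c_{B'}\}_{B'}$, is isomorphic to the direct product $\prod_{B'}[\hat 0,\{c_{B'}\}]$ of ``generating'' intervals $[\hat 0,\{c_{B'}\}]\in\Cop^{\langle -1\rangle}[B']$. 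By the grading remark each such factor is graded, and by the K\"unneth (join) formula for the homology of a direct product of bounded posets (see \cite{Bjorner,Wachs}) a direct product of graded posets whose homologies are concentrated in top rank again has homology concentrated in top rank, with rank the sum of the ranks. Hence every generating interval having homology concentrated in top rank forces the same for every interval of every $\Pop_\Cop[U]$, i.e.\ $\Cop^{\langle -1\rangle}$ is Cohen-Macaulay; the converse is immediate. Combined with the previous paragraph this proves the theorem. I expect the two delicate points to be the sign bookkeeping in identifying $\tilde d$ with $\partial$ under $\alpha$, and making the product decomposition of subintervals fully precise.
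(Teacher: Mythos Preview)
The paper does not give its own proof of this theorem; it simply restates Vallette's result \cite{Vallette}, Theorem 9. So your proposal must be measured against Vallette's argument.

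Your overall strategy---transporting the proof of Theorems \ref{C-M0} and \ref{C-M} from $c$-monoids to $c$-operads---is the right instinct, and your final paragraph (splitting $H_*(\Pop_\Cop[U])$ over maximal elements, then using the product decomposition of general intervals together with a K\"unneth/join argument) is correct. The gap is in the heart of the argument: the claimed isomorphism $\alpha:\Bao(\Cop)\to C^{\grader}$ does not exist. In the monoid case the free monoid $\LL(M_+)=\sum_r(M_+)^r$ is built from \emph{ordered tuples}, and these biject with chains; that is exactly why Theorem \ref{C-M0} works. For operads the free operad $\Sc_G$ is built from \emph{Schr\"oder trees}, and $\Cop$-enriched Schr\"oder trees on $U$ with $r$ internal vertices do \emph{not} biject with chains of length $r$ in $\Pop_\Cop[U]$. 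Concretely, take $\Cop=E_+$ (generated in cardinality $2$) and $|U|=4$: there are $10$ Schr\"oder trees on $U$ with two internal vertices but $13$ chains $\hat 0<\pi<\hat 1$ in $\Pi[4]=\Pop_{E_+}[4]$; the chain through $\pi=\{12,34\}$ has no $2$-vertex tree counterpart. So $\Bao(\Cop)[U]$ and $\KK\Delta(\Pop_\Cop[U])$ are not isomorphic degree by degree (only quasi-isomorphic). Your rule ``perform the partial compositions carried by $v_1,\dots,v_r$ in that order'' is also not well defined on the antisymmetric element $v_1\wedge\dots\wedge v_r\otimes t$: if $v_i$ and $v_j$ are incomparable in the tree order, swapping them changes only a sign in $\Bao(\Cop)$ but produces a genuinely different chain in $\Pop_\Cop[U]$.

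What is missing is precisely the extra layer present in Vallette's (and Fresse's \cite{Fresse}) argument. Chains of length $r$ in $\Pop_\Cop[U]$ correspond not to Schr\"oder trees but to \emph{leveled} trees, i.e.\ to non-degenerate elements of the iterated substitution $\Cop(\Cop)\cdots(\Cop)$; the genuine analogue of your $\alpha$ identifies the order complex of $\Pop_\Cop$ with this \emph{simplicial} (normalized) bar complex of $\Cop$, exactly as in Theorem \ref{C-M0}. One then needs the separate, nontrivial fact that the simplicial bar complex and the operadic bar complex $\Bao(\Cop)$ are quasi-isomorphic (the ``levelization'' comparison). That quasi-isomorphism is the ingredient that distinguishes the operad case from the monoid case, and your proof skips it.
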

We obtain that, if $\Cop$ is as in the previous theorem, then
\begin{eqnarray} \Mob \Cop^{\langle -1\rangle}(x)&=&(\Cop^{\coop})^{\grader}(x)
\\ \Ch \Cop^{\langle -1\rangle}(\xx)&=&(\Ch\Cop^{\coop})^{\grader}(\xx).\end{eqnarray}
\noindent we have also
\begin{eqnarray}\label{odim}\sum_{c\in
\Cop^{\underline{k}}[n]}\mu(\hat{0},\{c\})&=&(-1)^k \mathrm{dim}(\Cop^{\coop })^{\underline{k}}[n],\\
\label{otr}\sum_{c\in
\Cop^{\underline{k}}[n],\Cop[\sigma]c=c}\mu([\hat{0},\{c\}]_{\sigma})&=&(-1)^k\mathrm{tr}(\Cop^{\coop
})^{\underline{k}}[\sigma].\end{eqnarray}

It is easy to check that if $M$ is a c-monoid, the operad $\Arb_M$
is a $c$-operad. The following corollary is trivial from the
previous theory.
\begin{cor}Let $M=\mathcal{M}(F_k,R)$ be a quadratic $c$-monoid generated
by a species $F_k$ concentrated in cardinality $k$. Then, the
following statements are equivalent
\begin{enumerate}
\item The M\"obius species $M^{-1}$ is Cohen-Macaulay. \item $M$
is Koszul.
 \item The
M\"obius species $\Arb_M^{\langle -1\rangle}$ is Cohen-Macaulay.
\item The operad $\Arb_M$ is Koszul.
\end{enumerate}
\end{cor}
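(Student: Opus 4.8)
The plan is to deduce the four equivalences by assembling results already proved; the corollary is \emph{formal} once one checks that $\Arb_M$ falls under the scope of the Vallette-type theorem restated just above. The equivalence $(1)\Leftrightarrow(2)$ is immediate: it is Theorem~\ref{C-M} applied to the quadratic c-monoid $M$. So the real work is to relate $(2)$ with $(4)$, and $(4)$ with $(3)$.

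For $(2)\Leftrightarrow(4)$ I would argue purely formally, using neither cancellativity nor the single-cardinality hypothesis. By Corollary~\ref{genchapoton}, $M$ is Koszul iff $\stree_M$ is a Koszul operad; and, exactly as in the proof of Corollary~\ref{genchapoton2}, Fresse duality gives that $\stree_M$ is Koszul iff $(\stree_M^{\coop})^*=\Arb_{M^{!.}}$ is Koszul. Applying this last equivalence with $M$ replaced by the quadratic monoid $M^{!.}$, together with Corollary~\ref{genchapoton} for $M^{!.}$ and the self-duality proposition ($M$ and $M^{!.}$ are simultaneously Koszul, and $(M^{!.})^{!.}=M$), one obtains the chain
\[
M\ \text{Koszul}\ \Longleftrightarrow\ M^{!.}\ \text{Koszul}\ \Longleftrightarrow\ \stree_{M^{!.}}\ \text{Koszul}\ \Longleftrightarrow\ \Arb_M\ \text{Koszul},
\]
which is exactly $(2)\Leftrightarrow(4)$.

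For $(3)\Leftrightarrow(4)$ the plan is to verify the hypotheses of the Vallette-type theorem above for $\Cop=\Arb_M$ and then quote it. First, since $M$ is a c-monoid, the explicit description of the partial composition $\star:\Arb_M\cdot\Arb_M'\to\Arb_M$ (whose fibre enrichments are combined through $\nu$) together with the left-cancellation law for $\nu$ shows at once that $\Arb_M$ is a c-operad. Second, $\Arb_M$ is quadratic because $M$ is: this is built into Corollary~\ref{genchapoton2}, where $\Arb_{M^{!.}}=(\stree_M^{\coop})^*$ is the linear dual of a quadratic cooperad. Third --- and this is where the hypothesis $M=\mathcal{M}(F_k,R)$ with $F_k$ concentrated in cardinality $k$ is used --- one must see that $\Arb_M$ is generated, as a quadratic operad, by a species concentrated in the single cardinality $k+1$: since the operad-degree of an $M$-enriched tree is the sum over its vertices of the $M^{\grader}$-degree of the enriching element, the degree-one part $\Arb_M^{\underline{1}}$ consists of the trees using a single generator, i.e.\ a root with $k$ leaf-children whose fibre is decorated by an element of $F_k$, so that $\Arb_M^{\underline{1}}\cong XF_k$; equivalently, one may transport this through Fresse duality from $\stree_{M^{!.}}=\mathcal{O}\bigl(XF_k^{*},\,XR^{\perp}+X^2F_k^{*}(F_k^{*})'\bigr)$, which is generated in cardinality $k+1$ by the proposition preceding Theorem~\ref{main}. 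With these three points the restated Vallette theorem applies verbatim to $\Cop=\Arb_M$ and gives $(3)\Leftrightarrow(4)$.

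The main obstacle is precisely that third point: pinning down that single-cardinality generation of the \emph{monoid} $M$ propagates to single-cardinality quadratic generation of the \emph{operad} $\Arb_M$, which is the condition under which the Cohen--Macaulay $\Leftrightarrow$ Koszul dictionary for c-operads is available. Once that is in place, the statement is a bookkeeping assembly of Theorem~\ref{C-M}, Corollaries~\ref{genchapoton} and \ref{genchapoton2}, the self-duality of Koszulness, and the Vallette-type theorem, with no further computation.
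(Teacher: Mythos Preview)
Your proposal is correct and follows essentially the same route the paper intends: the paper offers no explicit proof beyond ``trivial from the previous theory'', and your assembly of Theorem~\ref{C-M} for $(1)\Leftrightarrow(2)$, the chain through Corollaries~\ref{genchapoton}--\ref{genchapoton2} and self-duality for $(2)\Leftrightarrow(4)$, and the Vallette-type theorem (after checking $\Arb_M$ is a quadratic c-operad generated in cardinality $k+1$) for $(3)\Leftrightarrow(4)$ is exactly the bookkeeping the author has in mind. Your care in verifying the single-cardinality generation of $\Arb_M$ --- via $\Arb_M^{\underline{1}}\cong XF_k$, or equivalently via $\Arb_M=(\stree_{M^{!.}}^{\coop})^*$ with $\stree_{M^{!.}}$ generated by $XF_k^*$ --- is the one nontrivial check, and you handle it correctly.
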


\begin{figure}\begin{center}
\scalebox{1} 
{
\begin{pspicture}(0,-4.65)(15.2,4.65)
\definecolor{color11b}{rgb}{1.0,0.0,0.6}
\definecolor{color190}{rgb}{0.8,0.0,0.8}
\psdots[dotsize=0.12](8.9609375,2.11)
\psdots[dotsize=0.12](8.260938,1.37)
\psdots[dotsize=0.12](6.5409374,0.25)
\psdots[dotsize=0.12](5.7609377,0.15)
\psdots[dotsize=0.12](4.7609377,0.15)
\psline[linewidth=0.04cm](8.940937,2.13)(8.260938,1.39)
\psline[linewidth=0.04cm](6.5409374,0.27)(5.7409377,0.17)
\psline[linewidth=0.04cm](5.7209377,0.15)(4.7609377,0.17)
\pscircle[linewidth=0.04,dimen=outer,fillstyle=solid,fillcolor=color11b](3.1609375,0.71){0.14}
\psline[linewidth=0.04cm](4.7209377,0.19)(3.2809374,0.65)
\psline[linewidth=0.04cm](8.940937,2.09)(9.68,0.95)
\psline[linewidth=0.04cm](8.9609375,2.15)(10.440937,1.71)
\psline[linewidth=0.04cm](8.980938,2.15)(11.160937,2.59)
\psline[linewidth=0.04cm](8.240937,1.35)(8.220938,0.07)
\psline[linewidth=0.04cm](8.260938,1.37)(9.500937,-0.05)
\psline[linewidth=0.04cm](6.5409374,0.29)(6.3809376,-2.27)
\psline[linewidth=0.04cm](6.5409374,0.27)(7.7209377,-1.47)
\psline[linewidth=0.04cm](5.7209377,0.19)(4.0409374,1.79)
\psline[linewidth=0.04cm](5.7409377,0.21)(5.9409375,2.37)
\psline[linewidth=0.04cm](4.7409377,0.09)(4.6809373,-1.59)
\psline[linewidth=0.04cm](4.7209377,0.13)(3.1609375,-0.37)
\psline[linewidth=0.04cm](6.5409374,0.29)(8.220938,1.35)
\psframe[linewidth=0.04,linecolor=color190,linestyle=dashed,dash=0.16cm
0.16cm,dimen=outer](4.9209375,2.79)(3.2209375,1.85)
\usefont{T1}{ptm}{m}{n}
\rput(3.9923437,2.32){$\mathscr{B}\mathrm{ar}(\stree_M)$}
\usefont{T1}{ptm}{m}{n}
\rput(8.092343,1.66){$v_1$}
\usefont{T1}{ptm}{m}{n}
\rput(6.0623436,-0.1){$v_{j-1}$}
\usefont{T1}{ptm}{m}{n}
\rput(6.5623436,0.62){$v_{j-2}$}
\usefont{T1}{ptm}{m}{n}
\rput(4.632344,0.48){$v_j$} \psdots[dotsize=0.05](6.7809377,0.97)
\psdots[dotsize=0.05](7.4409375,1.37)
\psdots[dotsize=0.05](7.1009374,1.19)
\usefont{T1}{ptm}{m}{n}
\rput(8.632343,2.4){$v_k$}
\usefont{T1}{ptm}{m}{n}
\rput(6.3123436,1.22){$\mil M_+$}
\psarc[linewidth=0.04](5.3509374,1.1){0.67}{23.198591}{187.69604}
\psarc[linewidth=0.04](4.5509377,-0.12){0.61}{161.56505}{310.9144}
\usefont{T1}{ptm}{m}{n}
\rput(5.1523438,-0.36){$\mil M_+$}
\psarc[linewidth=0.04](9.070937,2.3){0.67}{257.61923}{21.037512}
\usefont{T1}{ptm}{m}{n}
\rput(10.072344,2.8){$\mil M_+$}
\psarc[linewidth=0.04](6.7609377,-0.29){0.5}{208.61046}{14.534455}
\usefont{T1}{ptm}{m}{n}
\rput(7.2323437,-0.02){$\mil M_+$}
\psarc[linewidth=0.04](8.150937,1.32){0.67}{257.61923}{340.46335}
\usefont{T1}{ptm}{m}{n}
\rput(8.832344,1.26){$\mil M_+$}
\usefont{T1}{ptm}{m}{n}
\rput(3.0923438,1.1){$X$}
\psframe[linewidth=0.04,linecolor=color190,linestyle=dashed,dash=0.16cm
0.16cm,dimen=outer](5.6009374,-1.53)(3.9009376,-2.47)
\usefont{T1}{ptm}{m}{n}
\rput(4.6723437,-2.0){$\mathscr{B}\mathrm{ar}(\stree_M)$}
\psframe[linewidth=0.04,linecolor=color190,linestyle=dashed,dash=0.16cm
0.16cm,dimen=outer](12.860937,2.97)(11.160937,2.03)
\usefont{T1}{ptm}{m}{n}
\rput(11.9323435,2.5){$\mathscr{B}\mathrm{ar}(\stree_M)$}
\psframe[linewidth=0.04,linecolor=color190,linestyle=dashed,dash=0.16cm
0.16cm,dimen=outer](12.160937,1.99)(10.4609375,1.05)
\usefont{T1}{ptm}{m}{n}
\rput(11.232344,1.52){$\mathscr{B}\mathrm{ar}(\stree_M)$}
\psframe[linewidth=0.04,linecolor=color190,linestyle=dashed,dash=0.16cm
0.16cm,dimen=outer](11.360937,0.97)(9.660937,0.03)
\usefont{T1}{ptm}{m}{n}
\rput(10.4323435,0.5){$\mathscr{B}\mathrm{ar}(\stree_M)$}
\psframe[linewidth=0.04,linecolor=color190,linestyle=dashed,dash=0.16cm
0.16cm,dimen=outer](10.740937,-0.05)(9.040937,-0.99)
\usefont{T1}{ptm}{m}{n}
\rput(9.812344,-0.52){$\mathscr{B}\mathrm{ar}(\stree_M)$}
\psframe[linewidth=0.04,linecolor=color190,linestyle=dashed,dash=0.16cm
0.16cm,dimen=outer](4.0009375,-0.41)(2.3009374,-1.35)
\usefont{T1}{ptm}{m}{n}
\rput(3.0723438,-0.88){$\mathscr{B}\mathrm{ar}(\stree_M)$}
\psframe[linewidth=0.04,linecolor=color190,linestyle=dashed,dash=0.16cm
0.16cm,dimen=outer](7.0009375,3.31)(5.3009377,2.37)
\usefont{T1}{ptm}{m}{n}
\rput(6.072344,2.84){$\mathscr{B}\mathrm{ar}(\stree_M)$}
\psframe[linewidth=0.04,linecolor=color190,linestyle=dashed,dash=0.16cm
0.16cm,dimen=outer](7.2809377,-2.21)(5.5809374,-3.15)
\usefont{T1}{ptm}{m}{n}
\rput(6.3523436,-2.68){$\mathscr{B}\mathrm{ar}(\stree_M)$}
\psframe[linewidth=0.04,linecolor=color190,linestyle=dashed,dash=0.16cm
0.16cm,dimen=outer](9.140938,0.13)(7.4409375,-0.81)
\usefont{T1}{ptm}{m}{n}
\rput(8.212344,-0.34){$\mathscr{B}\mathrm{ar}(\stree_M)$}
\psframe[linewidth=0.04,linecolor=color190,linestyle=dashed,dash=0.16cm
0.16cm,dimen=outer](9.440937,-1.41)(7.7409377,-2.35)
\usefont{T1}{ptm}{m}{n}
\rput(8.512343,-1.88){$\mathscr{B}\mathrm{ar}(\stree_M)$}
\psdots[dotsize=0.22](4.76,0.13) \psdots[dotsize=0.22](5.72,0.17)
\psdots[dotsize=0.22](6.52,0.25) \psdots[dotsize=0.22](8.22,1.39)
\psdots[dotsize=0.22](8.94,2.11)
\psframe[linewidth=0.034,framearc=0.1,dimen=outer](15.2,4.65)(0.0,-4.65)
\end{pspicture}
}

\end{center}
\caption{Graphical representation of the equation
$\Bao(\stree_M)=X\cdot\Ba(M)(\Bao(\stree_M))$.}\label{BaoBar}
\end{figure}

\begin{ex} By the previous corollaries we obtain
the following. \begin{itemize}\item Since $E$ is a Koszul monoid,
the operads $\stree_{E}$, $\stree_{\Lambda}$, and
$\Arb=(\stree_{\Lambda}^{\coop})^*=\stree_{E}^!$ are Koszul. We
also have that $\Arb_{\Lambda}=(\stree_{E}^{\coop})^*$ is Koszul.
\item The monoid $\mathrm{Cosh}$ is Koszul and hence its quadratic
dual $\sum_{j=0}^{\infty}\mathcal{S}_{\tau_{2j}}$. Then,
$\stree_{\mathrm{Cosh}}$ and $\Arb_{\mathrm{Cosh}}$, respectively
the species of small trees and rooted trees where each vertex has
an even number of sons, are Koszul. So are the operads
$\stree_{\sum_{j=0}^{\infty}\mathcal{S}_{\tau_{2j}}}$, and
$\Arb_{\sum_{j=0}^{\infty}\mathcal{S}_{\tau_{2j}}}.$ \item $E\seg
E$ and $E\seb E$  are Koszul monoids. The operads $\stree_{E\seg
E}$, $\stree_{E\seb E}$, $\Arb_{E\seg E}$ and $\Arb_{E\seb E}$
 are Koszul.
 \item Since the $c-$monoids $E$, $E_{(k)}$, $E_{(k_1)}\seg
 E_{(k_2)},$
 $E_{(k_1)}\seg E_{(k_2)}\seg E_{(k_3)},\dots,$ are
 Koszul, the M\"obius species $\Arb^{\langle -1\rangle}$,
 $\Arb_{E_{(k)}}^{\langle -1\rangle}$,
 $\Arb_{E_{(k_1)}\seg E_{(k_2)}}^{\langle -1\rangle},\Arb_{E_{(k_1)}\seg E_{(k_2)}\seg E_{(k_3)}}
 ^{\langle -1\rangle},\dots,$ are
 Cohen-Macaulay.
\end{itemize}
\end{ex}
Koszulness for associative algebras, and hence for monoids in
species, is a property that is closed under many operations (see
\cite{Backelin}). Segre and Manin product are two among them. So,
corollaries \ref{genchapoton} and \ref{genchapoton2} give us a
wide class of Koszul operads.

\end{document}